\theoremstyle{plain}
\newtheorem{thm}{Theorem}[section]
\newtheorem{cor}[thm]{Corollary}
\newtheorem{lem}[thm]{Lemma}
\newtheorem{prop}[thm]{Proposition}
\theoremstyle{definition}
\newtheorem{defn}[thm]{Definition}
\theoremstyle{remark}
\setlist[enumerate,1]{leftmargin=2em}
\def\BI{\mathfrak{BI}}
\def\C{\mathbb C}
\def\Cl{{\rm Cl}}
\def\D{\mathbf D}
\def\R{\mathbb R}
\def\M{\mathscr M}
\def\N{\mathscr N}
\def\W{\mathcal W}
\def\Z{\mathbb Z}
\def\Sym{\mathfrak{S}}
\def\rank{{\rm rank}}
\def\e{\varepsilon}
\newcommand{\rvline}{\hspace*{-\arraycolsep}\vline\hspace*{-\arraycolsep}}
\title[The space $\M_n$ and the algebra $\BI$]
{The space of Dunkl monogenics associated with $\Z_2^3$ and the universal Bannai--Ito algebra}
\author{Hau-Wen Huang}
\address{
Hau-Wen Huang\\
Department of Mathematics\\
National Central University\\
Chung-Li 32001 Taiwan
}
\email{hauwenh@math.ncu.edu.tw}
\begin{document}

\begin{abstract}
Let $n\geq 0$ denote an integer.  
Let $\M_n$ denote the space of Dunkl monogenics of degree $n$ associated with the reflection group $\Z_2^3$. 
The universal Bannai--Ito algebra $\BI$ is a unital associative algebra over $\C$ generated by $X,Y,Z$ and the relations assert that each of 
\begin{gather*}
\{X,Y\}-Z,
\qquad 
\{Y,Z\}-X,
\qquad 
\{Z,X\}-Y
\end{gather*}
commutes with $X,Y,Z$. 
When the multiplicity function $k$ is real-valued the space $\M_n$ supports a $\BI$-module in terms of the symmetries of the spherical Dirac--Dunkl operator.  Under the assumption that $k$ is nonnegative, it was shown that $\dim \M_n=2(n+1)$ and $\M_n$ is isomorphic to a direct sum of two copies of an $(n+1)$-dimensional irreducible $\BI$-module. 
In this paper, we improve the aforementioned result.
\end{abstract}

\maketitle

{\footnotesize{\bf Keywords:} Bannai--Ito algebras, Dunkl operators, irreducible modules.}

{\footnotesize{\bf MSC2020:} 11E88, 16D70,  33D45, 81Q05.}

\section{Introduction}

The idea of Dunkl operators was to use the finite reflection groups to generalize the standard partial derivatives \cite{Dunkl1989}. Since that time, the connections between the Dunkl operators and many other areas \cite{Dunkl,Cherednik:1991,Dunkl&Harmonic,Dunkl&Fourier,Dunkltransform} have been explored such as Fourier transforms, quantum-mechanical integrable systems and double affine Hecke algebras. 
The goal of this paper is to improve the main result of \cite{BI2016} on the Dunkl operators associated 
with the finite reflection group $\Z_2^3$ and the universal Bannai--Ito algebra. 
For notational convenience, we adopt the following conventions: 
The unadorned tensor product $\otimes$ is meant to be over the real number field $\R$.  The notation ``$\dim$'' is meant to be the dimension over the complex number field $\C$. 
An algebra is meant to be a unital associative algebra. For any integer $n\geq 1$ the notation ${\rm Mat}_n$ stands for the algebra consisting of all $n\times n$ matrices.
In an algebra the curly bracket $\{\, , \}$ stands for the anticommutator. Given a polynomial ring $R$ and an integer $n\geq 0$, let $R_n$ denote the subspace of $R$ spanned by the homogeneous polynomials in $R$ of degree $n$.

Consider the root system
$$
\Phi=\{\pm(1,0,0), \pm(0,1,0), \pm(0,0,1)\}
$$
in the standard three-dimensional Euclidean space $\R^3$. 
The finite reflection group $\W$ of $\Phi$ is isomorphic to $\Z_2^3$. A {\it multiplicity function} $k:\Phi\to \C$ is a $\W$-invariant complex-valued function on $\Phi$. In this paper we assume that $k$ is real-valued and write
\begin{align*}
k_1 =k (1,0,0),
\qquad 
k_2 =k(0,1,0),
\qquad 
k_3 =k(0,0,1).
\end{align*}
The {\it Dunkl operators} $T_1,T_2,T_3$ associated to $(1,0,0),(0,1,0),(0,0,1)$  are given as follows:
\begin{align*}
(T_1f)(x_1,x_2,x_3)
&=
\frac{\partial f}{\partial{x_1}}(x_1,x_2,x_3)
+
k_1\frac{f(x_1,x_2,x_3)-f(-x_1,x_2,x_3)}{x_1},
\\
(T_2f)(x_1,x_2,x_3)
&=
\frac{\partial f}{\partial{x_2}}(x_1,x_2,x_3)
+
k_2\frac{f(x_1,x_2,x_3)-f(x_1,-x_2,x_3)}{x_2},
\\
(T_3f)(x_1,x_2,x_3)
&=
\frac{\partial f}{\partial{x_3}}(x_1,x_2,x_3)
+
k_3\frac{f(x_1,x_2,x_3)-f(x_1,x_2,-x_3)}{x_3}
\end{align*}
for all smooth functions $f$ on $\R^3$. 
The {\it Clifford algebra} $\Cl(\R^3)$ of $\R^3$ is an algebra over $\R$ generated by $e_1,e_2,e_3$ with the relations
\begin{gather*}
e_1^2=1,\qquad e_2^2=1,\qquad e_3^2=1,
\\
\{e_1,e_2\}=\{e_2,e_3\}=\{e_3,e_1\}=0.
\end{gather*}
Recall that the {\it Pauli matrices}  $\sigma_1,\sigma_2,\sigma_3$ are given by 
$$
\sigma_1
=\begin{pmatrix}
0 &1
\\
1 &0
\end{pmatrix},
\qquad 
\sigma_2
=\begin{pmatrix}
0 &(-1)^{\frac{3}{2}}
\\
(-1)^{\frac{1}{2}} &0
\end{pmatrix},
\qquad 
\sigma_3
=\begin{pmatrix}
1 &0
\\
0 &-1
\end{pmatrix}.
$$
The vector space $\C^2$ supports a $\Cl(\R^3)$-module via the representation $\Cl(\R^3)\to {\rm Mat}_2(\C)$ given by 
\begin{eqnarray*}
e_1 &\mapsto & \sigma_1,
\qquad 
e_2 \;\;\mapsto\;\; \sigma_2,
\qquad 
e_3 \;\;\mapsto\;\; \sigma_3.
\end{eqnarray*}
The {\it Dirac--Dunkl operator} $\D$ is defined as
$$
\D=e_1 \otimes T_1+e_2\otimes T_2+e_3\otimes T_3.
$$
Given an integer $n\geq 0$ the {\it space $\M_n$ of Dunkl monogenics of degree $n$} is defined as
$$
\M_n=\ker (\D|_{\C^2\otimes \R[x_1,x_2,x_3]_n}).
$$

The {\it universal Bannai--Ito algebra} $\BI$  \cite{Huang:R<BImodules,R&BI2015,Vinet2019,BI&NW2016,BI2015,BI2014,BI&osp2018,BI2014-2,BI2015-2,BI2017,Huang:R<BI,Huang:BImodule} is an algebra over $\C$ generated by $X,Y,Z$ and the relations assert that each of the following elements commutes with $X,Y,Z$:
\begin{align}
&\{X,Y\}-Z, 
\label{kappa}
\\
&\{Y,Z\}-X,
\label{lambda}
\\ 
&\{Z,X\}-Y.
\label{mu}
\end{align}
Let $\kappa,\lambda,\mu$ denote the central elements (\ref{kappa})--(\ref{mu}) of $\BI$, respectively. Note that the universal Bannai--Ito algebra is the case $q=-1$ of the universal Askey--Wilson algebra \cite{Huang:AW&DAHAmodule,Huang:2021,centerAW:2016,Huang:RW,uaw&equit2011,DAHA2013,ter2018,uaw2011,Askeyscheme,lp&awrelation,lp2001,hidden_sym,BannaiIto1984}.

\begin{thm}
[\S 4, \cite{BI2016}]
\label{thm:BImodule_Mn}
Let $n\geq 0$ denote an integer. Then the following hold: 
\begin{enumerate}
\item The space $\M_n$ is a $\BI$-module given by 
\begin{align*}
(X f)(x_1,x_2,x_3) &=
\textstyle(
(-1)^{\frac{1}{2}}e_1\otimes (x_3 T_2
-x_2 T_3))f(x_1,-x_2,-x_3)
\\
&\qquad+\,
\textstyle
\frac{1}{2}f(x_1,-x_2,-x_3)
+
k_2f(x_1,x_2,-x_3)+k_3f(x_1,-x_2,x_3),
\\
(Yf)(x_1,x_2,x_3) &=
\textstyle(
(-1)^{\frac{1}{2}}e_2\otimes (x_1 T_3- x_3 T_1)
) f(-x_1,x_2,-x_3)
\\
&\qquad+\,
\textstyle
\frac{1}{2} f(-x_1,x_2,-x_3)
+
k_3 f(-x_1,x_2,x_3)+k_1 f(x_1,x_2,-x_3),
\\
(Zf)(x_1,x_2,x_3)&=
\textstyle(
(-1)^{\frac{1}{2}}e_3\otimes (x_2 T_1- x_1 T_2)
) f(-x_1,-x_2,x_3)
\\
&\qquad+\,
\textstyle
\frac{1}{2} f(-x_1,-x_2,x_3)
+
k_1f(x_1,-x_2,x_3)+k_2 f(-x_1,x_2,x_3)
\end{align*}
for all $f\in \M_n$. 

\item The elements $\kappa,\lambda,\mu$ act on $\M_n$ as scalar multiplication by 
\begin{align*}
2(k_1k_2+(-1)^n(k_1+k_2+k_3+n+1)k_3),
\\
2(k_2k_3+(-1)^n(k_1+k_2+k_3+n+1)k_1),
\\
2(k_3k_1+(-1)^n(k_1+k_2+k_3+n+1)k_2),
\end{align*}
respectively.
\end{enumerate}
\end{thm}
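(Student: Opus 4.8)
The plan is to verify directly that the operators $X,Y,Z$ defined in (i) map $\M_n$ into itself and that the operators $\{X,Y\}-Z$, $\{Y,Z\}-X$, $\{Z,X\}-Y$ act on $\M_n$ as the scalars listed in (ii). The reason this suffices is that \emph{any} triple of operators on a vector space for which these three anticommutator combinations are scalar multiples of the identity automatically satisfies the defining relations of $\BI$; hence proving (ii) together with the invariance of $\M_n$ under $X,Y,Z$ establishes (i) at the same time. It is convenient first to rewrite, for instance, $X$ as $(-1)^{1/2}\,(e_1\otimes(x_3T_2-x_2T_3))\,r_2r_3+\tfrac{1}{2}r_2r_3+k_2r_3+k_3r_2$, where $r_1,r_2,r_3$ denote the reflections $(r_if)(x_1,x_2,x_3)=f(\dots,-x_i,\dots)$ on $\R[x_1,x_2,x_3]$, and similarly for $Y$ and $Z$ by cyclic rotation of the indices.

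Each of $X,Y,Z$ manifestly preserves the finite-dimensional space $\C^2\otimes\R[x_1,x_2,x_3]_n$, so it remains to check that it maps $\ker\D$ into itself, which follows from the identity $[\D,X]=0$ (that is, $X$ is a symmetry of the Dirac--Dunkl operator). Verifying this uses only the Clifford relations in the Pauli representation --- $e_ie_j=(-1)^{1/2}e_k$ for $(i,j,k)$ a cyclic permutation of $(1,2,3)$, and $e_1e_2e_3=(-1)^{1/2}$ --- together with the Dunkl commutation relations $T_ix_i-x_iT_i=1+2k_ir_i$, $\;[T_i,x_j]=0$ for $i\neq j$, $\;r_iT_j=(-1)^{\delta_{ij}}T_jr_i$ and $r_ix_j=(-1)^{\delta_{ij}}x_jr_i$. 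Alternatively, $X,Y,Z$ can be recognised as representatives, restricted to $\M_n$, of the symmetry operators attached to the copy of $\mathfrak{osp}(1|2)$ generated by $\D$ and $\sum_i e_i\otimes x_i$, whose commutation with $\D$ is standard.

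For (ii) I would expand $\{X,Y\}$. The product of the two leading ``angular-momentum'' terms carries the Clifford factor $e_1e_2=(-1)^{1/2}e_3$ in front of a grade-preserving Dunkl expression, while the mixed products carry $e_1$- and $e_2$-multiples that recombine, after normal-ordering via the relations above, into precisely the leading term and the constant term of $Z$. One is then left with $\{X,Y\}-Z$ equal to an operator built only from products of $r_1,r_2,r_3$ and the constants $k_1,k_2,k_3$. On $\M_n$ one now invokes three facts: $\D f=0$; the Euler identity $\sum_i x_i\partial_i f=nf$ for $f\in\R[x_1,x_2,x_3]_n$; and, crucially, $r_1r_2r_3=(-1)^n$ on $\R[x_1,x_2,x_3]_n$. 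Together these collapse the residual reflection operator to the scalar $\kappa=2\bigl(k_1k_2+(-1)^n(k_1+k_2+k_3+n+1)k_3\bigr)$, and the two cyclic computations give $\lambda$ and $\mu$. Since $\kappa,\lambda,\mu$ thus act as scalars on $\M_n$, the relations of $\BI$ hold, which also completes the proof of (i).

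The main obstacle is purely a matter of bookkeeping: the full expansion of $\{X,Y\}$ produces a large number of Clifford/Dunkl/reflection monomials, and its collapse to a scalar on $\M_n$ rests on patient normal-ordering together with meticulous tracking of the factors $(-1)^{1/2}$ coming from $\Cl(\R^3)$ and the factor $(-1)^n$ coming from $r_1r_2r_3$ on $\R[x_1,x_2,x_3]_n$. The interplay between the reflections hidden inside the $T_i$ and the explicit reflections $r_2,r_3,\dots$ appearing in $X,Y,Z$ is where sign errors are most likely, so I would arrange the computation so that every reflection is pushed to the far right and only evaluated against $\M_n$ at the very end.
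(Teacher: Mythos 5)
The paper states this theorem as a citation to \cite{BI2016} and does not reprove it, so there is no in-paper argument to compare yours against. Your sketch follows the approach of the cited reference: rewrite $X,Y,Z$ in terms of the sign-change operators $r_1,r_2,r_3$, argue that they preserve $\M_n$ because they are symmetries of $\D$ (equivalently, because they arise from the $\mathfrak{osp}(1|2)$ realization generated by $\D$ and $\sum_i e_i\otimes x_i$), and then compute $\{X,Y\}-Z$ and its cyclic counterparts, normal-ordering with the Dunkl relations and using the grading together with $r_1r_2r_3=(-1)^n$ on $\R[x_1,x_2,x_3]_n$ to reduce each to the scalar in (ii). The algebraic facts you invoke (the Clifford identities in the Pauli representation, $[T_i,x_i]=1+2k_ir_i$, $[T_i,x_j]=0$ for $i\neq j$, $r_iT_j=(-1)^{\delta_{ij}}T_jr_i$, $r_1r_2r_3=(-1)^n$) are all correct, and the observation that the $\BI$ relations follow automatically once $\kappa,\lambda,\mu$ act as scalars is a legitimate shortcut. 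One small caveat: what actually appears after normal-ordering is the Dunkl--Euler identity $\sum_i x_iT_i = n + \sum_i k_i(1-r_i)$ on $\R[x_1,x_2,x_3]_n$, not the classical $\sum_i x_i\partial_i = n$; the residual reflection terms in that formula are precisely what combine with $r_1r_2r_3=(-1)^n$ to produce the $(-1)^n$ dependence in $\kappa,\lambda,\mu$, so keep that distinction in view when doing the bookkeeping.
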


\noindent In \cite{BI2016}, under the assumption that $k_1,k_2,k_3$ are nonnegative, it was shown that $\dim \M_n=2(n+1)$ and $\M_n$ is isomorphic to a direct sum of two copies of an $(n+1)$-dimensional irreducible $\BI$-module. 
In \cite{HBI2016} it was studied a higher rank version of Theorem \ref{thm:BImodule_Mn}. 
The aim of this paper is to generalize the results of \cite{BI2016} as follows:

Define 
\begin{align*}
t_1&=\left\{
\begin{array}{ll}
-2k_1 \qquad &\hbox{if $2k_1$ is an odd negative integer},
\\
\infty \qquad &\hbox{otherwise};
\end{array}
\right.
\\
t_2&=\left\{
\begin{array}{ll}
-2k_2 \qquad &\hbox{if $2k_2$ is an odd negative integer},
\\
\infty \qquad &\hbox{otherwise};
\end{array}
\right.
\\
t_3&=\left\{
\begin{array}{ll}
-2k_3 \qquad &\hbox{if $2k_3$ is an odd negative integer},
\\
\infty \qquad &\hbox{otherwise}.
\end{array}
\right.
\end{align*}

\begin{thm}\label{thm:dimM=2(n+1)}
Let $n\geq 0$ denote an integer. If $n<\max\{t_1,t_2,t_3\}$ or $n+1\geq t_1+t_2+t_3$ then $\dim\M_n=2(n+1)$.
\end{thm}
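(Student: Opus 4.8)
The plan is to solve the equation $\D F=G$ by a Cauchy--Kovalevskaya-type recursion in one coordinate, iterated down to a single variable, using the fact that for $\Z_2^3$ the operator $T_i$ acts on a monomial by $T_i x_i^{a}=\bigl(a+(1-(-1)^{a})k_i\bigr)x_i^{a-1}$, the scalar vanishing (for $a\ge 1$) exactly when $a=t_i$. After relabelling the coordinates assume $t_1=\max\{t_1,t_2,t_3\}$. Identify $\C^2\otimes\R[x_1,x_2,x_3]$ with pairs of polynomials through $e_i\mapsto\sigma_i$, so that $\D=\sigma_1 T_1+\sigma_2 T_2+\sigma_3 T_3$, put $\D'=\sigma_2 T_2+\sigma_3 T_3$ for the ``transverse'' Dirac--Dunkl operator in $x_2,x_3$, and expand a spinor polynomial of degree $n$ as $F=\sum_{a=0}^{n}x_1^{a}F_a$ with $F_a\in\C^2\otimes\R[x_2,x_3]_{n-a}$. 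Sorting $\D F=G$ by powers of $x_1$ turns it into the chain
\begin{gather*}
\bigl(a+(1-(-1)^{a})k_1\bigr)\,\sigma_1 F_{a}+\D'F_{a-1}=G_{a-1}\qquad(1\le a\le n).
\end{gather*}

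If $n<t_1$ -- which in particular holds whenever some $t_i=\infty$ -- every coefficient in the chain is nonzero, so $F_0\in\C^2\otimes\R[x_2,x_3]_n$ may be prescribed freely and the remaining $F_a$ are then determined in turn; hence $\D$ is onto $\C^2\otimes\R[x_1,x_2,x_3]_{n-1}$ and $\M_n\cong\C^2\otimes\R[x_2,x_3]_n$, so $\dim\M_n=2(n+1)$. This settles the first hypothesis. Otherwise $n\ge t_1$, all $t_i$ are finite, $k_i=-t_i/2$, and the operative hypothesis is $n+1\ge t_1+t_2+t_3$. Now the chain stalls exactly once, at $a=t_1$: that equation lacks $F_{t_1}$, so it becomes a constraint on $F_{t_1-1}$ while $F_{t_1}$ is freed, after which the recursion resumes without further stalls. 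Since $F_{t_1-1}$ is obtained from $F_0$ by $t_1-1$ applications of $F\mapsto(\text{scalar})^{-1}\sigma_1\D'F$ and $(\sigma_1\D')^2=-(\D')^2=-(T_2^2+T_3^2)$, the constraint collapses (homogeneously) to $(\D')^{t_1}F_0=0$, whence $\dim\M_n=\dim\ker\bigl((\D')^{t_1}\colon\C^2\otimes\R[x_2,x_3]_n\to\C^2\otimes\R[x_2,x_3]_{n-t_1}\bigr)+2(n-t_1+1)$. It therefore remains to prove that $(\D')^{t_1}$ is onto in degree $n$, which forces this kernel to have the minimal dimension $2t_1$ and hence $\dim\M_n=2(n+1)$.

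For this I would run the same device one level down, expanding in powers of $x_2$ and carrying out the $\Z_2^2$-version of the recursion. The single stall there (present when $m\ge t_2$) uses the identity $(\sigma_2\sigma_3 T_3)^{t_2-1}=(-1)^{(t_2-1)/2}T_3^{\,t_2-1}$ to reduce the obstruction to surjectivity of $T_3^{\,t_2}$ on $\R[x_3]_m$; as $\R[x_3]_m$ is one-dimensional, this holds precisely when $t_3\notin\{m-t_2+1,\dots,m\}$, i.e.\ when $m<t_3$ or $m\ge t_2+t_3$. Combining with the unstalled case $m<t_2$, one gets: $\D'\colon\C^2\otimes\R[x_2,x_3]_m\to\C^2\otimes\R[x_2,x_3]_{m-1}$ is onto iff $m<\max\{t_2,t_3\}$ or $m\ge t_2+t_3$. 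Writing $(\D')^{t_1}$ in degree $n$ as the composite $\D'|_{\deg n}\circ\D'|_{\deg n-1}\circ\cdots\circ\D'|_{\deg n-t_1+1}$ and noting that $n+1\ge t_1+t_2+t_3$ forces each intervening degree $m$ to satisfy $m\ge n-t_1+1\ge t_2+t_3$, every factor is onto, hence so is the composite, and the displayed identity gives $\dim\M_n=2(n+1)$.

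The only delicate point I anticipate is the stall analysis itself at each level: checking precisely that a single threshold crossing trades the two ``missing'' scalar equations for one lower-variable surjectivity statement plus one released $\C^2$-valued free parameter, via the Clifford identities $(\D')^2=T_2^2+T_3^2$, $(\sigma_1\D')^2=-(\D')^2$, and $(\sigma_2\sigma_3)^2=-1$. Everything else is bookkeeping, threading the three-variable count through the two-variable and one-variable ones. The same analysis moreover computes $\dim\M_n$ exactly: in the complementary range $\max\{t_1,t_2,t_3\}\le n\le t_1+t_2+t_3-2$ the relevant intermediate maps $\D'|_{\deg m}$ are not onto, so $\dim\M_n>2(n+1)$ there, which is why the two regimes in the statement are exactly those where equality holds.
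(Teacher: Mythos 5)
Your argument is correct, and your overall skeleton is the same as the paper's: you expand a spinor polynomial in powers of a distinguished variable, observe that the recursion stalls exactly once at the threshold $t_i$, and thereby reduce $\dim\M_n$ to $2(n-t_i+1)$ plus the nullity of an iterated transverse Dirac--Dunkl operator on the lower-dimensional polynomial space --- this is precisely the content of the paper's Definitions \ref{defn:M(x1)}, \ref{defn:N123}, Lemmas \ref{lem1:dimN1}, \ref{lem2:dimN1} and Corollary \ref{cor:dimMn}. Where you diverge is in how you establish surjectivity of $\D(x_j)^{t_j}$ when $n+1\geq t_1+t_2+t_3$. The paper proceeds by explicit linear algebra: Lemma \ref{lem:D3} writes out the matrix of $\D(x_3)$ in the monomial basis, and Lemma \ref{lem:rankN} computes the rank of the bidiagonal blocks $N_m$, showing each factor has full rank so that the product $P=N_{n-t_3+1}\cdots N_n$ does too. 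You instead run the recursion one level further down: surjectivity of the two-variable Dirac--Dunkl operator on $\C^2\otimes\R[x_2,x_3]_m$ is reduced, via the same stall analysis and the Clifford identities $(\sigma_2\sigma_3)^2=-1$, to nonvanishing of the scalar $T_3^{t_2}$ on the one-dimensional space $\R[x_3]_m$, which is transparent. This sidesteps the matrix bookkeeping entirely and yields the rank criterion for $\D'\big|_{\deg m}$ ($m<\max\{t_2,t_3\}$ or $m\geq t_2+t_3$) from first principles, giving a more conceptual version of the paper's Lemma \ref{lem:rankN}. Both routes then close identically by rank--nullity. The one caveat: your one-paragraph ``stall trades two missing scalar equations for one surjectivity constraint plus one free $\C^2$ parameter'' needs to be verified with the correct Clifford signs at each level --- the paper's Lemmas \ref{lem1:dimN1}, \ref{lem2:dimN1} do exactly this and confirm the counts you assert, so this is a gap in exposition only, not in substance.
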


\noindent For each of the following cases, it follows from Theorem \ref{thm:dimM=2(n+1)} that $\dim\M_n=2(n+1)$:
\begin{enumerate}
\item[(I)] $n<\min\{t_1,t_2,t_3\}$.

\item[(II)] 
$t_2\leq n<\min\{t_1,t_3\}$, $t_3\leq n<\min\{t_1,t_2\}$ or $t_1\leq n<\min\{t_2,t_3\}$.

\item[(III)] 
$t_1+t_3\leq n<t_2$, $t_1+t_2\leq n<t_3$ or $t_2+t_3\leq n<t_1$.

\item[(IV)] $n\geq t_1+t_2+t_3$.
\end{enumerate}
In this paper we explicitly describe the $\BI$-modules $\M_n$ of types  (I)--(IV).

The outline of this paper is as follows. In \S\ref{s:BImodule} we review the preliminaries on the finite-dimensional $\BI$-modules. 
In \S\ref{s:submodule_Mn} we introduce three $\BI$-submodules of $\M_n$ and give some results concerning these $\BI$-modules. 
In \S\ref{s:decMn} we study three decompositions of $\M_n$ relative to the three $\BI$-submodules of $\M_n$ given in \S \ref{s:submodule_Mn}. 
In \S\ref{s:dimMn=2n+2} we give a proof of Theorem \ref{thm:dimM=2(n+1)}.
In \S\ref{s:case1} we represent the structures of the $\BI$-modules $\M_n$ of type (I) (Theorems \ref{thm:n<mint123(odd)} and \ref{thm:n<mint123(even)}).
In \S\ref{s:case2} we represent the structures of the $\BI$-modules $\M_n$ of type (II) (Theorems  \ref{thm:n<mint13(odd)}--\ref{thm:n<mint23(odd)} and  \ref{thm:n<mint13(even)}--\ref{thm:n<mint23(even)}).
In \S\ref{s:case3} we display the structures of the $\BI$-modules $\M_n$ of type (III) (Theorems \ref{thm:t1+t3<=n<t2(odd)}--\ref{thm:t2+t3<=n<t1(odd)} and  \ref{thm:t1+t3<=n<t2(even)}--\ref{thm:t2+t3<=n<t1(even)}). 
In \S\ref{s:case4} we display the structures of the $\BI$-modules $\M_n$ of type (IV) (Theorems \ref{thm:n+1>=t1+t2+t3(odd)} and \ref{thm:n+1>=t1+t2+t3(even)}).

\section{Finite-dimensional $\BI$-modules}\label{s:BImodule}

Let $V$ denote a $\BI$-module. For any algebra automorphism $\e$ of $\BI$, the notation 
$
V^\e
$ 
stands for an alternate $\BI$-module structure on $V$ given by
$$
xv :=\e(x) v 
\qquad \hbox{for all $x\in \BI$ and all $v\in V$}.
$$

Recall that $\{\pm 1\}$ is a group under multiplication and the group $\{\pm 1\}^2$ is isomorphic to the Klein $4$-group. Recall that the symmetric group $\Sym_3$ of degree three is generated by the transpositions $(1\,2)$ and $(2\,3)$. Observe that there exists a unique group homomorphism $\Sym_3\to {\rm Aut}(\{\pm 1\}^2)$ given in the following way:
 
\begin{table}[H]
\centering
\extrarowheight=3pt
\begin{tabular}{c|cc}
$\Sym_3$ &${{\rm Aut}(\{\pm 1\}^2)}$
\\
\midrule[1pt]
\multirow{2}{*}{$(1\,2)$}  &$(1,-1)\mapsto(-1,1)$
\\
&$(-1,1)\mapsto(1,-1)$
\\
\hline
\multirow{2}{*}{$(2\,3)$}  &$(1,-1)\mapsto (1,-1)$
\\
&$(-1,1)\mapsto (-1,-1)$
\end{tabular}
\caption{The homomorphism $\Sym_3\to {\rm Aut}(\{\pm 1\}^2)$}\label{pm1-semidirect}
\end{table}
\noindent Let $\{\pm 1\}^2\rtimes \Sym_3$ denote the semidirect product of $\{\pm 1\}^2$ and $\Sym_3$ with respect to the above homomorphism $\Sym_3\to {\rm Aut}(\{\pm 1\}^2)$. Using Table \ref{pm1-semidirect} yields that there exists a unique $\{\pm 1\}^2\rtimes \Sym_3$-action on $\BI$ such that each element of $\{\pm 1\}^2\rtimes \Sym_3$ acts on $\BI$ as an algebra automorphism in the following way:

\begin{table}[H]
\centering
\extrarowheight=3pt
\begin{tabular}{c|rrrrrr}
$u$  &$X$ &$Y$ &$Z$ 
&$\kappa$ &$\lambda$ &$\mu$
\\

\midrule[1pt]
${(1,-1)}(u)$ &$X$  &$-Y$ &$-Z$ 
&$-\kappa$ &$\lambda$ &$-\mu$
\\
${(-1,1)}(u)$ &$-X$  &$Y$ &$-Z$
&$-\kappa$ &$-\lambda$ &$\mu$
\\
${(1\,2)}(u)$  &$Y$  &$X$ &$Z$
&$\kappa$ &$\mu$ &$\lambda$ 
\\
${(2\,3)}(u)$  &$X$  &$Z$ &$Y$
&$\mu$ &$\lambda$ &$\kappa$
\end{tabular}
\caption{The $\{\pm 1\}^2\rtimes \Sym_3$-action on $\BI$}\label{pm1-action}
\end{table}

The classification of even-dimensional irreducible $\BI$-modules is as follows:

\begin{prop}
[Proposition 2.4, \cite{Huang:BImodule}]
\label{prop:Ed} 
For any scalars $a,b,c\in \C$ and any odd integer $d\geq 1$, there exists a $(d+1)$-dimensional $\BI$-module $E_d(a,b,c)$ satisfying the following conditions: 
\begin{enumerate}
\item There exists a basis for $E_d(a,b,c)$ with respect to which the matrices representing $X$ and $Y$ are 
$$
\begin{pmatrix}
\theta_0 & & &  &{\bf 0}
\\ 
1 &\theta_1 
\\
&1 &\theta_2
 \\
& &\ddots &\ddots
 \\
{\bf 0} & & &1 &\theta_d
\end{pmatrix},
\qquad 
\begin{pmatrix}
\theta_0^* &\varphi_1 &  & &{\bf 0}
\\ 
 &\theta_1^* &\varphi_2
\\
 &  &\theta_2^* &\ddots
 \\
 & & &\ddots &\varphi_d
 \\
{\bf 0}  & & & &\theta_d^*
\end{pmatrix},
$$
respectively, where 
\allowdisplaybreaks
\begin{align*}
\theta_i &=\frac{(-1)^i(2a-d+2i)}{2}  \qquad (0\leq i\leq d),
\\
\theta^*_i &=\frac{(-1)^i(2b-d+2i)}{2} \qquad (0\leq i\leq d),
\\
\varphi_i &=
\left\{
\begin{array}{ll}
\displaystyle{i (d-i+1)}
\qquad &\hbox{if $i$ is even},
\\
\displaystyle{c^2-\frac{(2a+2b-d+2i-1)^2}{4}}
\qquad &\hbox{if $i$ is odd}
\end{array}
\right.
\qquad (1\leq i\leq d).
\end{align*}

\item The elements $\kappa,\lambda,\mu$ act on $E_d(a,b,c)$ as scalar multiplication by
\begin{align*}
c^2-a^2-b^2+\frac{(d+1)^2}{4},
\\
a^2-b^2-c^2+\frac{(d+1)^2}{4},
\\
b^2-c^2-a^2+\frac{(d+1)^2}{4},
\end{align*}
respectively.
\end{enumerate}
\end{prop}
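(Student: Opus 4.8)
The plan is to verify directly that the matrices exhibited in part (i) satisfy the defining relations of $\BI$, and then read off the central character in part (ii) as a consequence. Concretely, write $\mathsf{X}$ and $\mathsf{Y}$ for the two displayed $(d+1)\times(d+1)$ matrices, so that $\mathsf X$ is lower bidiagonal with diagonal $(\theta_i)$ and subdiagonal all $1$'s, while $\mathsf Y$ is upper bidiagonal with diagonal $(\theta_i^*)$ and superdiagonal $(\varphi_i)$. The first step is to \emph{define} the action of $Z$ by the first relation, i.e.\ set $\mathsf Z:=\{\mathsf X,\mathsf Y\}-\kappa_0 I$, where $\kappa_0$ is the scalar asserted in part (ii) for $\kappa$; one then checks that $\mathsf Z$ is a tridiagonal matrix and computes its three diagonals explicitly in terms of $a,b,c,d$. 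With $\mathsf Z$ in hand, the relations of $\BI$ amount to three statements: that $\kappa I=\{\mathsf X,\mathsf Y\}-\mathsf Z$ is central (true by construction, it is a scalar), and that $\lambda I:=\{\mathsf Y,\mathsf Z\}-\mathsf X$ and $\mu I:=\{\mathsf Z,\mathsf X\}-\mathsf Y$ are also scalar matrices equal to the scalars claimed in part (ii).

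The second step is therefore the computation of $\{\mathsf Z,\mathsf X\}-\mathsf Y$ and $\{\mathsf Y,\mathsf Z\}-\mathsf X$ and the verification that each is the asserted scalar times $I$. Since $\mathsf X$ and $\mathsf Y$ are bidiagonal and $\mathsf Z$ is tridiagonal, these anticommutators are pentadiagonal a priori, so one must show the two outer diagonals and the two next-to-outer diagonals vanish while the main diagonal is constant. This reduces to a finite list of polynomial identities in $i$ (the row index) with parameters $a,b,c,d$, the key inputs being the closed forms $\theta_i=\frac{(-1)^i(2a-d+2i)}{2}$, $\theta_i^*=\frac{(-1)^i(2b-d+2i)}{2}$, and the two-case formula for $\varphi_i$; the alternating signs $(-1)^i$ are what make the cross terms telescope. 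It is convenient to treat $i$ even and $i$ odd separately because $\varphi_i$ is defined by cases, and to note that $\varphi_i$ was engineered precisely so that $\varphi_i+\varphi_{i+1}$ and $\theta_i\theta_i^*$-type combinations collapse. One should also remark that $d$ odd is used here: it guarantees $\varphi_i$ for $i$ even and $i$ odd fit together consistently at the two ends ($\varphi$ is not needed at $i=0$ or $i=d+1$), and it makes the spectra of $\mathsf X$ and $\mathsf Y$ symmetric about $0$.

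Finally, part (ii) follows immediately once the relations are verified: the central elements $\kappa,\lambda,\mu$ act by the scalars $\kappa_0,\lambda_0,\mu_0$ that appeared as constants of proportionality, and a short computation identifies $\kappa_0=c^2-a^2-b^2+\frac{(d+1)^2}{4}$ and its cyclic images. I expect the main obstacle to be purely organizational rather than conceptual: correctly bookkeeping the $2(d+1)$-or-so scalar identities coming from the pentadiagonal entries, keeping track of the $(-1)^i$ factors, and handling the parity cases for $\varphi_i$ without sign errors. A clean way to package this is to introduce auxiliary quantities $\alpha_i:=\theta_i+\theta_{i+1}$ (which is the constant $(-1)^i$ times something linear-free, since consecutive $\theta$'s have opposite sign) and similarly for $\theta^*$, so that the off-diagonal entries of $\mathsf Z$ have simple forms and the subsequent anticommutators become manifestly telescoping. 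No genuinely hard step is anticipated; the content is that the three formulas for $\theta_i,\theta_i^*,\varphi_i$ were reverse-engineered from exactly these relations.
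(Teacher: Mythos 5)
The paper does not prove Proposition \ref{prop:Ed}; it cites it verbatim from \cite{Huang:BImodule}, so there is no in-paper proof to compare your argument against. That said, your plan is the natural and, I believe, the correct one, and it is almost certainly the argument used in the cited source: fix $\mathsf X$, $\mathsf Y$ to be the displayed bidiagonal matrices, define $\mathsf Z:=\{\mathsf X,\mathsf Y\}-\kappa_0 I$ with $\kappa_0=c^2-a^2-b^2+\frac{(d+1)^2}{4}$, observe $\mathsf Z$ is tridiagonal, and then check entrywise that $\{\mathsf Y,\mathsf Z\}-\mathsf X$ and $\{\mathsf Z,\mathsf X\}-\mathsf Y$ are scalar matrices with the claimed scalars. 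Since any triple of operators for which the three combinations $\{X,Y\}-Z$, $\{Y,Z\}-X$, $\{Z,X\}-Y$ act as scalars automatically yields a $\BI$-module, this suffices for both parts (i) and (ii) at once.

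Two small remarks. First, you are right that the choice of $\kappa_0$ is not free: replacing $\mathsf Z$ by $\mathsf Z+\e I$ changes $\{\mathsf Y,\mathsf Z\}-\mathsf X$ by $2\e\mathsf Y$, which is not scalar, so one must use the correct $\kappa_0$ from the outset — you handle this correctly by taking the value asserted in part (ii). Second, as written this is a proof \emph{plan} rather than a proof; the actual content is the pentadiagonal-entry bookkeeping you defer (the outer bands of the two anticommutators vanish, the inner off-diagonal bands cancel, the diagonal is constant), and all of it has to be done in two parity cases for $i$ because of the piecewise definition of $\varphi_i$. Those identities are elementary but nontrivial to organize, and a complete proof would need them spelled out; conceptually, though, nothing is missing.
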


Note that the traces of $X,Y,Z$ on $E_d(a,b,c)$ are equal to $-\frac{d+1}{2}$.

\begin{thm}
[Theorem 4.5, \cite{Huang:BImodule}]
\label{thm:irr_E}
For any scalars $a,b,c\in \C$ and any odd integer $d\geq 1$, the $\BI$-module $E_d(a,b,c)$ is irreducible if and only if
$$
a+b+c,-a+b+c,a-b+c,a+b-c
\not\in 
\textstyle
\{
\frac{d-1}{2}-i\,|\, i=0,2,\ldots,d-1
\}.
$$
\end{thm}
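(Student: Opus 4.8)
The plan is to analyze when the module $E_d(a,b,c)$ of Proposition~\ref{prop:Ed} fails to be irreducible by examining the tridiagonal action of $X$ and $Y$ on the displayed basis $v_0,v_1,\ldots,v_d$. Since $X$ acts as a lower-triangular matrix with all subdiagonal entries equal to $1$, for any nonzero $\BI$-submodule $W$ one has that $W$ contains $v_i,v_{i+1},\ldots,v_d$ for some index $i$ (take $i$ minimal so that $v_i$ appears with nonzero coefficient in some element of $W$, then repeatedly apply $X-\theta_i$, which shifts down). A proper nonzero submodule therefore corresponds exactly to a ``break point'': an index $i$ with $1\le i\le d$ such that the span of $v_i,\ldots,v_d$ is $Y$-invariant, equivalently $\varphi_i=0$. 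Conversely, if some $\varphi_i=0$ then that span is a proper nonzero submodule. Thus $E_d(a,b,c)$ is reducible if and only if $\varphi_i=0$ for some $i$ in $1,\ldots,d$. (One should double-check the edge behaviour and that the two possible ``directions'' of invariant subspace collapse to the single condition $\varphi_i = 0$; the symmetry between $X$ lower-triangular and $Y$ upper-triangular makes this clean.)

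Next I would translate $\varphi_i=0$ into the stated condition on $a,b,c,d$. For even $i$ with $1\le i\le d$, since $d$ is odd we have $2\le i\le d-1$, and $\varphi_i=i(d-i+1)$ is strictly positive, so even indices never produce a break. Hence reducibility is equivalent to $\varphi_i=0$ for some odd $i$ with $1\le i\le d$, i.e.
\[
c^2=\frac{(2a+2b-d+2i-1)^2}{4}
\quad\text{for some odd }i\in\{1,3,\ldots,d\},
\]
which says $2c=\pm(2a+2b-d+2i-1)$. Writing $j=i-1$, which ranges over the even integers $0,2,\ldots,d-1$ as $i$ ranges over the odd integers $1,3,\ldots,d$, the two sign choices give
\[
a+b-c=\frac{d-1}{2}-j
\qquad\text{or}\qquad
a+b+c=\frac{d-1}{2}-j .
\]
To get the full symmetric list $a+b+c,\,-a+b+c,\,a-b+c,\,a+b-c$, I would invoke the $\{\pm1\}^2$-action of Table~\ref{pm1-action}: applying the automorphism $(1,-1)$ sends $(X,Y,Z)\mapsto(X,-Y,-Z)$ and one checks it carries $E_d(a,b,c)$ to $E_d(a,-b,c)$ (up to isomorphism), and similarly $(-1,1)$ realizes $a\mapsto -a$. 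Since irreducibility is preserved by these automorphisms and by $a\mapsto -a$, $b\mapsto -b$, $c\mapsto -c$ (the parameters enter only through $a+b\pm c$ type combinations after sign twists), the single condition ``$a+b+c\in\{\frac{d-1}{2}-j : j=0,2,\ldots,d-1\}$ or $a+b-c\in\cdots$'' propagates to all four linear forms, giving precisely the displayed exclusion set. Collecting: $E_d(a,b,c)$ is irreducible iff none of $a+b+c,-a+b+c,a-b+c,a+b-c$ lies in $\{\frac{d-1}{2}-i : i=0,2,\ldots,d-1\}$.

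The main obstacle I anticipate is the bookkeeping in the direction ``$\varphi_i \ne 0$ for all $i$ $\Rightarrow$ irreducible'': one must verify that a nonzero submodule $W$ genuinely contains a basis vector $v_i$ and then that the upper-triangular action of $Y$ forces $W$ to climb all the way up to $v_0$ when every $\varphi_i$ is nonzero. The descent via $X$ is straightforward because the subdiagonal entries are literally $1$ (no vanishing), but the ascent via $Y$ requires that the superdiagonal entries $\varphi_i$ be nonzero, which is exactly the hypothesis; care is needed if $W$ is spanned by a single vector or if $\theta_i=\theta_{i'}$ forces one to use $\prod(X-\theta_\ell)$ rather than a single factor. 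Handling these degenerate spectral coincidences cleanly — and confirming that the even-$i$ values of $\varphi_i$ can indeed be ignored because $d$ is odd — is where the argument needs to be written out carefully rather than waved through.
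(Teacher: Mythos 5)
This theorem is quoted from \cite{Huang:BImodule} and is not reproved in the present paper, so there is no internal argument to compare against; on its own terms your proposal contains a genuine error. The pivotal claim --- that $E_d(a,b,c)$ is reducible if and only if $\varphi_i=0$ for some $i$ --- is false, because the premise that every nonzero $\BI$-submodule contains $\mathrm{span}(v_i,\ldots,v_d)$ for some $i$ is false. Take $d=1$, $a=b=1$, $c=0$: then $\theta_0=\theta_0^*=\tfrac12$, $\theta_1=\theta_1^*=-\tfrac32$, $\varphi_1=-4\neq 0$, and
$$
X=\begin{pmatrix}\tfrac12&0\\ 1&-\tfrac32\end{pmatrix},\qquad
Y=\begin{pmatrix}\tfrac12&-4\\ 0&-\tfrac32\end{pmatrix}.
$$
The vector $2v_0+v_1$ is a common eigenvector of $X$ and $Y$ (eigenvalues $\tfrac12$, $-\tfrac32$), and since $Z=\{X,Y\}-\kappa$ acts through $X$ and $Y$, the line $\C(2v_0+v_1)$ is a proper nonzero $\BI$-submodule of $E_1(1,1,0)$ even though $\varphi_1\neq 0$. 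This is exactly what the theorem predicts, since $-a+b+c=0$ lies in $\{\tfrac{d-1}{2}-i\,|\,i=0,2,\ldots,d-1\}=\{0\}$. Note also that $(X-\theta_0)(2v_0+v_1)=0$, so your ``shift down by $X-\theta_i$'' step produces nothing from this vector; the descent argument simply does not reach the only submodule present.

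The gap then surfaces in the bookkeeping: your translation of $\varphi_i=0$ correctly yields only the two conditions $a+b+c\in S$ and $a+b-c\in S$, and the attempted repair via symmetry does not supply the other two. The automorphism $(1,-1)$ flips the trace of $Y$ from $-\tfrac{d+1}{2}$ to $+\tfrac{d+1}{2}$, whereas every $E_d(a',b',c')$ has $\mathrm{tr}\,Y=-\tfrac{d+1}{2}$, so $E_d(a,b,c)^{(1,-1)}$ is not isomorphic to $E_d(a,-b,c)$ and the $\{\pm1\}^2$-twist cannot realize the parameter sign change. Theorem \ref{lem:onto2_E} does give the biconditional ``$E_d(a,b,c)$ irreducible iff $E_d(-a,b,c)$ irreducible'', but combining that with your criterion would force ``$a+b+c\in S$ or $a+b-c\in S$'' to be equivalent to ``$-a+b+c\in S$ or $-a+b-c\in S$'', which the example above refutes; so the criterion and the symmetry are mutually inconsistent and the criterion is the one at fault. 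A correct proof must account for $\BI$-submodules that are not of the form $\mathrm{span}(v_i,\ldots,v_d)$, which is where the real content of \cite{Huang:BImodule} lies.
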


\begin{thm}
[Theorem 5.3, \cite{Huang:BImodule}]
\label{lem:onto2_E}
Let $a,b,c\in \C$ and $d\geq 1$ denote an odd integer. If the $\BI$-module $E_d(a,b,c)$ is irreducible then the $\BI$-module $E_d(a,b,c)$ is isomorphic to 
$E_d(-a,b,c)$, $E_d(a,-b,c)$ and $E_d(a,b,-c)$.
\end{thm}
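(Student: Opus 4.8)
The plan is to reduce the three assertions to one, dispatch the easy ingredients, and then build an intertwiner. First, the case $c\mapsto -c$ is trivial: every quantity in Proposition \ref{prop:Ed} — the $\theta_i$, the $\theta_i^*$, the $\varphi_i$, and the three central scalars in part (ii) — depends on $c$ only through $c^2$, so the identity map on the underlying space is a $\BI$-isomorphism $E_d(a,b,c)\to E_d(a,b,-c)$. For the other two cases it suffices to treat $a\mapsto -a$: the automorphism $(1\,2)$ of Table \ref{pm1-action} interchanges the roles of $a$ and $b$ at the level of the modules $E_d$ (the $(1\,2)$-twist of $E_d(a,b,c)$ is $E_d(b,a,c)$, up to a reversal and rescaling of the defining basis — a fact one checks directly from the matrices in Proposition \ref{prop:Ed}), so an isomorphism $E_d(a,b,c)\cong E_d(-a,b,c)$ valid for all parameters yields, after twisting both sides by $(1\,2)$ and relabelling, $E_d(a,b,c)\cong E_d(a,-b,c)$.

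Next the two routine ingredients for the surviving case. By Proposition \ref{prop:Ed}(ii) the elements $\kappa,\lambda,\mu$ act on $E_d(-a,b,c)$ by exactly the same scalars as on $E_d(a,b,c)$, since those scalars are polynomials in $a^2,b^2,c^2$. Moreover $E_d(-a,b,c)$ is again irreducible: in Theorem \ref{thm:irr_E} the forbidden set $S=\{\tfrac{d-1}{2}-i\mid i=0,2,\ldots,d-1\}$ satisfies $S=-S$ because $d$ is odd, and replacing $a$ by $-a$ carries the quadruple $(a+b+c,\,-a+b+c,\,a-b+c,\,a+b-c)$ to $(-a+b+c,\,a+b+c,\,-(a+b-c),\,-(a-b+c))$; using $S=-S$, the four conditions of Theorem \ref{thm:irr_E} for $(-a,b,c)$ are therefore equivalent to those for $(a,b,c)$. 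Consequently $E_d(-a,b,c)$ is irreducible and has dimension $d+1$, so any nonzero $\BI$-homomorphism $E_d(a,b,c)\to E_d(-a,b,c)$ is automatically an isomorphism; it remains only to exhibit one.

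For the construction I would exploit that the $X$-spectra of the two modules coincide: by Proposition \ref{prop:Ed}(i), $X$ acts on $E_d(a,b,c)$ by a lower-bidiagonal matrix with diagonal $(\theta_0,\ldots,\theta_d)$, and since $d$ is odd one has $\theta_i(-a)=\theta_{d-i}(a)$, so the diagonal of $X$ on $E_d(-a,b,c)$ is the reversal of that on $E_d(a,b,c)$. When these eigenvalues are distinct one takes $\phi$ to be the map sending the $X$-eigenvector of $E_d(a,b,c)$ for eigenvalue $\theta_i$ to a fixed scalar multiple of the $X$-eigenvector of $E_d(-a,b,c)$ for the same eigenvalue; the substance of the proof is then to choose these scalars so that $\phi$ also intertwines $Y$ (equivalently, by $\kappa=\{X,Y\}-Z$ and the shared central character, so that it intertwines $Z$), which amounts to a finite system of polynomial identities in $a,b,c,d$ relating the $\theta_i,\theta_i^*,\varphi_i$ of the two modules.

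I expect the main obstacle to be precisely this last step: producing the intertwiner in closed form and verifying the identities, together with the bookkeeping needed at the degenerate parameter values where $X$ has repeated eigenvalues — there one must replace the eigenvector description by a flag/generalized-eigenspace argument — and pinning down exactly where the irreducibility hypothesis enters (to rule out that the candidate $\phi$ degenerates). A cleaner but heavier alternative would bypass the explicit matrix: show that every $(d+1)$-dimensional irreducible $\BI$-module is isomorphic to some $E_d(a',b',c')$ and that $a'^2,b'^2,c'^2$ are recovered from the central character by solving the linear system of Proposition \ref{prop:Ed}(ii); then $E_d(-a,b,c)$, being irreducible with the same central character as $E_d(a,b,c)$, is forced to be isomorphic to it. This trades the computation for the classification of even-dimensional irreducibles, and the choice between the two routes is really the only strategic decision here.
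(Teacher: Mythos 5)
Your preliminary reductions are in good order: the identity map on the underlying space realizes $E_d(a,b,c)\cong E_d(a,b,-c)$ because every datum in Proposition~\ref{prop:Ed} is a polynomial in $c^2$; the central scalars depend only on $a^2,b^2,c^2$; and your verification that the irreducibility criterion of Theorem~\ref{thm:irr_E} is invariant under $a\mapsto -a$ (using $S=-S$ for odd $d$ and the permutation/negation of the quadruple) is correct. The $(1\,2)$-twist reduction of the $b$-case to the $a$-case is also in the right spirit, though the precise claim needs a correction: after the basis reversal $v_i\mapsto v_{d-i}$ the factor $(-1)^{d-i}=-(-1)^i$ (for $d$ odd) flips a sign, and one finds $E_d(a,b,c)^{(1\,2)}\cong E_d(-b,-a,c)$ rather than $E_d(b,a,c)$; after a relabelling the reduction still yields the $b\mapsto -b$ case from the $a\mapsto -a$ case, so the structure of your argument survives this correction.

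However, the proposal stops precisely where the theorem's content begins. For $a\mapsto -a$ you note that the $X$-spectra match up after reversal (via $\theta_i(-a)=\theta_{d-i}(a)$) and then write that ``the substance of the proof is then to choose these scalars so that $\phi$ also intertwines $Y$.'' That substance \emph{is} the theorem: no intertwiner is produced, no identities are verified, and the repeated-eigenvalue degeneracies are explicitly deferred. The fallback via classification has a separate gap: Theorem~\ref{thm:onto_E} classifies $(d+1)$-dimensional irreducibles only up to a twist $E_d(a',b',c')^{\e}$ with $\e\in\{\pm 1\}^{2}$, so matching central characters does not by itself force $E_d(-a,b,c)\cong E_d(a,b,c)$ --- one must still eliminate the three nontrivial twists. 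Pinning that down is essentially the content of Theorem~\ref{thm:onto2_E}, which this paper derives \emph{from} the statement you are trying to prove, so within the logic of the paper that route would be circular. As written, this is a plan with correct scaffolding but a missing core.
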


\begin{thm}
[Theorem 6.3, \cite{Huang:BImodule}]
\label{thm:onto_E}
Let $d\geq 1$ denote an odd integer. Suppose that $V$ is a $(d+1)$-dimensional irreducible $\BI$-module. Then there exist $a,b,c\in \C$ and $\e\in \{\pm 1\}^2$ such that the $\BI$-module $E_d(a,b,c)^\e$ is isomorphic to $V$.
\end{thm}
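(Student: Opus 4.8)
\textbf{Proof proposal for Theorem \ref{thm:onto_E}.}

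The plan is to produce explicit scalars $a,b,c$ from the central eigenvalues of $\kappa,\lambda,\mu$ on $V$, build a suitable basis from a weight-vector structure, and then compare with the normal form of Proposition \ref{prop:Ed}, using the action of $\{\pm 1\}^2$ from Table \ref{pm1-action} to absorb sign ambiguities. First I would observe that, since $\kappa,\lambda,\mu$ are central in $\BI$ and $V$ is irreducible, Schur's lemma forces them to act as scalars, say $\kappa\mapsto\gamma_\kappa$, $\lambda\mapsto\gamma_\lambda$, $\mu\mapsto\gamma_\mu$. Solving the three linear equations
\begin{align*}
\gamma_\kappa&=c^2-a^2-b^2+\tfrac{(d+1)^2}{4},\\
\gamma_\lambda&=a^2-b^2-c^2+\tfrac{(d+1)^2}{4},\\
\gamma_\mu&=b^2-c^2-a^2+\tfrac{(d+1)^2}{4}
\end{align*}
for $a^2,b^2,c^2$ has a unique solution, and choosing square roots fixes a candidate triple $(a,b,c)$ up to simultaneous sign changes; by Theorem \ref{lem:onto2_E} (once irreducibility of $E_d(a,b,c)$ is known) these sign choices do not matter, so there is no loss in fixing one branch.

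Next I would analyze the $\BI$-action on $V$ directly. The element $X$ is a $(d+1)\times(d+1)$ matrix; from the defining relations one checks that $2\{X,Y\}\{Y,Z\}\{Z,X\}$-type identities constrain the spectrum of $X$. Concretely, combining the three relations shows that $X$ satisfies a polynomial identity whose roots are exactly the numbers $\theta_i=\tfrac{(-1)^i(2a-d+2i)}{2}$, $0\le i\le d$, so that $X$ is diagonalizable with these eigenvalues each of multiplicity one (multiplicity one follows because $V$ is irreducible: a repeated eigenvalue would yield a proper invariant subspace after intersecting eigenspaces of $X$ with the cyclic module generated by a single eigenvector). Pick an eigenvector $v_0$ of $X$ for the eigenvalue $\theta_0$. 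Using the relation that $\{X,Y\}-Z$ is central, one shows $Y$ maps the $\theta_i$-eigenspace into the span of the $\theta_{i-1},\theta_i,\theta_{i+1}$-eigenspaces; after rescaling the eigenvectors $v_i$ appropriately, one arranges that $X$ and $Y$ take precisely the bidiagonal/tridiagonal shapes of Proposition \ref{prop:Ed}(i), with the subdiagonal entries of $X$ normalized to $1$. The diagonal entries $\theta_i^*$ of $Y$ are then forced to equal $\tfrac{(-1)^i(2b-d+2i)}{2}$ by matching the eigenvalue pattern of $Y$ (obtained the same way as for $X$, via the relation involving $\mu$), and the superdiagonal entries $\varphi_i$ are forced by the remaining relations together with the central value of $\lambda$, yielding exactly the stated formula for $\varphi_i$. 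This identifies $V$ with $E_d(a,b,c)$ as a $\BI$-module, provided the normalizations can all be achieved simultaneously.

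The main obstacle is precisely that last point: showing that the rescalings needed to put $X$ in companion-like form \emph{and} $Y$ in the prescribed tridiagonal form are compatible, and that no obstruction arises when some $\theta_i$ coincide in value across different indices (which can happen since $\theta_i$ involves $(-1)^i$, so e.g. $\theta_0$ and $\theta_d$ may clash). I expect this to be handled by first proving the eigenvalues of $X$ are genuinely distinct using irreducibility (a Jordan-block or repeated-eigenvalue argument), then proving $v_0,Xv_0,\ldots$ — or rather the eigenbasis ordered by the natural $i$-grading coming from how $Y$ shifts the grading — is a basis, and finally checking the sign corrections: if the branch of $(a,b,c)$ chosen above produces the ``wrong'' overall sign pattern in the diagonal of $X$ or $Y$ (i.e. one obtains $E_d(a,b,c)$ with $X\mapsto -X$ or similar), that is repaired by applying one of the automorphisms $(1,-1)$, $(-1,1)$ of Table \ref{pm1-action}, which negate an appropriate pair among $X,Y,Z$; this is exactly the $\e\in\{\pm 1\}^2$ in the statement. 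Finally, irreducibility of the resulting $E_d(a,b,c)$ (needed to invoke Theorem \ref{lem:onto2_E} and to know the construction is consistent) follows from Theorem \ref{thm:irr_E} together with the fact that $V$ itself is irreducible, since the isomorphism type is preserved. Assembling these pieces gives $a,b,c\in\C$ and $\e\in\{\pm 1\}^2$ with $E_d(a,b,c)^\e\cong V$, as desired.
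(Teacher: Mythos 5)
First, note that the paper does not actually prove this statement: it is imported verbatim as Theorem 6.3 of \cite{Huang:BImodule}, so the comparison below is against what a complete proof must accomplish rather than against a proof printed in this paper.

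Your outline fails at its central step. You assert that on an irreducible $(d+1)$-dimensional module the relations force $X$ to be diagonalizable with the $\theta_i$ as eigenvalues, each of multiplicity one, and you justify multiplicity one by irreducibility ("a repeated eigenvalue would yield a proper invariant subspace"). Both claims are false. Take $d=1$, $a=0$, $b=1$, $c=3$: by Theorem \ref{thm:irr_E} the module $E_1(0,1,3)$ is irreducible (the forbidden set is $\{0\}$ and $b\pm c\neq 0$), yet by Proposition \ref{prop:Ed} the matrix representing $X$ is
\[
\begin{pmatrix}
-\tfrac{1}{2} & 0\\
1 & -\tfrac{1}{2}
\end{pmatrix},
\]
a single nontrivial Jordan block: $X$ has one eigenvalue of multiplicity two and is not diagonalizable. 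Since by the theorem itself every $2$-dimensional irreducible module is of the form $E_1(a,b,c)^\e$, such modules genuinely occur among the $V$ you must treat, so the eigenbasis $\{v_i\}$ on which your whole normalization argument rests need not exist. Relatedly, the claim that "combining the three relations" produces a polynomial identity for $X$ with roots exactly the $\theta_i$ is unsupported: the defining relations of $\BI$ impose no universal bound on the minimal polynomial of $X$, and even the weaker structural fact that consecutive eigenvalues of $X$ on an irreducible module satisfy $\theta_i+\theta_{i+1}=\pm 1$ is a nontrivial output of the classification, not an input. Your parenthetical repair ("first proving the eigenvalues of $X$ are genuinely distinct using irreducibility") is therefore not a gap to be filled but a statement that cannot be proved.

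A correct argument must instead produce the lower-bidiagonal normal form of Proposition \ref{prop:Ed} without assuming semisimplicity of $X$ — the subdiagonal $1$'s in that normal form are there precisely to accommodate coincident $\theta_i$ and nontrivial Jordan structure. Concretely, one constructs a distinguished vector $v_0$ (e.g.\ a suitable common eigenvector coming from a raising/lowering analysis of the pair $X,Y$ and the central values of $\kappa,\lambda,\mu$) and shows that the vectors $v_i=(X-\theta_{i-1})\cdots(X-\theta_0)v_0$ form a basis, deriving the spectrum and the block structure of $Y$ along the way; the residual fourfold ambiguity is then absorbed by $\e\in\{\pm 1\}^2$ exactly as you indicate, and the trace data of Theorem \ref{thm:onto2_E} is what pins down $\e$. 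Your first step (recovering $a^2,b^2,c^2$ from the scalars by which $\kappa,\lambda,\mu$ act) and your last step (using Table \ref{pm1-action} and Theorem \ref{lem:onto2_E} to handle signs) are sound, but without a replacement for the false spectral claim the proposal does not constitute a proof.
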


\begin{thm}
\label{thm:onto2_E}
Let $d\geq 1$ denote an odd integer. Suppose that $V$ is a $(d+1)$-dimensional irreducible $\BI$-module. For any scalars $a,b,c \in \C$ and any $\e\in \{\pm 1\}^{2}$ the following are equivalent:
\begin{enumerate}
\item The $\BI$-module $E_d(a,b,c)^\e$ is isomorphic to $V$.
\item The traces of $X$ and $Y$ on $V^\e$ are equal to $-\frac{d+1}{2}$ and $\kappa+\mu,\lambda+\kappa,\mu+\lambda$ act on $V^\e$ as scalar multiplication by 
\begin{gather*}
-2\left(a+\frac{d+1}{2}\right)\left(a-\frac{d+1}{2}\right),
\\
-2\left(b+\frac{d+1}{2}\right)\left(b-\frac{d+1}{2}\right),
\\
-2\left(c+\frac{d+1}{2}\right)\left(c-\frac{d+1}{2}\right),
\end{gather*} 
respectively.
\end{enumerate}
\end{thm}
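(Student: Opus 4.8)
The plan is to prove the two implications separately, leaning on Proposition~\ref{prop:Ed} and Theorems~\ref{lem:onto2_E} and~\ref{thm:onto_E}. Two elementary observations handle the bookkeeping with the twist $W\mapsto W^\e$: since $\{\pm 1\}^2$ is the Klein four-group, every $\e\in\{\pm 1\}^2$ satisfies $\e^2=\mathrm{id}$, so $(W^\e)^\e=W$ for any $\BI$-module $W$; and since $\e$ is an algebra automorphism of $\BI$, the modules $W$ and $W^\e$ have exactly the same submodules, so $W$ is irreducible if and only if $W^\e$ is.

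For (i)$\Rightarrow$(ii): if $E_d(a,b,c)^\e\cong V$, then twisting both sides by $\e$ gives $E_d(a,b,c)\cong V^\e$. By the remark following Proposition~\ref{prop:Ed} the traces of $X$ and $Y$ on $E_d(a,b,c)$ equal $-\frac{d+1}{2}$, and from Proposition~\ref{prop:Ed}(ii) one computes on $E_d(a,b,c)$
\begin{align*}
\kappa+\mu
&=\Bigl(c^2-a^2-b^2+\tfrac{(d+1)^2}{4}\Bigr)+\Bigl(b^2-c^2-a^2+\tfrac{(d+1)^2}{4}\Bigr)\\
&=-2\Bigl(a+\tfrac{d+1}{2}\Bigr)\Bigl(a-\tfrac{d+1}{2}\Bigr),
\end{align*}
together with the cyclically analogous identities for $\lambda+\kappa$ and $\mu+\lambda$ in terms of $b$ and $c$. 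Transporting these facts across the isomorphism $E_d(a,b,c)\cong V^\e$ yields~(ii).

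For (ii)$\Rightarrow$(i): the module $V^\e$ is $(d+1)$-dimensional and irreducible, so Theorem~\ref{thm:onto_E} provides $a',b',c'\in\C$ and $\e'\in\{\pm 1\}^2$ with $E_d(a',b',c')^{\e'}\cong V^\e$. I first claim $\e'=\mathrm{id}$. By Table~\ref{pm1-action} each element of $\{\pm 1\}^2$ sends $X$ to $\pm X$ and $Y$ to $\pm Y$, the signs depending only on that element, so the trace of $X$ on $E_d(a',b',c')^{\e'}$ equals $\pm\bigl(-\tfrac{d+1}{2}\bigr)$ by the remark following Proposition~\ref{prop:Ed}. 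Since this trace equals the trace of $X$ on $V^\e$, which is $-\frac{d+1}{2}\neq 0$ (here $d\geq 1$ is used), $\e'$ must fix $X$; arguing with $Y$ in place of $X$ shows $\e'$ must fix $Y$. The only element of $\{\pm 1\}^2$ fixing both $X$ and $Y$ is $\mathrm{id}$, hence $E_d(a',b',c')\cong V^\e$.

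It remains to trade $(a',b',c')$ for $(a,b,c)$. Comparing the scalar by which $\kappa+\mu$ acts on $E_d(a',b',c')$, computed as above, with the prescribed scalar on $V^\e$ gives $(a')^2=a^2$, and likewise $(b')^2=b^2$ and $(c')^2=c^2$; hence $a'=\pm a$, $b'=\pm b$, $c'=\pm c$. Since $E_d(a',b',c')\cong V^\e$ is irreducible, Theorem~\ref{lem:onto2_E}, applied repeatedly, reverses each of these three signs independently, so $E_d(a',b',c')\cong E_d(a,b,c)$. Therefore $E_d(a,b,c)\cong V^\e$, and twisting once more by $\e$ gives $E_d(a,b,c)^\e\cong V$. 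The one genuinely delicate point is the claim $\e'=\mathrm{id}$: that is precisely where the trace hypotheses in~(ii) and the assumption $d\geq 1$ enter; the remainder is formal manipulation with the cited structure theorems.
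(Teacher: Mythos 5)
Your argument is correct and is exactly the expansion of what the paper compresses into ``Immediate from Proposition \ref{prop:Ed} and Theorems \ref{lem:onto2_E}, \ref{thm:onto_E}'': you use the same three ingredients, and the sign-of-trace argument pinning down $\e'=\mathrm{id}$, together with the $\kappa+\mu,\lambda+\kappa,\mu+\lambda$ computation forcing $a'=\pm a$, $b'=\pm b$, $c'=\pm c$ and the repeated application of Theorem \ref{lem:onto2_E}, is precisely the bookkeeping the author left implicit. No gap; this is a faithful unpacking rather than a different route.
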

\begin{proof}
Immediate from Proposition \ref{prop:Ed} and Theorems \ref{lem:onto2_E}, \ref{thm:onto_E}.
\end{proof}

The classification of odd-dimensional irreducible $\BI$-modules is as follows:

\begin{prop}
[Proposition 2.6, \cite{Huang:BImodule}]
\label{prop:Od} 
For any scalars $a,b,c\in \C$ and any even integer $d\geq 0$,  there exists a $(d+1)$-dimensional $\BI$-module $O_d(a,b,c)$ satisfying the following conditions:
\begin{enumerate}
\item There exists a basis for $O_d(a,b,c)$ with respect to which the matrices representing $X$ and $Y$ are 
$$
\begin{pmatrix}
\theta_0 & & &  &{\bf 0}
\\ 
1 &\theta_1 
\\
&1 &\theta_2
 \\
& &\ddots &\ddots
 \\
{\bf 0} & & &1 &\theta_d
\end{pmatrix},
\qquad 
\begin{pmatrix}
\theta_0^* &\varphi_1 &  & &{\bf 0}
\\ 
 &\theta_1^* &\varphi_2
\\
 &  &\theta_2^* &\ddots
 \\
 & & &\ddots &\varphi_d
 \\
{\bf 0}  & & & &\theta_d^*
\end{pmatrix},
$$
respectively, where 
\begin{align*}
\theta_i &=\frac{(-1)^i(2a-d+2i)}{2}  \qquad (0\leq i\leq d),
\\
\theta^*_i &=\frac{(-1)^i(2b-d+2i)}{2} \qquad (0\leq i\leq d),
\\
\varphi_i &=
\left\{
\begin{array}{ll}
\displaystyle{\frac{i(d+1-2i-2a-2b-2c)}{2}}
\qquad &\hbox{if $i$ is even},
\\
\displaystyle{\frac{(i-d-1)(d+1-2i-2a-2b+2c)}{2}}
\qquad &\hbox{if $i$ is odd}
\end{array}
\right.
\qquad (1\leq i\leq d).
\end{align*}

\item The elements $\kappa,\lambda,\mu$ act on $O_d(a,b,c)$ as scalar multiplication by
\allowdisplaybreaks
\begin{align*}
2ab-c(d+1),
\\
2bc-a(d+1),
\\
2ca-b(d+1),
\end{align*}
respectively.
\end{enumerate}
\end{prop}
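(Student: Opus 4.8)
The plan is to realise $O_d(a,b,c)$ concretely on a $(d+1)$-dimensional space and then reduce the verification of the defining relations of $\BI$ to two scalar matrix identities. First I would fix a vector space $V$ with basis $v_0,\dots,v_d$ and let $X,Y\in\mathrm{End}(V)$ be the operators whose matrices in this basis are the displayed bidiagonal ones, so that $Xv_i=\theta_iv_i+v_{i+1}$ and $Yv_i=\theta_i^*v_i+\varphi_iv_{i-1}$ under the conventions $v_{-1}=v_{d+1}=0$ and $\varphi_0=0$. Set $\kappa_0=2ab-c(d+1)$, $\lambda_0=2bc-a(d+1)$, $\mu_0=2ca-b(d+1)$, and define $Z:=\{X,Y\}-\kappa_0I$. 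Then $\{X,Y\}-Z=\kappa_0I$ is a scalar operator, hence central in $\mathrm{End}(V)$, so the three relations asserting that $\{X,Y\}-Z$ commutes with $X,Y,Z$ hold for free. It therefore suffices to prove
\[
\{Y,Z\}-X=\lambda_0I,\qquad \{Z,X\}-Y=\mu_0I,
\]
since these exhibit $\{Y,Z\}-X$ and $\{Z,X\}-Y$ as scalar operators, hence central; all defining relations of $\BI$ then hold for $X,Y,Z$, giving an algebra homomorphism $\BI\to\mathrm{End}(V)$, and with it claim (1) (the matrices are as displayed, by construction) and claim (2) (since $\kappa,\lambda,\mu$ then act as $\kappa_0I,\lambda_0I,\mu_0I$, which are the stated scalars).

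Substituting $Z=\{X,Y\}-\kappa_0I$ converts the two relations above into identities involving $X$ and $Y$ only:
\[
\{Y,\{X,Y\}\}=X+2\kappa_0Y+\lambda_0I,\qquad \{X,\{X,Y\}\}=Y+2\kappa_0X+\mu_0I.
\]
I would verify each by comparing matrix entries in the basis $\{v_i\}$. Both left-hand sides are pentadiagonal, and the outermost band vanishes automatically: the coefficient of $v_{i-2}$ in $\{Y,\{X,Y\}\}v_i$ equals $\varphi_{i-1}\varphi_i(\theta_{i-2}+2\theta_{i-1}+\theta_i)$, and the coefficient of $v_{i+2}$ in $\{X,\{X,Y\}\}v_i$ equals $\theta_i^*+2\theta_{i+1}^*+\theta_{i+2}^*$; both vanish because $\theta_i=(-1)^i(a-\tfrac d2+i)$ and $\theta_i^*=(-1)^i(b-\tfrac d2+i)$ are sign-alternating affine sequences, so these are, up to sign, second differences of affine functions. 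What remains is to match the tridiagonal part, i.e.\ the coefficients of $v_{i-1},v_i,v_{i+1}$ on the two sides, where the precise values of $\kappa_0,\lambda_0,\mu_0$ and the piecewise definition of $\varphi_i$ come into play, together with the simplifications $\theta_i+\theta_{i+1}=(-1)^{i+1}=\theta_i^*+\theta_{i+1}^*$.

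The main obstacle is precisely this last coefficient comparison. It splits according to the parity of $i$, since $\varphi_i$ is defined by distinct formulas for even and odd $i$, and it needs separate attention at the boundary indices $i=0$ and $i=d$, where a neighbouring basis vector is absent and one must check that the general identity is consistent with the conventions $\varphi_0=0$ and $v_{-1}=v_{d+1}=0$. After collecting terms, the $v_{i-1}$ and $v_i$ coefficients each reduce to a polynomial identity in $a,b,c,i$ (at most quadratic in $i$) that must hold for all $0\le i\le d$; these are confirmed by direct expansion once $\varphi_i$ is written out in the two parity cases, while the $v_{i+1}$ coefficient is comparatively easy because both sides there are governed by $(-1)^{i+1}$. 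Once these identities are checked, the two displayed relations hold and the module $O_d(a,b,c)$ is constructed.
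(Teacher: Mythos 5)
This proposition is quoted in the paper directly from \cite{Huang:BImodule} (Proposition 2.6); the paper itself gives no proof, so there is no internal argument to compare against.

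Your reduction is sound and is the natural way to establish such a result: put $Z:=\{X,Y\}-\kappa_0I$ so that the relation for $\kappa$ is automatic, and then the remaining relations are equivalent to the two operator identities $\{Y,\{X,Y\}\}=X+2\kappa_0Y+\lambda_0I$ and $\{X,\{X,Y\}\}=Y+2\kappa_0X+\mu_0I$, to be checked entrywise. Your disposal of the outer band is correct --- since $\theta_i=(-1)^i(a-\tfrac d2+i)$ and $\theta_i^*=(-1)^i(b-\tfrac d2+i)$ are alternating-affine, the second differences $\theta_{i-2}+2\theta_{i-1}+\theta_i$ and $\theta_i^*+2\theta_{i+1}^*+\theta_{i+2}^*$ vanish --- and the identity $\theta_i+\theta_{i+1}=(-1)^{i+1}=\theta_i^*+\theta_{i+1}^*$ does indeed reduce the $v_{i+1}$ (resp.\ $v_{i-1}$) coefficient on the left of the first (resp.\ second) identity to $(\theta_i^*+\theta_{i+1}^*)^2=1$, matching the right side. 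However, the substantive content of the proposition lives precisely in the remaining $v_{i-1}$ and $v_i$ coefficients, where the piecewise definition of $\varphi_i$ and the specific scalars $\kappa_0,\lambda_0,\mu_0$ must conspire; you state that this ``is confirmed by direct expansion'' but never carry it out. That computation, split by parity of $i$ and by the choice of relation, is the whole point, and without it you have a plan rather than a proof. Two minor additions you should make when finishing it: adopt $\varphi_{d+1}:=0$ (the term $\varphi_{i+1}$ in the diagonal entry $2\theta_i\theta_i^*+\varphi_i+\varphi_{i+1}$ of $\{X,Y\}$ originates from $v_{i+1}$, which is absent at $i=d$); and verify the generic coefficient identities are consistent at the endpoints $i=0$ and $i=d$ under the conventions $\varphi_0=0$, $v_{-1}=v_{d+1}=0$.
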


Note that the traces of $X,Y,Z$ on $O_d(a,b,c)$ are equal to $a,b,c$ respectively.

\begin{thm}
[Theorem 2.8, \cite{Huang:BImodule}]
\label{thm:irr_O}
For any $a, b, c\in \C$ and any even integer $d\geq 0$ the $\BI$-module $O_d(a,b,c)$ is irreducible if and only if 
$$
a+b+c, a-b-c, -a+b-c, -a-b+c\not\in 
\{
\textstyle
\frac{d+1}{2}-i \,|\, i =2,4,\ldots,d
\}.
$$
\end{thm}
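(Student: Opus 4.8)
The plan is to translate the statement into linear algebra for the two bidiagonal matrices of Proposition \ref{prop:Od} and then enumerate their common invariant subspaces. Since $\kappa$ acts on $O_d(a,b,c)$ as a scalar, the relation $\kappa=\{X,Y\}-Z$ expresses $Z$ as a polynomial in the actions of $X$ and $Y$; hence a subspace of $O_d(a,b,c)$ is a $\BI$-submodule if and only if it is invariant under both $X$ and $Y$. I record the structural facts to be used. In the given basis $v_0,\dots,v_d$ the matrix of $X$ is lower bidiagonal with all subdiagonal entries equal to $1$, so $X$ is nonderogatory with cyclic vector $v_0$ (the vectors $v_0,Xv_0,\dots,X^dv_0$ form a basis), its eigenvalues are $\theta_0,\dots,\theta_d$, and, when these are distinct, its $\theta_j$-eigenline is spanned by a vector lying in ${\rm span}\{v_j,\dots,v_d\}$ with nonzero $v_j$- and $v_d$-components. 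Dually the matrix of $Y$ is upper bidiagonal with superdiagonal entries $\varphi_1,\dots,\varphi_d$, so when all $\varphi_i\neq0$ the operator $Y$ is nonderogatory with cyclic vector $v_d$, and, when $\theta_0^*,\dots,\theta_d^*$ are distinct, its $\theta_j^*$-eigenline is spanned by a vector in ${\rm span}\{v_0,\dots,v_j\}$ with nonzero $v_j$-component, which moreover has nonzero $v_0$-component when all $\varphi_i\neq0$. Cases with repeated $\theta_i$ or $\theta_i^*$ are handled by replacing eigenspaces with generalized eigenspaces.

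For the direction ``a condition fails $\Rightarrow$ $O_d(a,b,c)$ reducible'': suppose first that $\varphi_i=0$ for some $1\le i\le d$. Then ${\rm span}\{v_i,\dots,v_d\}$ is automatically $X$-invariant, and it is $Y$-invariant because $Yv_i=\theta_i^*v_i+\varphi_iv_{i-1}=\theta_i^*v_i$; so it is a proper nonzero $\BI$-submodule. Substituting the explicit formula for $\varphi_i$, the equation $\varphi_i=0$ is equivalent to $a+b+c=\tfrac{d+1}{2}-i$ when $i$ is even and to $-a-b+c=i-\tfrac{d+1}{2}$ when $i$ is odd; letting $i$ run over $\{2,4,\dots,d\}$ and over $\{1,3,\dots,d-1\}$ this reproduces exactly the two exclusions for $a+b+c$ and for $-a-b+c$. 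For the two remaining exclusions I would pass to a twist: composing the $\BI$-action with an automorphism from Table \ref{pm1-action} and reversing the order of the basis, $v_i\mapsto v_{d-i}$ (followed by a diagonal rescaling, legitimate once all $\varphi_i\neq0$), turns $O_d(a,b,c)$ into a module whose $X$ and $Y$ have the same bidiagonal shape as in Proposition \ref{prop:Od} but with $(a,b,c)$ replaced by a sign-altered triple; the ``$\varphi=0$'' submodule of that module yields the exclusions involving $a-b-c$ and $-a+b-c$. (Alternatively, for each value in one of these sets one writes down a proper submodule directly, as the span of a suitable common eigenvector of $X$ and $Y$ or as the kernel of a common left eigenvector.)

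For the converse, assume all four exclusions hold; then in particular all $\varphi_i\neq0$, so $v_d$ is cyclic for $Y$ and it suffices to show that every nonzero $\BI$-submodule $W$ equals $O_d(a,b,c)$. In the non-degenerate case, $X$-invariance gives $W=\bigoplus_{i\in S}\C w_i$ for a subset $S$ of $X$-eigenlines, and $Y$-invariance gives $W=\bigoplus_{j\in S^*}\C w_j^*$ for a subset $S^*$ of $Y$-eigenlines. Each $w_i$ has nonzero $v_d$-component, so the largest index occurring in $W$ is $d$; comparing with the $Y$-description forces $d\in S^*$, hence $w_d^*\in W$. Since $w_d^*$ has nonzero $v_0$-component, the smallest index occurring in $W$ is $0$; comparing with the $X$-description forces $0\in S$, hence $w_0\in W$. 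If $W$ were proper, the consistency of its two descriptions would impose a resonance among $a,b,c$: for instance $\dim W=1$ would make $w_0$ and $w_d^*$ proportional, and matching their defining recursions shows this occurs exactly when $-a+b-c$ (or $a-b-c$, for the analogous matching) lies in $\{\tfrac{d+1}{2}-i\mid i=2,4,\dots,d\}$; in general each proper choice of $S$ forces one of $a+b+c,a-b-c,-a+b-c,-a-b+c$ into this set, contrary to hypothesis. Hence no proper $W$ exists, and $O_d(a,b,c)$ is irreducible; the cases with repeated $\theta_i$ or $\theta_i^*$ are settled by the same scheme using generalized eigenspaces.

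The step I expect to be the main obstacle is this last one: making precise, for submodules of intermediate dimension, the dictionary between a common $X$- and $Y$-invariant subspace and a resonance of the parameters, and checking that the resonance locus is governed exactly by $\{\tfrac{d+1}{2}-i\mid i=2,4,\dots,d\}$ — this is the computational heart of the argument — together with a uniform treatment of the non-semisimple cases in which $X$ or $Y$ has a repeated eigenvalue.
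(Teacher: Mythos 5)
Since the paper cites Theorem \ref{thm:irr_O} from \cite{Huang:BImodule} without reproducing its proof, there is no in-text argument to compare against; I evaluate your proposal on its own terms. Your framing is the right one: because $\kappa$ acts as a scalar, a subspace of $O_d(a,b,c)$ is a $\BI$-submodule exactly when it is invariant under $X$ and $Y$; $X$ is nonderogatory with cyclic vector $v_0$; and $Y$ is nonderogatory once all $\varphi_i\neq 0$. Your computation that $\varphi_i=0$ for some even (resp.\ odd) $i$ is equivalent to $a+b+c$ (resp.\ $-a-b+c$) lying in $\{\tfrac{d+1}{2}-i : i=2,4,\dots,d\}$ is correct, and your deduction that $w_0\in W$ and $w_d^*\in W$ for any nonzero common invariant subspace $W$ is also sound under the distinctness hypotheses.

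There are, however, two genuine gaps. For the reducibility direction, the reversal $v_i\mapsto v_{d-i}$ followed by rescaling yields new diagonals $\theta^*_{d-i}$ and $\theta_{d-i}$, which are \emph{not} of the form $\theta_i(a')$ and $\theta_i^*(b')$ for fixed $a',b'$; to land back inside the family $O_d$ you must also twist by a suitable sign automorphism from Table~\ref{pm1-action}, and the proposal does not identify which twist produces which of the two remaining excluded families nor carry out the advertised alternative of writing a submodule directly. More seriously, in the converse direction $w_0,w_d^*\in W$ does not give $W=V$: $w_0$ is a single $X$-eigenvector, so acting by $X$ contributes nothing, and the theorem really rests on showing that $Yw_0$, expanded in the $X$-eigenbasis $\{w_i\}$, has nonzero components along the remaining $w_i$ with the vanishing of those components controlled exactly by the four excluded linear combinations of $a,b,c$. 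You assert the conclusion (``each proper $S$ forces a resonance'') without proving it; this is the computational heart of the theorem, and as stated the claim about the $\dim W=1$ case is an example rather than a proof of the general statement. Finally, the description $W=\bigoplus_{i\in S}\C w_i$ of $X$-invariant subspaces presupposes that $\theta_0,\dots,\theta_d$ are pairwise distinct, which fails for some half-integer values of $a$; the remark about ``generalized eigenspaces'' does not by itself repair the argument, since in the nonsemisimple case an invariant subspace is not a direct sum of eigenlines and one needs a different filtration argument.
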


\begin{thm}
[Theorem 2.8, \cite{Huang:BImodule}]
\label{thm:onto_O}
Let $d\geq 0$ denote an even integer.  Suppose that $V$ is a $(d+1)$-dimensional irreducible $\BI$-module. Then there exist unique $a,b,c\in \C$ such that the $\BI$-module $O_d(a,b,c)$ is isomorphic to $V$.
\end{thm}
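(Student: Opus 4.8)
The plan is to treat uniqueness and existence separately; uniqueness is the easy half. By the Note following Proposition~\ref{prop:Od} the traces of $X$, $Y$, $Z$ on $O_d(a,b,c)$ equal $a$, $b$, $c$, and these traces are invariants of a $\BI$-module up to isomorphism. Hence if $O_d(a,b,c)\cong V$ then $a$, $b$, $c$ must be the traces of $X$, $Y$, $Z$ on $V$; so $a,b,c$ are unique, and in particular distinct parameter triples yield non-isomorphic modules.

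For existence, set $a=\mathrm{tr}(X|_V)$, $b=\mathrm{tr}(Y|_V)$, $c=\mathrm{tr}(Z|_V)$; the goal is to produce a basis of $V$ realizing the matrices of Proposition~\ref{prop:Od}(i). The first task is to identify the spectrum of $X$. Substituting $Z=\{X,Y\}-\kappa$ into relations \eqref{lambda} and \eqref{mu} gives $\{X,\{X,Y\}\}=Y+\mu+2\kappa X$ and $\{Y,\{X,Y\}\}=X+\lambda+2\kappa Y$. Decomposing $V$ into generalized eigenspaces $V_\theta$ of $X$ and extracting the consequences of the first relation block by block, one sees that $Y$ sends $V_\theta$ into $V_{\theta'}$ only when $\theta+\theta'=\pm1$ (the blocks with $(\theta+\theta')^2\neq1$ vanish after iterating away the nilpotent parts of $X$). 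Since $\kappa$ acts by a scalar, $Z$ preserves any subspace preserved by $X$ and $Y$; thus for each connected component $C$ of the graph on the eigenvalues of $X$ with edges $\theta\sim\theta'\iff\theta+\theta'=\pm1$ the subspace $\bigoplus_{\theta\in C}V_\theta$ is a $\BI$-submodule, and irreducibility forces the graph to be connected. A closer look at its finite connected subgraphs, combined with control of the Jordan structure of $X$, then shows that the eigenvalues of $X$, counted with multiplicity, form an alternating chain and can be listed as $\theta_i=\tfrac{(-1)^i(2a-d+2i)}{2}$ for $0\le i\le d$, the normalization being fixed by $\sum_i\theta_i=a$.

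With the $\theta_i$ in hand, I would build the standard basis directly, not through eigenvectors — unlike for $\mathfrak{sl}_2$, an irreducible $\BI$-module can have $X$ non-semisimple (for instance when two consecutive $\theta_i$ coincide). Choosing a cyclic vector $v_0$ with $\prod_{i=0}^d(X-\theta_i)v_0=0$ and setting $v_i=(X-\theta_{i-1})\cdots(X-\theta_0)v_0$, one checks that $v_0,\dots,v_d$ is a basis in which $X$ is lower bidiagonal with subdiagonal $1$ and diagonal $(\theta_i)$. Carrying out the same analysis for $Y$ (passing to the flag attached to $Y$) makes $Y$ upper bidiagonal with diagonal $\theta_i^*=\tfrac{(-1)^i(2b-d+2i)}{2}$, and the superdiagonal entries $\varphi_i$ are then pinned down by feeding the basis back into \eqref{kappa}--\eqref{mu}, using that $\kappa,\lambda,\mu$ act as the scalars $2ab-c(d+1)$, $2bc-a(d+1)$, $2ca-b(d+1)$ (read off from Proposition~\ref{prop:Od}(ii) together with the trace $c$). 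A direct computation identifies the $\varphi_i$ with the formulas of Proposition~\ref{prop:Od}(i); since $d$ is even this yields $V\cong O_d(a,b,c)$. In contrast with the even-dimensional case (Theorem~\ref{thm:onto_E}), no twist by an element of $\{\pm1\}^2$ is needed here, precisely because the traces $a,b,c$ already separate the modules $O_d(a,b,c)$.

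The main obstacle, I expect, is the spectral analysis of $X$: showing that connectedness of the eigenvalue graph forces the chain shape and, above all, controlling the multiplicities and the possibly non-semisimple Jordan structure of $X$ so that the cyclic-vector construction of the standard basis is valid. The degenerate sub-cases, where some $\theta_i=\pm\tfrac12$ and constraints among $\kappa,\lambda,\mu$ appear, will have to be dispatched separately, as will the small values of $d$.
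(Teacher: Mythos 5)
The paper does not prove this theorem; it is quoted verbatim from \cite{Huang:BImodule} (Theorem 2.8 there) and used as a black box, so there is no internal proof to compare against. Evaluating your sketch on its own terms: the uniqueness half is correct and essentially complete — the Note after Proposition~\ref{prop:Od} gives the traces, traces are isomorphism invariants, done. The existence half has the right overall shape (spectral analysis of $X$ via the relation $\{X,\{X,Y\}\}=Y+\mu+2\kappa X$, graph-connectivity on the spectrum, then a raising/lowering basis), which is indeed the kind of argument used for these Leonard-pair–type classifications, but the load-bearing steps are asserted rather than argued and one of them is misstated.

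Concretely: from $((X+\theta)^2-1)Yv=(\mu+2\kappa\theta)v$ for an $X$-eigenvector $v\in V_\theta$ you may conclude $Y V_\theta\subseteq V_\theta\oplus V_{1-\theta}\oplus V_{-1-\theta}$, not the cleaner ``only when $\theta+\theta'=\pm1$'' you state (the diagonal block $(Yv)_\theta$ is generically nonzero). The connectivity argument survives this, but the subsequent steps do not come for free: (a) you need the connected component to be a \emph{path}, not a cycle, and to have exactly $d+1$ nodes counted with multiplicity — connectivity alone gives neither; (b) the cyclic-vector construction requires $X$ to act cyclically on $V$, which is not automatic when consecutive $\theta_i$ collide, and is precisely where the Jordan structure you wave at must be controlled; (c) the real difficulty is not that $X$ can be made lower bidiagonal and, separately, $Y$ upper bidiagonal — it is that this happens \emph{in the same basis}, i.e.\ that $(X,Y)$ is a Leonard-type pair. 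Your phrase ``passing to the flag attached to $Y$'' skips exactly this compatibility, which is the heart of the matter. You flag (a), (b) and the $\theta_i=\pm\tfrac12$ degeneracies yourself, so this is an honest outline of a plausible strategy, but as written it is a plan with the hard parts deferred rather than a proof.
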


\begin{thm}
\label{thm:onto2_O}
Let $d\geq 0$ denote an even integer. Suppose that $V$ is a $(d+1)$-dimensional irreducible $\BI$-module. For any scalars $a,b,c\in \C$ the following are equivalent:
\begin{enumerate}
\item The $\BI$-module $O_d(a,b,c)$ is isomorphic to $V$. 

\item The traces of $X,Y,Z$ on $V$ are equal to $a,b,c$ respectively. 
\end{enumerate}
\end{thm}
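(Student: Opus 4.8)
The plan is to deduce both implications from Theorem~\ref{thm:onto_O} together with the trace computation recorded in the remark following Proposition~\ref{prop:Od}. The only substantive input is the \emph{uniqueness} clause of Theorem~\ref{thm:onto_O}; granting that, everything else is bookkeeping with the elementary fact that the trace of an operator is an isomorphism invariant of a module.

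First I would establish (1)$\Rightarrow$(2). Suppose the $\BI$-module $O_d(a,b,c)$ is isomorphic to $V$. An isomorphism of modules conjugates the matrix of each generator, so the traces of $X$, $Y$, $Z$ acting on $V$ equal their traces acting on $O_d(a,b,c)$. By the remark following Proposition~\ref{prop:Od}, those traces are $a$, $b$, $c$ respectively, which is precisely statement (2).

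Next I would establish (2)$\Rightarrow$(1). Since $V$ is a $(d+1)$-dimensional irreducible $\BI$-module with $d$ even (hence odd-dimensional), Theorem~\ref{thm:onto_O} provides \emph{unique} scalars $a',b',c'\in\C$ with $O_d(a',b',c')\cong V$. Applying the already-proved implication (1)$\Rightarrow$(2) with $(a',b',c')$ in place of $(a,b,c)$, the traces of $X$, $Y$, $Z$ on $V$ are $a'$, $b'$, $c'$. Comparing with hypothesis (2), which asserts those same traces are $a$, $b$, $c$, we get $a=a'$, $b=b'$, $c=c'$, and therefore $O_d(a,b,c)=O_d(a',b',c')\cong V$.

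I do not anticipate any real obstacle: the weight of the argument rests entirely on Theorem~\ref{thm:onto_O} (the existence-and-uniqueness parametrization of odd-dimensional irreducibles by the family $O_d$) and on Proposition~\ref{prop:Od} (the explicit trace values), both of which may be assumed. The one point requiring care is that it is exactly the uniqueness in Theorem~\ref{thm:onto_O} that forbids two distinct parameter triples from yielding isomorphic modules; without it, condition (2) would determine only the traces and not the module up to isomorphism.
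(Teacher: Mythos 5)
Your proposal is correct and follows essentially the same route as the paper, whose proof is simply ``Immediate from Proposition~\ref{prop:Od} and Theorem~\ref{thm:onto_O}.'' You have merely spelled out the details: the trace values from the remark after Proposition~\ref{prop:Od} give (1)$\Rightarrow$(2), and the uniqueness in Theorem~\ref{thm:onto_O} gives (2)$\Rightarrow$(1).
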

\begin{proof}
Immediate from Proposition \ref{prop:Od} and Theorem \ref{thm:onto_O}.
\end{proof}

\section{Three $\BI$-submodules of $\M_n$}
\label{s:submodule_Mn}

In this section we consider three subspaces of $\M_n$ given as follows:

\begin{defn}\label{defn:M(x1)}
Let $n\geq 0$ denote an integer. Define 
\begin{align*}
\M_n(x_1)&=
\M_n\cap 
\bigoplus_{i=t_1}^n 
 \C^2\otimes (x_1^i\cdot \R[x_2,x_3]_{n-i}),
\\
\M_n(x_2)&=
\M_n\cap 
\bigoplus_{i=t_2}^n 
 \C^2\otimes (x_2^i\cdot \R[x_1,x_3]_{n-i}),
\\
\M_n(x_3)&=
\M_n\cap 
\bigoplus_{i=t_3}^n 
\C^2\otimes (x_3^i\cdot \R[x_1,x_2]_{n-i}).
\end{align*}
The space $\M_n(x_1)$ (resp. $\M_n(x_2)$) (resp. $\M_n(x_3)$) is interpreted as the zero subspace of $\M_n$ when $n<t_1$ (resp. $n<t_2$) (resp. $n<t_3$). Observe that 
each of $\M_n(x_1), \M_n(x_2), \M_n(x_3)$ is a $\BI$-submodule of $\M_n$. 
\end{defn}

For convenience we use the following notations. Let 
\begin{align*}
\D(x_1) &=e_2\otimes T_2+e_3\otimes T_3,
\\
\D(x_2) &=e_3\otimes T_3+e_1\otimes T_1,
\\
\D(x_3) &=e_1\otimes T_1+e_2\otimes T_2.
\end{align*}
For any integer $n\geq 0$ we set 
\begin{align*}
m^{(n)}_1
&=
n+(1-(-1)^n)k_1,
\\
m^{(n)}_2
&=
n+(1-(-1)^n)k_2,
\\
m^{(n)}_3
&=
n+(1-(-1)^n)k_3.
\end{align*}
Note that for any integer $n\geq 0$, 
\begin{align*}
T_1(x_1^n)
&=
m^{(n)}_1 x_1^{n-1},
\\
T_2(x_2^n)
&=
m^{(n)}_2 x_2^{n-1},
\\
T_3(x_3^n)
&=
m^{(n)}_3 x_3^{n-1}.
\end{align*}
For any integers $i,j$ with $0\leq i\leq j$ we set 
\begin{align*}
[x_1]^j_i &=
\prod_{h=i+1}^j m_1^{(h)} 
x_1^{i},
\\
[x_2]^j_i &=
\prod_{h=i+1}^j m_2^{(h)} 
x_2^{i},
\\
[x_3]^j_i &=
\prod_{h=i+1}^j m_3^{(h)} 
x_3^{i}.
\end{align*}

\begin{lem}\label{lem:dimM1}
Let $n\geq 0$ denote an integer. Then the following hold: 
\begin{enumerate}
\item Suppose that $n\geq t_1$. Then given any integer $k$ with $t_1\leq k\leq n$ and any $f_{n-i}\in \C^2\otimes \R[x_2,x_3]_{n-i}$ for $i=t_1,t_1+1,\ldots,k$ the polynomial 
\begin{gather}\label{e:dimM1}
\sum_{i=t_1}^k 
(-1)^{\lceil \frac{i}{2}\rceil}
e_1^i
\otimes
[x_1]^k_i(f_{n-i})\in \M_n
\end{gather}
if and only if $f_{n-i}=\D(x_1)^{i-t_1} (f_{n-t_1})$ for all $i=t_1,t_1+1,\ldots,k$.

\item Suppose that $n\geq t_2$. Then given any integer $k$ with $t_2\leq k\leq n$ and any $f_{n-i}\in \C^2\otimes \R[x_1,x_3]_{n-i}$ for all $i=t_2,t_2+1,\ldots,k$  the polynomial 
$$
\sum_{i=t_2}^k
(-1)^{\lceil \frac{i}{2}\rceil}
e_2^i
\otimes
[x_2]^k_i(f_{n-i})\in \M_n 
$$
if and only if $f_{n-i}=\D(x_2)^{i-t_2} (f_{n-t_2})$ for all $i=t_2,t_2+1,\ldots,k$.

\item Suppose that $n\geq t_3$. Then given any integer $k$ with $t_3\leq k\leq n$ and any $f_{n-i}\in \C^2\otimes \R[x_1,x_2]_{n-i}$ for all $i=t_3,t_3+1,\ldots,k$ the polynomial   
$$
\sum_{i=t_3}^k 
(-1)^{\lceil \frac{i}{2}\rceil}
e_3^i
\otimes
[x_3]^k_i(f_{n-i})\in \M_n
$$
if and only if $f_{n-i}=\D(x_3)^{i-t_3} (f_{n-t_3})$ for all $i=t_3,t_3+1,\ldots,k$.
\end{enumerate}
\end{lem}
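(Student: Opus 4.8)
The three statements (i), (ii), (iii) are related by the $\Sym_3$-symmetry permuting the coordinates (and correspondingly permuting $e_1,e_2,e_3$, $T_1,T_2,T_3$, and $t_1,t_2,t_3$), so it suffices to prove (i) and then invoke symmetry. The plan is to compute $\D$ applied to the candidate polynomial in \eqref{e:dimM1} and show that it vanishes precisely under the stated recursion. First I would split the Dirac--Dunkl operator as $\D = e_1\otimes T_1 + \D(x_1)$, where $\D(x_1) = e_2\otimes T_2 + e_3\otimes T_3$ involves no $x_1$ and commutes appropriately with the $x_1$-powers. Note that $\D(x_1)$ maps $\C^2\otimes\R[x_2,x_3]_{m}$ into $\C^2\otimes\R[x_2,x_3]_{m-1}$, which is why the recursion $f_{n-i} = \D(x_1)^{i-t_1}(f_{n-t_1})$ makes sense degree-wise.

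\textbf{Key computation.} Write $P = \sum_{i=t_1}^k (-1)^{\lceil i/2\rceil} e_1^i \otimes [x_1]^k_i(f_{n-i})$. I would apply $\D$ term by term. For the $e_1\otimes T_1$ part, use $T_1([x_1]^k_i) = \big(\prod_{h=i+1}^k m_1^{(h)}\big) T_1(x_1^i) = m_1^{(i)}\prod_{h=i+1}^k m_1^{(h)}\, x_1^{i-1} = [x_1]^k_{i-1}$, together with $e_1\cdot e_1^i = e_1^{i+1}$; since $e_1$ (represented by $\sigma_1$) acts on the Clifford side and $T_1$ on the polynomial side, these factors combine cleanly, and the term $e_1^i\otimes[x_1]^k_i(f_{n-i})$ contributes $(-1)^{\lceil i/2\rceil} e_1^{i+1}\otimes [x_1]^k_{i-1}(f_{n-i})$ after reindexing. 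For the $\D(x_1)$ part, since $\D(x_1)$ anticommutes with $e_1$ in $\Cl(\R^3)$ (because $\{e_1,e_2\}=\{e_1,e_3\}=0$) and acts only on the $x_2,x_3$ variables, I get $\D(x_1)(e_1^i\otimes[x_1]^k_i(f_{n-i})) = (-1)^i e_1^i\otimes [x_1]^k_i(\D(x_1)f_{n-i})$. Collecting the coefficient of $e_1^{j}\otimes[x_1]^k_{j-1}(\cdot)$ (or more carefully, grouping by the pair (power of $e_1$, power of $x_1$)), the vanishing of $\D P$ reduces, after bookkeeping with the signs $(-1)^{\lceil i/2\rceil}$ and $(-1)^i$, to the condition that $f_{n-i} = \D(x_1)^{i-t_1}(f_{n-t_1})$ for each $i$, together with the boundary condition $T_1(x_1^{t_1}) = 0$, i.e. $m_1^{(t_1)} = 0$ — which is exactly why $t_1 = -2k_1$ was chosen when $2k_1$ is an odd negative integer (then $m_1^{(t_1)} = t_1 + 2k_1 = 0$ since $t_1$ is odd), and why $\M_n(x_1)$ is zero otherwise.

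\textbf{Sign bookkeeping and the main obstacle.} The main technical obstacle is getting the sign matching exactly right: one must check that the coefficient $(-1)^{\lceil i/2\rceil}$ in front of $e_1^i$ is precisely what makes the two contributions — the "raising" term from $e_1\otimes T_1$ acting on level $i$ and the term from $\D(x_1)$ acting on level $i+1$ — telescope/cancel. Concretely, the relation $\lceil (i+1)/2\rceil - \lceil i/2\rceil$ alternates in a way that must reproduce the factor $(-1)^{i+1}$ coming from pushing $\D(x_1)$ past $e_1^{i+1}$; this is a short parity check done by cases on $i \bmod 2$, but it is the crux and easy to botch. Once the identity $\D P = \sum_j (\text{coefficient}_j)\, e_1^{j}\otimes[x_1]^k_{j}\big(f_{n-j} - \D(x_1)f_{n-j-1}\big) + (\text{boundary term})$ is established, the "if" direction is immediate (each bracket vanishes by hypothesis and the boundary term vanishes since $m_1^{(t_1)}=0$), and the "only if" direction follows because the polynomials $e_1^{j}\otimes[x_1]^k_{j}(g)$ for varying $j$ and $g\in\C^2\otimes\R[x_2,x_3]_{n-j}$ are linearly independent (they live in distinct $x_1$-graded pieces, and $[x_1]^k_j$ is a nonzero scalar multiple of $x_1^j$ for $j$ in the admissible range $t_1\le j\le n$), so each bracket must vanish, yielding the recursion inductively starting from $i=t_1$. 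I would also remark that a priori $P$ lies in $\C^2\otimes\R[x_1,x_2,x_3]_n$, so $P\in\M_n$ is equivalent to $\D P = 0$, closing the argument. Finally, parts (ii) and (iii) follow by applying the automorphisms corresponding to the transpositions $(1\,2)$ and $(1\,3)$ to the result of part (i).
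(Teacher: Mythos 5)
Your approach matches the paper's proof exactly: decompose $\D=\D(x_1)+e_1\otimes T_1$, apply it termwise to the polynomial in \eqref{e:dimM1}, regroup by $x_1$-degree using $e_1^2=1$ together with the sign identity relating $(-1)^{\lceil 3i/2\rceil}$ to $(-1)^{\lceil i/2\rceil}$ so that $\D$ telescopes, observe that $m_1^{(t_1)}=0$ kills the bottom boundary term and that $[x_1]^k_{i-1}\neq 0$ for $t_1+1\le i\le k$, and deduce (ii), (iii) by permuting coordinates. One small indexing slip worth fixing: after grouping, the bracket at $x_1$-degree $j$ should read $f_{n-j-1}-\D(x_1)(f_{n-j})$ (that is, $f_{n-i}-\D(x_1)(f_{n-i+1})$ with $i=j+1$, as in the paper's display), not $f_{n-j}-\D(x_1)f_{n-j-1}$, which is degree-inconsistent; this is a typo and does not affect the substance of the argument.
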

\begin{proof}
(i): Let  $f$ denote the left-hand side of (\ref{e:dimM1}).
Using $\D=\D(x_1)+e_1\otimes T_1$ yields that  
\begin{align*}
\D(f)&=\sum_{i=t_1}^{k-1}
(-1)^{\lceil \frac{3i}{2}\rceil}
e_1^i
\otimes
[x_1]^k_i
(\D(x_1)(f_{n-i}))
+
\sum_{i=t_1+1}^k
(-1)^{\lceil \frac{i}{2}\rceil}
e_1^{i+1} 
\otimes
[x_1]^k_{i-1}
(f_{n-i})
\\
&=
\sum_{i=t_1+1}^k
(-1)^{\lceil \frac{3i-3}{2}\rceil}
e_1^{i-1}
\otimes
[x_1]^k_{i-1}
(
\D(x_1)(f_{n-i+1})
)
+
\sum_{i=t_1+1}^k
(-1)^{\lceil \frac{i}{2}\rceil}
e_1^{i+1} 
\otimes 
[x_1]^k_{i-1}
(f_{n-i}).
\end{align*}
Since $e_1^2=1$ and $(-1)^{\lceil \frac{3i-3}{2}\rceil}=(-1)^{\lceil \frac{i}{2}\rceil+1}$ for all integers $i$, it follows that 
$$
\D(f)=
\sum_{i=t_1+1}^k
(-1)^{\lceil \frac{i}{2}\rceil}
e_1^{i+1}
\otimes 
[x_1]^k_{i-1} (f_{n-i}-\D(x_1)(f_{n-i+1})).
$$
Since $k\geq t_1$ the term $[x_1]^k_i\not=0$ for all $i=t_1,t_1+1,\ldots,k$. Hence $f\in \M_n$ if and only if $f_{n-i}=\D(x_1)(f_{n-i+1})$ for all $i=t_1+1,t_1+2,\ldots,k$. The statement (i) follows.

(ii), (iii): By similar arguments the statements (ii), (iii) hold.
\end{proof}

\begin{prop}\label{prop:dimM(x1)}
Let $n\geq 0$ denote an integer. Then the following hold:
\begin{enumerate}
\item $
\dim \M_n(x_1)
=
\left\{
\begin{array}{ll}
0
\qquad 
&\hbox{if $n< t_1$},
\\
2(n-t_1+1)
\qquad 
&\hbox{if $n\geq t_1$}.
\end{array}
\right.
$ 

\item $
\dim \M_n(x_2)
=
\left\{
\begin{array}{ll}
0
\qquad 
&\hbox{if $n< t_2$},
\\
2(n-t_2+1)
\qquad 
&\hbox{if $n\geq t_2$}.
\end{array}
\right.
$  

\item $
\dim \M_n(x_3)
=
\left\{
\begin{array}{ll}
0
\qquad 
&\hbox{if $n< t_3$},
\\
2(n-t_3+1)
\qquad 
&\hbox{if $n\geq t_3$}.
\end{array}
\right.
$ 
\end{enumerate}
\end{prop}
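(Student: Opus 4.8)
The plan is to prove the three statements simultaneously; I will describe the argument for $\M_n(x_1)$, the other two being identical after permuting the roles of the coordinates. When $n<t_1$ the claim is the definition (the space is declared to be zero), so assume $n\geq t_1$; in particular $t_1\neq\infty$, so $t_1=-2k_1$ is a positive odd integer and the factors $m_1^{(h)}$ for $h>t_1$ are all nonzero, which is what makes $[x_1]^k_i\neq 0$ usable in Lemma \ref{lem:dimM1}.

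\textbf{Spanning set via Lemma \ref{lem:dimM1}.} First I would take $k=n$ in Lemma \ref{lem:dimM1}(i). It says that for any choice of $f\in\C^2\otimes\R[x_2,x_3]_{n-t_1}$, the polynomial
\[
\Psi(f)\;=\;\sum_{i=t_1}^{n}(-1)^{\lceil i/2\rceil}\,e_1^i\otimes[x_1]^n_i\bigl(\D(x_1)^{\,i-t_1}f\bigr)
\]
lies in $\M_n$, and conversely every element of $\M_n$ whose $x_1$-expansion starts at degree $\geq t_1$ (equivalently, every element of $\M_n\cap\bigoplus_{i=t_1}^n\C^2\otimes(x_1^i\cdot\R[x_2,x_3]_{n-i})=\M_n(x_1)$) is of this form, since its lowest $x_1$-component $f_{n-t_1}$ determines all the higher ones. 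Hence $\M_n(x_1)=\{\Psi(f):f\in\C^2\otimes\R[x_2,x_3]_{n-t_1}\}$, so $\M_n(x_1)$ is the image of the linear map $\Psi:\C^2\otimes\R[x_2,x_3]_{n-t_1}\to\M_n$. The domain has complex dimension $2(n-t_1+1)$ because $\dim_\R\R[x_2,x_3]_{n-t_1}=n-t_1+1$ and we tensor with $\C^2$.

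\textbf{Injectivity of $\Psi$.} It remains to show $\Psi$ is injective, which gives $\dim\M_n(x_1)=2(n-t_1+1)$. This is the one point requiring a small argument: $\Psi(f)=0$ forces, looking at the $x_1$-degree-$t_1$ component (the $i=t_1$ term), the vanishing of $(-1)^{\lceil t_1/2\rceil}e_1^{t_1}\otimes[x_1]^n_{t_1}(f)$. Since $e_1^{t_1}$ is a unit in $\Cl(\R^3)$ and the scalar $\prod_{h=t_1+1}^n m_1^{(h)}$ multiplying $x_1^{t_1}$ is nonzero (all $h$ in the range exceed $t_1=-2k_1$, so $m_1^{(h)}=h+(1-(-1)^h)k_1\neq 0$), we conclude $f=0$. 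Thus $\Psi$ is a linear isomorphism onto $\M_n(x_1)$ and the dimension count follows.

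\textbf{Main obstacle.} The conceptual content is entirely in Lemma \ref{lem:dimM1}, which is already available; the only real care needed here is the nonvanishing of the leading coefficient $\prod_{h=t_1+1}^n m_1^{(h)}$ ensuring both that Lemma \ref{lem:dimM1} applies with $k=n$ and that $\Psi$ is injective. One should double-check the edge case $n=t_1$, where the sum has a single term and $\Psi(f)=(-1)^{\lceil t_1/2\rceil}e_1^{t_1}\otimes x_1^{t_1}f$, so $\M_n(x_1)\cong\C^2\otimes\R[x_2,x_3]_0$ has dimension $2$, matching $2(n-t_1+1)=2$. The proof then concludes by noting that statements (ii) and (iii) follow by the same argument using parts (ii) and (iii) of Lemma \ref{lem:dimM1}.
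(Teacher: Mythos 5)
Your proposal is correct and follows essentially the same route as the paper: both invoke Lemma \ref{lem:dimM1}(i) with $k=n$ to identify $\M_n(x_1)$ with the image of $\Psi:\C^2\otimes\R[x_2,x_3]_{n-t_1}\to\M_n$, then count dimensions. You have simply spelled out the injectivity of $\Psi$ (via the nonvanishing of $\prod_{h=t_1+1}^n m_1^{(h)}$) that the paper subsumes under the unargued phrase ``linear isomorphism.''
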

\begin{proof}
(i): By Definition \ref{defn:M(x1)} the statement (i) is true for $n<t_1$. Suppose that $n\geq t_1$. 
By Lemma \ref{lem:dimM1}(i) there exists a linear isomorphism $\C^2\otimes \R[x_2,x_3]_{n-t_1}\to \M_n(x_1)$ that sends $f$ to 
\begin{eqnarray*}
f &\mapsto &
\sum_{i=t_1}^n
(-1)^{\lceil \frac{i}{2}\rceil}
e_1^i
\otimes
[x_1]^{n}_{i}
( 
\D(x_1)^{i-t_1} (f))
\end{eqnarray*} 
for all $f\in \C^2\otimes \R[x_2,x_3]_{n-t_1}$. Therefore (i) follows.

(ii), (iii): By similar arguments the statements (ii), (iii) hold.
\end{proof}

\begin{prop}\label{prop:dimM(x12)}
Let $n\geq 0$ denote an integer. Then the following hold:
\begin{enumerate}
\item $
\dim \M_n(x_1)\cap \M_n(x_2)=
\left\{
\begin{array}{ll}
0
\qquad 
&\hbox{if $n<t_1+t_2$},
\\
2(n-t_1-t_2+1)
\qquad 
&\hbox{if $n\geq t_1+t_2$}.
\end{array}
\right.
$

\item $
\dim \M_n(x_2)\cap \M_n(x_3)=
\left\{
\begin{array}{ll}
0
\qquad 
&\hbox{if $n<t_2+t_3$},
\\
2(n-t_2-t_3+1)
\qquad 
&\hbox{if $n\geq t_2+t_3$}.
\end{array}
\right.
$

\item $
\dim \M_n(x_3)\cap \M_n(x_1)=
\left\{
\begin{array}{ll}
0
\qquad 
&\hbox{if $n<t_3+t_1$},
\\
2(n-t_3-t_1+1)
\qquad 
&\hbox{if $n\geq t_3+t_1$}.
\end{array}
\right.
$
\end{enumerate}
\end{prop}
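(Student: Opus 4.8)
The plan is to deduce (i) from the explicit parametrization of $\M_n(x_1)$ provided by Lemma~\ref{lem:dimM1}(i), together with a divisibility lemma for the Dunkl operator $T_2$; parts (ii) and (iii) then follow by the same argument with the indices cyclically permuted. The starting point is the remark that, for $f\in\M_n$, one has $f\in\M_n(x_2)$ if and only if $f$ is divisible by $x_2^{t_2}$ in its polynomial factor, since $\bigoplus_{i=t_2}^n\C^2\otimes(x_2^i\cdot\R[x_1,x_3]_{n-i})$ is precisely the span of the degree-$n$ monomials $x_1^ax_2^cx_3^b$ with $c\geq t_2$. (When $t_2=\infty$, or $t_1=\infty$, this already gives the claim, since then $\M_n(x_2)=0$ or $\M_n(x_1)=0$ while $n<t_1+t_2$.)

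The technical heart is the following: if $2k_2$ is an odd negative integer and $t_2=-2k_2$, then $T_2\bigl(x_2^{t_2}h\bigr)\in x_2^{t_2}\,\R[x_1,x_2,x_3]$ for every $h\in\R[x_1,x_2,x_3]$. This is a short computation: writing $h=h_++h_-$ for the parts of $h$ even and odd in $x_2$, the difference quotient appearing in $T_2$ involves only $h_+$, and the identity $t_2=-2k_2$ makes the coefficient of $x_2^{t_2-1}h_+$ vanish, leaving $T_2(x_2^{t_2}h)=t_2x_2^{t_2-1}h_-+x_2^{t_2}\,\partial_{x_2}h$; since $h_-$ is odd in $x_2$ one may write $h_-=x_2\tilde h$, so $T_2(x_2^{t_2}h)=x_2^{t_2}\bigl(t_2\tilde h+\partial_{x_2}h\bigr)$. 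As $T_3$ obviously preserves divisibility by $x_2^{t_2}$, so does $\D(x_1)=e_2\otimes T_2+e_3\otimes T_3$, and hence so does every power $\D(x_1)^j$.

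To conclude, assume $n\geq t_1$ (if $n<t_1$ then $\M_n(x_1)=0$, both sides of (i) vanish, and $n<t_1+t_2$). By Lemma~\ref{lem:dimM1}(i) with $k=n$, the assignment $g\mapsto\sum_{i=t_1}^n(-1)^{\lceil i/2\rceil}e_1^i\otimes[x_1]^n_i\bigl(\D(x_1)^{i-t_1}(g)\bigr)$ is a linear isomorphism $F\colon\C^2\otimes\R[x_2,x_3]_{n-t_1}\to\M_n(x_1)$ — the one already used in the proof of Proposition~\ref{prop:dimM(x1)}. Grouping $F(g)$ by powers of $x_1$, the $x_1^i$-coefficient of $F(g)$ equals a nonzero scalar times $e_1^i$ applied to $\D(x_1)^{i-t_1}(g)$; since $e_1$ acts invertibly on $\C^2$, the polynomial $F(g)$ is divisible by $x_2^{t_2}$ if and only if $\D(x_1)^{i-t_1}(g)$ is divisible by $x_2^{t_2}$ for every $i\in\{t_1,\dots,n\}$. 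By the preceding paragraph this holds once $g$ itself is divisible by $x_2^{t_2}$, and the converse is read off from the term $i=t_1$. Therefore $\M_n(x_1)\cap\M_n(x_2)$ is the $F$-image of the subspace of $\C^2\otimes\R[x_2,x_3]_{n-t_1}$ consisting of polynomials divisible by $x_2^{t_2}$; this subspace is $0$ when $n-t_1<t_2$ and equals $x_2^{t_2}\cdot\bigl(\C^2\otimes\R[x_2,x_3]_{n-t_1-t_2}\bigr)$, of dimension $2(n-t_1-t_2+1)$, when $n\geq t_1+t_2$. Since $F$ is injective, this proves (i); parts (ii) and (iii) follow verbatim after replacing $(1,2)$ by $(2,3)$ and by $(3,1)$, respectively.

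The sole nonroutine point is the divisibility computation for $T_2$ in the second paragraph; the rest is bookkeeping built on Lemma~\ref{lem:dimM1} and Proposition~\ref{prop:dimM(x1)}.
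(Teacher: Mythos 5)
Your proof is correct and follows essentially the same route as the paper: parametrize $\M_n(x_1)$ through Lemma~\ref{lem:dimM1}(i), then carve out the $x_2^{t_2}$-divisible part. The genuine added value in your write-up is the explicit verification that $T_2$ --- hence $\D(x_1)$ and all of its powers --- preserves divisibility by $x_2^{t_2}$ when $t_2=-2k_2$; the cancellation of the coefficient of $x_2^{t_2-1}h_+$ is precisely what allows the construction to close up, and the paper uses this silently. Indeed, in the paper's linear isomorphism $\C^2\otimes\R[x_2,x_3]_{n-t_1-t_2}\to\M_n(x_1)\cap\M_n(x_2)$, after applying $\D(x_1)^{i-t_1}$ to $x_2^{t_2}f$ one must be able to pull the factor $x_2^{t_2}$ back out so that Lemma~\ref{lem:dimM1}(i) is applicable with $k=n-t_2$ and the resulting polynomial lies in $\M_n(x_2)$; note that $\D(x_1)$ does not literally commute with multiplication by $x_2^{t_2}$, it only preserves the subspace $x_2^{t_2}\cdot\R[x_2,x_3]$, which is exactly your lemma. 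Your other deviation --- applying Lemma~\ref{lem:dimM1}(i) with $k=n$ to parametrize all of $\M_n(x_1)$ and then restricting, versus the paper's direct application with $k=n-t_2$ to elements already known to lie in the intersection --- is a bookkeeping choice that yields the same isomorphism onto a space of dimension $2(n-t_1-t_2+1)$. The degenerate cases ($t_j=\infty$ and $n<t_1$) and the cyclic permutation giving parts (ii) and (iii) are handled correctly.
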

\begin{proof}
(i): By Definition \ref{defn:M(x1)} the statement (i) is true for $n<t_1+t_2$. Suppose that $n\geq t_1+t_2$. Let 
$
f\in
\bigoplus_{i=t_1}^n\C^2\otimes (x_1^i\cdot \R[x_2,x_3]_{n-i})
\cap 
\bigoplus_{i=t_2}^n\C^2\otimes (x_2^i\cdot \R[x_1,x_3]_{n-i})$. 
Observe that there are unique $f_{n-t_2-i}\in \C^2\otimes \R[x_2,x_3]_{n-t_2-i}$ for all $i=t_1,t_1+1,\ldots,n-t_2$ such that 
$$
f=\sum_{i=t_1}^{n-t_2}
(-1)^{\lceil \frac{i}{2}\rceil}
e_1^i \otimes [x_1]_i^{n-t_2}x_2^{t_2} (f_{n-t_2-i}). 
$$
Using Lemma \ref{lem:dimM1}(i) yields that $f\in \M_n$ if and only if $f_{n-t_2-i}=\D(x_1)^{i-t_1}(f_{n-t_1-t_2})$ for all $i=t_1,t_1+1,\ldots,n-t_2$. 
By the above comment there exists a linear isomorphism $\C^2\otimes \R[x_2,x_3]_{n-t_1-t_2}\to \M_n(x_1)\cap \M_n(x_2)$ that sends 
\begin{eqnarray*}
f &\mapsto& 
\sum_{i=t_1}^{n-t_2}
(-1)^{\lceil \frac{i}{2}\rceil}
e_1^i \otimes [x_1]_i^{n-t_2}x_2^{t_2} (\D(x_1)^{i-t_1}(f)) 
\end{eqnarray*}
for all $f\in \C^2\otimes \R[x_2,x_3]_{n-t_1-t_2}$. Therefore (i) follows.

(ii), (iii): By similar arguments the statements (ii), (iii) hold.
\end{proof}

\begin{prop}\label{prop:dimM(x123)}
Let $n\geq 0$ denote an integer. Then 
$$
\dim \M_n(x_1)\cap \M_n(x_2)\cap \M_n(x_3)=
\left\{
\begin{array}{ll}
0
\qquad 
&\hbox{if $n<t_1+t_2+t_3$},
\\
2 (n-t_1-t_2-t_3+1)
\qquad 
&\hbox{if $n\geq t_1+t_2+t_3$}.
\end{array}
\right.
$$
\end{prop}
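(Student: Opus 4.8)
The plan is to recognize $\M_n(x_1)\cap\M_n(x_2)\cap\M_n(x_3)$ as a copy of a space of Dunkl monogenics for the \emph{reflected} multiplicity $(-k_1,-k_2,-k_3)$, and then to quote the result of \cite{BI2016} for nonnegative multiplicities recalled in the introduction. First I would dispose of the degenerate cases. If one of $t_1,t_2,t_3$ equals $\infty$, then the corresponding space among $\M_n(x_1),\M_n(x_2),\M_n(x_3)$ is the zero subspace by Definition \ref{defn:M(x1)}, so the triple intersection is $0$; since also $t_1+t_2+t_3=\infty>n$, this agrees with the asserted formula. So from now on assume $t_1,t_2,t_3$ are all finite; then each $t_j=-2k_j$ is an odd \emph{positive} integer and $k_j=-t_j/2<0$. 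Now observe that a polynomial in $\C^2\otimes\R[x_1,x_2,x_3]_n$ lies in $\bigoplus_{i=t_1}^n\C^2\otimes(x_1^i\cdot\R[x_2,x_3]_{n-i})$ exactly when each of its monomials has $x_1$-degree at least $t_1$, equivalently when it is divisible by $x_1^{t_1}$; and similarly for $x_2$ and $x_3$. Hence the intersection of these three subspaces of $\C^2\otimes\R[x_1,x_2,x_3]_n$ is precisely the set of multiples of $x_1^{t_1}x_2^{t_2}x_3^{t_3}$, and $g\mapsto x_1^{t_1}x_2^{t_2}x_3^{t_3}g$ is a linear isomorphism from $\C^2\otimes\R[x_1,x_2,x_3]_{n-t_1-t_2-t_3}$ onto it (this space, and the intersection, being $0$ when $n<t_1+t_2+t_3$).

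The crux is a conjugation identity for the Dunkl operators. Writing $T_j^{[\ell]}$ for the Dunkl operator in $x_j$ with multiplicity $\ell$ (so $T_j=T_j^{[k_j]}$), a short direct computation using that $t_j$ is odd, so $(-x_j)^{t_j}=-x_j^{t_j}$, together with $t_j+k_j=-k_j$, shows $T_j^{[k_j]}(x_j^{t_j}h)=x_j^{t_j}\,T_j^{[-k_j]}(h)$ for every polynomial $h$. Since $T_j$ involves only the variable $x_j$, it commutes with multiplication by $x_\ell^{t_\ell}$ for $\ell\neq j$; combining this with the identity and with the fact that the Clifford generators $e_j$ act only on the $\C^2$ factor, one gets $\D(x_1^{t_1}x_2^{t_2}x_3^{t_3}g)=x_1^{t_1}x_2^{t_2}x_3^{t_3}\,\widetilde\D(g)$, where $\widetilde\D=e_1\otimes T_1^{[-k_1]}+e_2\otimes T_2^{[-k_2]}+e_3\otimes T_3^{[-k_3]}$ is the Dirac--Dunkl operator for the multiplicity $(-k_1,-k_2,-k_3)$. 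As multiplication by $x_1^{t_1}x_2^{t_2}x_3^{t_3}$ is injective on polynomials, intersecting with $\M_n=\ker\D$ shows that $\M_n(x_1)\cap\M_n(x_2)\cap\M_n(x_3)$ is isomorphic, as a vector space, to the space $\widetilde\M_{\,n-t_1-t_2-t_3}$ of Dunkl monogenics of degree $n-t_1-t_2-t_3$ for the multiplicity $(-k_1,-k_2,-k_3)$.

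It remains to conclude. If $n<t_1+t_2+t_3$ there are no polynomials of degree $n-t_1-t_2-t_3<0$, so the space is $0$. If $n\geq t_1+t_2+t_3$, the multiplicities $-k_j=t_j/2$ are all positive, hence nonnegative, so the result of \cite{BI2016} recalled in the introduction gives $\dim\widetilde\M_{\,n-t_1-t_2-t_3}=2(n-t_1-t_2-t_3+1)$, which is the claim.

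The only genuine computation is the conjugation identity $T_j^{[k_j]}(x_j^{t_j}h)=x_j^{t_j}T_j^{[-k_j]}(h)$, and I expect that to be the main obstacle only in the sense of bookkeeping: once it is in hand, pushing it through $\D$ is routine because the Clifford generators touch only the $\C^2$ factor, and the rest of the argument is the elementary identification of the relevant subspace of $\C^2\otimes\R[x_1,x_2,x_3]_n$ with the multiples of $x_1^{t_1}x_2^{t_2}x_3^{t_3}$, together with a check that this is consistent with the zero-subspace convention of Definition \ref{defn:M(x1)} in the cases $n<t_j$.
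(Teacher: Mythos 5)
Your proof is correct, but it takes a genuinely different route from the paper's. The paper proves this proposition by the same technique used for Propositions \ref{prop:dimM(x1)} and \ref{prop:dimM(x12)}: writing a general element of the triple intersection of the weight subspaces in the form $\sum_i (-1)^{\lceil i/2\rceil} e_1^i\otimes [x_1]_i^{n-t_2-t_3}x_2^{t_2}x_3^{t_3}(f_{n-t_2-t_3-i})$, invoking Lemma \ref{lem:dimM1}(i) to characterize when such an element lies in $\M_n$, and thereby constructing an explicit linear isomorphism from $\C^2\otimes\R[x_2,x_3]_{n-t_1-t_2-t_3}$ onto $\M_n(x_1)\cap\M_n(x_2)\cap\M_n(x_3)$. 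Your approach instead uses the intertwining identity $T_j^{[k_j]}(x_j^{t_j}h)=x_j^{t_j}T_j^{[-k_j]}(h)$ (valid because $t_j=-2k_j$ is odd and positive) to exhibit multiplication by $x_1^{t_1}x_2^{t_2}x_3^{t_3}$ as a linear isomorphism from the monogenic space $\widetilde\M_{n-t_1-t_2-t_3}$ for the reflected multiplicity $(-k_1,-k_2,-k_3)$ onto the triple intersection, and then quotes the dimension count from \cite{BI2016} for nonnegative multiplicities. What your approach buys is a conceptually transparent interpretation of the triple intersection as a genuine monogenic space with a shifted multiplicity function, and a shorter argument that sidesteps Lemma \ref{lem:dimM1}; what it costs is an appeal to an external result (the $\dim\M_n=2(n+1)$ theorem of \cite{BI2016}) where the paper remains self-contained via Lemma \ref{lem:dimM1}, which it has to prove anyway for the companion Propositions \ref{prop:dimM(x1)} and \ref{prop:dimM(x12)}. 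Your treatment of the degenerate case where some $t_j=\infty$, and your verification that the reflected multiplicities $-k_j=t_j/2$ are indeed nonnegative, are both correct and necessary; the computation of the intertwining identity via $(-x_j)^{t_j}=-x_j^{t_j}$ and $t_j=-2k_j$ also checks out.
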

\begin{proof}
By Definition \ref{defn:M(x1)} the statement is true for $n<t_1+t_2+t_3$. Suppose that $n\geq t_1+t_2+t_3$. Let $f$ lie in $
\bigoplus_{i=t_1}^n\C^2\otimes (x_1^i\cdot \R[x_2,x_3]_{n-i})
\cap
\bigoplus_{i=t_2}^n\C^2\otimes (x_2^i\cdot \R[x_1,x_3]_{n-i})
\cap
\bigoplus_{i=t_3}^n\C^2\otimes (x_3^i\cdot \R[x_1,x_2]_{n-i})$. 
Observe that there are unique $f_{n-t_2-t_3-i}\in \C^2\otimes \R[x_2,x_3]_{n-t_2-t_3-i}$ for all $i=t_1,t_1+1,\ldots,n-t_2-t_3$ such that 
$$
f=\sum_{i=t_1}^{n-t_2-t_3}
(-1)^{\lceil \frac{i}{2}\rceil}
e_1^i \otimes [x_1]_i^{n-t_2-t_3} x_2^{t_2} x_3^{t_3} (f_{n-t_2-t_3-i}). 
$$
Using Lemma \ref{lem:dimM1}(i) yields that $f\in \M_n$ if and only if $f_{n-t_2-t_3-i}=\D(x_1)^{i-t_1}(f_{n-t_1-t_2-t_3})$ for all $i=t_1,t_1+1,\ldots,n-t_2-t_3$. 
By the above comment there exists a linear isomorphism $\C^2\otimes \R[x_2,x_3]_{n-t_1-t_2-t_3}\to \M_n(x_1)\cap \M_n(x_2)\cap \M_n(x_3)$ that sends 
\begin{eqnarray*}
f &\mapsto& 
\sum_{i=t_1}^{n-t_2-t_3}
(-1)^{\lceil \frac{i}{2}\rceil}
e_1^i \otimes [x_1]_i^{n-t_2-t_3} x_2^{t_2} x_3^{t_3} (\D(x_1)^{i-t_1}(f)) 
\end{eqnarray*}
for all $f\in \C^2\otimes \R[x_2,x_3]_{n-t_1-t_2-t_3}$. The proposition follows.
\end{proof}

\section{Three decompositions of $\M_n$}\label{s:decMn}

In this section we study the decompositions of $\M_n$ relative to $\M_n(x_1),\M_n(x_2),\M_n(x_3)$.

\begin{defn}\label{defn:N123}
Let $n\geq 0$ denote an integer. Define 
\begin{align*}
\N_n(x_1)&=
\M_n\cap 
\bigoplus_{i=0}^{\min\{n,t_1-1\}}
 \C^2\otimes (x_1^i\cdot \R[x_2,x_3]_{n-i}),
\\
\N_n(x_2)&=
\M_n\cap 
\bigoplus_{i=0}^{\min\{n,t_2-1\}}
 \C^2\otimes (x_2^i\cdot \R[x_1,x_3]_{n-i}),
\\
\N_n(x_3)&=
\M_n\cap 
\bigoplus_{i=0}^{\min\{n,t_3-1\}}
\C^2\otimes (x_3^i\cdot \R[x_1,x_2]_{n-i}).
\end{align*}
\end{defn}

\begin{thm}\label{thm:decM}
Let $n\geq 0$ denote an integer. Then the following hold:
\begin{enumerate}
\item $\M_n=\M_n(x_1)\oplus\N_n(x_1)$.

\item $\M_n=\M_n(x_2)\oplus\N_n(x_2)$.

\item $\M_n=\M_n(x_3)\oplus\N_n(x_3)$.
\end{enumerate}
\end{thm}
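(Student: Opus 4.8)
The plan is to prove statement (i); statements (ii) and (iii) follow by the same argument (permuting the roles of the three variables). We must show that $\M_n = \M_n(x_1) \oplus \N_n(x_1)$, i.e. that the sum is direct and exhausts $\M_n$. Since $\M_n(x_1)$ and $\N_n(x_1)$ are both defined as $\M_n$ intersected with certain sums of homogeneous monomial subspaces of $\C^2 \otimes \R[x_1,x_2,x_3]_n$, the natural first step is to observe the ambient vector-space decomposition
\begin{gather*}
\C^2 \otimes \R[x_1,x_2,x_3]_n
=
\Bigl(\bigoplus_{i=0}^{\min\{n,t_1-1\}} \C^2\otimes (x_1^i\cdot \R[x_2,x_3]_{n-i})\Bigr)
\oplus
\Bigl(\bigoplus_{i=t_1}^{n} \C^2\otimes (x_1^i\cdot \R[x_2,x_3]_{n-i})\Bigr),
\end{gather*}
which is immediate by sorting monomials according to the exponent of $x_1$ (the two index ranges $0\le i\le \min\{n,t_1-1\}$ and $t_1\le i\le n$ partition $\{0,1,\dots,n\}$; when $n<t_1$ the second range is empty, consistent with the convention $\M_n(x_1)=0$). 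Intersecting this direct sum with $\M_n$ shows immediately that $\M_n(x_1)\cap\N_n(x_1)=0$, so the sum $\M_n(x_1)+\N_n(x_1)$ is direct; it remains only to show it equals all of $\M_n$.

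For surjectivity, the cleanest route is a dimension count. By Proposition \ref{prop:dimM(x1)} we have $\dim\M_n(x_1) = 2(n-t_1+1)$ when $n\ge t_1$ and $0$ otherwise. One then needs $\dim\N_n(x_1)$; here I would run the same elimination argument as in Lemma \ref{lem:dimM1}. An element $f=\sum_{i=0}^{\min\{n,t_1-1\}} e_1^i\otimes g_{n-i}$ of the ambient subspace lies in $\ker\D$ precisely when, writing $\D=\D(x_1)+e_1\otimes T_1$ and collecting coefficients of successive powers of $e_1$, the recursion $T_1(\text{coefficient at level }i) = -\D(x_1)(\text{coefficient at level }i-1)$ holds across the truncated range $0\le i\le \min\{n,t_1-1\}$; since $T_1(x_1^j\cdot\text{stuff})$ has leading coefficient $m_1^{(j)}$ which is \emph{nonzero} for $j$ in this range (the only vanishing $m_1^{(j)}$ occurs at $j=t_1$, which is excluded), the recursion determines $g_{n-1},\dots$ uniquely from $g_n$, giving a linear isomorphism $\C^2\otimes\R[x_2,x_3]_n \to \N_n(x_1)$, hence $\dim\N_n(x_1) = 2(n+1)$ when $n<t_1$, and more generally one must track that the recursion is consistent and terminates correctly. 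In any case $\dim\M_n(x_1)+\dim\N_n(x_1) = \dim\M_n$ can be checked by comparing with the known value of $\dim\M_n$, or — more robustly — one argues directly that every $f\in\M_n$ decomposes: write $f = f' + f''$ using the ambient splitting above, and show $f''\in\M_n(x_1)$ and $f'\in\N_n(x_1)$ separately by verifying each piece is itself $\D$-closed, using that $\D$ respects the two index blocks in the following sense: $\D$ maps $\bigoplus_{i\ge t_1}$ into itself (since $T_1$ lowers the $x_1$-degree by one but the image term $e_1^{i}\otimes T_1(x_1^i\cdots)$ with $i\ge t_1$ either stays in the block or, at $i=t_1$, vanishes because $m_1^{(t_1)}=0$), while $\D$ maps the low block into the low block plus possibly the single boundary term at degree $t_1$ — which again vanishes.

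The main obstacle is precisely the boundary behaviour at $i=t_1$: one must verify that $\D$ does not mix the two blocks, and this hinges on the vanishing $m_1^{(t_1)}=0$, i.e. on the fact that $t_1 = -2k_1$ is an odd negative integer and $m_1^{(t_1)} = t_1 + (1-(-1)^{t_1})k_1 = t_1 + 2k_1 = 0$ (and on $m_1^{(j)}\ne 0$ for $0\le j<t_1$, which needs a short check that no earlier $m_1^{(j)}$ vanishes). Once that block-stability of $\D$ is established, both the directness and the surjectivity follow formally, and (ii),(iii) are obtained by the evident symmetry. I would state the block-stability as a small lemma and then dispatch (i)–(iii) in a few lines.
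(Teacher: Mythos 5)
Your proposal is correct and follows essentially the same route as the paper: decompose the ambient space by $x_1$-degree into the low block ($0\le i\le t_1-1$) and high block ($i\ge t_1$), observe that the intersection with $\M_n$ makes the sum automatically direct, and then establish surjectivity by showing $\D$ respects the two blocks — which is exactly how the paper argues (write $f=f_1+f_2$, show $\D(f_1)$ and $\D(f_2)$ land in disjoint subspaces, conclude both vanish). The key technical point, $m_1^{(t_1)}=0$, is correctly identified as what prevents $e_1\otimes T_1$ from leaking the $i=t_1$ layer of the high block down into the low block.

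Two minor remarks. First, the sentence about $\D$ mapping the low block ``into the low block plus possibly the single boundary term at degree $t_1$'' is misdirected: $\D$ can only lower or preserve the $x_1$-degree, so the low block maps into the low block unconditionally; the boundary issue, and the place where $m_1^{(t_1)}=0$ is actually needed, is entirely on the high-block side (preventing the $i=t_1$ term from dropping to degree $t_1-1$). Second, the parenthetical checks you flag — that $m_1^{(j)}\ne0$ for $0\le j<t_1$, and the dimension count via $\dim\N_n(x_1)$ — are not required for this theorem; they belong to the subsequent dimension computations (Lemma~\ref{lem1:dimN1}, Proposition~\ref{prop:dimN(x1)}), not to the decomposition itself. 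The direct argument you give in the second half of the surjectivity discussion is the right one and suffices on its own.
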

\begin{proof}
(i): Suppose that $n<t_1$. Then $\M_n(x_1)=\{0\}$ and $\N_n(x_1)=\M_n$ by Definitions \ref{defn:M(x1)} and \ref{defn:N123}. The statement (i) holds for $n<t_1$. 

Suppose that $n\geq t_1$. 
Let $f\in \M_n$ be given. Since $f\in \C^2\otimes \R[x_1,x_2,x_3]_n$ there are unique 
$$
f_1\in \bigoplus\limits_{i=t_1}^n \C^2\otimes (x_1^i\cdot \R[x_2,x_3]_{n-i}),\qquad 
f_2\in \bigoplus\limits_{i=0}^{t_1-1} \C^2\otimes (x_1^i\cdot \R[x_2,x_3]_{n-i})
$$ such that 
$
f=f_1+f_2.
$
Since $\D(f)=0$ it follows that 
$\D(f_1)=-\D(f_2)$. 
Since 
$
\D(f_1)\in \bigoplus\limits_{i=t_1}^{n-1} \C^2\otimes (x_1^i\cdot \R[x_2,x_3]_{n-i-1})
$
and 
$
\D(f_2)\in
\bigoplus\limits_{i=0}^{t_1-2} \C^2\otimes (x_1^i\cdot \R[x_2,x_3]_{n-i-1})$, 
it follows that $\D(f_1)=\D(f_2)=0$. Hence $f_1\in \M_n(x_1)$ and $f_2\in \N_n(x_1)$. The statement (i) holds for $n\geq t_1$.

(ii), (iii): By similar arguments (ii) and (iii) follow.
\end{proof}

\begin{lem}\label{lem1:dimN1}
Let $n\geq 0$ denote an integer. Then the following hold:
\begin{enumerate}
\item Suppose that $n< t_1$. Then given any $f_{n-i}\in \C^2\otimes \R[x_2,x_3]_{n-i}$ for all $i=0,1,\ldots, n$ the polynomial 
\begin{gather}\label{e:dimN1}
\sum_{i=0}^n (-1)^{\lceil \frac{i}{2}\rceil}
e_1^i\otimes
[x_1]^n_{i}
(f_{n-i})\in \M_n
\end{gather}
if and only if $f_{n-i}=\D(x_1)^i (f_n)$ for all $i=0,1,\ldots,n$.

\item Suppose that $n< t_2$. Then given any $f_{n-i}\in \C^2\otimes \R[x_1,x_3]_{n-i}$ for all $i=0,1,\ldots, n$ the polynomial 
$$
\sum_{i=0}^n (-1)^{\lceil \frac{i}{2}\rceil}
e_2^i\otimes
[x_2]^n_{i}
(f_{n-i})\in \M_n
$$
if and only if $f_{n-i}=\D(x_2)^i (f_n)$ for all $i=0,1,\ldots,n$.

\item Suppose that $n< t_3$. Then given any $f_{n-i}\in \C^2\otimes \R[x_1,x_2]_{n-i}$ for all $i=0,1,\ldots, n$ the polynomial 
$$
\sum_{i=0}^n (-1)^{\lceil \frac{i}{2}\rceil}
e_3^i\otimes
[x_3]^n_{i}
(f_{n-i})\in \M_n
$$
if and only if $f_{n-i}=\D(x_3)^i (f_n)$ for all $i=0,1,\ldots,n$.
\end{enumerate}
\end{lem}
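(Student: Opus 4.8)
The plan is to proceed exactly as in the proof of Lemma~\ref{lem:dimM1}(i), with the summation index now running over $0\le i\le n$ and with the hypothesis $n<t_1$ replacing the hypothesis $k\ge t_1$; the latter change is precisely what keeps the relevant coefficients from vanishing. First I would let $f$ denote the left-hand side of~(\ref{e:dimN1}) and use $\D=\D(x_1)+e_1\otimes T_1$ to compute $\D(f)$ termwise. The operator $e_1\otimes T_1$ carries $e_1^i\otimes[x_1]^n_i(f_{n-i})$ to $e_1^{i+1}\otimes[x_1]^n_{i-1}(f_{n-i})$, since $T_1(x_1^ig)=m^{(i)}_1x_1^{i-1}g$ for $g$ independent of $x_1$, so that $m^{(i)}_1\cdot[x_1]^n_i=[x_1]^n_{i-1}$ on such $g$; and $\D(x_1)$ anticommutes past the $i$ Clifford factors $e_1$ with total sign $(-1)^i$ and then acts on the $\R[x_2,x_3]$-component, sending $e_1^i\otimes[x_1]^n_i(f_{n-i})$ to $(-1)^{\lceil 3i/2\rceil}e_1^i\otimes[x_1]^n_i(\D(x_1)(f_{n-i}))$ (using $i+\lceil i/2\rceil=\lceil 3i/2\rceil$).

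In the resulting $\D(x_1)$-sum the term $i=n$ vanishes because $\D(x_1)$ annihilates $\C^2\otimes\R[x_2,x_3]_0$, and in the $e_1\otimes T_1$-sum the term $i=0$ vanishes because $m^{(0)}_1=0$. After reindexing both sums over $1\le i\le n$ and using $e_1^2=1$ together with the identity $(-1)^{\lceil(3i-3)/2\rceil}=-(-1)^{\lceil i/2\rceil}$ already noted in the proof of Lemma~\ref{lem:dimM1}, I obtain
\[
\D(f)=\sum_{i=1}^{n}(-1)^{\lceil i/2\rceil}\,e_1^{i+1}\otimes[x_1]^n_{i-1}\bigl(f_{n-i}-\D(x_1)(f_{n-i+1})\bigr).
\]
Here the hypothesis $n<t_1$ enters: for $1\le h\le n$ one has $m^{(h)}_1=h$ if $h$ is even and $m^{(h)}_1=h+2k_1$ if $h$ is odd, and the latter vanishes only when $h=-2k_1=t_1$, which is excluded by $h\le n<t_1$; hence $m^{(h)}_1\ne 0$ for $1\le h\le n$ and so $[x_1]^n_{i-1}\ne 0$ for $i=1,\dots,n$. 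The $n$ summands on the right lie in pairwise distinct homogeneous components (their $x_1$-degrees are $0,1,\dots,n-1$), so they are linearly independent, whence $f\in\M_n$ if and only if $f_{n-i}=\D(x_1)(f_{n-i+1})$ for all $i=1,\dots,n$. A one-line induction on $i$ turns this chain of relations into $f_{n-i}=\D(x_1)^i(f_n)$ for $0\le i\le n$, giving~(i).

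For (ii) and (iii) I would run the same computation after cyclically relabeling the indices, using $\D(x_2)=e_3\otimes T_3+e_1\otimes T_1$ and $\D(x_3)=e_1\otimes T_1+e_2\otimes T_2$ and replacing $t_1$ by $t_2$, respectively $t_3$. I do not expect a genuine obstacle: the argument is essentially a transcription of the one in Lemma~\ref{lem:dimM1}, and the only points requiring care are verifying that the two boundary terms ($i=n$ and $i=0$) really drop out and observing that $n<t_1$ is exactly the condition keeping every $m^{(h)}_1$ with $1\le h\le n$ nonzero, so that the displayed formula can be read off coefficient by coefficient.
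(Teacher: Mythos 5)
Your proposal is correct and follows essentially the same route as the paper's proof of this lemma: split $\D=\D(x_1)+e_1\otimes T_1$, move the Clifford factor through $e_1^i$ to collect the sign $(-1)^{\lceil 3i/2\rceil}$, reindex and combine to get the single displayed sum, and then use $n<t_1$ to keep every $[x_1]^n_{i-1}$ nonzero. The extra bookkeeping you supply (why the $i=n$ and $i=0$ boundary terms drop, and the degree-by-degree linear independence of the summands) is accurate and just makes explicit what the paper leaves implicit.
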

\begin{proof}
(i): Let  $f$ denote the left-hand side of (\ref{e:dimN1}).
Using $\D=\D(x_1)+e_1\otimes T_1$ yields that  
\begin{align*}
\D(f)&=\sum_{i=0}^{n-1}
(-1)^{\lceil \frac{3i}{2}\rceil}
e_1^i
\otimes
[x_1]^n_i
(\D(x_1)(f_{n-i}))
+
\sum_{i=1}^n
(-1)^{\lceil \frac{i}{2}\rceil}
e_1^{i+1} 
\otimes
[x_1]^n_{i-1}
(f_{n-i})
\\
&=
\sum_{i=1}^n
(-1)^{\lceil \frac{3i-3}{2}\rceil}
e_1^{i-1}
\otimes
[x_1]^n_{i-1}
(
\D(x_1)(f_{n-i+1})
)
+
\sum_{i=1}^n
(-1)^{\lceil \frac{i}{2}\rceil}
e_1^{i+1} 
\otimes 
[x_1]^n_{i-1}
(f_{n-i}).
\end{align*}
Since $e_1^2=1$ and $(-1)^{\lceil \frac{3i-3}{2}\rceil}=(-1)^{\lceil \frac{i}{2}\rceil+1}$ for all integers $i$, it follows that 
$$
\D(f)=
\sum_{i=1}^n
(-1)^{\lceil \frac{i}{2}\rceil}
e_1^{i+1}
\otimes 
[x_1]^n_{i-1} (f_{n-i}-\D(x_1)(f_{n-i+1})).
$$
Since $n< t_1$ the term $[x_1]^n_{i-1}\not=0$ for all $i=1,2,\ldots,n$. Hence $f\in \M_n$ if and only if $f_{n-i}=\D(x_1)(f_{n-i+1})$ for all $i=1,2,\ldots,n$. The statement (i) follows.

(ii), (iii): By similar arguments the statements (ii), (iii) hold.
\end{proof}

\begin{lem}\label{lem2:dimN1}
Let $n\geq 0$ denote an integer. Then the following hold:
\begin{enumerate}
\item Suppose that $n\geq t_1$. Then given any $f_{n-i}\in \C^2\otimes \R[x_2,x_3]_{n-i}$ for all $i=0,1,\ldots, t_1-1$ the polynomial 
\begin{gather}\label{e2:dimN1}
\sum_{i=0}^{t_1-1} (-1)^{\lceil \frac{i}{2}\rceil}
e_1^i\otimes
[x_1]^{t_1-1}_{i}
(f_{n-i})\in \M_n
\end{gather}
if and only if $f_{n-i}=\D(x_1)^i (f_n)$ for all $i=0,1,\ldots,t_1-1$ and $\D(x_1)^{t_1}(f_n)=0$.

\item Suppose that $n\geq t_2$. Then given any $f_{n-i}\in \C^2\otimes \R[x_1,x_3]_{n-i}$ for all $i=0,1,\ldots, t_2-1$ the polynomial 
$$
\sum_{i=0}^{t_2-1} (-1)^{\lceil \frac{i}{2}\rceil}
e_2^i\otimes
[x_2]^{t_2-1}_{i}
(f_{n-i})\in \M_n
$$
if and only if $f_{n-i}=\D(x_2)^i (f_n)$ for all $i=0,1,\ldots,t_2-1$ and $\D(x_2)^{t_2}(f_n)=0$.

\item Suppose that $n\geq t_3$. Then given any $f_{n-i}\in \C^2\otimes \R[x_1,x_2]_{n-i}$ for all $i=0,1,\ldots, t_3-1$ the polynomial 
$$
\sum_{i=0}^{t_3-1} (-1)^{\lceil \frac{i}{2}\rceil}
e_3^i\otimes
[x_3]^{t_3-1}_{i}
(f_{n-i})\in \M_n
$$
if and only if $f_{n-i}=\D(x_3)^i (f_n)$ for all $i=0,1,\ldots,t_3-1$ and $\D(x_3)^{t_3}(f_n)=0$. 
\end{enumerate}
\end{lem}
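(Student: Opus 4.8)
The plan is to repeat the computation in the proof of Lemma~\ref{lem1:dimN1}(i) almost verbatim; the only genuinely new feature is a boundary term that this time does not vanish. Write $f$ for the left-hand side of (\ref{e2:dimN1}). Each summand $e_1^i\otimes[x_1]^{t_1-1}_i(f_{n-i})$ lies in $\C^2\otimes\R[x_1,x_2,x_3]_n$, so $f\in\M_n$ if and only if $\D(f)=0$, and it is enough to evaluate $\D(f)$.

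First I would split $\D=\D(x_1)+e_1\otimes T_1$ and apply it termwise. Since $e_2$ and $e_3$ each anticommute with $e_1$, one has $\D(x_1)(e_1^i\otimes g)=(-1)^i e_1^i\otimes\D(x_1)(g)$, while $T_1([x_1]^{t_1-1}_i)=[x_1]^{t_1-1}_{i-1}$ for $1\le i\le t_1-1$ and $T_1([x_1]^{t_1-1}_0)=0$ (the polynomial $[x_1]^{t_1-1}_0$ does not involve $x_1$). Hence
\begin{align*}
\D(f)&=\sum_{i=0}^{t_1-1}(-1)^{\lceil\frac{3i}{2}\rceil}e_1^i\otimes[x_1]^{t_1-1}_i\bigl(\D(x_1)(f_{n-i})\bigr)\\
&\quad+\sum_{i=1}^{t_1-1}(-1)^{\lceil\frac{i}{2}\rceil}e_1^{i+1}\otimes[x_1]^{t_1-1}_{i-1}(f_{n-i}).
\end{align*}
Reindexing the first sum by $i\mapsto i-1$ (so that it runs over $1\le i\le t_1$), using $e_1^2=1$ together with the identity $(-1)^{\lceil\frac{3i-3}{2}\rceil}=(-1)^{\lceil\frac{i}{2}\rceil+1}$ to merge it with the second sum over the common range $1\le i\le t_1-1$, and keeping the $i=t_1$ term of the first sum aside, I expect to obtain
\begin{align*}
\D(f)&=\sum_{i=1}^{t_1-1}(-1)^{\lceil\frac{i}{2}\rceil}e_1^{i+1}\otimes[x_1]^{t_1-1}_{i-1}\bigl(f_{n-i}-\D(x_1)(f_{n-i+1})\bigr)\\
&\quad+(-1)^{\lceil\frac{3t_1-3}{2}\rceil}e_1^{t_1-1}\otimes x_1^{t_1-1}\,\D(x_1)(f_{n-t_1+1}).
\end{align*}

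The $t_1$ terms on the right carry pairwise distinct $x_1$-degrees $0,1,\dots,t_1-1$, so they lie in independent subspaces of $\C^2\otimes\R[x_1,x_2,x_3]_{n-1}$; moreover $[x_1]^{t_1-1}_{i-1}\ne0$ for $1\le i\le t_1-1$ (none of $m^{(1)}_1,\dots,m^{(t_1-1)}_1$ is zero) and $x_1^{t_1-1}\ne0$. Hence $\D(f)=0$ if and only if $f_{n-i}=\D(x_1)(f_{n-i+1})$ for all $i=1,\dots,t_1-1$ together with $\D(x_1)(f_{n-t_1+1})=0$. A straightforward induction rewrites the first batch of equations as $f_{n-i}=\D(x_1)^i(f_n)$ for $i=0,1,\dots,t_1-1$, and then the last equation becomes $\D(x_1)^{t_1}(f_n)=0$; conversely these two conditions give back $\D(f)=0$. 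This proves (i), and (ii) and (iii) follow by the same argument after cyclically permuting the roles of $1,2,3$.

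The step that needs care — and the sole departure from Lemma~\ref{lem1:dimN1}(i) — is the surviving boundary term $e_1^{t_1-1}\otimes x_1^{t_1-1}\D(x_1)(f_{n-t_1+1})$. In Lemma~\ref{lem1:dimN1}(i) the analogous leftover term involved $\D(x_1)$ applied to a polynomial of degree $0$ and so vanished, whereas here $f_{n-t_1+1}$ has degree $n-t_1+1\ge1$ (because $n\ge t_1$), so the term is genuinely present, and it is precisely what produces the extra condition $\D(x_1)^{t_1}(f_n)=0$. I would also check that nothing from the $e_1\otimes T_1$ part can cancel it, since after the reindexing that part reaches only $x_1$-degree $t_1-2$.
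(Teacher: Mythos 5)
Your argument is correct and follows the same route as the paper's proof: split $\D=\D(x_1)+e_1\otimes T_1$, expand termwise using anticommutativity and $T_1([x_1]^{t_1-1}_i)=[x_1]^{t_1-1}_{i-1}$, reindex and merge via $(-1)^{\lceil(3i-3)/2\rceil}=(-1)^{\lceil i/2\rceil+1}$, and isolate the surviving boundary term $x_1^{t_1-1}\,\D(x_1)(f_{n-t_1+1})$, which lives at the unique $x_1$-degree $t_1-1$ and therefore contributes the extra condition $\D(x_1)^{t_1}(f_n)=0$. Your side remarks (that the analogous boundary term in Lemma~\ref{lem1:dimN1} vanished because it hit a degree-zero polynomial, and that the $e_1\otimes T_1$ contribution never reaches $x_1$-degree $t_1-1$) are both accurate and merely make explicit what the paper leaves implicit.
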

\begin{proof}
(i): Let  $f$ denote the left-hand side of (\ref{e2:dimN1}).
Using $\D=\D(x_1)+e_1\otimes T_1$ yields that  
\begin{align*}
\D(f)&=\sum_{i=0}^{t_1-1}
(-1)^{\lceil \frac{3i}{2}\rceil}
e_1^i
\otimes
[x_1]^{t_1-1}_i
(\D(x_1)(f_{n-i}))
+
\sum_{i=1}^{t_1-1}
(-1)^{\lceil \frac{i}{2}\rceil}
e_1^{i+1} 
\otimes
[x_1]^{t_1-1}_{i-1}
(f_{n-i})
\\
&=
\sum_{i=1}^{t_1}
(-1)^{\lceil \frac{3i-3}{2}\rceil}
e_1^{i-1}
\otimes
[x_1]^{t_1-1}_{i-1}
(
\D(x_1)(f_{n-i+1})
)
+
\sum_{i=1}^{t_1-1}
(-1)^{\lceil \frac{i}{2}\rceil}
e_1^{i+1} 
\otimes 
[x_1]^{t_1-1}_{i-1}
(f_{n-i}).
\end{align*}
Since $e_1^2=1$ and $(-1)^{\lceil \frac{3i-3}{2}\rceil}=(-1)^{\lceil \frac{i}{2}\rceil+1}$ for all integers $i$, it follows that $\D(f)$ is equal to 
$$
(-1)^{\frac{t_1-1}{2}} 
(1\otimes x_1^{t_1-1})
(\D(x_1)(f_{n-t_1+1}))
+
\sum_{i=1}^{t_1-1}
(-1)^{\lceil \frac{i}{2}\rceil}
e_1^{i+1}
\otimes 
[x_1]^{t_1-1}_{i-1} (f_{n-i}-\D(x_1)(f_{n-i+1})).
$$
Since $[x_1]^{t_1-1}_{i-1}\not=0$ for all $i=1,2,\ldots,t_1-1$. Hence $f\in \M_n$ if and only if $f_{n-i}=\D(x_1)(f_{n-i+1})$ for all $i=1,2,\ldots,t_1-1$ and $\D(x_1)(f_{n-t_1+1})=0$. The statement (i) follows.

(ii), (iii): By similar arguments the statements (ii), (iii) hold.
\end{proof}

\begin{prop}\label{prop:dimN(x1)}
Let $n\geq 0$ denote an integer. Then the following hold:
\begin{enumerate}
\item 
$
\dim \N_n(x_1)=
\left\{
\begin{array}{ll}
2(n+1)
\qquad 
&\hbox{if $n< t_1$},
\\
{\rm nullity} 
\left(\D(x_1)^{t_1}
\big|_{\C^2\otimes \R[x_2,x_3]_n}
\right)
\qquad 
&\hbox{if $n\geq t_1$}.
\end{array}
\right.
$ 

\item 
$
\dim \N_n(x_2)=
\left\{
\begin{array}{ll}
2(n+1)
\qquad 
&\hbox{if $n< t_2$},
\\
{\rm nullity} 
\left(\D(x_2)^{t_2}
\big|_{\C^2\otimes \R[x_1,x_3]_n}
\right)
\qquad 
&\hbox{if $n\geq t_2$}.
\end{array}
\right.
$ 

\item 
$
\dim \N_n(x_3)=
\left\{
\begin{array}{ll}
2(n+1)
\qquad 
&\hbox{if $n< t_3$},
\\
{\rm nullity} 
\left(\D(x_3)^{t_3}
\big|_{\C^2\otimes \R[x_1,x_2]_n}
\right)
\qquad 
&\hbox{if $n\geq t_3$}.
\end{array}
\right.
$ 
\end{enumerate}
\end{prop}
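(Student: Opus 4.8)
The plan is to read off both cases of each of (i)--(iii) directly from Lemmas~\ref{lem1:dimN1} and \ref{lem2:dimN1}; by the symmetry among $x_1,x_2,x_3$ it suffices to treat (i). The first step is to unwind Definition~\ref{defn:N123}. Since $\min\{n,t_1-1\}$ equals $n$ when $n<t_1$ and equals $t_1-1$ when $n\geq t_1$, we have, in the case $n<t_1$,
$$
\N_n(x_1)=\M_n\cap\bigoplus_{i=0}^{n}\C^2\otimes(x_1^i\cdot\R[x_2,x_3]_{n-i})=\M_n,
$$
because $\bigoplus_{i=0}^{n}\C^2\otimes(x_1^i\cdot\R[x_2,x_3]_{n-i})$ is all of $\C^2\otimes\R[x_1,x_2,x_3]_n$; and in the case $n\geq t_1$ (so that $t_1$ is finite),
$$
\N_n(x_1)=\M_n\cap\bigoplus_{i=0}^{t_1-1}\C^2\otimes(x_1^i\cdot\R[x_2,x_3]_{n-i}).
$$

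For the case $n<t_1$, I would invoke Lemma~\ref{lem1:dimN1}(i). Since the scalar $\prod_{h=i+1}^{n}m^{(h)}_1$ attached to $[x_1]^n_i$ is nonzero for each $0\leq i\leq n$ (from $m^{(h)}_1=h+(1-(-1)^h)k_1$ one checks $m^{(h)}_1=0$ forces $h=t_1$, while here $h\leq n<t_1$), every $g\in\C^2\otimes\R[x_1,x_2,x_3]_n$ is uniquely of the form $\sum_{i=0}^{n}(-1)^{\lceil i/2\rceil}e_1^i\otimes[x_1]^n_i(f_{n-i})$, and Lemma~\ref{lem1:dimN1}(i) identifies exactly which such $g$ lie in $\M_n$. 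Consequently $f\mapsto\sum_{i=0}^{n}(-1)^{\lceil i/2\rceil}e_1^i\otimes[x_1]^n_i(\D(x_1)^i(f))$ is a linear isomorphism $\C^2\otimes\R[x_2,x_3]_n\to\M_n=\N_n(x_1)$ (injectivity from the $i=0$ component, surjectivity from the equivalence in the lemma), whence $\dim\N_n(x_1)=\dim(\C^2\otimes\R[x_2,x_3]_n)=2(n+1)$.

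For the case $n\geq t_1$ the argument runs the same way with $[x_1]^{t_1-1}_i$ in place of $[x_1]^n_i$ and Lemma~\ref{lem2:dimN1}(i) in place of Lemma~\ref{lem1:dimN1}(i): the scalar $\prod_{h=i+1}^{t_1-1}m^{(h)}_1$ is again nonzero for $0\leq i\leq t_1-1$, so every element of $\bigoplus_{i=0}^{t_1-1}\C^2\otimes(x_1^i\cdot\R[x_2,x_3]_{n-i})$ is uniquely $\sum_{i=0}^{t_1-1}(-1)^{\lceil i/2\rceil}e_1^i\otimes[x_1]^{t_1-1}_i(f_{n-i})$, and by Lemma~\ref{lem2:dimN1}(i) it lies in $\M_n$ precisely when $f_{n-i}=\D(x_1)^i(f_n)$ for $0\leq i\leq t_1-1$ and $\D(x_1)^{t_1}(f_n)=0$. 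Hence $f\mapsto\sum_{i=0}^{t_1-1}(-1)^{\lceil i/2\rceil}e_1^i\otimes[x_1]^{t_1-1}_i(\D(x_1)^i(f))$ restricts to a linear isomorphism $\ker\bigl(\D(x_1)^{t_1}|_{\C^2\otimes\R[x_2,x_3]_n}\bigr)\to\N_n(x_1)$, giving $\dim\N_n(x_1)={\rm nullity}\bigl(\D(x_1)^{t_1}|_{\C^2\otimes\R[x_2,x_3]_n}\bigr)$. Parts (ii) and (iii) then follow verbatim using parts (ii), (iii) of Lemmas~\ref{lem1:dimN1} and \ref{lem2:dimN1}. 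There is no genuine obstacle: the substantive content is entirely in the two cited lemmas, and the only point needing care is verifying that the displayed maps land in, and surject onto, exactly $\N_n(x_1)$, which reduces to the nonvanishing of $\prod m^{(h)}_1$ over index ranges lying strictly below the threshold $t_1$ — immediate from the definition of $t_1$.
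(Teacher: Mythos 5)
Your argument is correct and matches the paper's proof essentially step for step: both cases of (i) are read off from Lemmas \ref{lem1:dimN1}(i) and \ref{lem2:dimN1}(i) via the same linear isomorphisms, and your extra remark that the coefficients $\prod m_1^{(h)}$ are nonzero over the relevant ranges (because $m_1^{(h)}=0$ with $h\geq 1$ forces $h=t_1$) is exactly the nonvanishing the paper relies on implicitly. Nothing further is needed.
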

\begin{proof}
(i): Suppose that $n<t_1$. By Lemma \ref{lem1:dimN1}(i) there is a linear isomorphism $\C^2\otimes \R[x_2,x_3]_n \to \N_n(x_1)$ that sends 
\begin{eqnarray*}
f &\mapsto & 
\sum_{i=0}^n
(-1)^{\lceil \frac{i}{2}\rceil}
e_1^i
\otimes 
[x_1]^{n}_{i}
(\D(x_1)^i (f))
\end{eqnarray*}
for all $f\in \C^2\otimes \R[x_2,x_3]_n$. The statement (i) holds for $n<t_1$. Suppose that $n\geq t_1$. 
By Lemma \ref{lem2:dimN1}(i) there is a linear isomorphism $\ker\left(\D(x_1)^{t_1}\big|_{\C^2\otimes \R[x_2,x_3]_n}\right)\to\N_n(x_1)$ that sends 
\begin{eqnarray*}
f 
&\mapsto &
\sum_{i=0}^{t_1-1}
(-1)^{\lceil \frac{i}{2}\rceil}
e_1^i
\otimes
[x_1]^{t_1-1}_{i}
(\D(x_1)^i (f))
\end{eqnarray*}
for all $f\in \ker\left(\D(x_1)^{t_1}\big|_{\C^2\otimes \R[x_2,x_3]_n}\right)$. The statement (i) holds for $n\geq t_1$.

(ii), (iii): By similar arguments (ii) and (iii) follow.
\end{proof}

\begin{cor}\label{cor:dimMn}
Let $n\geq 0$ denote an integer. Then the following hold:
\begin{enumerate}
\item $
\dim \M_n
=
\left\{
\begin{array}{ll}
2(n+1)
\qquad 
&\hbox{if $n< t_1$},
\\
2(n-t_1+1)
+
{\rm nullity}
\left(
\D(x_1)^{t_1}|_{\C^2\otimes \R[x_2,x_3]_n}
\right)
\qquad 
&\hbox{if $n\geq t_1$}.
\end{array}
\right.
$ 

\item $
\dim \M_n
=
\left\{
\begin{array}{ll}
2(n+1)
\qquad 
&\hbox{if $n< t_2$},
\\
2(n-t_2+1)
+
{\rm nullity}
\left(
\D(x_2)^{t_2}|_{\C^2\otimes \R[x_1,x_3]_n}
\right)
\qquad 
&\hbox{if $n\geq t_2$}.
\end{array}
\right.
$ 

\item $
\dim \M_n
=
\left\{
\begin{array}{ll}
2(n+1)
\qquad 
&\hbox{if $n< t_3$},
\\
2(n-t_3+1)
+
{\rm nullity}
\left(
\D(x_3)^{t_3}|_{\C^2\otimes \R[x_1,x_2]_n}
\right)
\qquad 
&\hbox{if $n\geq t_3$}.
\end{array}
\right.
$ 
\end{enumerate}
\end{cor}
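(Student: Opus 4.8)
The plan is to combine the direct-sum decomposition of Theorem \ref{thm:decM} with the dimension formulas already established for its two summands, running the same argument three times, once for each index. For part (i), I would start from $\M_n=\M_n(x_1)\oplus\N_n(x_1)$ (Theorem \ref{thm:decM}(i)), so that $\dim\M_n=\dim\M_n(x_1)+\dim\N_n(x_1)$. Then I would substitute the value of $\dim\M_n(x_1)$ given by Proposition \ref{prop:dimM(x1)}(i) --- namely $0$ when $n<t_1$ and $2(n-t_1+1)$ when $n\geq t_1$ --- together with the value of $\dim\N_n(x_1)$ given by Proposition \ref{prop:dimN(x1)}(i) --- namely $2(n+1)$ when $n<t_1$ and ${\rm nullity}\big(\D(x_1)^{t_1}|_{\C^2\otimes\R[x_2,x_3]_n}\big)$ when $n\geq t_1$.

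Adding these in each of the two ranges yields exactly the asserted formula: $2(n+1)$ when $n<t_1$, and $2(n-t_1+1)+{\rm nullity}\big(\D(x_1)^{t_1}|_{\C^2\otimes\R[x_2,x_3]_n}\big)$ when $n\geq t_1$. Parts (ii) and (iii) then follow by the identical computation with the indices $2$ and $3$ in place of $1$, invoking parts (ii) and (iii) of Theorem \ref{thm:decM}, Proposition \ref{prop:dimM(x1)} and Proposition \ref{prop:dimN(x1)}.

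I do not expect any genuine obstacle here: the corollary is a bookkeeping consequence of the three cited results, and the only point that needs checking is that their case splits (at $n<t_1$ versus $n\geq t_1$) are mutually compatible, which they are by construction. The one mild subtlety worth recording is the degenerate case $t_1=\infty$, in which the condition $n\geq t_1$ never holds, so only the first line of the formula applies; this is consistent with Definition \ref{defn:M(x1)}, where $\M_n(x_1)$ is then the zero subspace for every $n$ and $\N_n(x_1)=\M_n$.
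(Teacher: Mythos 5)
Your proposal is correct and matches the paper's proof exactly: the paper likewise deduces the corollary by combining Theorem \ref{thm:decM} with Propositions \ref{prop:dimM(x1)} and \ref{prop:dimN(x1)}. Your remark about the degenerate case $t_1=\infty$ is a sensible sanity check but not a point of divergence.
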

\begin{proof}
Combine Propositions \ref{prop:dimM(x1)} and \ref{prop:dimN(x1)} together with Theorem \ref{thm:decM}.
\end{proof}

\section{Proof of Theorem \ref{thm:dimM=2(n+1)}}
\label{s:dimMn=2n+2}

Let $n\geq 1$ denote an integer. Let $I_n$ denote the $n\times n$ identity matrix. Set $A_n$ to be the $n\times n$ diagonal matrix whose $(i,i)$-entry is $(-1)^{n-i}$ for all $i=1,2,\ldots,n$. 
Define $N_n$ to be the $n\times (n+1)$ matrix with 
$$
(N_n)_{ij}
=\left\{
\begin{array}{ll}
(-1)^{n-i+\frac{1}{2}} m_2^{(n+1-i)}
\qquad 
&\hbox{if $i=j$},
\\
(-1)^{n-i} m_1^{(i)}
\qquad 
&\hbox{if $i=j-1$},
\\
0
\qquad 
&\hbox{else}
\end{array}
\right.
$$
for all integers $i,j$ with $1\leq i\leq n$ and $1\leq j\leq n+1$.

\begin{lem}\label{lem:rankN}
Let $n\geq 1$ denote an integer. Then 
$$
\rank(N_n)
=\left\{
\begin{array}{ll}
n-1 \qquad &\hbox{if $\max\{t_1,t_2\}\leq n<t_1+t_2$},
\\
n \qquad &\hbox{else}.
\end{array}
\right.
$$
\end{lem}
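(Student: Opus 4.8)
The plan is to exploit the fact that $N_n$ is an upper bidiagonal $n\times(n+1)$ matrix: in row $i$ only the diagonal entry $(N_n)_{i,i}=(-1)^{n-i+\frac{1}{2}}m_2^{(n+1-i)}$ and the superdiagonal entry $(N_n)_{i,i+1}=(-1)^{n-i}m_1^{(i)}$ can be nonzero. Since $\rank(N_n)\le n$ always, I would instead compute the nullity $\dim\ker N_n=n+1-\rank(N_n)$ and show that it equals $2$ on the range $\max\{t_1,t_2\}\le n<t_1+t_2$ and equals $1$ otherwise.

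The first step is the elementary observation that for an integer $h\ge 1$ one has $m_1^{(h)}=0$ if and only if $h=t_1$, and likewise $m_2^{(h)}=0$ if and only if $h=t_2$: indeed $m_j^{(h)}=h\ne 0$ when $h$ is even, while $m_j^{(h)}=h+2k_j$ when $h$ is odd, and this vanishes precisely when $2k_j$ is a negative odd integer equal to $-h$, which by definition of $t_j$ means $h=t_j$. Hence, among rows $1\le i\le n$, the superdiagonal entry of $N_n$ vanishes exactly in row $i=t_1$ (which occurs iff $t_1\le n$), the diagonal entry vanishes exactly in row $i=n+1-t_2$ (which occurs iff $t_2\le n$), and in each case there is at most one such row.

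The second step is to solve $N_nv=0$ coordinate by coordinate. The $i$-th equation is $(N_n)_{i,i}v_i+(N_n)_{i,i+1}v_{i+1}=0$; call it \emph{genuine} when both coefficients are nonzero, in which case it expresses $v_{i+1}$ as a nonzero scalar times $v_i$. Deleting the at most two non-genuine equations from the path $1,2,\dots,n+1$ partitions $\{1,\dots,n+1\}$ into blocks of consecutive indices whose coordinates are forced mutually proportional; each block contributes one free parameter to $\ker N_n$ unless a non-genuine equation pins one of its coordinates to $0$ (a vanishing superdiagonal entry in row $i$ forces $v_i=0$, a vanishing diagonal entry in row $i$ forces $v_{i+1}=0$), in which case the whole block is $0$ and contributes nothing. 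Write $p=t_1$, $q=n+1-t_2$. If $n<t_1$ or $n<t_2$ there is at most one non-genuine equation and exactly one surviving block, so $\dim\ker N_n=1$. If $\max\{t_1,t_2\}\le n$, so that row $p$ has a vanishing superdiagonal entry and row $q$ a vanishing diagonal entry (possibly $p=q$), then a short case check gives $\dim\ker N_n=2$ exactly when $q\le p$: for $q<p$ the middle block $\{q+1,\dots,p\}$ is annihilated by the two forced zeros while the two outer blocks survive, and for $q=p$ row $p$ vanishes identically and again two blocks survive; whereas for $q>p$ only the middle block survives, so $\dim\ker N_n=1$. Since $q\le p\iff n+1\le t_1+t_2\iff n<t_1+t_2$, this shows $\dim\ker N_n=2$ precisely on $\max\{t_1,t_2\}\le n<t_1+t_2$, and $\rank(N_n)=n+1-\dim\ker N_n$ gives the stated formula.

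I expect the main obstacle to be the case bookkeeping in the second step, in particular the boundary behaviour: the degenerate outer blocks when $p=1$ or $q=n$, the coincidence $p=q$ in which a whole row of $N_n$ is zero (which must still be checked to yield nullity $2$), and the direction in which a forced zero propagates along a proportional block, which is what makes the middle block collapse exactly when $q\le p$ and survive when $q>p$. Once the vanishing pattern of Step~1 is pinned down, the rest is routine linear algebra, so that observation is the real pivot of the argument.
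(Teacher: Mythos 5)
Your argument is correct, and the key observation driving it---that for $h\ge 1$ the scalar $m_j^{(h)}$ vanishes exactly when $h=t_j$, so the superdiagonal of $N_n$ vanishes only in row $t_1$ and the diagonal only in row $n+1-t_2$---is also the pivot of the paper's proof. Where you differ is in what you do with it. The paper observes that when $n\ge\max\{t_1,t_2\}$ the zero diagonal entry in row $n-t_2+1$ splits $N_n$ into a block-diagonal form $\begin{pmatrix}U&0\\0&L\end{pmatrix}$ with $U$ of size $(n-t_2)\times(n-t_2+1)$ and $L$ square of size $t_2$, and then reads off $\rank(U)=n-t_2$ from the nonzero diagonal of $U$ and $\rank(L)\in\{t_2-1,t_2\}$ according to whether the bidiagonal square matrix $L$ has a zero on its diagonal (which happens exactly when $t_1\le n<t_1+t_2$). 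You instead solve $N_n v=0$ row by row, partitioning $\{1,\dots,n+1\}$ into blocks of coordinates that the genuine equations force to be proportional, and count which blocks survive a forced zero; this computes the nullity directly. The two are the same linear algebra seen from dual sides, but your version has a modest advantage: it treats the cases $n<\max\{t_1,t_2\}$ and $n\ge\max\{t_1,t_2\}$ uniformly, whereas the paper handles the first case with a one-line assertion and the second with the block decomposition. The trade-off is that your route requires the block-by-block bookkeeping you flag at the end (the direction a forced zero propagates, the degenerate $p=q$ and boundary cases), which you handle correctly, whereas the block-rank computation avoids that casework by appealing to standard facts about triangular square matrices.
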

\begin{proof}
If $n<\max\{t_1,t_2\}$ then at most one zero entry lies on the diagonal and superdiagonal of $N_n$. Hence $\rank(N_n)=n$ in this case.

Suppose that $n\geq \max\{t_1,t_2\}$. Since the $(n-t_2+1,n-t_2+1)$-entry of $N_n$ is zero, the matrix $N_n$ is of the form 
$$
\kbordermatrix{
    \mbox{} &n- t_2+1  & & t_2 \\
     n-t_2 &U &\rvline & {\bf 0} 
     \\
     \cline{2-4}
    t_2 &{\bf 0} &\rvline &L
  }.
$$
If $n<t_1+t_2$ then $\rank(U)=n-t_2$ and $\rank(L)=t_2-1$. If $n\geq t_1+t_2$ then $\rank(U)=n-t_2$ and $\rank(L)=t_2$. The lemma follows.
\end{proof}

For any integer $n\geq 0$
let $\alpha_n$ stand for the basis 
$$
\left\{
\begin{pmatrix}
1
\\
0
\end{pmatrix}
\otimes 
x_1^i x_2^{n-i}
\right\}_{i=0}^n
\cup 
\left\{
\begin{pmatrix}
0
\\
1
\end{pmatrix}
\otimes 
x_1^i x_2^{n-i}
\right\}_{i=0}^n
$$
for $\C^2\otimes \R[x_1,x_2]_n$.

\begin{lem}\label{lem:D3}
Let  $n\geq 1$ denote an integer. 
Then the matrix representing 
$$
\D(x_3)
\big|_{\C^2\otimes \R[x_1,x_2]_n}
:
\C^2\otimes \R[x_1,x_2]_n
\to 
\C^2\otimes \R[x_1,x_2]_{n-1}
$$ 
with respect to the basis 
$
\alpha_n
$
for $\C^2\otimes \R[x_1,x_2]_n$ and the basis $\alpha_{n-1}$ for $\C^2\otimes \R[x_1,x_2]_{n-1}$ is 
\begin{gather*}
\begin{pmatrix}
I_n &\rvline &{\bf 0}
\\
\hline
{\bf 0} &\rvline & A_n
\end{pmatrix}
\begin{pmatrix}
{\bf 0} &\rvline &N_n
\\
\hline
 N_n &\rvline &{\bf 0}
\end{pmatrix}
\begin{pmatrix}
I_{n+1} &\rvline &{\bf 0}
\\
\hline
{\bf 0} &\rvline & A_{n+1}
\end{pmatrix}.
\end{gather*}
\end{lem}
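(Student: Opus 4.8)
The plan is to compute the matrix of $\D(x_3)$ on the basis $\alpha_n$ by a direct calculation, and then to expand the product on the right-hand side of the claimed identity and match the two matrices entry by entry.

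For the first part I would use $\D(x_3)=e_1\otimes T_1+e_2\otimes T_2$ together with the facts that $e_1,e_2$ act on $\C^2$ as the Pauli matrices $\sigma_1,\sigma_2$, and that $T_1(x_1^ix_2^{n-i})=m_1^{(i)}\,x_1^{i-1}x_2^{n-i}$ while $T_2(x_1^ix_2^{n-i})=m_2^{(n-i)}\,x_1^ix_2^{n-i-1}$ (the latter since $T_2$ acts only in the variable $x_2$). Evaluating $\D(x_3)$ on $\left(\begin{smallmatrix}1\\0\end{smallmatrix}\right)\otimes x_1^ix_2^{n-i}$ and on $\left(\begin{smallmatrix}0\\1\end{smallmatrix}\right)\otimes x_1^ix_2^{n-i}$ for $0\le i\le n$ and reading off coordinates with respect to $\alpha_{n-1}$ is routine. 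Because $\sigma_1$ and $\sigma_2$ are both off-diagonal, the first half of $\alpha_n$ is sent into the span of the second half of $\alpha_{n-1}$ and conversely, so the matrix of $\D(x_3)$ is block anti-diagonal, of the form $\left(\begin{smallmatrix}{\bf 0}&B\\C&{\bf 0}\end{smallmatrix}\right)$ with $B,C$ of size $n\times(n+1)$. The computation shows that in each of $B$ and $C$ the term $m_1^{(i)}$ produced by $T_1$ lies just above the main diagonal, while the term produced by $T_2$ lies on the main diagonal, equal to $(-1)^{1/2}m_2^{(n+1-i)}$ for $C$ and to $(-1)^{3/2}m_2^{(n+1-i)}$ for $B$; the two distinct half-integer powers of $-1$ are precisely the two off-diagonal entries of $\sigma_2$. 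The end terms cause no trouble since $m_1^{(0)}=0$ and $m_2^{(0)}=0$.

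For the second part, since $A_n$ and $A_{n+1}$ are diagonal sign matrices, multiplying out the right-hand side of the claimed identity gives $\left(\begin{smallmatrix}{\bf 0}&N_nA_{n+1}\\A_nN_n&{\bf 0}\end{smallmatrix}\right)$, so it remains to verify the entrywise identities $B=N_nA_{n+1}$ and $C=A_nN_n$. Both are immediate from the definition of $N_n$: its superdiagonal entry $(-1)^{n-i}m_1^{(i)}$ is multiplied by the sign $(-1)^{n-i}$ coming from the relevant diagonal matrix and yields $m_1^{(i)}$, while its diagonal entry $(-1)^{n-i+1/2}m_2^{(n+1-i)}$ is multiplied by $(-1)^{n-i}$ in $A_nN_n$ and by $(-1)^{n+1-i}$ in $N_nA_{n+1}$, giving $(-1)^{1/2}m_2^{(n+1-i)}$ and $(-1)^{3/2}m_2^{(n+1-i)}$ respectively, which are exactly the diagonal entries of $C$ and $B$ found above.

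I expect the only real difficulty to be bookkeeping: one must keep the half-integer powers of $-1$ coming from $\sigma_2$, the signs built into $N_n$, and the conjugating sign matrices $A_n,A_{n+1}$ consistent, and handle the index shift between degree-$n$ and degree-$(n-1)$ monomials at the two endpoints. There is no conceptual obstacle.
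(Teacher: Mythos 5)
Your computation is correct and is precisely the "straightforward verification" that the paper invokes without writing out: the paper's proof of this lemma consists of the single sentence that the result is routine to check, so there is no alternative route to compare against. Your direct evaluation of $\D(x_3)=e_1\otimes T_1+e_2\otimes T_2$ on the $\alpha_n$ basis, the identification of the block anti-diagonal structure forced by $\sigma_1,\sigma_2$ being off-diagonal, and the entrywise matching $B=N_nA_{n+1}$, $C=A_nN_n$ all check out.
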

\begin{proof}
It is straightforward to verify the lemma.
\end{proof}

\noindent{\it Proof of Theorem \ref{thm:dimM=2(n+1)}}. 
For $n<\max\{t_1,t_2,t_3\}$ the result is immediate from Corollary \ref{cor:dimMn}. 
Suppose that $n+1\geq t_1+t_2+t_3$. 
Using Lemma \ref{lem:D3} yields that the matrix representing 
$$
\D(x_3)^{t_3}
\big|_{\C^2\otimes \R[x_1,x_2]_n}
:
\C^2\otimes \R[x_1,x_2]_n
\to 
\C^2\otimes \R[x_1,x_2]_{n-t_3}
$$ 
with respect to the basis $\alpha_n$ for $\C^2\otimes \R[x_1,x_2]_n$ and the basis $\alpha_{n-t_3}$ for $\C^2\otimes \R[x_1,x_2]_{n-t_3}$  is of the form
\begin{gather*}
\begin{pmatrix}
I_{n-t_3+1} &\rvline &{\bf 0}
\\
\hline
{\bf 0} &\rvline & A_{n-t_3+1}
\end{pmatrix}
\begin{pmatrix}
{\bf 0} &\rvline & P
\\
\hline
P &\rvline &{\bf 0}
\end{pmatrix}
\begin{pmatrix}
I_{n+1} &\rvline &{\bf 0}
\\
\hline
{\bf 0} &\rvline & A_{n+1}
\end{pmatrix}.
\end{gather*}
Here $P=
N_{n-t_3+1} N_{n-t_3+2}\cdots N_n$. 
Since the square matrices  $I_k$ and $A_k$ are of full rank for any integer $k\geq 1$, it follows that 
\begin{gather}\label{rankD^t3}
{\rm rank}\left(\D(x_3)^{t_3}|_{\C^2\otimes \R[x_1,x_2]_n}\right)=2\cdot {\rm rank}\left(P\right).
\end{gather}
By Lemma \ref{lem:rankN} each of $N_{n-t_3+1}, N_{n-t_3+2},\ldots, N_n$ is of full rank. Hence ${\rm rank}(P)=n-t_3+1$. Combined with (\ref{rankD^t3}) this yields that
$
{\rm rank}\left(\D(x_3)^{t_3}|_{\C^2\otimes \R[x_1,x_2]_n}\right)=2(n-t_3+1)$.
It follows from the rank-nullity theorem that 
$$
{\rm nullity}\left(\D(x_3)^{t_3}|_{\C^2\otimes \R[x_1,x_2]_n}\right)=2t_3.
$$
Hence $\dim \M_n=2(n+1)$ by Corollary \ref{cor:dimMn}(iii). The result follows.
\hfill $\square$

\section{The $\BI$-modules $\M_n$ of type {\bf (I)}}\label{s:case1}

\begin{prop}\label{prop:n<mint13}
Suppose that $n$ is an odd integer with $1\leq n<\min\{t_1,t_3\}$. 
Let 
$$
p_i
=
\sum_{h=0}^i
(-1)^{\frac{h}{2}}
\sum_{j=0}^{n-h}
(-1)^{\frac{j}{2}}
{\left\lfloor \frac{n-i-j}{2}\right\rfloor+\left\lfloor\frac{i-h}{2}\right\rfloor \choose \left\lfloor\frac{i-h}{2}\right\rfloor}
c_{h,i,j}
\otimes
[x_1]^n_j 
[x_2]^i_h
[x_3]^n_{n-h-j}
$$
for all $i=0,1,\ldots,n$ where 
\begin{align*}
c_{h,i,j}&=
\left\{
\begin{array}{ll}
\sigma_2^j \sigma_1 
\qquad &\hbox{if $h$ is odd and $i$ is odd},
\\
\sigma_2^j
\qquad &\hbox{if $h$ is even and $i$ is even},
\\
(-1)^\frac{3}{2} \sigma_3 
\qquad &\hbox{if $h$ is odd, $i$ is even and $j$ is odd},
\\
-\begin{pmatrix}
1 &0
\\
0 &1
\end{pmatrix}
\qquad &\hbox{if $h$ is even, $i$ is odd and $j$ is even},
\\
\begin{pmatrix}
0 &0
\\
0 &0
\end{pmatrix}
\qquad &\hbox{else}
\end{array}
\right.
\end{align*}
for any integers $h,i,j$. 
Then the following hold:
\begin{enumerate}
\item $\{p_i\}_{i=0}^n$ are linearly independent over ${\rm Mat}_2(\C)$.

\item The following equations hold:
\begin{align}
(X-\theta_i)p_i &=p_{i+1} \qquad (0\leq i\leq n-1),
\qquad 
(X-\theta_n) p_n=0,
\label{e:n<mint13-1}
\\
(Y-\theta_i^*) p_i &=\varphi_i p_{i-1} \qquad (1\leq i\leq n),
\qquad 
(Y-\theta_0^*) p_0=0,
\label{e:n<mint13-2}
\end{align}
where 
\begin{align*}
\theta_i &=(-1)^i
\textstyle 
(
k_2+k_3+i+\frac{1}{2}
)
\qquad 
(0\leq i\leq n),
\\
\theta_i^* &=(-1)^{i+1}
\textstyle 
(
k_1+k_3+n-i+\frac{1}{2}
)
\qquad 
(0\leq i\leq n),
\\
\varphi_i
&=
\left\{
\begin{array}{ll}
i(n-i+1)
\qquad
\hbox{if $i$ is even},
\\
(i+2k_2)
(n-i+2k_1+1)
\qquad
\hbox{if $i$ is odd}
\end{array}
\right.
\qquad 
(1\leq i\leq n).
\end{align*}

\item $\D(p_i)=0$ for all $i=0,1,\ldots,n$. 
\end{enumerate}
\end{prop}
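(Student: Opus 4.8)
The plan is to establish (iii) first, then (i), then (ii); since (i) and (ii) do not rely on (iii), the ordering is flexible. For (iii) I would compute $\D(p_i)$ directly. Under the representation $\Cl(\R^3)\to{\rm Mat}_2(\C)$ we have $\D=\sigma_1\otimes T_1+\sigma_2\otimes T_2+\sigma_3\otimes T_3$, and from the definitions of $[x_1]^j_i$, $[x_2]^j_i$, $[x_3]^j_i$ together with $T_1(x_1^m)=m_1^{(m)}x_1^{m-1}$ (and its analogues) one obtains the shift rules $T_1[x_1]^n_j=[x_1]^n_{j-1}$, $T_2[x_2]^i_h=[x_2]^i_{h-1}$, $T_3[x_3]^n_{n-h-j}=[x_3]^n_{n-h-j-1}$. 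Applying $\D$ to the defining sum for $p_i$ term by term and reindexing the three resulting sums so that every monomial has the common shape $[x_1]^n_a[x_2]^i_b[x_3]^n_{n-1-a-b}$, the coefficient of such a monomial becomes a sum of three matrix expressions; using $\sigma_k^2=1$ and $\sigma_r\sigma_s=-\sigma_s\sigma_r$ for $r\neq s$ to normalise the Pauli parts and Pascal's identity $\binom{m}{k}=\binom{m-1}{k}+\binom{m-1}{k-1}$ to relate the combinatorial coefficients, one checks, splitting into cases on the parities of $a$ and $b$, that these three contributions cancel, whence $\D(p_i)=0$. (Alternatively, one may check $\D(p_0)=0$ directly, a short computation, and then deduce $\D(p_i)=0$ for all $i$ from (ii) and the fact that $X$ maps $\M_n$ into itself by Theorem~\ref{thm:BImodule_Mn}.)

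For (i) I would use triangularity with respect to the exponent of $x_2$. In the sum defining $p_i$ the monomial $x_2^ix_3^{n-i}$ arises only from the index pair $h=i$, $j=0$, so its matrix coefficient is $(-1)^{i/2}c_{i,i,0}$ multiplied by the scalar coming from $[x_1]^n_0[x_2]^i_i[x_3]^n_{n-i}$, which is a product of factors $m_1^{(h)}$ and $m_3^{(h)}$ that are nonzero because $n<\min\{t_1,t_3\}$; here $c_{i,i,0}$ equals $\sigma_1$ if $i$ is odd and the $2\times2$ identity if $i$ is even, hence is invertible. On the other hand $p_{i'}$ with $i'<i$ contributes nothing to $x_2^ix_3^{n-i}$, since in that sum $h$ never reaches $i$. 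Consequently a relation $\sum_{i=0}^nM_ip_i=0$ with $M_i\in{\rm Mat}_2(\C)$ forces, upon reading off the coefficients of $x_2^nx_3^0,\ x_2^{n-1}x_3,\ \dots,\ x_2^0x_3^n$ in turn, that $M_n=M_{n-1}=\cdots=M_0=0$.

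For (ii) I would compute $Xp_i$ and $Yp_i$ directly from the formulas of Theorem~\ref{thm:BImodule_Mn}. Each of the three reflection terms contributes a sign factor determined by the parities of the exponents of $x_1$, $x_2$, $x_3$ in a given monomial, while the pieces $x_3T_2-x_2T_3$ and $x_1T_3-x_3T_1$ shift two of the three exponents simultaneously; after reindexing, normalising the Pauli matrices, and applying Pascal's identity, the output reassembles into $\theta_ip_i+p_{i+1}$ for the $X$-relation and into $\theta_i^*p_i+\varphi_ip_{i-1}$ for the $Y$-relation, with the piecewise definitions of $c_{h,i,j}$, $\theta_i$, $\theta_i^*$, $\varphi_i$ emerging exactly from the parity cases. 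The endpoint identities $(X-\theta_n)p_n=0$ and $(Y-\theta_0^*)p_0=0$ come for free: the term playing the role of $p_{n+1}$ vanishes identically (every binomial coefficient occurring in it has upper index smaller than lower index), and $p_{-1}=0$ because its defining sum over $h$ is empty.

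The main obstacle in all three parts, most severely in (ii), is the bookkeeping of signs and binomial coefficients through the parity case distinctions on $h$, $i$, $j$ (equivalently $a$, $b$); the underlying mechanism is nonetheless just Pascal's identity together with the $\Z_2$-grading of the Clifford algebra, so the work is organisational rather than conceptual.
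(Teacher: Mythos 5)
Your proposal is correct and matches the paper's proof. The paper uses the same triangularity in the $x_2$-exponent for (i) (it reads off the coefficient of $x_1^{n-i}x_2^i$ rather than your $x_2^ix_3^{n-i}$, but either is nonsingular under $n<\min\{t_1,t_3\}$); for (ii) it carries out exactly the coefficient-by-coefficient verification via Theorem~\ref{thm:BImodule_Mn}(i) that you describe; and for (iii) it follows your parenthetical alternative, checking $\D(p_0)=0$ directly and then propagating along $(X-\theta_i)p_i=p_{i+1}$ using that $\M_n$ is a $\BI$-module.
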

\begin{proof}  
(i): Let $i$ be an integer with $0\leq i\leq n$.
By construction the coefficient of $x_2^h$ in $p_i$ is zero for all integers $h$ with $i<h\leq n$. Observe that the coefficient of $x_1^{n-i}x_2^i$ in $p_i$ is 
\begin{gather}\label{coeff:n<t1+t3}
(-1)^\frac{n}{2}
\prod_{h=n-i+1}^n m_1^{(h)}
\prod_{h=1}^{n} m_3^{(h)}
\times 
\left\{
\begin{array}{ll}
\sigma_2 \qquad 
&\hbox{if $i$ is even},
\\
\sigma_1 \qquad 
&\hbox{if $i$ is odd}.
\end{array}
\right.
\end{gather}
Since $n< t_1$ the scalar $\prod\limits_{h=n-i+1}^n m_1^{(h)}$ is nonzero. 
Since $n<t_3$ the scalar $\prod\limits_{h=1}^n m_3^{(h)}$ is nonzero. 
Hence the matrix (\ref{coeff:n<t1+t3}) is nonsingular. By the above comments the part (i) follows.

(ii):  
Let $i,j$ denote two nonnegative integers with $i+j\leq n-1$. Using Theorem \ref{thm:BImodule_Mn}(i) yields that the coefficient of  $[x_1]^n_j [x_2]^{i+1}_{i+1} [x_3]^n_{n-i-j-1}$ in $(X-\theta_i)p_i$ is equal to 
$$
(-1)^{\frac{i-j+1}{2}}
 \sigma_1 c_{i,i,j}=(-1)^{\frac{i+j+1}{2}} c_{i+1,i+1,j}.
$$
Now let $h,i,j$ denote three integers with $0\leq h\leq i\leq n$ and $0\leq j\leq n-h$. Using Theorem \ref{thm:BImodule_Mn}(i) yields that the coefficient of  $[x_1]^n_j [x_2]^i_h [x_3]^n_{n-h-j}$ in $(X-\theta_i)p_i$ is equal to $(-1)^{\frac{h-j}{2}}$ times the sum of 
\begin{align*}
&
m_2^{(h)}
{\left\lfloor \frac{n-i-j}{2}\right\rfloor+\left\lfloor\frac{i-h+1}{2}\right\rfloor \choose \left\lfloor\frac{i-h+1}{2}\right\rfloor}
 \sigma_1 c_{h-1,i,j},
\\
&
m_3^{(n-h-j)}
{\left\lfloor \frac{n-i-j}{2}\right\rfloor+\left\lfloor\frac{i-h-1}{2}\right\rfloor \choose \left\lfloor\frac{i-h-1}{2}\right\rfloor}
 \sigma_1 c_{h+1,i,j},
\\
&
{\textstyle(
(-1)^{h+j} k_3
-
(-1)^h k_2
-
(-1)^j\theta_i
-
\frac{1}{2}
)}
{\left\lfloor \frac{n-i-j}{2}\right\rfloor+\left\lfloor\frac{i-h}{2}\right\rfloor \choose \left\lfloor\frac{i-h}{2}\right\rfloor}
c_{h,i,j}.
\end{align*}
It is straightforward to verify that the sum of the above three terms is equal to 
$$
(-1)^j
m_2^{(i+1)}
{\left\lfloor \frac{n-i-j-1}{2}\right\rfloor+\left\lfloor\frac{i-h+1}{2}\right\rfloor \choose \left\lfloor\frac{i-h+1}{2}\right\rfloor}
c_{h,i+1,j}.
$$
By the above comments the equations given in (\ref{e:n<mint13-1}) follow.

Using Theorem \ref{thm:BImodule_Mn}(i) yields that the coefficient of  $[x_1]^n_j [x_2]^i_h [x_3]^n_{n-h-j}$ in $(Y-\theta_i^*)p_i$ is equal to $(-1)^{\frac{j-h}{2}+1}$ times the sum of 
\begin{align*}
&
m_1^{(j)}
{\left\lfloor \frac{n-i-j+1}{2}\right\rfloor+\left\lfloor\frac{i-h}{2}\right\rfloor \choose \left\lfloor\frac{i-h}{2}\right\rfloor}
 \sigma_2 c_{h,i,j-1},
\\
&
m_3^{(n-h-j)}
{\left\lfloor \frac{n-i-j-1}{2}\right\rfloor+\left\lfloor\frac{i-h}{2}\right\rfloor \choose \left\lfloor\frac{i-h}{2}\right\rfloor}
 \sigma_2 c_{h,i,j+1},
\\
&
{\textstyle(
(-1)^j k_1
-
(-1)^{h+j} k_3
+
(-1)^h
\theta_i^*
+
\frac{1}{2}
)}
{\left\lfloor \frac{n-i-j}{2}\right\rfloor+\left\lfloor\frac{i-h}{2}\right\rfloor \choose \left\lfloor\frac{i-h}{2}\right\rfloor}
c_{h,i,j}.
\end{align*}
It is straightforward to verify that the sum of the above three terms is equal to $(-1)^{h+1}$ times 
$$
\left\{
\begin{array}{ll}
\begin{pmatrix}
0 &0
\\
0 &0
\end{pmatrix}
\qquad &\hbox{if $h=i$},
\\
(n-i+1)
\displaystyle{\left\lfloor \frac{n-i-j+1}{2}\right\rfloor+\left\lfloor\frac{i-h-1}{2}\right\rfloor \choose \left\lfloor\frac{i-h-1}{2}\right\rfloor}
c_{h,i-1,j}
\qquad &\hbox{if $h<i$ and $i$ is even},
\\
(n-i+2k_1+1)
\displaystyle{\left\lfloor \frac{n-i-j+1}{2}\right\rfloor+\left\lfloor\frac{i-h-1}{2}\right\rfloor \choose \left\lfloor\frac{i-h-1}{2}\right\rfloor}
c_{h,i-1,j}
\qquad &\hbox{if $h<i$ and $i$ is odd}.
\end{array}
\right.
$$
By the above comments the equations given in (\ref{e:n<mint13-2}) follow.

(iii): It is routine to verify that $\D(p_0)=0$. Combined with (ii) the statement (iii) follows.
\end{proof}

\begin{lem}\label{lem:basis:n<mint13(odd)}
Suppose that $n$ is an odd integer with $1\leq n<\min\{t_1,t_3\}$. 
Let $\{p_i\}_{i=0}^n$ be as in Proposition \ref{prop:n<mint13}. Then $\M_n$ has the basis
\begin{align}
p_i\cdot 
\begin{pmatrix}
1
\\
0
\end{pmatrix}
\otimes
1 
\qquad 
(0\leq i\leq n), 
\label{basis:n<mint13(odd)-1}
\\ 
p_i\cdot 
\begin{pmatrix}
0
\\
1
\end{pmatrix}
\otimes
1 
\qquad 
(0\leq i\leq n).
\label{basis:n<mint13(odd)-2}
\end{align}
\end{lem}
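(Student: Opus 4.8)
The plan is to combine the structural results about $\M_n$ already established with the explicit vectors $p_i$ from Proposition~\ref{prop:n<mint13}. First I would note that under the hypothesis $1\leq n<\min\{t_1,t_3\}$ we have $n<t_1$ and $n<t_3$, so by Corollary~\ref{cor:dimMn}(i) (or equivalently (iii)) the dimension of $\M_n$ is exactly $2(n+1)$. Hence it suffices to exhibit $2(n+1)$ linearly independent elements of $\M_n$; the two displayed families \eqref{basis:n<mint13(odd)-1}--\eqref{basis:n<mint13(odd)-2} have precisely that cardinality, so a basis will follow from membership in $\M_n$ together with linear independence.

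Membership in $\M_n$ is essentially immediate: by Proposition~\ref{prop:n<mint13}(iii) we have $\D(p_i)=0$ for all $i$, and each $p_i\in {\rm Mat}_2(\C)\otimes\R[x_1,x_2,x_3]_n$ acts on the fixed vectors $\binom{1}{0}\otimes 1$ and $\binom{0}{1}\otimes 1$ of $\C^2\otimes\R[x_1,x_2,x_3]_0$ to produce elements of $\C^2\otimes\R[x_1,x_2,x_3]_n$; since $\D$ commutes with right multiplication by such constant vectors, $\D\bigl(p_i\cdot(v\otimes 1)\bigr)=\D(p_i)\cdot(v\otimes 1)=0$, so each displayed vector lies in $\M_n=\ker(\D|_{\C^2\otimes\R[x_1,x_2,x_3]_n})$.

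For linear independence I would reduce it to part (i) of Proposition~\ref{prop:n<mint13}, which asserts that $\{p_i\}_{i=0}^n$ are linearly independent over ${\rm Mat}_2(\C)$. Suppose a complex linear combination of the $2(n+1)$ vectors vanishes; group the terms as $\sum_{i=0}^n p_i\cdot\left(\begin{smallmatrix}a_i\\ b_i\end{smallmatrix}\right)\otimes 1=\sum_{i=0}^n\left(p_i\cdot\left(\begin{smallmatrix}a_i & 0\\ b_i & 0\end{smallmatrix}\right)\right)\otimes 1=0$ for scalars $a_i,b_i\in\C$. Viewing each $p_i$ as a polynomial with ${\rm Mat}_2(\C)$ coefficients, this says $\sum_{i=0}^n p_i M_i=0$ where $M_i=\left(\begin{smallmatrix}a_i & 0\\ b_i & 0\end{smallmatrix}\right)$. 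To conclude $M_i=0$ for all $i$ (hence all $a_i=b_i=0$) I would invoke the structure uncovered in the proof of Proposition~\ref{prop:n<mint13}(i): the coefficient of $x_1^{n-i}x_2^i$ in $p_j$ vanishes for $j<i$ and equals the nonsingular matrix \eqref{coeff:n<t1+t3} (a nonzero scalar times $\sigma_2$ or $\sigma_1$) for $j=i$. Reading off the coefficient of $x_1^{n-i}x_2^i$ in the relation $\sum_j p_j M_j=0$ and working downward from $i=0$ gives, at each stage, a nonsingular matrix times $M_i$ equal to zero, forcing $M_i=0$.

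The only genuinely delicate point is the bookkeeping in that last descent argument: one must be careful that the terms with $j>i$ do not contribute to the coefficient of $x_1^{n-i}x_2^i$, which is exactly the content of the observation (from the proof of part (i)) that the coefficient of $x_2^h$ in $p_j$ vanishes for $h>j$. Given that fact the triangular structure is transparent and the argument is routine; I expect no substantive obstacle, since all the hard computations have already been done in establishing Proposition~\ref{prop:n<mint13}.
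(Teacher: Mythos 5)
Your proposal is correct and follows essentially the same three-step argument as the paper: dimension count ($\dim\M_n = 2(n+1)$), membership in $\M_n$ via Proposition~\ref{prop:n<mint13}(iii), and linear independence via Proposition~\ref{prop:n<mint13}(i). You cite Corollary~\ref{cor:dimMn} rather than Theorem~\ref{thm:dimM=2(n+1)}, but under the hypothesis $n<\min\{t_1,t_3\}$ both yield $\dim\M_n=2(n+1)$, so this is an equivalent route. One small slip in your expanded account of the linear-independence step: the descent should run from $i=n$ \emph{down} to $i=0$, not "downward from $i=0$." Since the coefficient of the monomial $x_1^{n-i}x_2^i$ in $p_j$ vanishes precisely for $j<i$ (the $x_2$-degree of $p_j$ is at most $j$), the terms with $j>i$ \emph{can} contribute and must be eliminated first; so one starts with $i=n$ where only $p_n$ survives, concludes $M_n=0$, and proceeds to lower $i$. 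The triangular fact you quote is the right one; only the stated direction of the induction needs reversing. In any case, this re-derivation is optional once Proposition~\ref{prop:n<mint13}(i) is accepted, and your reduction from a vanishing $\C$-linear combination to a vanishing $\mathrm{Mat}_2(\C)$-linear combination via $M_i=\bigl(\begin{smallmatrix}a_i&0\\ b_i&0\end{smallmatrix}\bigr)$ is sound.
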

\begin{proof}
It follows from Theorem \ref{thm:dimM=2(n+1)} that $\dim \M_n=2(n+1)$.  
The linear independence of (\ref{basis:n<mint13(odd)-1}) and (\ref{basis:n<mint13(odd)-2}) follows from Proposition \ref{prop:n<mint13}(i). 
By Proposition \ref{prop:n<mint13}(iii) the polynomials (\ref{basis:n<mint13(odd)-1}) and (\ref{basis:n<mint13(odd)-2}) are in $\M_n$. 
The lemma follows. 
\end{proof}

\begin{thm}\label{thm:n<mint123(odd)}
Suppose that $n$ is an odd integer with $1\leq n<\min\{t_1,t_2,t_3\}$. Then the $\BI$-module $\M_n$ is isomorphic to a direct sum of two copies of 
\begin{gather}\label{BImodule:n<mint123(odd)}
E_n
\textstyle(
k_2+k_3+\frac{n+1}{2},-k_1-k_3-\frac{n+1}{2},k_1+k_2+\frac{n+1}{2}
).
\end{gather}
Moreover, if each of $k_1,k_2,k_3$ is nonnegative then the $\BI$-module (\ref{BImodule:n<mint123(odd)}) is irreducible and it is isomorphic to 
\begin{gather}\label{BImodule:k123>=0(odd)}
E_n
\textstyle(
k_2+k_3+\frac{n+1}{2},k_1+k_3+\frac{n+1}{2},k_1+k_2+\frac{n+1}{2}
).
\end{gather}
\end{thm}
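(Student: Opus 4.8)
The plan is to assemble the theorem from the preceding results on the module $\M_n$ and the classification of even-dimensional irreducible $\BI$-modules. First I would invoke Lemma~\ref{lem:basis:n<mint13(odd)}: it gives an explicit basis of $\M_n$ consisting of two copies of the vectors $p_i\cdot\binom{1}{0}\otimes 1$ and $p_i\cdot\binom{0}{1}\otimes 1$ for $0\le i\le n$. The crucial point is that the recurrence relations (\ref{e:n<mint13-1}) and (\ref{e:n<mint13-2}) in Proposition~\ref{prop:n<mint13}(ii) do not mix the two copies (they are stated at the level of the $p_i$ as elements of a free $\mathrm{Mat}_2(\C)$-module, so multiplying on the right by either column vector is harmless), hence $\M_n$ decomposes as a $\BI$-module into a direct sum of two copies of the span of $\{p_i\cdot v\otimes 1\}_{i=0}^n$ for $v$ a fixed column vector. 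Call this $(n+1)$-dimensional submodule $W$.

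Next I would match $W$ against the standard module $E_n(a,b,c)$ of Proposition~\ref{prop:Ed}. Reading off (\ref{e:n<mint13-1})--(\ref{e:n<mint13-2}), in the basis $p_i\cdot v\otimes 1$ the operator $X$ is lower bidiagonal with diagonal entries $\theta_i$ and subdiagonal $1$, and $Y$ is upper bidiagonal with diagonal $\theta_i^*$ and superdiagonal $\varphi_i$ — exactly the shape in Proposition~\ref{prop:Ed}(i). It remains to solve for $a,b,c$. Comparing the diagonal of $X$: $\theta_i=(-1)^i(k_2+k_3+i+\tfrac12)$ against $\frac{(-1)^i(2a-n+2i)}{2}$ forces $a=k_2+k_3+\frac{n+1}{2}$. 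Comparing the diagonal of $Y$: $\theta_i^*=(-1)^{i+1}(k_1+k_3+n-i+\tfrac12)$; writing $i'=n-i$ (the standard module is symmetric under reversing the basis with appropriate sign, or one simply checks directly) gives $b=-(k_1+k_3+\frac{n+1}{2})$. Then the odd-index $\varphi_i=(i+2k_2)(n-i+2k_1+1)$ must equal $c^2-\frac{(2a+2b-n+2i-1)^2}{4}$; substituting the values of $a,b$ just found, the right-hand side becomes $c^2-(k_2-k_1+i-\frac{n-1}{2})^2$ type expression, and matching (the even-index $\varphi_i=i(n-i+1)$ already agrees automatically) pins down $c=k_1+k_2+\frac{n+1}{2}$ up to sign. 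This identifies $W\cong E_n(k_2+k_3+\frac{n+1}{2},\,-k_1-k_3-\frac{n+1}{2},\,k_1+k_2+\frac{n+1}{2})$, giving the first assertion. An alternative, cleaner route for this matching is to use the central-element data: Theorem~\ref{thm:BImodule_Mn}(ii) gives the scalars by which $\kappa,\lambda,\mu$ act on $\M_n$ (hence on $W$), and Proposition~\ref{prop:Ed}(ii) gives the same scalars in terms of $a,b,c$; solving the resulting three quadratic equations (together with the trace normalization: the trace of $X$ on $E_d(a,b,c)$ is $-\frac{d+1}{2}$, and one checks $\sum_i\theta_i=-\frac{n+1}{2}$) yields $(a,b,c)$ up to the sign ambiguities that Theorem~\ref{lem:onto2_E} shows are irrelevant once irreducibility is known.

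For the "moreover" clause, suppose $k_1,k_2,k_3\ge 0$. I would apply the irreducibility criterion Theorem~\ref{thm:irr_E}: with $a=k_2+k_3+\frac{n+1}{2}$, $b=-k_1-k_3-\frac{n+1}{2}$, $c=k_1+k_2+\frac{n+1}{2}$, one computes $a+b+c=k_2+\frac{n+1}{2}>0$, $-a+b+c=-k_3-\frac{n+1}{2}<0$ in absolute value too large, $a-b+c=2k_1+2k_2+k_3+\frac{3(n+1)}{2}>0$, and $a+b-c=-k_3-\frac{n+1}{2}$; each of these must be shown to avoid the finite set $\{\frac{n-1}{2}-i : i=0,2,\dots,n-1\}=\{\frac{n-1}{2},\frac{n-5}{2},\dots,-\frac{n-1}{2}\}$, whose elements are $\le\frac{n-1}{2}<\frac{n+1}{2}$ in absolute value — and the nonnegativity of the $k_j$ pushes the positive combinations strictly above $\frac{n-1}{2}$ and the negative ones strictly below $-\frac{n-1}{2}$ (using that $n\ge 1$, and noting $2k_1,2k_2,2k_3$ cannot be odd negative integers here so there is no degeneracy). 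Hence $E_n(a,b,c)$ is irreducible, and then Theorem~\ref{lem:onto2_E} gives $E_n(a,b,-b\text{-flip})\cong E_n(a,-b,c)=E_n(k_2+k_3+\frac{n+1}{2},\,k_1+k_3+\frac{n+1}{2},\,k_1+k_2+\frac{n+1}{2})$, which is (\ref{BImodule:k123>=0(odd)}).

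The main obstacle I anticipate is the verification that the off-diagonal coefficients $\varphi_i$ coming from the explicit $p_i$ (the odd-$i$ case) really do fit the quadratic form in Proposition~\ref{prop:Ed}(i) with a consistent choice of $c$ — i.e., that the same $c$ works for every odd $i$, not just one. This is a finite but slightly delicate algebraic identity; using the central-element route (Theorem~\ref{thm:BImodule_Mn}(ii) versus Proposition~\ref{prop:Ed}(ii)) sidesteps the per-index bookkeeping and is the approach I would actually write up, reducing everything to matching three scalars plus a trace. Everything else is bookkeeping with the already-established basis and recurrences.
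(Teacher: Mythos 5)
Your approach is essentially the one in the paper: decompose $\M_n$ into two $\BI$-submodules via Lemma~\ref{lem:basis:n<mint13(odd)}, match the recurrences of Proposition~\ref{prop:n<mint13}(ii) against the standard form in Proposition~\ref{prop:Ed}, then apply Theorems~\ref{thm:irr_E} and~\ref{lem:onto2_E} (the paper invokes Theorem~\ref{thm:onto2_E} for the final identification, an equivalent route). There are a few arithmetic slips in your irreducibility check --- e.g.\ $a+b+c=2k_2+\tfrac{n+1}{2}$, not $k_2+\tfrac{n+1}{2}$, $a-b+c=2k_1+2k_2+2k_3+\tfrac{3(n+1)}{2}$, and $a+b-c=-2k_1-\tfrac{n+1}{2}$ --- but the nonnegativity of $k_1,k_2,k_3$ still pushes all four quantities outside $\{\tfrac{n-1}{2}-i : i=0,2,\dots,n-1\}$, so the conclusion is unaffected.
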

\begin{proof}
Let $V$ and $V'$ denote the subspaces of $\M_n$ spanned by (\ref{basis:n<mint13(odd)-1}) and (\ref{basis:n<mint13(odd)-2}), respectively. It follows from Lemma \ref{lem:basis:n<mint13(odd)} that 
$
\M_n=V\oplus V'$. 
It follows from Theorem \ref{thm:BImodule_Mn}(ii) and Proposition \ref{prop:n<mint13}(ii) that $V$ and $V'$ are two isomorphic $\BI$-submodules of $\M_n$. Using Proposition \ref{prop:Ed} yields that both are isomorphic to (\ref{BImodule:n<mint123(odd)}). The first assertion follows.

Suppose that each of $k_1,k_2,k_3$ is nonnegative. Using Theorem \ref{thm:irr_E} yields that the $\BI$-module (\ref{BImodule:n<mint123(odd)}) is irreducible. By Theorem \ref{thm:onto2_E} the $\BI$-module  (\ref{BImodule:n<mint123(odd)}) is isomorphic to (\ref{BImodule:k123>=0(odd)}). The second assertion follows.
\end{proof}

\begin{prop}\label{prop:n<mint13(even)}
Suppose that $n$ is an even integer with $0\leq n<\min\{t_1,t_3\}$. 
Let 
$$
p_i
=
\sum_{h=0}^i
(-1)^{-\frac{h}{2}}
\sum_{j=0}^{n-h}
(-1)^{\frac{j}{2}}
{\left\lfloor \frac{n-i-j}{2}\right\rfloor+\left\lfloor\frac{i-h}{2}\right\rfloor \choose \left\lfloor\frac{i-h}{2}\right\rfloor}
c_{h,i,j}
[x_1]^n_j 
[x_2]^i_h
[x_3]^n_{n-h-j}
$$
for all $i=0,1,\ldots,n$ where 
\begin{align*}
c_{h,i,j}&=
\left\{
\begin{array}{ll}
\sigma_2^j \sigma_1 
\qquad &\hbox{if $h$ is odd and $i$ is odd},
\\
\sigma_2^j
\qquad &\hbox{if $h$ is even and $i$ is even},
\\
\sigma_1
\qquad &\hbox{if $h$ is odd, $i$ is even and $j$ is even},
\\
-\sigma_2
\qquad &\hbox{if $h$ is even, $i$ is odd and $j$ is odd},
\\
\begin{pmatrix}
0 &0
\\
0 &0
\end{pmatrix}
\qquad &\hbox{else}
\end{array}
\right.
\end{align*}
for any integers $h,i,j$.  Then the following hold:
\begin{enumerate}
\item $\{p_i\}_{i=0}^n$ are linearly independent over ${\rm Mat}_2(\C)$.

\item The following equations hold:
\begin{align}
(X-\theta_i)p_i &=p_{i+1} \qquad (0\leq i\leq n-1),
\qquad 
(X-\theta_n) p_n=0,
\label{e:n<mint13-1(even)}
\\
(Y-\theta_i^*) p_i &=\varphi_i p_{i-1} \qquad (1\leq i\leq n),
\qquad 
(Y-\theta_0^*) p_0=0,
\label{e:n<mint13-2(even)}
\end{align}
where 
\begin{align*}
\theta_i &=(-1)^i
\textstyle(
k_2+k_3+i+\frac{1}{2}
)
\qquad 
(0\leq i\leq n),
\\
\theta_i^* &=(-1)^i
\textstyle(
k_1+k_3+n-i+\frac{1}{2}
)
\qquad 
(0\leq i\leq n),
\\
\varphi_i
&=
\left\{
\begin{array}{ll}
i
(i-n-2k_1-1)
\qquad
\hbox{if $i$ is even},
\\
(i+2k_2)
(i-n-1)
\qquad
\hbox{if $i$ is odd}
\end{array}
\right.
\qquad 
(1\leq i\leq n).
\end{align*}

\item $\D(p_i)=0$ for all $i=0,1,\ldots,n$. 
\end{enumerate}
\end{prop}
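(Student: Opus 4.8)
The plan is to adapt the proof of Proposition~\ref{prop:n<mint13} mutatis mutandis, with the constants of the odd case replaced by those displayed in the present statement; all three assertions follow from the same three devices. For part~(i), observe first that, by construction, the coefficient of $x_2^h$ in $p_i$ vanishes for $i<h\le n$, because the summation index $h$ in $p_i$ runs only over $0,1,\ldots,i$ and $[x_2]^i_h$ has $x_2$-degree $h$; in particular the top $x_2$-degree occurring in $p_i$ is $i$. I would then extract the coefficient of $x_1^{n-i}x_2^i$ in $p_i$: this forces $h=i$ and $j=n-i$, so the coefficient equals a nonzero scalar times $\prod_{g=n-i+1}^n m_1^{(g)}\cdot\prod_{g=1}^n m_3^{(g)}$ times an invertible matrix (a power of $\sigma_2$ if $i$ is even, and $\sigma_2\sigma_1$ if $i$ is odd). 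Since $n<t_1$ and $n<t_3$, neither product vanishes, so this leading coefficient is invertible; a downward induction on the $x_2$-degree then shows that $\sum_{i=0}^n A_ip_i=0$ with $A_i\in{\rm Mat}_2(\C)$ forces every $A_i=0$, proving~(i).

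For part~(ii), I would apply the operators $X$ and $Y$ of Theorem~\ref{thm:BImodule_Mn}(i) directly to $p_i$ and expand the outcome in the monomials $[x_1]^n_j[x_2]^i_h[x_3]^n_{n-h-j}$, using $T_2(x_2^h)=m_2^{(h)}x_2^{h-1}$, $T_3(x_3^h)=m_3^{(h)}x_3^{h-1}$ and the fact that the reflections built into $X$ and $Y$ merely introduce signs. For $X-\theta_i$ one first checks the coefficient of the new monomial $[x_1]^n_j[x_2]^{i+1}_{i+1}[x_3]^n_{n-i-j-1}$, which must reproduce the $x_2$-degree-$(i+1)$ part of $p_{i+1}$; then, for $0\le h\le i$ and $0\le j\le n-h$, the coefficient of $[x_1]^n_j[x_2]^i_h[x_3]^n_{n-h-j}$ in $(X-\theta_i)p_i$ appears as a sum of three contributions --- one from $T_3$, one from the $T_2$/reflection part, and one from the diagonal term involving $\theta_i$ --- and one verifies that this sum equals the corresponding coefficient of $p_{i+1}$. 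The computation for $Y-\theta_i^*$ is entirely parallel: the analogous three-term sum collapses, via the recursions obeyed by the binomial coefficients $\binom{\lfloor(n-i-j)/2\rfloor+\lfloor(i-h)/2\rfloor}{\lfloor(i-h)/2\rfloor}$, to $\varphi_i$ times the matching coefficient of $p_{i-1}$, with the $h=i$ term vanishing, and the boundary identities $(X-\theta_n)p_n=0$ and $(Y-\theta_0^*)p_0=0$ emerge from the same formulas. The bulk of the work --- and the main obstacle --- is precisely this bookkeeping: matching the five-case definition of $c_{h,i,j}$ against the shifted floor-valued binomial coefficients.

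For part~(iii), I would verify $\D(p_0)=0$ by a short direct computation: for $i=0$ the double sum reduces to a single sum in $x_1,x_3$ only, and the vanishing comes down to the Clifford identity $\sigma_1\sigma_2=(-1)^{1/2}\sigma_3$. Since $X$ and $Y$ are symmetries of $\D$ --- in particular they preserve the kernel of $\D$ on $\C^2\otimes\R[x_1,x_2,x_3]_n$, and hence also on ${\rm Mat}_2(\C)\otimes\R[x_1,x_2,x_3]_n$, cf. Theorem~\ref{thm:BImodule_Mn}(i) --- the recurrences $p_{i+1}=(X-\theta_i)p_i$ of part~(ii) give $\D(p_i)=0$ for all $i$ by induction on $i$.
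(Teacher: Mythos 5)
Your proposal follows the paper's proof essentially verbatim: part (i) via the triangular structure in the $x_2$-degree and the nonsingular leading coefficient (using $n<t_1$ and $n<t_3$ to keep the scalar products $\prod m_1^{(\cdot)}$ and $\prod m_3^{(\cdot)}$ nonzero); part (ii) via extracting coefficients of $[x_1]^n_j[x_2]^i_h[x_3]^n_{n-h-j}$ in $(X-\theta_i)p_i$ and $(Y-\theta_i^*)p_i$ and matching the resulting three-term sums to the claimed recursion; part (iii) by a direct check of $\D(p_0)=0$ combined with (ii) and the fact that $X$ preserves $\ker\D$. This is the same decomposition and the same computational strategy as the paper, with both leaving the term-by-term verification to the reader.
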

\begin{proof}
(i): Let $i$ be an integer with $0\leq i\leq n$.
By construction the coefficient of $x_2^h$ in $p_i$ is zero for all integers $h$ with $i<h\leq n$. Observe that the coefficient of $x_1^{n-i}x_2^i$ in $p_i$ is 
\begin{gather}\label{coeff:n<t1+t3(even)}
(-1)^{\frac{n}{2}-i}
\prod_{h=n-i+1}^n m_1^{(h)}
\prod_{h=1}^{n} m_3^{(h)}
\times 
\left\{
\begin{array}{ll}
\begin{pmatrix}
1 &0
\\
0 &1
\end{pmatrix}
\qquad 
&\hbox{if $i$ is even},
\\
(-1)^\frac{3}{2}\sigma_3 \qquad 
&\hbox{if $i$ is odd}.
\end{array}
\right.
\end{gather}
Since $n< t_1$ the scalar $\prod\limits_{h=n-i+1}^n m_1^{(h)}$ is nonzero. 
Since $n<t_3$ the scalar $\prod\limits_{h=1}^n m_3^{(h)}$ is nonzero. 
Hence the matrix (\ref{coeff:n<t1+t3(even)}) is nonsingular. By the above comments the part (i) follows.

(ii): Let $i,j$ denote two nonnegative integers with $i+j\leq n-1$. Using Theorem \ref{thm:BImodule_Mn}(i) yields that the coefficient of  $[x_1]^n_j [x_2]^{i+1}_{i+1} [x_3]^n_{n-i-j-1}$ in $(X-\theta_i)p_i$ is equal to 
$$
(-1)^{\frac{3-i-j}{2}}
 \sigma_1 c_{i,i,j}=(-1)^{\frac{j-i-1}{2}} c_{i+1,i+1,j}.
$$
Now let $h,i,j$ denote three integers with $0\leq h\leq i\leq n$ and $0\leq j\leq n-h$. Using Theorem \ref{thm:BImodule_Mn}(i) yields that the coefficient of  $[x_1]^n_j [x_2]^i_h [x_3]^n_{n-h-j}$ in $(X-\theta_i)p_i$ is equal to $(-1)^{-\frac{h+j}{2}}$ times the sum of 
\begin{align*}
&
m_2^{(h)}
{\left\lfloor \frac{n-i-j}{2}\right\rfloor+\left\lfloor\frac{i-h+1}{2}\right\rfloor \choose \left\lfloor\frac{i-h+1}{2}\right\rfloor}
 \sigma_1 c_{h-1,i,j},
\\
&
m_3^{(n-h-j)}
{\left\lfloor \frac{n-i-j}{2}\right\rfloor+\left\lfloor\frac{i-h-1}{2}\right\rfloor \choose \left\lfloor\frac{i-h-1}{2}\right\rfloor}
 \sigma_1 c_{h+1,i,j},
\\
&
{\textstyle(
(-1)^{h+j} k_3
+
(-1)^h k_2
-
(-1)^j\theta_i
+
\frac{1}{2}
)}
{\left\lfloor \frac{n-i-j}{2}\right\rfloor+\left\lfloor\frac{i-h}{2}\right\rfloor \choose \left\lfloor\frac{i-h}{2}\right\rfloor}
c_{h,i,j}.
\end{align*}
It is straightforward to verify that the sum of the above three terms is equal to 
$$
(-1)^j
m_2^{(i+1)}
{\left\lfloor \frac{n-i-j-1}{2}\right\rfloor+\left\lfloor\frac{i-h+1}{2}\right\rfloor \choose \left\lfloor\frac{i-h+1}{2}\right\rfloor}
c_{h,i+1,j}. 
$$
By the above comments the equations given in (\ref{e:n<mint13-1(even)}) follow.

Using Theorem \ref{thm:BImodule_Mn}(i) yields that the coefficient of  $[x_1]^n_j [x_2]^i_h [x_3]^n_{n-h-j}$ in $(Y-\theta_i^*)p_i$ is equal to $(-1)^{\frac{h+j}{2}}$ times the sum of 
\begin{align*}
&
m_1^{(j)}
{\left\lfloor \frac{n-i-j+1}{2}\right\rfloor+\left\lfloor\frac{i-h}{2}\right\rfloor \choose \left\lfloor\frac{i-h}{2}\right\rfloor}
 \sigma_2 c_{h,i,j-1},
\\
&
m_3^{(n-h-j)}
{\left\lfloor \frac{n-i-j-1}{2}\right\rfloor+\left\lfloor\frac{i-h}{2}\right\rfloor \choose \left\lfloor\frac{i-h}{2}\right\rfloor}
 \sigma_2 c_{h,i,j+1},
\\
&
{\textstyle(
(-1)^j k_1
+
(-1)^{h+j} k_3
-
(-1)^h
\theta_i^*
+
\frac{1}{2}
)}
{\left\lfloor \frac{n-i-j}{2}\right\rfloor+\left\lfloor\frac{i-h}{2}\right\rfloor \choose \left\lfloor\frac{i-h}{2}\right\rfloor}
c_{h,i,j}.
\end{align*}
It is straightforward to verify that the sum of the above three terms is equal to $(-1)^h$ times 
$$
\left\{
\begin{array}{ll}
\begin{pmatrix}
0 &0
\\
0 &0
\end{pmatrix}
\qquad &\hbox{if $h=i$},
\\
(i-n-2k_1-1)
\displaystyle{\left\lfloor \frac{n-i-j+1}{2}\right\rfloor+\left\lfloor\frac{i-h-1}{2}\right\rfloor \choose \left\lfloor\frac{i-h-1}{2}\right\rfloor}
c_{h,i-1,j}
\qquad &\hbox{if $h<i$ and $i$ is even},
\\
(i-n-1)
\displaystyle{\left\lfloor \frac{n-i-j+1}{2}\right\rfloor+\left\lfloor\frac{i-h-1}{2}\right\rfloor \choose \left\lfloor\frac{i-h-1}{2}\right\rfloor}
c_{h,i-1,j}
\qquad &\hbox{if $h<i$ and $i$ is odd}.
\end{array}
\right.
$$
By the above comments the equations given in (\ref{e:n<mint13-2(even)}) follow.

(iii): It is routine to verify that $\D(p_0)=0$. Combined with (ii) the statement (iii) follows.
\end{proof}

\begin{lem}\label{lem:basis:n<mint13(even)}
Suppose that $n$ is an even integer with $0\leq n<\min\{t_1,t_3\}$. 
Let $\{p_i\}_{i=0}^n$ be as in Proposition \ref{prop:n<mint13(even)}. Then $\M_n$ has the basis
\begin{align}
p_i\cdot 
\begin{pmatrix}
1
\\
0
\end{pmatrix}
\otimes
1 
\qquad 
(0\leq i\leq n), 
\label{basis:n<mint13(even)-1}
\\ 
p_i\cdot 
\begin{pmatrix}
0
\\
1
\end{pmatrix}
\otimes
1 
\qquad 
(0\leq i\leq n).
\label{basis:n<mint13(even)-2}
\end{align}
\end{lem}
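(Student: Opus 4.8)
The statement to prove is Lemma~\ref{lem:basis:n<mint13(even)}: when $n$ is even and $0\leq n<\min\{t_1,t_3\}$, the $2(n+1)$ polynomials in (\ref{basis:n<mint13(even)-1}) and (\ref{basis:n<mint13(even)-2}) form a basis of $\M_n$. The argument is the exact analogue of the proof of Lemma~\ref{lem:basis:n<mint13(odd)} in the odd case, so I would first invoke Theorem~\ref{thm:dimM=2(n+1)}: since $n<\min\{t_1,t_2,t_3\}\leq\max\{t_1,t_2,t_3\}$, we are in case (I), hence $\dim\M_n=2(n+1)$. Thus it suffices to show the $2(n+1)$ listed polynomials lie in $\M_n$ and are linearly independent over $\C$.

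The containment in $\M_n$ is immediate from Proposition~\ref{prop:n<mint13(even)}(iii): each $p_i$ satisfies $\D(p_i)=0$, and $p_i$ is built from monomials of total degree $n$ in $x_1,x_2,x_3$ with matrix coefficients, so $p_i\cdot\binom{1}{0}\otimes 1$ and $p_i\cdot\binom{0}{1}\otimes 1$ lie in $\ker(\D|_{\C^2\otimes\R[x_1,x_2,x_3]_n})=\M_n$. For linear independence I would appeal to Proposition~\ref{prop:n<mint13(even)}(i): the $\{p_i\}_{i=0}^n$ are linearly independent over ${\rm Mat}_2(\C)$, meaning any relation $\sum_i A_i p_i=0$ with $A_i\in{\rm Mat}_2(\C)$ forces all $A_i=0$. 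A $\C$-linear relation among the $2(n+1)$ vectors $p_i\binom{1}{0}\otimes 1$ and $p_i\binom{0}{1}\otimes 1$ can be collected as $\sum_i A_i(p_i\otimes 1)=0$ where the two columns of $A_i$ record the coefficients of $p_i\binom{1}{0}$ and $p_i\binom{0}{1}$; applying the over-${\rm Mat}_2(\C)$ independence gives every $A_i=0$, hence all coefficients vanish.

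There is essentially no obstacle here: the only content has already been discharged inside Proposition~\ref{prop:n<mint13(even)}, whose parts~(i) and~(iii) supply exactly the two inputs needed, and the dimension count comes for free from Theorem~\ref{thm:dimM=2(n+1)}. The proof is therefore a three-line bookkeeping argument identifying a $\C$-relation among the column-vectors with a ${\rm Mat}_2(\C)$-relation among the $p_i$. (The genuinely laborious verifications — that the stated $c_{h,i,j}$ produce a well-defined family, that $\D(p_0)=0$, and that the ladder relations (\ref{e:n<mint13-1(even)})–(\ref{e:n<mint13-2(even)}) hold — all live in Proposition~\ref{prop:n<mint13(even)} and are not repeated.) I would write:

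\begin{proof}
By Theorem~\ref{thm:dimM=2(n+1)} we have $\dim\M_n=2(n+1)$, since $n<\min\{t_1,t_2,t_3\}$. By Proposition~\ref{prop:n<mint13(even)}(iii) the polynomials (\ref{basis:n<mint13(even)-1}) and (\ref{basis:n<mint13(even)-2}) lie in $\M_n$. Suppose a $\C$-linear combination of (\ref{basis:n<mint13(even)-1}) and (\ref{basis:n<mint13(even)-2}) is zero; collecting, for each $i$, the two scalar coefficients into the columns of a matrix $A_i\in{\rm Mat}_2(\C)$, this relation reads $\sum_{i=0}^n A_i(p_i\otimes 1)=0$ in $\C^2\otimes\R[x_1,x_2,x_3]_n$, whence $\sum_{i=0}^n A_i p_i=0$. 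By Proposition~\ref{prop:n<mint13(even)}(i) the $\{p_i\}_{i=0}^n$ are linearly independent over ${\rm Mat}_2(\C)$, so $A_i=0$ for all $i$ and hence all the scalar coefficients are zero. Therefore (\ref{basis:n<mint13(even)-1}) and (\ref{basis:n<mint13(even)-2}) are linearly independent; being $2(n+1)$ vectors in $\M_n$, they form a basis.
\end{proof}
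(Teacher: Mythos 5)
Your proof is structurally the same as the paper's: cite Theorem~\ref{thm:dimM=2(n+1)} for $\dim\M_n=2(n+1)$, Proposition~\ref{prop:n<mint13(even)}(iii) for membership in $\M_n$, and Proposition~\ref{prop:n<mint13(even)}(i) for linear independence, and you correctly unpack what ``linearly independent over ${\rm Mat}_2(\C)$'' buys for the individual column vectors. One small slip, though: the hypothesis of the lemma is only $n<\min\{t_1,t_3\}$, not $n<\min\{t_1,t_2,t_3\}$, so writing ``since $n<\min\{t_1,t_2,t_3\}$'' (and ``we are in case (I)'') is not justified --- and indeed this lemma is later invoked in Lemma~\ref{lem:basis:t2<=n<mint13(even)} precisely in a regime where $t_2\leq n$. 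The correct justification for applying Theorem~\ref{thm:dimM=2(n+1)} is simply $n<\min\{t_1,t_3\}\leq\max\{t_1,t_2,t_3\}$, which already puts you in the first alternative of that theorem's hypothesis; everything else in your argument then goes through unchanged.
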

\begin{proof}
It follows from Theorem \ref{thm:dimM=2(n+1)} that $\dim \M_n=2(n+1)$.  
The linear independence of (\ref{basis:n<mint13(even)-1}) and (\ref{basis:n<mint13(even)-2}) follows from Proposition \ref{prop:n<mint13(even)}(i). 
By Proposition \ref{prop:n<mint13(even)}(iii) the polynomials (\ref{basis:n<mint13(even)-1}) and (\ref{basis:n<mint13(even)-2}) are in $\M_n$. 
The lemma follows. 
\end{proof}

\begin{thm}\label{thm:n<mint123(even)}
Suppose that $n$ is an even integer with $0\leq n<\min\{t_1,t_2,t_3\}$. Then the $\BI$-module $\M_n$ is isomorphic to a direct sum of two copies of 
\begin{gather}\label{BImodule:n<mint123(even)}
O_n
\textstyle(
k_2+k_3+\frac{n+1}{2},
-k_1-k_3-\frac{n+1}{2},
-k_1-k_2-\frac{n+1}{2}
)^{(1,-1)}.
\end{gather}
Moreover, if each of $k_1,k_2,k_3$ is nonnegative then the $\BI$-module (\ref{BImodule:n<mint123(even)}) is irreducible and it is isomorphic to 
\begin{gather}\label{BImodule:k123>=0(even)}
O_n
\textstyle(
k_2+k_3+\frac{n+1}{2},k_1+k_3+\frac{n+1}{2},k_1+k_2+\frac{n+1}{2}
).
\end{gather}
\end{thm}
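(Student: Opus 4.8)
The plan is to mirror the proof of Theorem \ref{thm:n<mint123(odd)}, with the even-dimensional module $E_n$ replaced by the odd-dimensional module $O_n$ and with the twist by $(1,-1)$ carried along. Concretely, by Lemma \ref{lem:basis:n<mint13(even)} one may write $\M_n=V\oplus V'$, where $V$ and $V'$ are the spans of the vectors (\ref{basis:n<mint13(even)-1}) and (\ref{basis:n<mint13(even)-2}) respectively. Since the $\BI$-action of Theorem \ref{thm:BImodule_Mn}(i) is by left multiplication with matrix-valued operators, it commutes with right multiplication by $\binom{1}{0}\otimes 1$ and by $\binom{0}{1}\otimes 1$; hence the relations of Proposition \ref{prop:n<mint13(even)}(ii) for the matrix-valued $p_i$ descend to the same relations (\ref{e:n<mint13-1(even)}), (\ref{e:n<mint13-2(even)}) on each of the two families. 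Together with $Z=\{X,Y\}-\kappa$ and the fact (Theorem \ref{thm:BImodule_Mn}(ii)) that $\kappa$ acts as a scalar, this shows $V$ and $V'$ are $\BI$-submodules and that $X,Y$ are represented by the same pair of bidiagonal matrices on $V$ and on $V'$; in particular $V\cong V'$, so it suffices to identify $V$.

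I would identify $V$ with $O_n(a,b,c)^{(1,-1)}$ for $a=k_2+k_3+\tfrac{n+1}{2}$, $b=-k_1-k_3-\tfrac{n+1}{2}$, $c=-k_1-k_2-\tfrac{n+1}{2}$. By Table \ref{pm1-action} the element $X$ acts on $O_n(a,b,c)^{(1,-1)}$ as it does on $O_n(a,b,c)$, while $Y$ acts as $-Y$ does on $O_n(a,b,c)$. In the standard basis of Proposition \ref{prop:Od}(i) the matrix of $X$ on $O_n(a,b,c)$ is lower bidiagonal with diagonal $\tfrac{(-1)^i(2a-n+2i)}{2}$ and subdiagonal $1$, which coincides with the matrix of $X$ on $V$ coming from (\ref{e:n<mint13-1(even)}) for the chosen $a$; one then checks that the matrix of $Y$ on $V$ from (\ref{e:n<mint13-2(even)}) equals $-1$ times the matrix of $Y$ on $O_n(a,b,c)$ exactly for the chosen $b$ (matching the diagonals $\theta_i^*$) and $c$ (matching the superdiagonals $\varphi_i$ for $i$ both even and odd). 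Because the subdiagonal of $X$ is $1$ on both sides, the map sending $p_i$ to the $i$-th standard basis vector is forced and is a $\BI$-module isomorphism; the scalar actions of $\kappa,\lambda,\mu$ computed from Theorem \ref{thm:BImodule_Mn}(ii) and from Proposition \ref{prop:Od}(ii) combined with Table \ref{pm1-action} provide a consistency check. This proves the first assertion.

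For the second assertion, assume $k_1,k_2,k_3\ge 0$. Since twisting by an algebra automorphism preserves irreducibility, $O_n(a,b,c)^{(1,-1)}$ is irreducible if and only if $O_n(a,b,c)$ is, and I would check the criterion of Theorem \ref{thm:irr_O} for the above $a,b,c$: one has $a+b+c=-2k_1-\tfrac{n+1}{2}$, $-a+b-c=-2k_3-\tfrac{n+1}{2}$, $-a-b+c=-2k_2-\tfrac{n+1}{2}$ and $a-b-c=2(k_1+k_2+k_3)+\tfrac{3(n+1)}{2}$, so each of these four quantities is at most $-\tfrac{n+1}{2}$ or at least $\tfrac{3(n+1)}{2}$, whereas every element of $\{\tfrac{n+1}{2}-i\mid i=2,4,\dots,n\}$ lies strictly between $-\tfrac{n+1}{2}$ and $\tfrac{3(n+1)}{2}$ (and the set is empty when $n=0$); hence none of the four quantities lies in that set, and $O_n(a,b,c)^{(1,-1)}$ is irreducible. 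Finally, using the trace description following Proposition \ref{prop:Od} together with Table \ref{pm1-action}, the traces of $X,Y,Z$ on $O_n(a,b,c)^{(1,-1)}$ are $a$, $-b$, $-c$, that is $k_2+k_3+\tfrac{n+1}{2}$, $k_1+k_3+\tfrac{n+1}{2}$, $k_1+k_2+\tfrac{n+1}{2}$, so Theorem \ref{thm:onto2_O} identifies it with the module (\ref{BImodule:k123>=0(even)}). The bulk of the work, namely the explicit formulas for the $p_i$ and the verification of the $X$- and $Y$-relations, is already done in Proposition \ref{prop:n<mint13(even)}, so the only genuinely delicate point is the bookkeeping of the $(1,-1)$-twist: one must notice that the apparent clash between the index $i$ in $\theta_i$ and the index $n-i$ in $\theta_i^*$ is exactly absorbed by the sign change $Y\mapsto -Y$, so that the parameters emerge as stated without any further reversal of the basis.
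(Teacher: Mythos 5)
Your proof is correct and follows essentially the same route as the paper: decompose $\M_n=V\oplus V'$ via Lemma \ref{lem:basis:n<mint13(even)}, use Proposition \ref{prop:n<mint13(even)}(ii) together with Proposition \ref{prop:Od} to identify both summands with the twisted module (\ref{BImodule:n<mint123(even)}), and then invoke Theorem \ref{thm:irr_O} and Theorem \ref{thm:onto2_O} for the second assertion. You have merely filled in the bookkeeping (the explicit $a,b,c$, the $(1,-1)$-twist of the $Y$-matrix, the four scalars in the irreducibility criterion, and the traces) that the paper leaves to the reader.
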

\begin{proof}
Let $V$ and $V'$ denote the subspaces of $\M_n$ spanned by (\ref{basis:n<mint13(even)-1}) and (\ref{basis:n<mint13(even)-2}), respectively. It follows from Lemma \ref{lem:basis:n<mint13(even)} that 
$
\M_n=V\oplus V'$. 
It follows from Theorem \ref{thm:BImodule_Mn}(ii) and Proposition \ref{prop:n<mint13(even)}(ii) that $V$ and $V'$ are two isomorphic $\BI$-submodules of $\M_n$. Using Proposition \ref{prop:Od} yields that both are isomorphic to (\ref{BImodule:n<mint123(even)}).  The first assertion follows.

Suppose  that each of $k_1,k_2,k_3$ is nonnegative. Using Theorem \ref{thm:irr_O} yields that the $\BI$-module (\ref{BImodule:n<mint123(even)}) is irreducible. By Theorem \ref{thm:onto2_O} the $\BI$-module  (\ref{BImodule:n<mint123(even)}) is isomorphic to (\ref{BImodule:k123>=0(even)}). The second assertion follows.
\end{proof}

\section{The $\BI$-modules $\M_n$ of type (II)}\label{s:case2}

\begin{lem}\label{lem:basis:t2<=n<mint13(odd)}
Suppose that $n$ is an odd integer with $t_2\leq n<\min\{t_1,t_3\}$. 
Let $\{p_i\}_{i=0}^n$ be as in Proposition \ref{prop:n<mint13}. Then the following hold:
\begin{enumerate}
\item $\M_n(x_2)$ has the basis
\begin{align}
p_i\cdot 
\begin{pmatrix}
1
\\
0
\end{pmatrix}
\otimes
1 
\qquad 
(t_2\leq i\leq n), 
\label{M2basis:n<mint13(odd)-1}
\\ 
p_i\cdot 
\begin{pmatrix}
0
\\
1
\end{pmatrix}
\otimes
1 
\qquad 
(t_2\leq i\leq n).
\label{M2basis:n<mint13(odd)-2}
\end{align}

\item $\M_n/\M_n(x_2)$ has the basis
\begin{align}
p_i\cdot 
\begin{pmatrix}
1
\\
0
\end{pmatrix}
\otimes
1 
+\M_n(x_2)
\qquad 
(0\leq i\leq t_2-1), 
\label{M/M2basis:n<mint13(odd)-1}
\\ 
p_i\cdot 
\begin{pmatrix}
0
\\
1
\end{pmatrix}
\otimes
1 
+\M_n(x_2)
\qquad 
(0\leq i\leq t_2-1).
\label{M/M2basis:n<mint13(odd)-2}
\end{align}
\end{enumerate}
\end{lem}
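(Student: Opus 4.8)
The plan is to exploit Lemma~\ref{lem:basis:n<mint13(odd)}, which applies since $1\le n<\min\{t_1,t_3\}$ and asserts that the $2(n+1)$ vectors $p_i\cdot\begin{pmatrix}1\\0\end{pmatrix}\otimes 1$ and $p_i\cdot\begin{pmatrix}0\\1\end{pmatrix}\otimes 1$ for $0\le i\le n$ form a basis of $\M_n$. I would show that this basis splits along the index $i=t_2$: the $2(n-t_2+1)$ vectors with $t_2\le i\le n$ span $\M_n(x_2)$, whence the remaining $2t_2$ vectors descend to a basis of $\M_n/\M_n(x_2)$. First I would record the dimensions in play. Since $n<t_1\le\max\{t_1,t_2,t_3\}$, Theorem~\ref{thm:dimM=2(n+1)} gives $\dim\M_n=2(n+1)$; since $t_2\le n$, Proposition~\ref{prop:dimM(x1)}(ii) gives $\dim\M_n(x_2)=2(n-t_2+1)$, and therefore $\dim\M_n/\M_n(x_2)=2t_2$.

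The crucial observation is that, for every $i$ with $t_2\le i\le n$, the polynomial $p_i$ of Proposition~\ref{prop:n<mint13} is supported on monomials of $x_2$-degree at least $t_2$. Indeed $t_2=-2k_2$ is an odd positive integer, so $m_2^{(t_2)}=t_2+\bigl(1-(-1)^{t_2}\bigr)k_2=t_2+2k_2=0$; consequently, for any $h$ with $0\le h<t_2\le i$ the index $t_2$ lies in the range $h+1,\ldots,i$, so the scalar $\prod_{h'=h+1}^{i}m_2^{(h')}$ occurring in $[x_2]^i_h$ vanishes, i.e.\ $[x_2]^i_h=0$. Inspecting the displayed formula for $p_i$, every surviving summand thus carries a factor $x_2^h$ with $t_2\le h\le i\le n$, so $p_i\cdot\begin{pmatrix}1\\0\end{pmatrix}\otimes 1$ and $p_i\cdot\begin{pmatrix}0\\1\end{pmatrix}\otimes 1$ lie in $\bigoplus_{i'=t_2}^n\C^2\otimes(x_2^{i'}\cdot\R[x_1,x_3]_{n-i'})$. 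By Lemma~\ref{lem:basis:n<mint13(odd)} these vectors also lie in $\M_n$, hence they lie in $\M_n(x_2)$ by Definition~\ref{defn:M(x1)}.

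To finish I would count. The vectors (\ref{M2basis:n<mint13(odd)-1})--(\ref{M2basis:n<mint13(odd)-2}) form a subset of the basis of $\M_n$ furnished by Lemma~\ref{lem:basis:n<mint13(odd)}, hence are linearly independent; by the previous paragraph they lie in $\M_n(x_2)$; and there are $2(n-t_2+1)=\dim\M_n(x_2)$ of them, so they form a basis of $\M_n(x_2)$, which is (i). It follows that $\M_n=\M_n(x_2)\oplus W$, where $W$ is the span of the complementary basis vectors, namely those indexed by $0\le i\le t_2-1$; the canonical projection $\M_n\to\M_n/\M_n(x_2)$ restricts to an isomorphism $W\xrightarrow{\sim}\M_n/\M_n(x_2)$ carrying the basis of $W$ to the family (\ref{M/M2basis:n<mint13(odd)-1})--(\ref{M/M2basis:n<mint13(odd)-2}), which is therefore a basis of the quotient, proving (ii). I expect the only delicate point to be the identity $m_2^{(t_2)}=0$ and its propagation through the products $[x_2]^i_h$; once that is in place, everything else is bookkeeping with dimensions against Lemma~\ref{lem:basis:n<mint13(odd)} and Proposition~\ref{prop:dimM(x1)}.
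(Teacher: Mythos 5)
Your proof is correct and takes essentially the same route as the paper: extract the family indexed by $t_2\leq i\leq n$ from the full basis of $\M_n$ furnished by Lemma~\ref{lem:basis:n<mint13(odd)}, observe that these vectors already lie in $\M_n(x_2)$, match dimensions against Proposition~\ref{prop:dimM(x1)}(ii), and let the remaining basis vectors descend to the quotient. The one place where you go beyond the paper is in actually verifying the support claim via $m_2^{(t_2)}=0$ (hence $[x_2]^i_h=0$ for $h<t_2\leq i$); the paper asserts without comment that the vectors lie in $\C^2\otimes\bigl(\bigoplus_{i=t_2}^{n}x_2^i\cdot\R[x_1,x_3]_{n-i}\bigr)$, and your justification is the correct one.
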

\begin{proof}
(i): 
The linear independence of (\ref{M2basis:n<mint13(odd)-1}) and (\ref{M2basis:n<mint13(odd)-2}) follows from Lemma \ref{lem:basis:n<mint13(odd)}. 
Since (\ref{M2basis:n<mint13(odd)-1}) and (\ref{M2basis:n<mint13(odd)-2}) are in 
$\C^2\otimes\left(\bigoplus\limits_{i=t_2}^n x_2^i\cdot \R[x_1,x_3]_{n-i}\right)$, it follows from Proposition \ref{prop:n<mint13}(iii) that (\ref{M2basis:n<mint13(odd)-1}) and (\ref{M2basis:n<mint13(odd)-2}) are in $\M_n(x_2)$. 
Combined with Proposition \ref{prop:dimM(x1)}(ii) the part (i) follows.

(ii): It is immediate from Lemma \ref{lem:basis:n<mint13(odd)} and Lemma \ref{lem:basis:t2<=n<mint13(odd)}(i).
\end{proof}

\begin{thm}\label{thm:n<mint13(odd)}
Suppose that $n$ is an odd integer with $t_2\leq n<\min\{t_1,t_3\}$. Then the following hold:
\begin{enumerate}
\item  The $\BI$-module $\M_n(x_2)$ is isomorphic to a direct sum of two copies of 
\begin{gather}\label{BImodule:t2<=n<mint13(odd)-1}
O_{n-t_2}
\textstyle(
k_3+\frac{n+1}{2},
-k_1-k_2-k_3-\frac{n+1}{2},
-k_1-\frac{n+1}{2}
)^{(-1,-1)}.
\end{gather}

\item The $\BI$-module $\M_n/\M_n(x_2)$ is isomorphic to a direct sum of two copies of 
\begin{gather}\label{BImodule:t2<=n<mint13(odd)-2}
O_{t_2-1}
(k_3,-k_1-k_2-k_3-n-1,k_1).
\end{gather}
\end{enumerate}
Moreover, if $k_1,k_3$ are nonnegative then the $\BI$-modules (\ref{BImodule:t2<=n<mint13(odd)-1}) and (\ref{BImodule:t2<=n<mint13(odd)-2}) are irreducible and (\ref{BImodule:t2<=n<mint13(odd)-1}) is isomorphic to 
\begin{gather}\label{BImodule:t2<=n<mint13(odd)-1'}
O_{n-t_2}
\textstyle(
-k_3-\frac{n+1}{2},
k_1+k_2+k_3+\frac{n+1}{2},
-k_1-\frac{n+1}{2}
).
\end{gather}
\end{thm}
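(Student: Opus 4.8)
The plan is to mimic the structure of the proof of Theorem \ref{thm:n<mint123(odd)}, but applied separately to the $\BI$-submodule $\M_n(x_2)$ and to the quotient $\BI$-module $\M_n/\M_n(x_2)$, using the explicit vectors $\{p_i\}_{i=0}^n$ from Proposition \ref{prop:n<mint13} together with the recursion relations \eqref{e:n<mint13-1}, \eqref{e:n<mint13-2} established there. The key point is that the $X,Y$-action on the full module $\M_n$ is encoded by a single tridiagonal pair of recursions in the index $i=0,1,\ldots,n$, and the index $t_2$ is precisely the value at which the off-diagonal coefficient $\varphi_{t_2}$ (the odd-$i$ branch $(i+2k_2)(n-i+2k_1+1)$, with $i=t_2=-2k_2$) vanishes. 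Hence the recursion decouples at $i=t_2$: the span of $p_{t_2},\ldots,p_n$ is $X,Y$-invariant, matching the basis of $\M_n(x_2)$ from Lemma \ref{lem:basis:t2<=n<mint13(odd)}(i), while the span of $p_0,\ldots,p_{t_2-1}$ descends to an $X,Y$-invariant subspace of the quotient, matching the basis from Lemma \ref{lem:basis:t2<=n<mint13(odd)}(ii).

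First I would split $\M_n=V\oplus V'$ according to the two families \eqref{basis:n<mint13(odd)-1}, \eqref{basis:n<mint13(odd)-2} as in the proof of Theorem \ref{thm:n<mint123(odd)}, so that it suffices to work with one copy, say $V$, and with $V\cap\M_n(x_2)$ and $V/(V\cap\M_n(x_2))$. On $V\cap\M_n(x_2)$, using the basis $p_{t_2},\ldots,p_n$, the relations \eqref{e:n<mint13-1}, \eqref{e:n<mint13-2} restrict (since $\varphi_{t_2}=0$) to a tridiagonal action with diagonal parameters $\theta_i,\theta_i^*$ for $t_2\le i\le n$ and off-diagonal parameters $\varphi_i$ for $t_2<i\le n$. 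I would then reindex $i\mapsto i-t_2$ and match these data against the standard form of $O_{n-t_2}(a,b,c)$ in Proposition \ref{prop:Od}, after applying the automorphism $(-1,-1)=(1,-1)(-1,1)$ from Table \ref{pm1-action} (this is where the superscript $(-1,-1)$ in \eqref{BImodule:t2<=n<mint13(odd)-1} comes from); the central elements $\kappa,\lambda,\mu$ are pinned down by Theorem \ref{thm:BImodule_Mn}(ii), which must agree with the $O_{d}$-values of Proposition \ref{prop:Od}(ii) twisted by $(-1,-1)$, and this comparison fixes $(a,b,c)=(k_3+\tfrac{n+1}{2},\,-k_1-k_2-k_3-\tfrac{n+1}{2},\,-k_1-\tfrac{n+1}{2})$. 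The analogous computation on the quotient uses the images of $p_0,\ldots,p_{t_2-1}$, where $n-i$ in $\theta_i,\theta_i^*,\varphi_i$ is congruent to the quotient-module parameters with $t_2-1$ in place of $n$, yielding \eqref{BImodule:t2<=n<mint13(odd)-2} with no twist; here one also checks that $\theta_i,\theta_i^*$ in the quotient indeed match those of $O_{t_2-1}(k_3,-k_1-k_2-k_3-n-1,k_1)$ via Proposition \ref{prop:Od}(i), and the central scalars via part (ii).

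For the ``moreover'' clause, assuming $k_1,k_3\ge 0$: irreducibility of each module follows from Theorem \ref{thm:irr_O}, by checking that each of $a+b+c$, $a-b-c$, $-a+b-c$, $-a-b+c$ avoids the set $\{\tfrac{d+1}{2}-i\mid i=2,4,\ldots,d\}$ for $d=n-t_2$ (resp. $d=t_2-1$); concretely the four combinations for \eqref{BImodule:t2<=n<mint13(odd)-1} are (up to sign) things like $-k_1-k_2+\tfrac12$, $2k_3+n+1$, $k_2+2k_1+\tfrac12$, $2k_2-1=-(t_2+1)$, and using $k_1,k_3\ge0$ together with $t_2\le n$ one sees none of them falls in the forbidden arithmetic progression of even-indexed values — the nonnegativity of $k_1,k_3$ and the integrality/parity of $t_2$ are exactly what is needed. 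Finally, to obtain the alternative form \eqref{BImodule:t2<=n<mint13(odd)-1'} I would invoke Theorem \ref{thm:onto2_O}: both $O_{n-t_2}$-modules in \eqref{BImodule:t2<=n<mint13(odd)-1} and \eqref{BImodule:t2<=n<mint13(odd)-1'} are odd-dimensional irreducibles, so it suffices to check that the traces of $X,Y,Z$ agree; the $(-1,-1)$-twist negates the traces of $X$ and $Y$ (Table \ref{pm1-action}) while fixing $Z$, and one verifies that negating the first two parameters of $O_{n-t_2}$ has the matching effect on traces by Proposition \ref{prop:Od}(ii)-adjacent trace formulas, so the two descriptions coincide. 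The main obstacle I anticipate is purely bookkeeping: getting all the sign conventions, the $\lfloor\cdot\rfloor$ shifts, and the direction of the index reversal $i\leftrightarrow n-i$ straight so that the $(a,b,c)$ and the twist $(-1,-1)$ come out exactly as stated — the decoupling at $i=t_2$ itself is immediate from $\varphi_{t_2}=0$.
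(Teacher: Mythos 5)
Your proposal follows essentially the same route as the paper: split off the two isomorphic copies using the $(1,0)^T$ and $(0,1)^T$ components, invoke Lemma \ref{lem:basis:t2<=n<mint13(odd)} for the bases of $\M_n(x_2)$ and $\M_n/\M_n(x_2)$, match the tridiagonal data from Proposition \ref{prop:n<mint13}(ii) against Proposition \ref{prop:Od} to pin down the $(a,b,c)$ and the $(-1,-1)$ twist, and finish with Theorems \ref{thm:irr_O} and \ref{thm:onto2_O}. Your explicit remark that the decoupling of the recursion at $i=t_2$ comes from $\varphi_{t_2}=(t_2+2k_2)(n-t_2+2k_1+1)=0$ is a correct and useful way to see why the split is $\BI$-equivariant, and it is consistent with (indeed underlies) the paper's spatial argument in Lemma \ref{lem:basis:t2<=n<mint13(odd)}; the rest is the same bookkeeping the paper carries out.
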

\begin{proof}
(i): Let $V$ and $V'$ denote the subspaces of $\M_n(x_2)$ spanned by (\ref{M2basis:n<mint13(odd)-1}) and  (\ref{M2basis:n<mint13(odd)-2}), respectively. It follows from Lemma \ref{lem:basis:t2<=n<mint13(odd)}(i) that 
$
\M_n(x_2)=V\oplus V'$. 
It follows from Theorem \ref{thm:BImodule_Mn}(ii) and Proposition \ref{prop:n<mint13}(ii) that $V$ and $V'$ are two isomorphic $\BI$-submodules of $\M_n(x_2)$. Using Proposition \ref{prop:Od} yields that both are isomorphic to (\ref{BImodule:t2<=n<mint13(odd)-1}).

(ii): Let $V$ and $V'$ denote the subspaces of $\M_n/\M_n(x_2)$ spanned by (\ref{M/M2basis:n<mint13(odd)-1}) and (\ref{M/M2basis:n<mint13(odd)-2}), respectively. It follows from Lemma \ref{lem:basis:t2<=n<mint13(odd)}(ii) that 
$
\M_n/\M_n(x_2)=V\oplus V'$. 
It follows from Theorem \ref{thm:BImodule_Mn}(ii) and Proposition \ref{prop:n<mint13}(ii) that $V$ and $V'$ are two isomorphic $\BI$-submodules of $\M_n/\M_n(x_2)$. Using Proposition \ref{prop:Od} yields that both are isomorphic to (\ref{BImodule:t2<=n<mint13(odd)-2}).

Suppose that $k_1,k_3$ are nonnegative. Using Theorem \ref{thm:irr_O} yields that the $\BI$-modules (\ref{BImodule:t2<=n<mint13(odd)-1}) and (\ref{BImodule:t2<=n<mint13(odd)-2}) irreducible. By Theorem \ref{thm:onto2_O} the $\BI$-module (\ref{BImodule:t2<=n<mint13(odd)-1}) is isomorphic to (\ref{BImodule:t2<=n<mint13(odd)-1'}).
\end{proof}

By similar arguments as in the proof of Theorem \ref{thm:n<mint13(odd)} we have the following results:

\begin{thm}\label{thm:n<mint12(odd)}
Suppose that $n$ is an  odd integer with $t_3\leq n<\min\{t_1,t_2\}$. Then the following hold:
\begin{enumerate}
\item The $\BI$-module $\M_n(x_3)$ is isomorphic to a direct sum of two copies of 
\begin{gather}\label{n<mint12(odd)-1}
O_{n-t_3}
\textstyle(
k_1+\frac{n+1}{2},
-k_1-k_2-k_3-\frac{n+1}{2},
-k_2-\frac{n+1}{2}
)^{((-1,-1),(1\,3\,2))}.
\end{gather}

\item The $\BI$-module $\M_n/\M_n(x_3)$  is isomorphic to a direct sum of two copies of 
\begin{gather}\label{n<mint12(odd)-2}
O_{t_3-1}
(k_1,-k_1-k_2-k_3-n-1,k_2)^{(1\,3\,2)}.
\end{gather}
\end{enumerate}
Moreover, if $k_1,k_2$ are nonnegative then the $\BI$-modules (\ref{n<mint12(odd)-1}) and (\ref{n<mint12(odd)-2}) are irreducible and they are isomorphic to 
\begin{align*}
&O_{n-t_3}
\textstyle(
-k_2-\frac{n+1}{2},
-k_1-\frac{n+1}{2},
k_1+k_2+k_3+\frac{n+1}{2}
),
\\
&O_{t_3-1}
(k_2,k_1,-k_1-k_2-k_3-n-1),
\end{align*}
respectively.
\end{thm}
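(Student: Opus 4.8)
The plan is to carry the proof of Theorem~\ref{thm:n<mint13(odd)} over, with the third coordinate playing the distinguished role that the second coordinate plays there. The naive exchange $x_2\leftrightarrow x_3$ is not a symmetry of the configuration, since the transposition $(2\,3)$ reverses the sign of $\sigma_1\sigma_2\sigma_3$ and so is not realized by conjugation on $\C^2$. Instead I would transport everything along the cyclic coordinate symmetry attached to the $3$-cycle $1\mapsto2\mapsto3\mapsto1$, which fixes $\sigma_1\sigma_2\sigma_3$ and hence is realized on $\C^2$ by conjugation by a fixed invertible matrix $P$ with $P\sigma_1P^{-1}=\sigma_2$, $P\sigma_2P^{-1}=\sigma_3$, $P\sigma_3P^{-1}=\sigma_1$. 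The resulting map on $\C^2\otimes\R[x_1,x_2,x_3]$ (conjugate the $\C^2$-factor by $P$, permute the variables cyclically) commutes with $\D$, permutes the multiplicities $k_i$ and the thresholds $t_i$ cyclically, and intertwines the $\BI$-action on $\M_n$ with its twist by the automorphism attached to $(1\,3\,2)$ in Table~\ref{pm1-action}. Feeding Proposition~\ref{prop:n<mint13} and Lemmas~\ref{lem:basis:n<mint13(odd)} and~\ref{lem:basis:t2<=n<mint13(odd)} through this symmetry produces, under the hypothesis $t_3\le n<\min\{t_1,t_2\}$, polynomials $q_0,\dots,q_n$ with matrix coefficients, linearly independent over ${\rm Mat}_2(\C)$, annihilated by $\D$, such that the columns $q_i\cdot(\binom{1}{0}\otimes1)$ and $q_i\cdot(\binom{0}{1}\otimes1)$, $0\le i\le n$, form a basis of $\M_n$ (with $\dim\M_n=2(n+1)$ by Theorem~\ref{thm:dimM=2(n+1)}, applicable since $n<t_1\le\max\{t_1,t_2,t_3\}$), and with the appropriate two of $X,Y,Z$ acting on the $q_i$ through the tridiagonal relations inherited from \eqref{e:n<mint13-1}--\eqref{e:n<mint13-2}.

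Second, I would split off the submodule. Because the scalar $[x_3]^i_h$ carries the factor $m_3^{(t_3)}=0$ whenever $h<t_3\le i$, the vectors $q_i\cdot(\binom{1}{0}\otimes1)$ and $q_i\cdot(\binom{0}{1}\otimes1)$ with $t_3\le i\le n$ lie in $\M_n(x_3)$; since $\dim\M_n(x_3)=2(n-t_3+1)$ by Proposition~\ref{prop:dimM(x1)}(iii) they are a basis of it, and the remaining columns with $0\le i\le t_3-1$ descend to a basis of $\M_n/\M_n(x_3)$. On each of the two $(n-t_3+1)$-dimensional constituents of $\M_n(x_3)$ the generators act in exactly the shape of Proposition~\ref{prop:Od}, so $\M_n(x_3)$ is a direct sum of two copies of a twisted $O_{n-t_3}(a,b,c)$ and $\M_n/\M_n(x_3)$ of two copies of a twisted $O_{t_3-1}(a',b',c')$. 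Instead of reading the parameters off the $\varphi_i$, I would compute the traces of $X,Y,Z$ on each constituent (equivalently, combine the central scalars of Theorem~\ref{thm:BImodule_Mn}(ii) with Proposition~\ref{prop:Od}(ii)) and invoke Theorems~\ref{thm:onto_O} and~\ref{thm:onto2_O}; this forces $(a,b,c)$ and $(a',b',c')$ to the values in \eqref{n<mint12(odd)-1} and \eqref{n<mint12(odd)-2}, and the composite twists $((-1,-1),(1\,3\,2))$ and $(1\,3\,2)$ are precisely the $(-1,-1)$-twist of Theorem~\ref{thm:n<mint13(odd)}(i) (resp.\ no twist) composed with the $(1\,3\,2)$ recorded by the cyclic transport.

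For the final assertion I would assume $k_1,k_2\ge0$ and apply Theorem~\ref{thm:irr_O}: the parameters in \eqref{n<mint12(odd)-1} and \eqref{n<mint12(odd)-2} are sums of nonnegative $k_i$'s with strictly positive quantities, so they avoid the forbidden arithmetic progressions and both $\BI$-modules are irreducible; Theorem~\ref{thm:onto2_O} (matching of traces) then rewrites each in the untwisted normalized form displayed. The main difficulty is the bookkeeping in the first two steps --- keeping the inner automorphism $P$ on $\C^2$, the cyclic relabeling of the $k_i$ and $t_i$, the $\BI$-automorphism read off Table~\ref{pm1-action}, and the degree-parity signs of the $q_i$ all mutually consistent --- so that the final parameters and the composite twist come out exactly as stated rather than off by a sign or a permutation. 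Once that alignment is fixed the argument is a transcription of the proof of Theorem~\ref{thm:n<mint13(odd)}; alternatively one may bypass the symmetry and reprove Proposition~\ref{prop:n<mint13} and Lemmas~\ref{lem:basis:n<mint13(odd)}, \ref{lem:basis:t2<=n<mint13(odd)} verbatim with $x_3$ distinguished, at the price of repeating the (straightforward but lengthy) verification of the action equations.
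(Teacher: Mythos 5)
Your proposal is correct in substance, and it takes a genuinely different route from the one the paper intends. The paper proves this theorem by the phrase ``by similar arguments as in the proof of Theorem \ref{thm:n<mint13(odd)},'' which in context (compare \S\ref{s:case3}--\S\ref{s:case4}, where analogous companion propositions for other variable orderings are written out explicitly) means redoing Proposition \ref{prop:n<mint13} and Lemmas \ref{lem:basis:n<mint13(odd)}, \ref{lem:basis:t2<=n<mint13(odd)} with $x_3$ playing the distinguished role --- exactly the fallback you mention in your last sentence. Your primary route, transporting Theorem \ref{thm:n<mint13(odd)} along a cyclic Clifford/variable symmetry, is a legitimate alternative that avoids re-verifying the tridiagonal action equations, at the cost of precisely the alignment you flag: you must run the variable substitution $x_1\to x_2\to x_3\to x_1$ and the inner automorphism of ${\rm Mat}_2(\C)$ in \emph{opposite} cyclic directions (if $\phi(x_1,x_2,x_3)=(x_2,x_3,x_1)$ sends $\tilde T_i$ to $T_{i+1}$ so that $\tilde k_i=k_{i+1}$, one needs $P^{-1}\sigma_iP=\sigma_{i+1}$, i.e.\ $P\sigma_iP^{-1}=\sigma_{i-1}$, the reverse of what you wrote), and then the twist coming out is $X\to Z,\ Y\to X,\ Z\to Y$, which is $(1\,3\,2)$, composed with the $(-1,-1)$ already present in Theorem \ref{thm:n<mint13(odd)}(i) --- giving $((-1,-1),(1\,3\,2))$ as claimed. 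This is all doable, but it is genuine work; the paper's implicit route has the advantage of producing the parameters $(a,b,c)$ directly from the $\theta_0,\theta_0^*,\varphi_1$ of the analogue of Proposition \ref{prop:n<mint13}(ii) without any twist composition. One precision: Theorems \ref{thm:onto_O} and \ref{thm:onto2_O} as stated require $V$ irreducible, so for parts (i) and (ii), which do not assume $k_1,k_2\geq 0$, you cannot literally invoke them; however, once the shape of Proposition \ref{prop:Od}(i) is matched, the module is $O_d(a,b,c)$ for parameters uniquely determined (as traces of $X,Y,Z$) independently of irreducibility, or one may simply carry the identification made in Theorem \ref{thm:n<mint13(odd)} through the transport. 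With that caveat, the outline is sound and would yield a complete proof once the sign/direction bookkeeping is pinned down.
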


\begin{thm}\label{thm:n<mint23(odd)}
Suppose that $n$ is an odd integer with $t_1\leq n<\min\{t_2,t_3\}$. Then the following hold:
\begin{enumerate}
\item The $\BI$-module $\M_n(x_1)$ is isomorphic to a direct sum of two copies of 
\begin{gather}\label{n<mint23(odd)-1}
O_{n-t_1}
\textstyle(
k_2+\frac{n+1}{2},
-k_1-k_2-k_3-\frac{n+1}{2},
-k_3-\frac{n+1}{2}
)^{((-1,-1),(1\,2\,3))}.
\end{gather}

\item The $\BI$-module $\M_n/\M_n(x_1)$ is isomorphic to a direct sum of two copies of 
\begin{gather}\label{n<mint23(odd)-2}
O_{t_1-1}
(k_2,-k_1-k_2-k_3-n-1,k_3)^{(1\,2\,3)}.
\end{gather}
\end{enumerate}
Moreover, if $k_2,k_3$ are nonnegative then the $\BI$-modules (\ref{n<mint23(odd)-1}) and (\ref{n<mint23(odd)-2}) are irreducible and they are isomorphic to 
\begin{align*}
&O_{n-t_1}
\textstyle(
k_1+k_2+k_3+\frac{n+1}{2},
-k_3-\frac{n+1}{2},
-k_2-\frac{n+1}{2}
),
\\
&O_{t_1-1}
(-k_1-k_2-k_3-n-1,k_3,k_2),
\end{align*}
respectively.
\end{thm}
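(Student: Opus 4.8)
The plan is to mimic the proof of Theorem~\ref{thm:n<mint13(odd)}, cyclically permuting the roles of the coordinates so that $x_1$ now plays the part that $x_2$ played there. First, observe the bookkeeping we are aiming at: since $n<\min\{t_2,t_3\}\le\max\{t_1,t_2,t_3\}$, Theorem~\ref{thm:dimM=2(n+1)} gives $\dim\M_n=2(n+1)$, while Proposition~\ref{prop:dimM(x1)}(i) gives $\dim\M_n(x_1)=2(n-t_1+1)$ and hence $\dim\M_n/\M_n(x_1)=2t_1$; these are exactly $2\bigl((n-t_1)+1\bigr)$ and $2\bigl((t_1-1)+1\bigr)$, the dimensions of the claimed modules.

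The first step is to establish an analogue of Proposition~\ref{prop:n<mint13}: polynomials $q_0,q_1,\dots,q_n$ in ${\rm Mat}_2(\C)\otimes\R[x_1,x_2,x_3]_n$ obtained from the $p_i$ of Proposition~\ref{prop:n<mint13} by the relabeling $(x_1,x_2,x_3,\sigma_1,\sigma_2,\sigma_3,k_1,k_2,k_3)\mapsto(x_2,x_3,x_1,\sigma_2,\sigma_3,\sigma_1,k_2,k_3,k_1)$, so that $x_1$ becomes the distinguished variable. One then checks the three properties: (i) the $q_i$ are linearly independent over ${\rm Mat}_2(\C)$, by the leading-coefficient computation, where the relevant products $\prod m_2^{(h)}$ and $\prod m_3^{(h)}$ are nonzero precisely because $n<t_2$ and $n<t_3$; (ii) ladder relations $(X-\widetilde\theta_i)q_i=q_{i+1}$ and $(Y-\widetilde\theta_i^{*})q_i=\widetilde\varphi_i\,q_{i-1}$ (with $q_{n+1}=0$ and $\widetilde\varphi_0 q_{-1}=0$), for scalars $\widetilde\theta_i,\widetilde\theta_i^{*},\widetilde\varphi_i$ obtained from $\theta_i,\theta_i^{*},\varphi_i$ by the same relabeling, proved by the coefficient-matching computations of Proposition~\ref{prop:n<mint13}; and (iii) $\D(q_i)=0$ for all $i$, which follows from a direct check that $\D(q_0)=0$ together with (ii). The second step is the analogue of Lemma~\ref{lem:basis:t2<=n<mint13(odd)}: the vectors obtained by applying each $q_i$ with $t_1\le i\le n$ to the two standard basis vectors of $\C^2\otimes 1$ lie in $\M_n(x_1)$ (they lie in $\C^2\otimes\bigoplus_{j\ge t_1}x_1^j\cdot\R[x_2,x_3]_{n-j}$ by construction and in $\M_n$ by (iii)) and are linearly independent by (i), so they form a basis of $\M_n(x_1)$ by the dimension count above; the analogous vectors with $0\le i\le t_1-1$ then descend to a basis of $\M_n/\M_n(x_1)$.

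Granting this, the theorem is assembled exactly as in the proof of Theorem~\ref{thm:n<mint13(odd)}: writing $V,V'\subseteq\M_n(x_1)$ for the spans of the two families, the ladder relations (ii) make $V,V'$ isomorphic $\BI$-submodules, and comparing the signs appearing in $\widetilde\theta_i,\widetilde\theta_i^{*}$ and the central scalars from Theorem~\ref{thm:BImodule_Mn}(ii) with the standard presentation of Proposition~\ref{prop:Od}, read through Table~\ref{pm1-action}, identifies each of $V,V'$ with $O_{n-t_1}\bigl(k_2+\tfrac{n+1}{2},-k_1-k_2-k_3-\tfrac{n+1}{2},-k_3-\tfrac{n+1}{2}\bigr)^{((-1,-1),(1\,2\,3))}$; the same argument applied to the quotient basis gives part (ii). Finally, when $k_2,k_3\ge 0$, irreducibility follows from Theorem~\ref{thm:irr_O} (irreducibility is unaffected by twisting with an algebra automorphism), and the identification of the twisted modules with the untwisted forms $O_{n-t_1}\bigl(k_1+k_2+k_3+\tfrac{n+1}{2},-k_3-\tfrac{n+1}{2},-k_2-\tfrac{n+1}{2}\bigr)$ and $O_{t_1-1}(-k_1-k_2-k_3-n-1,k_3,k_2)$ follows from Theorem~\ref{thm:onto2_O} by matching the traces of $X,Y,Z$.

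I expect the main obstacle to be bookkeeping rather than conceptual: one must carry the prefactors $(-1)^{\lceil i/2\rceil}$ and the matrix coefficients $c_{h,i,j}$ of Proposition~\ref{prop:n<mint13} through the cyclic relabeling so that the ladder relations emerge in the normalized shape of Proposition~\ref{prop:Od}, and---the genuinely delicate point---one must determine exactly which element of $\{\pm1\}^2\rtimes\Sym_3$ is the required twist. This last is pinned down by the behaviour of $\kappa,\lambda,\mu$: under the coordinate permutation these central elements are permuted and sign-changed as in Table~\ref{pm1-action}, and matching the resulting scalars against Theorem~\ref{thm:BImodule_Mn}(ii) and Proposition~\ref{prop:Od}(ii) forces the twist $((-1,-1),(1\,2\,3))$ for $\M_n(x_1)$ and $(1\,2\,3)$ for the quotient. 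Alternatively, I could deduce the whole statement from Theorem~\ref{thm:n<mint13(odd)} by conjugating the $\BI$-action on $\M_n$ by $U\otimes\pi$, where $\pi$ cyclically permutes $x_1,x_2,x_3$ and $U\in{\rm Mat}_2(\C)$ is the unitary cyclically rotating $\sigma_1,\sigma_2,\sigma_3$; this map carries $\M_n(x_2)$ to $\M_n(x_1)$ and intertwines the $\BI$-action with an automorphism of Table~\ref{pm1-action}, and here too the only real work is identifying that automorphism.
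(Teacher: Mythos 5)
Your overall plan---construct the analogue of Proposition~\ref{prop:n<mint13} by a cyclic relabeling, establish the analogue of Lemma~\ref{lem:basis:t2<=n<mint13(odd)}, and read off the twist by comparison with Proposition~\ref{prop:Od}, supplemented by the dimension bookkeeping and the alternative ``conjugate by $U\otimes\pi$'' route---is exactly the ``similar argument'' the paper invokes for this theorem, and your instinct that the delicate point is pinning down the element of $\{\pm 1\}^2\rtimes\Sym_3$ is correct. But two concrete details as written are wrong and would stop the verification from closing.

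First, the substitution $(x_1,x_2,x_3)\mapsto(x_2,x_3,x_1)$ applied to the $p_i$ of Proposition~\ref{prop:n<mint13} sends $[x_1]^n_j[x_2]^i_h[x_3]^n_{n-h-j}$ to $[x_2]^n_j[x_3]^i_h[x_1]^n_{n-h-j}$: it makes $x_3$, not $x_1$, the distinguished variable, and it transforms the hypothesis $n<\min\{t_1,t_3\}$ into $n<\min\{t_1,t_2\}$, which is the situation of Theorem~\ref{thm:n<mint12(odd)} rather than Theorem~\ref{thm:n<mint23(odd)}. To land on $\M_n(x_1)$ you need the opposite cycle, $x_1\mapsto x_3$, $x_2\mapsto x_1$, $x_3\mapsto x_2$ (with $e_i$ and $k_i$ permuted correspondingly), which produces $[x_3]^n_j[x_1]^i_h[x_2]^n_{n-h-j}$ and turns $n<\min\{t_1,t_3\}$ into the desired $n<\min\{t_2,t_3\}$.

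Second, and consequently, the ladder operators cannot remain $X$ and $Y$. Under the correct substitution the formula defining $X$ in Theorem~\ref{thm:BImodule_Mn}(i), built from $e_1\otimes(x_3T_2-x_2T_3)$, becomes the formula defining $Z$, built from $e_3\otimes(x_2T_1-x_1T_2)$, and the formula for $Y$ becomes the formula for $X$. So the relabeled $q_i$ satisfy a $(Z,X)$-ladder, $(Z-\widetilde\theta_i)q_i=q_{i+1}$ and $(X-\widetilde\theta^{\ast}_i)q_i=\widetilde\varphi_i q_{i-1}$, the same shape as Propositions~\ref{prop:t3<=n<t2+t3(odd)} and~\ref{prop:n>=t2+t3(odd)}; the mismatch with the $(X,Y)$-tridiagonal presentation of Proposition~\ref{prop:Od} is precisely what forces the nontrivial permutation $(1\,2\,3)$ into the twist. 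Your cross-check against the central scalars would eventually expose both slips, but the ladder relations you wrote are not ones that hold. For the alternative route, note also that conjugating by $\pi$ changes the multiplicity function, so it identifies $\M_n$ for the given $k$'s with $\M_n$ for the cyclically permuted $k$'s rather than twisting one module in place; this is fine, but the direction of the cycle must be chosen with the same care as above.
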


\begin{lem}\label{lem:basis:t2<=n<mint13(even)}
Suppose that $n$ is an even integer with $t_2\leq n<\min\{t_1,t_3\}$. 
Let $\{p_i\}_{i=0}^n$ be as in Proposition \ref{prop:n<mint13(even)}. Then the following hold:
\begin{enumerate}
\item $\M_n(x_2)$ has the basis
\begin{align}
p_i\cdot 
\begin{pmatrix}
1
\\
0
\end{pmatrix}
\otimes
1 
\qquad 
(t_2\leq i\leq n), 
\label{M2basis:n<mint13(even)-1}
\\ 
p_i\cdot 
\begin{pmatrix}
0
\\
1
\end{pmatrix}
\otimes
1 
\qquad 
(t_2\leq i\leq n).
\label{M2basis:n<mint13(even)-2}
\end{align}

\item $\M_n/\M_n(x_2)$ has the basis
\begin{align}
p_i\cdot 
\begin{pmatrix}
1
\\
0
\end{pmatrix}
\otimes
1 
+\M_n(x_2)
\qquad 
(0\leq i\leq t_2-1), 
\label{M/M2basis:n<mint13(even)-1}
\\ 
p_i\cdot 
\begin{pmatrix}
0
\\
1
\end{pmatrix}
\otimes
1 
+\M_n(x_2)
\qquad 
(0\leq i\leq t_2-1).
\label{M/M2basis:n<mint13(even)-2}
\end{align}
\end{enumerate}
\end{lem}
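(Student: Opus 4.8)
The plan is to follow the proof of Lemma~\ref{lem:basis:t2<=n<mint13(odd)} essentially verbatim, with Proposition~\ref{prop:n<mint13} and Lemma~\ref{lem:basis:n<mint13(odd)} replaced by their even-degree counterparts Proposition~\ref{prop:n<mint13(even)} and Lemma~\ref{lem:basis:n<mint13(even)}. For part (i), I would first observe that the vectors (\ref{M2basis:n<mint13(even)-1}) and (\ref{M2basis:n<mint13(even)-2}) form a sub-collection of the basis of $\M_n$ exhibited in Lemma~\ref{lem:basis:n<mint13(even)}, hence are linearly independent. Next I would check that for $t_2\leq i\leq n$ the polynomial $p_i$ involves only powers $x_2^h$ with $h\geq t_2$: since $2k_2$ is an odd negative integer, $m_2^{(t_2)}=t_2+2k_2=0$, so the factor $[x_2]^i_h=\prod_{l=h+1}^i m_2^{(l)}\,x_2^h$ appearing in the defining formula of $p_i$ in Proposition~\ref{prop:n<mint13(even)} vanishes whenever $h<t_2\leq i$. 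Consequently (\ref{M2basis:n<mint13(even)-1}) and (\ref{M2basis:n<mint13(even)-2}) lie in $\C^2\otimes\bigl(\bigoplus_{h=t_2}^n x_2^h\cdot\R[x_1,x_3]_{n-h}\bigr)$, and by Proposition~\ref{prop:n<mint13(even)}(iii) they lie in $\M_n$; hence they lie in $\M_n(x_2)$ by Definition~\ref{defn:M(x1)}. Since Proposition~\ref{prop:dimM(x1)}(ii) gives $\dim\M_n(x_2)=2(n-t_2+1)$, which is exactly the number of these vectors, they form a basis.

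For part (ii), I would combine Lemma~\ref{lem:basis:n<mint13(even)} with part (i): the $2(n+1)$ vectors indexed by $0\leq i\leq n$ form a basis of $\M_n$ and those indexed by $t_2\leq i\leq n$ form a basis of the submodule $\M_n(x_2)$, so the cosets modulo $\M_n(x_2)$ of the remaining $2t_2$ vectors, namely (\ref{M/M2basis:n<mint13(even)-1}) and (\ref{M/M2basis:n<mint13(even)-2}), form a basis of $\M_n/\M_n(x_2)$. Consistently, Theorem~\ref{thm:dimM=2(n+1)} gives $\dim\M_n/\M_n(x_2)=2(n+1)-2(n-t_2+1)=2t_2$.

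I do not expect a genuine obstacle here: once one has the vanishing $m_2^{(t_2)}=0$ — which is what confines the $x_2$-support of each $p_i$ with $i\geq t_2$ to exponents at least $t_2$ — the statement is pure dimension bookkeeping. This is precisely the even-degree analogue of the support observation underlying the proof of Lemma~\ref{lem:basis:t2<=n<mint13(odd)}, so the only thing to be careful about is correctly tracking which $p_i$ survive into the submodule and which descend to the quotient.
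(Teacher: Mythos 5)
Your proof is correct and follows essentially the same route as the paper: the paper likewise cites Lemma \ref{lem:basis:n<mint13(even)} for linear independence, observes that the vectors indexed by $t_2\leq i\leq n$ lie in $\C^2\otimes\bigl(\bigoplus_{i=t_2}^n x_2^i\cdot\R[x_1,x_3]_{n-i}\bigr)$ and hence in $\M_n(x_2)$ by Proposition \ref{prop:n<mint13(even)}(iii), and then invokes Proposition \ref{prop:dimM(x1)}(ii) for the dimension count, with part (ii) following from part (i) and Lemma \ref{lem:basis:n<mint13(even)}. Your explicit verification that $m_2^{(t_2)}=0$ forces $[x_2]^i_h=0$ for $h<t_2\leq i$ is a detail the paper leaves implicit, and is a worthwhile addition.
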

\begin{proof}
(i): 
The linear independence of (\ref{M2basis:n<mint13(even)-1}) and (\ref{M2basis:n<mint13(even)-2}) follows from Lemma \ref{lem:basis:n<mint13(even)}. 
Since (\ref{M2basis:n<mint13(even)-1}) and (\ref{M2basis:n<mint13(even)-2}) are in $\C^2\otimes\left(\bigoplus\limits_{i=t_2}^n x_2^i\cdot \R[x_1,x_3]_{n-i}\right)$,  it follows from Proposition \ref{prop:n<mint13(even)}(iii) that (\ref{M2basis:n<mint13(even)-1}) and (\ref{M2basis:n<mint13(even)-2}) are in $\M_n(x_2)$. 
Combined with Proposition \ref{prop:dimM(x1)}(ii) the part (i) follows.

(ii): It is immediate from Lemma \ref{lem:basis:n<mint13(even)} and Lemma \ref{lem:basis:t2<=n<mint13(even)}(i).
\end{proof}

\begin{thm}\label{thm:n<mint13(even)}
Suppose that $n$ is an even integer with $t_2\leq n<\min\{t_1,t_3\}$. Then the following hold:
\begin{enumerate}
\item  The $\BI$-module $\M_n(x_2)$ is isomorphic to a direct sum of two copies of 
\begin{gather}\label{BImodule:t2<=n<mint13(even)-1}
E_{n-t_2}
\textstyle(
k_3+\frac{n+1}{2},
-k_1-k_2-k_3-\frac{n+1}{2},
-k_1-\frac{n+1}{2}
)^{(-1,1)}.
\end{gather}

\item The $\BI$-module $\M_n/\M_n(x_2)$  is isomorphic to a direct sum of two copies of 
\begin{gather}\label{BImodule:t2<=n<mint13(even)-2}
O_{t_2-1}
(k_3,-k_1-k_2-k_3-n-1,-k_1)^{(1,-1)}.
\end{gather}
\end{enumerate}
Moreover, if $k_1,k_3$ are nonnegative then the $\BI$-modules (\ref{BImodule:t2<=n<mint13(even)-1}) and (\ref{BImodule:t2<=n<mint13(even)-2}) are irreducible and they are isomorphic to 
\begin{align}
&
E_{n-t_2}
\textstyle(
k_3+\frac{n+1}{2},
k_1+k_2+k_3+\frac{n+1}{2},
k_1+\frac{n+1}{2}
)^{(-1,1)},
\label{BImodule:t2<=n<mint13(even)-1'}
\\
&O_{t_2-1}(k_3,k_1+k_2+k_3+n+1,k_1),
\label{BImodule:t2<=n<mint13(even)-2'}
\end{align}
respectively.
\end{thm}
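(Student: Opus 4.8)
The plan is to imitate the proof of Theorem~\ref{thm:n<mint13(odd)}, replacing its odd--case input (Proposition~\ref{prop:n<mint13} and Lemma~\ref{lem:basis:t2<=n<mint13(odd)}) by the even--case input (Proposition~\ref{prop:n<mint13(even)} and Lemma~\ref{lem:basis:t2<=n<mint13(even)}). The one structural difference to keep track of is that here $n$ is even while $t_2=-2k_2$ is odd, so $n-t_2$ is odd and $t_2-1$ is even; hence the submodule $\M_n(x_2)$ is assembled from modules of type $E$ (Proposition~\ref{prop:Ed}) and the quotient $\M_n/\M_n(x_2)$ from modules of type $O$ (Proposition~\ref{prop:Od}), rather than the other way around as in Theorem~\ref{thm:n<mint13(odd)}.

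For part~(i), let $V$ and $V'$ be the subspaces of $\M_n(x_2)$ spanned by (\ref{M2basis:n<mint13(even)-1}) and (\ref{M2basis:n<mint13(even)-2}) respectively, with $\{p_i\}_{i=0}^n$ as in Proposition~\ref{prop:n<mint13(even)}; Lemma~\ref{lem:basis:t2<=n<mint13(even)}(i) gives $\M_n(x_2)=V\oplus V'$. First I would check that $V$ and $V'$ are $\BI$-submodules: the ladder relations of Proposition~\ref{prop:n<mint13(even)}(ii) show that $X$ raises and $Y$ lowers the index $i$, and the key point $\varphi_{t_2}=0$ (which holds because $i+2k_2=0$ at $i=t_2$) guarantees that $Y$ does not push $p_{t_2}$ outside the span of $\{p_i:t_2\le i\le n\}$; since $Z=\{X,Y\}-\kappa$ and $\kappa$ acts as a scalar on $\M_n$ by Theorem~\ref{thm:BImodule_Mn}(ii), invariance under $X,Y$ forces invariance under $Z$. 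Next, re-indexing $i\mapsto i-t_2$ and rescaling the basis so that the subdiagonal of $X$ becomes $1$, the relations of Proposition~\ref{prop:n<mint13(even)}(ii) acquire exactly the shape of those in Proposition~\ref{prop:Ed} for a module $E_{n-t_2}(a,b,c)^{\e}$; to pin down $\e$ and $(a,b,c)$ I would compare the traces of $X$ and $Y$ together with the scalars by which $\kappa+\mu,\lambda+\kappa,\mu+\lambda$ act (using Theorem~\ref{thm:BImodule_Mn}(ii)) and invoke Theorem~\ref{thm:onto2_E}, which should give $\e=(-1,1)$ and $(a,b,c)$ as in (\ref{BImodule:t2<=n<mint13(even)-1}). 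Since $V$ and $V'$ carry identical matrices in the $p_i$--basis, this yields $V\cong V'\cong$ (\ref{BImodule:t2<=n<mint13(even)-1}).

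For part~(ii), Lemma~\ref{lem:basis:t2<=n<mint13(even)}(ii) furnishes a basis of $\M_n/\M_n(x_2)$ consisting of the images of (\ref{M/M2basis:n<mint13(even)-1}) and (\ref{M/M2basis:n<mint13(even)-2}); the same ladder relations, now truncated at $i=t_2-1$ where $(X-\theta_{t_2-1})p_{t_2-1}=p_{t_2}\in\M_n(x_2)$ vanishes in the quotient, split $\M_n/\M_n(x_2)$ into two isomorphic $\BI$-submodules, each of dimension $t_2$. Since $t_2-1$ is even, Proposition~\ref{prop:Od} together with Theorem~\ref{thm:onto2_O} (matching the traces of $X,Y,Z$ via Theorem~\ref{thm:BImodule_Mn}(ii)) identifies each with (\ref{BImodule:t2<=n<mint13(even)-2}). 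Finally, under the hypothesis $k_1,k_3\ge 0$, I would apply Theorem~\ref{thm:irr_E} to (\ref{BImodule:t2<=n<mint13(even)-1}) and Theorem~\ref{thm:irr_O} to (\ref{BImodule:t2<=n<mint13(even)-2}): the ``forbidden'' arithmetic--progression conditions fail because the relevant $\pm$--combinations of the parameters simplify to positive quantities built from $k_1$, $k_3$ and $\tfrac{n+1}{2}$; once irreducibility is known, Theorem~\ref{thm:onto2_E} (resp.\ Theorem~\ref{thm:onto2_O}) lets us flip the signs of the arguments and absorb the twist, rewriting (\ref{BImodule:t2<=n<mint13(even)-1}) as (\ref{BImodule:t2<=n<mint13(even)-1'}) and (\ref{BImodule:t2<=n<mint13(even)-2}) as (\ref{BImodule:t2<=n<mint13(even)-2'}).

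The main obstacle is the parameter bookkeeping in parts~(i)--(ii): determining the twist $\e\in\{\pm1\}^2$ and the sign pattern of $(a,b,c)$ that reproduce the explicit scalars $\theta_i=(-1)^i(k_2+k_3+i+\tfrac12)$, $\theta_i^*=(-1)^i(k_1+k_3+n-i+\tfrac12)$ and the quadratic part of $\varphi_i$, keeping in mind that $\varphi_i$ for odd $i$ fixes the last argument only up to sign --- an ambiguity resolved by Theorem~\ref{thm:onto2_E}. Everything else is a routine transcription of the odd--case argument of Theorem~\ref{thm:n<mint13(odd)}, with $E$ and $O$ interchanged according to the parities of $n-t_2$ and $t_2-1$.
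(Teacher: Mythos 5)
Your proposal is correct and follows essentially the same route as the paper's proof: split $\M_n(x_2)$ and $\M_n/\M_n(x_2)$ into the two isomorphic pieces provided by Lemma~\ref{lem:basis:t2<=n<mint13(even)}, read off the isomorphism type by matching the tridiagonal/bidiagonal ladder of Proposition~\ref{prop:n<mint13(even)}(ii) against Propositions~\ref{prop:Ed} and \ref{prop:Od}, and then invoke Theorems~\ref{thm:irr_E}, \ref{thm:irr_O}, \ref{thm:onto2_E}, \ref{thm:onto2_O} under the hypothesis $k_1,k_3\geq 0$; your extra observation that $\varphi_{t_2}=0$ (forcing $Y$ to keep the upper block invariant) is the implicit reason the paper's decomposition yields genuine $\BI$-submodules. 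One small caution: for the \emph{first} assertions you should rely on the direct matrix match with Propositions~\ref{prop:Ed}(i)/\ref{prop:Od}(i) rather than on Theorems~\ref{thm:onto2_E}/\ref{thm:onto2_O}, which presuppose irreducibility and are only available after $k_1,k_3\geq 0$ is imposed; also the aside that Theorem~\ref{thm:n<mint13(odd)} has the $E$/$O$ roles ``the other way around'' is inaccurate (in the odd case both quotients are of type $O$, because $n-t_2$ and $t_2-1$ are then both even), though this does not affect the argument.
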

\begin{proof}
(i): Let $V$ and $V'$ denote the subspaces of $\M_n(x_2)$ spanned by (\ref{M2basis:n<mint13(even)-1}) and  (\ref{M2basis:n<mint13(even)-2}), respectively. It follows from Lemma \ref{lem:basis:t2<=n<mint13(even)}(i) that 
$
\M_n(x_2)=V\oplus V'$. 
It follows from Theorem \ref{thm:BImodule_Mn}(ii) and Proposition \ref{prop:n<mint13(even)}(ii) that $V$ and $V'$ are two isomorphic $\BI$-submodules of $\M_n(x_2)$. Using Proposition \ref{prop:Ed} yields that both are isomorphic to (\ref{BImodule:t2<=n<mint13(even)-1}).

(ii): Let $V$ and $V'$ denote the subspaces of $\M_n/\M_n(x_2)$ spanned by (\ref{M/M2basis:n<mint13(even)-1}) and (\ref{M/M2basis:n<mint13(even)-2}), respectively. It follows from Lemma \ref{lem:basis:t2<=n<mint13(even)}(ii) that 
$
\M_n/\M_n(x_2)=V\oplus V'$. 
It follows from Theorem \ref{thm:BImodule_Mn}(ii) and Proposition \ref{prop:n<mint13(even)}(ii) that $V$ and $V'$ are two isomorphic $\BI$-submodules of $\M_n/\M_n(x_2)$. Using Proposition \ref{prop:Od} yields that both are isomorphic to (\ref{BImodule:t2<=n<mint13(even)-2}).

Suppose that $k_1,k_3$ are nonnegative. Using Theorem \ref{thm:irr_E} yields that the $\BI$-module (\ref{BImodule:t2<=n<mint13(even)-1}) is irreducible. Using Theorem \ref{thm:irr_O} yields that the $\BI$-module (\ref{BImodule:t2<=n<mint13(even)-2}) is irreducible. By Theorem \ref{thm:onto2_E} the $\BI$-module (\ref{BImodule:t2<=n<mint13(even)-1}) is isomorphic to (\ref{BImodule:t2<=n<mint13(even)-1'}). 
By Theorem \ref{thm:onto2_O} the $\BI$-module (\ref{BImodule:t2<=n<mint13(even)-2}) is isomorphic to (\ref{BImodule:t2<=n<mint13(even)-2'}).
\end{proof}

By similar arguments as in the proof of Theorem \ref{thm:n<mint13(even)} we have the following results:

\begin{thm}\label{thm:n<mint12(even)}
Suppose that $n$ is an even integer with $t_3\leq n<\min\{t_1,t_2\}$. Then the following hold:
\begin{enumerate}
\item The $\BI$-module $\M_n(x_3)$ is isomorphic to a direct sum of two copies of 
\begin{gather}\label{n<mint12(even)-1}
E_{n-t_3}
\textstyle(
k_1+\frac{n+1}{2},
-k_1-k_2-k_3-\frac{n+1}{2},
-k_2-\frac{n+1}{2}
)^{((-1,1),(1\,3\,2))}.
\end{gather}

\item The $\BI$-module $\M_n/\M_n(x_3)$  is isomorphic to a direct sum of two copies of 
\begin{gather}\label{n<mint12(even)-2}
O_{t_3-1}
(k_1,-k_1-k_2-k_3-n-1,-k_2)^{((1,-1),(1\,3\,2))}.
\end{gather}
\end{enumerate}
Moreover, if $k_1,k_2$ are nonnegative then the $\BI$-modules (\ref{n<mint12(even)-1}) and (\ref{n<mint12(even)-2}) are irreducible and they are isomorphic to 
\begin{align*}
&
E_{n-t_3}
\textstyle(
k_2+\frac{n+1}{2},
k_1+\frac{n+1}{2},
k_1+k_2+k_3+\frac{n+1}{2}
)^{(-1,-1)},
\\
&O_{t_3-1}(k_2,k_1,k_1+k_2+k_3+n+1),
\end{align*}
respectively.
\end{thm}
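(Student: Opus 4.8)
\emph{Proof proposal.} The plan is to run the argument of Theorem~\ref{thm:n<mint13(even)} essentially verbatim, after interchanging the roles of the indices $2$ and $3$; this is the evident symmetry of the whole construction under permuting the coordinates, which on $\BI$ is realized by an element of $\{\pm 1\}^2\rtimes\Sym_3$ whose $\Sym_3$-part interchanges $2$ and $3$ (compare Table~\ref{pm1-action}). The point to notice first is that under the relabeling $x_2\leftrightarrow x_3$, $e_2\leftrightarrow e_3$, $T_2\leftrightarrow T_3$, $k_2\leftrightarrow k_3$, $t_2\leftrightarrow t_3$ the hypothesis $t_2\le n<\min\{t_1,t_3\}$ of Theorem~\ref{thm:n<mint13(even)} becomes exactly the hypothesis $t_3\le n<\min\{t_1,t_2\}$ of the present theorem, and $n$ stays even; so none of the combinatorial ranges, parities, or dimension counts are affected.

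The first substantive step is to record the analogue of Proposition~\ref{prop:n<mint13(even)} obtained by this relabeling: for $n$ even with $0\le n<\min\{t_1,t_2\}$ there exist elements $q_0,\dots,q_n$ of ${\rm Mat}_2(\C)\otimes\R[x_1,x_2,x_3]_n$, given by the same multi-index sums as the $p_i$ but with $x_3$ now in the role played by $x_2$ (and $x_2$ in the role played by $x_3$, with the Clifford coefficients adjusted accordingly), such that (i) the $q_i$ are linearly independent over ${\rm Mat}_2(\C)$; (ii) $X$ and $Z$ act on them via $(X-\theta_i)q_i=q_{i+1}$ and $(Z-\theta_i^*)q_i=\varphi_i q_{i-1}$ for $1\le i\le n-1$, together with $(X-\theta_n)q_n=0$ and $(Z-\theta_0^*)q_0=0$, for suitably permuted parameters $\theta_i,\theta_i^*,\varphi_i$; and (iii) $\D(q_i)=0$. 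The verification is the identical bookkeeping to that of Proposition~\ref{prop:n<mint13(even)}: expand using $\D=\D(x_3)+e_3\otimes T_3$, use $e_3^2=1$ and the parity identity $(-1)^{\lceil (3i-3)/2\rceil}=(-1)^{\lceil i/2\rceil+1}$, and invoke the nonvanishing of $\prod_{h=n-i+1}^n m_1^{(h)}$ and of $\prod_{h=1}^n m_2^{(h)}$, which hold because $n<t_1$ and $n<t_2$.

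With this in hand, I would next prove the analogue of Lemma~\ref{lem:basis:t2<=n<mint13(even)}: the vectors $q_i\cdot\begin{pmatrix}1\\0\end{pmatrix}\otimes 1$ and $q_i\cdot\begin{pmatrix}0\\1\end{pmatrix}\otimes 1$ for $t_3\le i\le n$ form a basis of $\M_n(x_3)$, and the corresponding vectors for $0\le i\le t_3-1$ form a basis of $\M_n/\M_n(x_3)$. Indeed all $2(n+1)$ such vectors lie in $\M_n$ by (iii), are independent by (i), and $\dim\M_n=2(n+1)$ by Theorem~\ref{thm:dimM=2(n+1)} (applicable since $n<\min\{t_1,t_2\}\le\max\{t_1,t_2,t_3\}$); those with $i\ge t_3$ already lie in $\bigoplus_{h\ge t_3}\C^2\otimes(x_3^h\cdot\R[x_1,x_2]_{n-h})$ and hence in $\M_n(x_3)$, whose dimension is $2(n-t_3+1)$ by Proposition~\ref{prop:dimM(x1)}(iii), forcing equality. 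The theorem itself then copies the proof of Theorem~\ref{thm:n<mint13(even)}: writing $V,V'$ for the spans of the two $\C^2$-halves of the relevant basis, one gets $\M_n(x_3)=V\oplus V'$ and $\M_n/\M_n(x_3)=V\oplus V'$ as direct sums of pairs of isomorphic $\BI$-submodules by Theorem~\ref{thm:BImodule_Mn}(ii) together with (ii) above; Propositions~\ref{prop:Ed} and~\ref{prop:Od} identify each summand with the asserted $E$- and $O$-module; and when $k_1,k_2\ge 0$, Theorems~\ref{thm:irr_E} and~\ref{thm:irr_O} give irreducibility while Theorems~\ref{thm:onto2_E} and~\ref{thm:onto2_O} rewrite the modules in the stated $k$-positive normal forms.

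The main obstacle is not conceptual but bookkeeping: pinning down the precise element of $\{\pm 1\}^2\rtimes\Sym_3$ that twists each module (the superscripts $((-1,1),(1\,3\,2))$ and $((1,-1),(1\,3\,2))$) and the exact permutation and sign changes undergone by $k_1,k_2,k_3,n$ inside the $E_d(\,\cdot\,)$ and $O_d(\,\cdot\,)$ labels. This is forced by reading off, from (ii) above and Theorem~\ref{thm:BImodule_Mn}(ii), the eigenvalue data of $V$ — the scalars $\theta_i,\theta_i^*$, the products $\varphi_i$, and the scalars by which $\kappa,\lambda,\mu$ act — and comparing with Propositions~\ref{prop:Ed} and~\ref{prop:Od} \emph{precomposed with the appropriate automorphism} from Table~\ref{pm1-action}, using Theorems~\ref{thm:onto2_E} and~\ref{thm:onto2_O} to certify the match via traces of $X,Y,Z$; once the correct twist is identified this is a finite check. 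Everything else is literally the computation of Section~\ref{s:case2} with the subscript $2$ replaced by $3$ throughout.
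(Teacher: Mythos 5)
Your proposal is correct and matches the paper's intended argument exactly: the paper introduces Theorem~\ref{thm:n<mint12(even)} (together with Theorem~\ref{thm:n<mint23(even)}) with the phrase ``by similar arguments as in the proof of Theorem~\ref{thm:n<mint13(even)},'' and what you spell out — relabelling $x_2\leftrightarrow x_3$, $e_2\leftrightarrow e_3$, $T_2\leftrightarrow T_3$, $k_2\leftrightarrow k_3$, $t_2\leftrightarrow t_3$ to obtain the analogue of Proposition~\ref{prop:n<mint13(even)} (now a ladder in $X$ and $Z$), then mirroring Lemma~\ref{lem:basis:t2<=n<mint13(even)} using Theorem~\ref{thm:dimM=2(n+1)} and Proposition~\ref{prop:dimM(x1)}(iii), and finally identifying the two $\C^2$-halves via Propositions~\ref{prop:Ed}, \ref{prop:Od} and Theorems~\ref{thm:irr_E}--\ref{thm:onto2_O} — is precisely that routine. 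The only cosmetic slip is the reference to the decomposition $\D=\D(x_3)+e_3\otimes T_3$: Proposition~\ref{prop:n<mint13(even)}(iii) does not expand $\D$ that way but simply checks $\D(p_0)=0$ directly and propagates membership in $\ker\D$ along the ladder via Theorem~\ref{thm:BImodule_Mn}(i); this does not affect the soundness of your argument.
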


\begin{thm}\label{thm:n<mint23(even)}
Suppose that $n$ is an even integer with $t_1\leq n<\min\{t_2,t_3\}$. Then the following hold:
\begin{enumerate}
\item The $\BI$-module $\M_n(x_1)$ is isomorphic to a direct sum of two copies of 
\begin{gather}\label{n<mint23(even)-1}
E_{n-t_1}
\textstyle(
k_2+\frac{n+1}{2},
-k_1-k_2-k_3-\frac{n+1}{2},
-k_3-\frac{n+1}{2}
)^{((-1,1),(1\,2\,3))}.
\end{gather}

\item The $\BI$-module $\M_n/\M_n(x_1)$ is isomorphic to a direct sum of two copies of 
\begin{gather}\label{n<mint23(even)-2}
O_{t_1-1}
(k_2,-k_1-k_2-k_3-n-1,-k_3)^{((1,-1),(1\,2\,3))}.
\end{gather}
\end{enumerate}
Moreover, if $k_2,k_3$ are nonnegative then the $\BI$-modules (\ref{n<mint23(even)-1}) and (\ref{n<mint23(even)-2}) are irreducible and they are isomorphic to 
\begin{align*}
&
E_{n-t_1}
\textstyle(
k_1+k_2+k_3+\frac{n+1}{2},
k_3+\frac{n+1}{2},
k_2+\frac{n+1}{2}
)^{(1,-1)},
\\
&
O_{t_1-1}
(k_1+k_2+k_3+n+1,k_3,k_2),
\end{align*}
respectively.
\end{thm}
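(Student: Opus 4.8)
The plan is to carry out the proof of Theorem~\ref{thm:n<mint13(even)} with the three coordinates cyclically relabeled so that $x_1$ plays the role there played by $x_2$; the only genuinely new feature is that such a relabeling conjugates the $\BI$-action of Theorem~\ref{thm:BImodule_Mn}(i) by the cyclic automorphism of $\BI$ recorded in Table~\ref{pm1-action}, and this is exactly what produces the extra twist $(1\,2\,3)$ in (\ref{n<mint23(even)-1}) and (\ref{n<mint23(even)-2}). Under the relabeling the hypothesis $t_1\le n<\min\{t_2,t_3\}$ goes over to the hypothesis $t_2\le n<\min\{t_1,t_3\}$ of Theorem~\ref{thm:n<mint13(even)}; in particular $n<\min\{t_2,t_3\}\le\max\{t_1,t_2,t_3\}$, so $\dim\M_n=2(n+1)$ by Theorem~\ref{thm:dimM=2(n+1)}. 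Moreover $n$ is even and $t_1$ is odd (by the definition of $t_1$), so $n-t_1$ is odd and $t_1-1$ is even; since $\dim\M_n(x_1)=2(n-t_1+1)$ by Proposition~\ref{prop:dimM(x1)}(i) and hence $\dim\M_n/\M_n(x_1)=2t_1$, this is why $E_{n-t_1}$ governs $\M_n(x_1)$ and $O_{t_1-1}$ governs $\M_n/\M_n(x_1)$.

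The first step is to prove the cyclic analogue of Proposition~\ref{prop:n<mint13(even)}: under the assumption $n<\min\{t_2,t_3\}$, with $x_1$ as the privileged variable, there is an explicit family $\{p_i\}_{i=0}^n$ of elements of $\M_n$, linearly independent over ${\rm Mat}_2(\C)$, with $\D(p_i)=0$ for all $i$, satisfying three-term raising/lowering relations for the relevant pair among $X,Y,Z$ with suitably transformed scalars $\theta_i,\theta_i^*,\varphi_i$. Concretely the $p_i$ are obtained from the polynomials of Proposition~\ref{prop:n<mint13(even)} by the cyclic coordinate substitution together with a rewriting of the matrices $c_{h,i,j}$, and the desired relations follow from the same direct (but lengthy) coefficient computations as there, now invoking $n<t_2$ and $n<t_3$ to keep the relevant products of the $m_j^{(h)}$ nonzero. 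Mirroring Lemma~\ref{lem:basis:t2<=n<mint13(even)}, one then checks that the vectors $p_i\binom{1}{0}\otimes 1$ and $p_i\binom{0}{1}\otimes 1$ with $t_1\le i\le n$ form a basis of $\M_n(x_1)$---linear independence from part~(i) of the new proposition, membership from the support of the $p_i$ with $i\ge t_1$ together with $\D(p_i)=0$, and the count from Proposition~\ref{prop:dimM(x1)}(i)---while the analogous vectors with $0\le i\le t_1-1$ descend to a basis of $\M_n/\M_n(x_1)$, using $\dim\M_n=2(n+1)$.

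With the bases in hand, split $\M_n(x_1)=V\oplus V'$ and $\M_n/\M_n(x_1)=W\oplus W'$ into the $\binom{1}{0}$- and $\binom{0}{1}$-parts. Each summand is a $\BI$-submodule; the three-term relations put $V,V'$ in the matrix form of Proposition~\ref{prop:Ed} and $W,W'$ in that of Proposition~\ref{prop:Od}, and $\kappa,\lambda,\mu$ act as in Theorem~\ref{thm:BImodule_Mn}(ii). Feeding the traces of $X,Y$ and the scalars for $\kappa+\mu,\lambda+\kappa,\mu+\lambda$ (resp. the traces of $X,Y,Z$) into Theorem~\ref{thm:onto2_E} (resp. Theorem~\ref{thm:onto2_O}) identifies $V\cong V'$ with (\ref{n<mint23(even)-1}) and $W\cong W'$ with (\ref{n<mint23(even)-2}), which proves parts~(i) and~(ii). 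For the last assertion, assuming $k_2,k_3\ge 0$ one verifies that the four pertinent linear combinations of the parameters avoid the excluded sets of Theorem~\ref{thm:irr_E} (resp. Theorem~\ref{thm:irr_O}), giving irreducibility, and a final use of Theorem~\ref{thm:onto2_E} (resp. Theorem~\ref{thm:onto2_O})---together with the $\Sym_3$-action of Table~\ref{pm1-action} to absorb the $(1\,2\,3)$-twist---rewrites the modules in the displayed form.

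I expect the main obstacle to be the first step: correctly tracking the cyclic coordinate substitution and the Table~\ref{pm1-action} twist simultaneously at the level of the matrices $c_{h,i,j}$ and the scalars $\theta_i,\theta_i^*,\varphi_i$, so that the transformed three-term relations land \emph{exactly} on standard $E_d$/$O_d$ presentations with the parameters asserted in (\ref{n<mint23(even)-1})--(\ref{n<mint23(even)-2}); once this bookkeeping is settled, the remaining steps are faithful repetitions of the proof of Theorem~\ref{thm:n<mint13(even)}. A secondary point requiring care is checking that the sign hypotheses $k_2,k_3\ge 0$---rather than $k_1,k_3\ge 0$ as in the reference case---genuinely exclude every forbidden value in the irreducibility criteria, since here the relevant parameters are both cyclically permuted and twisted.
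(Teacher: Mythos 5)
Your proposal is correct and matches the paper's approach: the paper states this theorem as a consequence of ``similar arguments as in the proof of Theorem~\ref{thm:n<mint13(even)},'' which is exactly the cyclic relabeling you describe, with the $(1\,2\,3)$ twist coming from translating the $\BI$-action back through the relabeling and the hypothesis $t_1\le n<\min\{t_2,t_3\}$ mapping to the reference hypothesis $t_2\le n<\min\{t_1,t_3\}$. Your parity bookkeeping ($t_1$ odd, so $n-t_1$ odd gives $E_{n-t_1}$ and $t_1-1$ even gives $O_{t_1-1}$), the dimension counts via Theorem~\ref{thm:dimM=2(n+1)} and Proposition~\ref{prop:dimM(x1)}, and the irreducibility/identification via Theorems~\ref{thm:irr_E}, \ref{thm:irr_O}, \ref{thm:onto2_E}, \ref{thm:onto2_O} all agree with the mechanism carried out explicitly in the proof of Theorem~\ref{thm:n<mint13(even)}.
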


\section{The $\BI$-modules $\M_n$ of type (III)}\label{s:case3}

\begin{prop}\label{prop:t1<=n<t1+t2(odd)}
Suppose that $n$ is an odd integer with $t_1\leq n<t_1+t_2$. 
Let 
$$
p_i
=
\sum_{h=0}^i
(-1)^{-\frac{h}{2}}
\sum_{j=t_1}^{n-h}
(-1)^{-\frac{j}{2}}
{\left\lfloor \frac{n-i-j}{2}\right\rfloor+\left\lfloor\frac{i-h}{2}\right\rfloor \choose \left\lfloor\frac{i-h}{2}\right\rfloor}
c_{h,i,j}
\otimes 
[x_1]^n_j 
[x_2]^{n-t_1}_{n-h-j}
[x_3]^i_h
$$
for all $i=0,1,\ldots,n-t_1$ where
\begin{align*}
c_{h,i,j}&=
\left\{
\begin{array}{ll}
\sigma_3^j \sigma_1 
\qquad &\hbox{if $h$ is odd and $i$ is odd},
\\
\sigma_3^j
\qquad &\hbox{if $h$ is even and $i$ is even},
\\
(-1)^\frac{1}{2} \sigma_2
\qquad &\hbox{if $h$ is odd, $i$ is even and $j$ is odd},
\\
-\begin{pmatrix}
1 &0
\\
0 &1
\end{pmatrix}
\qquad &\hbox{if $h$ is even, $i$ is odd and $j$ is even},
\\
\begin{pmatrix}
0 &0
\\
0 &0
\end{pmatrix}
\qquad &\hbox{else}
\end{array}
\right.
\end{align*}
for any integers $h,i,j$. 
Then the following hold:
\begin{enumerate}
\item $\{p_i\}_{i=0}^{n-t_1}$ are linearly independent over ${\rm Mat}_2(\C)$.

\item The following equations hold:
\begin{align}
(X-\theta_i)p_i &=p_{i+1} \qquad (0\leq i\leq n-t_1-1),
\qquad 
(X-\theta_n) p_{n-t_1}=0,
\label{e:t1<=n<t1+t2-1}
\\
(Z-\theta_i^*) p_i &=\varphi_i p_{i-1} \qquad (1\leq i\leq n-t_1),
\qquad 
(Z-\theta_0^*) p_0=0,
\label{e:t1<=n<t1+t2-2}
\end{align} 
where
\begin{align*}
\theta_i &=(-1)^i
\textstyle(
k_2+k_3+i+\frac{1}{2}
)
\qquad 
(0\leq i\leq n-t_1),
\\
\theta_i^* &=(-1)^{i+1}
\textstyle(
n+k_1+k_2-i+\frac{1}{2}
)
\qquad 
(0\leq i\leq n-t_1),
\\
\varphi_i
&=
\left\{
\begin{array}{ll}
i(n-i+1)
\qquad
\hbox{if $i$ is even},
\\
(i+2k_3)(n-i+2k_1+1)
\qquad
\hbox{if $i$ is odd}
\end{array}
\right.
\qquad 
(1\leq i\leq n-t_1).
\end{align*}

\item $\D(p_i)=0$ for all $i=0,1,\ldots,n-t_1$. 
\end{enumerate}
\end{prop}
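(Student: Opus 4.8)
The plan is to follow the template used for Proposition \ref{prop:n<mint13} and its even-degree counterpart Proposition \ref{prop:n<mint13(even)}, with the variables and the Pauli matrices permuted so that $x_3$ now plays the role of the ladder variable and $Z$ replaces $Y$.

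For part (i) I would first observe, from the definition of $c_{h,i,j}$, that in $p_i$ the coefficient of $x_3^h$ vanishes for every $h$ with $i<h\le n-t_1$, so that $\{p_i\}_{i=0}^{n-t_1}$ is ``staircase-shaped'' in the $x_3$-degree. It then suffices to show that the coefficient of the monomial $x_1^{n-i}x_3^i$ in $p_i$ — which comes only from the term $h=i$, $j=n-i$ (legitimate since $i\le n-t_1$) — is a nonsingular $2\times 2$ matrix, and then to argue by downward induction on $i$: if $\sum_i p_iM_i=0$ with $M_i\in{\rm Mat}_2(\C)$, comparing the top $x_3$-degree forces $M_{n-t_1}=0$, and peeling off the successive $x_3$-degrees gives $M_i=0$ for all $i$. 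That leading coefficient is a nonzero scalar times $\prod_{h=n-i+1}^{n}m_1^{(h)}\cdot\prod_{h=1}^{n-t_1}m_2^{(h)}$ times a power of $\sigma_3$ (or that power times $\sigma_1$): the hypothesis $t_1\le n$ makes $\prod_{h=n-i+1}^{n}m_1^{(h)}\ne0$, because the only vanishing factor $m_1^{(h)}$ has $h=t_1$, which lies below the range $n-i+1\le h\le n$ since $n-i\ge t_1$; and the hypothesis $n<t_1+t_2$ makes $\prod_{h=1}^{n-t_1}m_2^{(h)}\ne0$, because the only vanishing factor $m_2^{(h)}$ has $h=t_2>n-t_1$. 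As powers of $\sigma_1$ and $\sigma_3$ are invertible, the matrix is nonsingular; this is precisely where the hypotheses on $n$ are used.

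For part (ii) I would apply the explicit formulas for the actions of $X$ and of $Z$ given in Theorem \ref{thm:BImodule_Mn}(i) to $p_i$ and then compare, monomial by monomial, the coefficient of $[x_1]^n_j[x_2]^{n-t_1}_{n-h-j}[x_3]^i_h$ on the two sides of the asserted relations. As in the proof of Proposition \ref{prop:n<mint13}, each such coefficient of $(X-\theta_i)p_i$ is a sum of three terms — two of them coming from the two first-order pieces of the operator $X$ applied to the neighboring terms of $p_i$ (and carrying the normalization constants $m_2^{(\cdot)},m_3^{(\cdot)}$) and one coming from the reflection-and-scalar part — and the point of the computation is that, after using the Pascal-type relations among the binomial coefficients appearing in $p_i$ together with the definitions of $m_1^{(\cdot)},m_2^{(\cdot)},m_3^{(\cdot)}$, this sum collapses to the coefficient of that monomial in $p_{i+1}$; the extremal monomial, the one with $h=i+1$, is checked separately. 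The relation $(Z-\theta_i^*)p_i=\varphi_ip_{i-1}$ is handled by the same coefficient-chasing, the scalar $\varphi_i$ — in its two cases according to the parity of $i$, absorbing $2k_1$ and $2k_3$ in the odd case — emerging from the analogous collapse. This coefficient bookkeeping is the main, and essentially the only, technical obstacle; everything around it is formal.

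Finally, for part (iii) I would check directly that $\D(p_0)=0$ (a short computation, since $p_0$ involves only the terms with $h=0$, hence is free of $x_3$, so that $T_3p_0=0$ and only the $T_1$- and $T_2$-parts of $\D$ survive), and then invoke part (ii): by Theorem \ref{thm:BImodule_Mn} the operator $X$ maps $\ker\D$ into itself, and since $p_{i+1}=(X-\theta_i)p_i$ for $0\le i\le n-t_1-1$, induction on $i$ gives $\D(p_i)=0$ for all $i=0,1,\ldots,n-t_1$.
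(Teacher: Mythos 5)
Your proposal is correct and follows the same route as the paper: the staircase/triangular argument in the $x_3$-degree with a nonsingular leading coefficient (where both hypotheses $t_1\le n$ and $n<t_1+t_2$ are used exactly as you describe) for linear independence, monomial-by-monomial coefficient comparison via Theorem~\ref{thm:BImodule_Mn}(i) for the ladder relations, and a direct check of $\D(p_0)=0$ combined with (ii) for (iii). The only difference is that you spell out the downward induction in (i) and the closure of $\M_n$ under $X$ in (iii), which the paper leaves implicit.
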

\begin{proof}  
(i): Let $i$ be an integer with $0\leq i\leq n-t_1$.
By construction the coefficient of $x_3^h$ in $p_i$ is zero for all integers $h$ with $i<h\leq n-t_1$. Observe that the coefficient of $x_1^{n-i}x_3^i$ in $p_i$ is 
\begin{gather}\label{coeff:t1<=n<t1+t2}
(-1)^{-\frac{n}{2}}
\prod_{h=n-i+1}^n m_1^{(h)}
\prod_{h=1}^{n-t_1} m_2^{(h)}
\times 
\left\{
\begin{array}{ll}
\sigma_1 \qquad 
&\hbox{if $i$ is odd},
\\
\sigma_3 \qquad 
&\hbox{if $i$ is even}.
\end{array}
\right.
\end{gather}
Since $i\leq n-t_1$ the scalar $\prod\limits_{h=n-i+1}^n m_1^{(h)}$ is nonzero. 
Since $n<t_1+t_2$ the scalar $\prod\limits_{h=1}^{n-t_1} m_2^{(h)}$ is nonzero. 
Hence the matrix (\ref{coeff:t1<=n<t1+t2}) is nonsingular. By the above comments the part (i) follows.

(ii): 
Let $i$ be a nonnegative integer and let $j$ be an integer with $t_1\leq j\leq n-i-1$. 
Using Theorem \ref{thm:BImodule_Mn}(i) yields that the coefficient of 
$[x_1]^n_j [x_2]^{n-t_1}_{n-i-j-1} [x_3]^{i+1}_{i+1}$ in $(X-\theta_i)p_i$ is equal to 
$$
(-1)^{\frac{j-i-1}{2}}\sigma_1 c_{i,i,j}=(-1)^{\frac{-i-j-1}{2}}c_{i+1,i+1,j}.
$$
Now let $h,i,j$ denote three integers with $0\leq h\leq i\leq n-t_1$ and $t_1\leq j\leq n-h$. Using Theorem \ref{thm:BImodule_Mn}(i) yields that the coefficient of 
$[x_1]^n_j [x_2]^{n-t_1}_{n-h-j} [x_3]^i_h$ in $(X-\theta_i)p_i$ is equal to $(-1)^{\frac{j-h}{2}}$ times the sum of 
\begin{align*}
&
m_3^{(h)}
{\left\lfloor \frac{n-i-j}{2}\right\rfloor+\left\lfloor\frac{i-h+1}{2}\right\rfloor \choose \left\lfloor\frac{i-h+1}{2}\right\rfloor}
 \sigma_1 c_{h-1,i,j},
\\
&
m_2^{(n-h-j)}
{\left\lfloor \frac{n-i-j}{2}\right\rfloor+\left\lfloor\frac{i-h-1}{2}\right\rfloor \choose \left\lfloor\frac{i-h-1}{2}\right\rfloor}
 \sigma_1 c_{h+1,i,j},
\\
&
{\textstyle(
(-1)^{h+1} k_3
+
(-1)^{h+j} k_2
-
(-1)^j\theta_i
-
\frac{1}{2}
)}
{\left\lfloor \frac{n-i-j}{2}\right\rfloor+\left\lfloor\frac{i-h}{2}\right\rfloor \choose \left\lfloor\frac{i-h}{2}\right\rfloor}
c_{h,i,j}.
\end{align*}
It is straightforward to verify that the sum of the above three terms is equal to 
$$
(-1)^j
m_3^{(i+1)}
{\left\lfloor \frac{n-i-j-1}{2}\right\rfloor+\left\lfloor\frac{i-h+1}{2}\right\rfloor \choose \left\lfloor\frac{i-h+1}{2}\right\rfloor}
c_{h,i+1,j}. 
$$
By the above comments the equations given in (\ref{e:t1<=n<t1+t2-1}) follow.

Using Theorem \ref{thm:BImodule_Mn}(i) yields that the coefficient of 
$[x_1]^n_j [x_2]^{n-t_1}_{n-h-j} [x_3]^i_h$ in $(Z-\theta_i^*)p_i$ is equal to $(-1)^{\frac{h-j}{2}+1}$ times the sum of 
\begin{align*}
&
m_1^{(j)}
{\left\lfloor \frac{n-i-j+1}{2}\right\rfloor+\left\lfloor\frac{i-h}{2}\right\rfloor \choose \left\lfloor\frac{i-h}{2}\right\rfloor}
 \sigma_3 c_{h,i,j-1},
\\
&
m_2^{(n-h-j)}
{\left\lfloor \frac{n-i-j-1}{2}\right\rfloor+\left\lfloor\frac{i-h}{2}\right\rfloor \choose \left\lfloor\frac{i-h}{2}\right\rfloor}
 \sigma_3 c_{h,i,j+1},
\\
&
{\textstyle(
(-1)^j k_1
-
(-1)^{h+j} k_2
+
(-1)^h
\theta_i^*
+
\frac{1}{2}
)}
{\left\lfloor \frac{n-i-j}{2}\right\rfloor+\left\lfloor\frac{i-h}{2}\right\rfloor \choose \left\lfloor\frac{i-h}{2}\right\rfloor}
c_{h,i,j}.
\end{align*}
It is straightforward to verify that the sum of the above three terms is equal to $(-1)^{h+1}$ times 
$$
\left\{
\begin{array}{ll}
\begin{pmatrix}
0 &0
\\
0 &0
\end{pmatrix}
\qquad &\hbox{if $h=i$},
\\
(n-i+1)
\displaystyle{\left\lfloor \frac{n-i-j+1}{2}\right\rfloor+\left\lfloor\frac{i-h-1}{2}\right\rfloor \choose \left\lfloor\frac{i-h-1}{2}\right\rfloor}
c_{h,i-1,j}
\qquad &\hbox{if $h<i$ and $i$ is even},
\\
(n-i+2k_1+1)
\displaystyle{\left\lfloor \frac{n-i-j+1}{2}\right\rfloor+\left\lfloor\frac{i-h-1}{2}\right\rfloor \choose \left\lfloor\frac{i-h-1}{2}\right\rfloor}
c_{h,i-1,j}
\qquad &\hbox{if $h<i$ and $i$ is odd}.
\end{array}
\right.
$$
By the above comments the equations given in (\ref{e:t1<=n<t1+t2-2}) follow.

(iii): It is routine to verify that $\D(p_0)=0$. Combined with (ii) the statement (iii) follows.
\end{proof}

\begin{lem}\label{lem:basis:t1<=n<t1+t2(odd)}
Suppose that $n$ is an odd integer with $t_1+t_3\leq n< t_2$. 
Let $\{p_i\}_{i=0}^{n-t_1}$ be as in Proposition \ref{prop:t1<=n<t1+t2(odd)}. Then the following hold:
\begin{enumerate}
\item $\M_n(x_1)\cap \M_n(x_3)$ has the basis 
\begin{align}
p_i\cdot 
\begin{pmatrix}
1
\\
0
\end{pmatrix}
\otimes
1 
\qquad 
(t_3\leq i\leq n-t_1), 
\label{M13basis:n<t1+t2(odd)-1}
\\ 
p_i\cdot 
\begin{pmatrix}
0
\\
1
\end{pmatrix}
\otimes
1 
\qquad 
(t_3\leq i\leq n-t_1).
\label{M13basis:n<t1+t2(odd)-2}
\end{align}

\item $\M_n(x_1)/\M_n(x_1)\cap \M_n(x_3)$ has the basis
\begin{align}
p_i\cdot 
\begin{pmatrix}
1
\\
0
\end{pmatrix}
\otimes
1 
+
\M_n(x_1)\cap \M_n(x_3)
\qquad 
(0\leq i\leq t_3-1), 
\label{M1/M13basis:n<t1+t2(odd)-1}
\\ 
p_i\cdot 
\begin{pmatrix}
0
\\
1
\end{pmatrix}
\otimes
1 
+
\M_n(x_1)\cap \M_n(x_3)
\qquad 
(0\leq i\leq t_3-1).
\label{M1/M13basis:n<t1+t2(odd)-2}
\end{align}

\item $\M_n/\M_n(x_3)$ has the basis
\begin{align}
p_i\cdot 
\begin{pmatrix}
1
\\
0
\end{pmatrix}
\otimes
1 
+
\M_n(x_3)
\qquad 
(0\leq i\leq t_3-1), 
\label{M/M3basis:n<t1+t2(odd)-1}
\\ 
p_i\cdot 
\begin{pmatrix}
0
\\
1
\end{pmatrix}
\otimes
1 
+
\M_n(x_3)
\qquad 
(0\leq i\leq t_3-1).
\label{M/M3basis:n<t1+t2(odd)-2}
\end{align}
\end{enumerate}
\end{lem}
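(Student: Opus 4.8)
The plan is to imitate the proof of Lemma~\ref{lem:basis:t2<=n<mint13(odd)}: I would first build an explicit basis for all of $\M_n(x_1)$ out of the polynomials $\{p_i\}_{i=0}^{n-t_1}$ of Proposition~\ref{prop:t1<=n<t1+t2(odd)}, and then cut that basis into the two pieces controlled by $x_3$. Note first that $t_1+t_3\leq n<t_2$ forces $n\geq t_1$ and $n<t_1+t_2$, so Proposition~\ref{prop:t1<=n<t1+t2(odd)} is available; since $t_1$ and $t_3$ are then finite, also $n\geq t_3$.

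\emph{A basis for $\M_n(x_1)$.} For each $i$ with $0\leq i\leq n-t_1$ the polynomial $p_i$ is, by construction, a $\C$-combination of the monomials $[x_1]^n_j[x_2]^{n-t_1}_{n-h-j}[x_3]^i_h$ with $t_1\leq j\leq n-h$, each of total degree $n$. Hence $p_i\cdot\left(\begin{smallmatrix}1\\0\end{smallmatrix}\right)\otimes 1$ and $p_i\cdot\left(\begin{smallmatrix}0\\1\end{smallmatrix}\right)\otimes 1$, for $0\leq i\leq n-t_1$, lie in $\bigoplus_{\ell=t_1}^n\C^2\otimes(x_1^\ell\cdot\R[x_2,x_3]_{n-\ell})$, and by Proposition~\ref{prop:t1<=n<t1+t2(odd)}(iii) they lie in $\M_n$, hence in $\M_n(x_1)$. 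They are linearly independent over $\C$ by Proposition~\ref{prop:t1<=n<t1+t2(odd)}(i), there are $2(n-t_1+1)$ of them, and $\dim\M_n(x_1)=2(n-t_1+1)$ by Proposition~\ref{prop:dimM(x1)}(i); so they form a basis of $\M_n(x_1)$.

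\emph{Parts (i) and (ii).} The point I would exploit is that $m_3^{(t_3)}=t_3+2k_3=0$, since $t_3=-2k_3$ is odd. Therefore, for $i\geq t_3$, the factor $[x_3]^i_h=\bigl(\prod_{\ell=h+1}^i m_3^{(\ell)}\bigr)x_3^h$ vanishes whenever $h<t_3$, so each $p_i$ with $i\geq t_3$ is a combination of monomials of $x_3$-degree at least $t_3$. Consequently the vectors (\ref{M13basis:n<t1+t2(odd)-1}) and (\ref{M13basis:n<t1+t2(odd)-2}) lie in $\M_n(x_3)$, hence in $\M_n(x_1)\cap\M_n(x_3)$; they are $2(n-t_1-t_3+1)$ linearly independent vectors, and $\dim(\M_n(x_1)\cap\M_n(x_3))=2(n-t_1-t_3+1)$ by Proposition~\ref{prop:dimM(x12)}(iii), so they form a basis, which is (i). Part (ii) is then immediate: the basis of $\M_n(x_1)$ found above splits as this basis of $\M_n(x_1)\cap\M_n(x_3)$ (the terms with $t_3\leq i\leq n-t_1$) together with the vectors indexed by $0\leq i\leq t_3-1$, so the images of the latter in $\M_n(x_1)/(\M_n(x_1)\cap\M_n(x_3))$ are exactly a basis.

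\emph{Part (iii).} The composite $\M_n(x_1)\hookrightarrow\M_n\twoheadrightarrow\M_n/\M_n(x_3)$ has kernel $\M_n(x_1)\cap\M_n(x_3)$, so it induces an injection $\M_n(x_1)/(\M_n(x_1)\cap\M_n(x_3))\hookrightarrow\M_n/\M_n(x_3)$ sending each coset in (\ref{M1/M13basis:n<t1+t2(odd)-1})--(\ref{M1/M13basis:n<t1+t2(odd)-2}) to the corresponding coset in (\ref{M/M3basis:n<t1+t2(odd)-1})--(\ref{M/M3basis:n<t1+t2(odd)-2}). Its domain has dimension $2t_3$ by the step above, while its codomain has dimension $\dim\M_n-\dim\M_n(x_3)=2(n+1)-2(n-t_3+1)=2t_3$, using $\dim\M_n=2(n+1)$ from Theorem~\ref{thm:dimM=2(n+1)} (valid because $n<t_2\leq\max\{t_1,t_2,t_3\}$) and Proposition~\ref{prop:dimM(x1)}(iii). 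Hence the injection is an isomorphism and it carries the basis from (ii) onto the family in (iii), proving (iii). The whole argument is bookkeeping --- checking the two membership claims and matching the three dimension counts --- with no computation beyond Proposition~\ref{prop:t1<=n<t1+t2(odd)}; the one place a slip could creep in is the vanishing $m_3^{(t_3)}=0$, which is exactly where the parity of $t_3$ (odd, since $2k_3$ is an odd negative integer) is used.
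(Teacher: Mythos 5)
Your proposal is correct and follows essentially the same route as the paper: membership of the vectors in the relevant subspaces (using Proposition~\ref{prop:t1<=n<t1+t2(odd)}(i),(iii)), followed by the dimension counts of Propositions~\ref{prop:dimM(x1)}, \ref{prop:dimM(x12)} and Theorem~\ref{thm:dimM=2(n+1)}. Your version is slightly more explicit in two spots that the paper leaves implicit --- the observation $m_3^{(t_3)}=0$ that forces $p_i\in\M_n(x_3)$ for $i\geq t_3$, and the use of the injection $\M_n(x_1)/(\M_n(x_1)\cap\M_n(x_3))\hookrightarrow\M_n/\M_n(x_3)$ to transfer linear independence to part (iii) --- but these are precisely the details the paper's terser argument relies on, so there is no substantive difference.
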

\begin{proof}
(i): The linear independence of (\ref{M13basis:n<t1+t2(odd)-1}) and (\ref{M13basis:n<t1+t2(odd)-2}) follows from Proposition \ref{prop:t1<=n<t1+t2(odd)}(i). Since (\ref{M13basis:n<t1+t2(odd)-1}) and (\ref{M13basis:n<t1+t2(odd)-2}) are in 
$
\C^2\otimes\left(\bigoplus\limits_{i=t_1}^n x_1^i\cdot \R[x_2,x_3]_{n-i}
\cap 
\bigoplus\limits_{i=t_3}^n x_3^i\cdot \R[x_1,x_2]_{n-i}
\right)$,  
it follows from Proposition \ref{prop:t1<=n<t1+t2(odd)}(iii) that  (\ref{M13basis:n<t1+t2(odd)-1}) and (\ref{M13basis:n<t1+t2(odd)-2}) are in $\M_n(x_1)\cap \M_n(x_3)$. Combined with Proposition \ref{prop:dimM(x12)}(iii) the statement (i) follows.

(ii): The linear independence of (\ref{M1/M13basis:n<t1+t2(odd)-1}) and (\ref{M1/M13basis:n<t1+t2(odd)-2}) follows from Proposition \ref{prop:t1<=n<t1+t2(odd)}(i). 
By Proposition \ref{prop:t1<=n<t1+t2(odd)}(iii) the cosets (\ref{M1/M13basis:n<t1+t2(odd)-1}) and (\ref{M1/M13basis:n<t1+t2(odd)-2}) are in 
$
\M_n(x_1)/\M_n(x_1)\cap \M_n(x_3).
$  
By Propositions \ref{prop:dimM(x1)}(i) and \ref{prop:dimM(x12)}(iii) the dimension of $\M_n(x_1)/\M_n(x_1)\cap \M_n(x_3)$ is $2t_3$. The statement (ii) follows.

(iii): The linear independence of (\ref{M/M3basis:n<t1+t2(odd)-1}) and (\ref{M/M3basis:n<t1+t2(odd)-2}) follows from Proposition \ref{prop:t1<=n<t1+t2(odd)}(i). By Proposition \ref{prop:t1<=n<t1+t2(odd)}(iii) the cosets (\ref{M/M3basis:n<t1+t2(odd)-1}) and (\ref{M/M3basis:n<t1+t2(odd)-2}) are in  $\M_n/\M_n(x_3)$. By Theorem \ref{thm:dimM=2(n+1)} and Proposition \ref{prop:dimM(x1)}(iii) 
the dimension of $\M_n/\M_n(x_3)$ is $2t_3$.
The statement (iii) follows.
\end{proof}

\begin{prop}\label{prop:t3<=n<t2+t3(odd)}
Suppose that $n$ is an odd integer with $t_3\leq n<t_2+t_3$. 
Let 
$$
p_i
=
\sum_{h=0}^i
(-1)^{\frac{h}{2}}
\sum_{j=t_3}^{n-h}
(-1)^{\frac{j}{2}}
{\left\lfloor \frac{n-i-j}{2}\right\rfloor+\left\lfloor\frac{i-h}{2}\right\rfloor \choose \left\lfloor\frac{i-h}{2}\right\rfloor}
c_{h,i,j}
[x_3]^n_j[x_1]^i_h[x_2]^{n-t_3}_{n-h-j}
$$
for all $i=0,1,\ldots,n-t_3$ where
\begin{align*}
c_{h,i,j}&=
\left\{
\begin{array}{ll}
\sigma_1^j \sigma_3
\qquad &\hbox{if $h$ is odd and $i$ is odd},
\\
\sigma_1^j
\qquad &\hbox{if $h$ is even and $i$ is even},
\\
(-1)^\frac{3}{2}
\sigma_2 
\qquad &\hbox{if $h$ is odd, $i$ is even and $j$ is odd},
\\
-\begin{pmatrix}
1 &0
\\
0 &1
\end{pmatrix}
\qquad &\hbox{if $h$ is even, $i$ is odd and $j$ is even},
\\
\begin{pmatrix}
0 &0
\\
0 &0
\end{pmatrix}
\qquad &\hbox{else}
\end{array}
\right.
\end{align*}
for any integers $h,i,j$. 
Then the following hold:
\begin{enumerate}
\item $\{p_i\}_{i=0}^{n-t_3}$ are linearly independent over ${\rm Mat}_2(\C)$.

\item The following equations hold:
\begin{align}
(Z-\theta_i)p_i &=p_{i+1} \qquad (0\leq i\leq n-t_3-1),
\qquad 
(Z-\theta_n) p_{n-t_3}=0,
\label{e:t3<=n<t2+t3-1}
\\
(X-\theta_i^*) p_i &=\varphi_i p_{i-1} \qquad (1\leq i\leq n-t_3),
\qquad 
(X-\theta_0^*) p_0=0,
\label{e:t3<=n<t2+t3-2}
\end{align}
where
\begin{align*}
\theta_i &=(-1)^i
\textstyle(
k_1+k_2+i+\frac{1}{2}
)
\qquad 
 (0\leq i\leq n-t_3),
\\
\theta_i^* &=(-1)^{i+1}
\textstyle(
k_2+k_3+n-i+\frac{1}{2}
)
\qquad 
 (0\leq i\leq n-t_3),
\\
\varphi_i
&=
\left\{
\begin{array}{ll}
i(n-i+1)
\qquad
\hbox{if $i$ is even},
\\
(i+2k_1)(n-i+2k_3+1)
\qquad
\hbox{if $i$ is odd}
\end{array}
\right.
\qquad 
(1\leq i\leq n-t_3).
\end{align*}

\item $\D(p_i)=0$ for all $i=0,1,\ldots,n-t_3$. 
\end{enumerate}
\end{prop}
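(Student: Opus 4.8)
The plan is to follow the proof of Proposition~\ref{prop:t1<=n<t1+t2(odd)} essentially line for line, since the present statement is obtained from that one by the coordinate interchange $x_1\leftrightarrow x_3$ (equivalently, by the action of the transposition $(1\,3)\in\Sym_3$ on $\BI$ together with the simultaneous swaps $k_1\leftrightarrow k_3$, $e_1\leftrightarrow e_3$, $\sigma_1\leftrightarrow\sigma_3$, $t_1\leftrightarrow t_3$ and $X\leftrightarrow Z$); under this relabelling the hypothesis $t_1\le n<t_1+t_2$ turns into $t_3\le n<t_2+t_3$ and the scalars $\theta_i,\theta_i^*,\varphi_i$ transform exactly as displayed. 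One could invoke this symmetry outright, but because a few sign conventions in Theorem~\ref{thm:BImodule_Mn}(i) and in the Pauli matrices do not transport transparently, I would instead carry out the three verifications below by hand, in the indicated order.

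For part~(i), I would first observe that, by construction, the coefficient of $x_1^h$ in $p_i$ vanishes for every $h>i$, so $p_0,\dots,p_{n-t_3}$ form a triangular system with respect to the $x_1$-degree filtration of ${\rm Mat}_2(\C)\otimes\R[x_1,x_2,x_3]_n$. The coefficient of the monomial $x_3^{n-i}x_1^{i}$ in $p_i$ receives a contribution only from the summand $(h,j)=(i,n-i)$, and it equals $(-1)^{n/2}\bigl(\prod_{h=n-i+1}^{n}m_3^{(h)}\bigr)\bigl(\prod_{h=1}^{n-t_3}m_2^{(h)}\bigr)$ times $\sigma_1$ if $i$ is even and times $\sigma_3$ if $i$ is odd. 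Here $\prod_{h=n-i+1}^{n}m_3^{(h)}\ne0$ because $i\le n-t_3$ keeps the only possible odd root $t_3$ of $m_3^{(\cdot)}$ outside $\{n-i+1,\dots,n\}$, and $\prod_{h=1}^{n-t_3}m_2^{(h)}\ne0$ because $n<t_2+t_3$ keeps $t_2$ outside $\{1,\dots,n-t_3\}$; since $\sigma_1$ and $\sigma_3$ are invertible, every leading coefficient is invertible, and the triangular structure then yields linear independence over ${\rm Mat}_2(\C)$.

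For part~(ii), I would apply the explicit formulas for $X$ and $Z$ from Theorem~\ref{thm:BImodule_Mn}(i) to $p_i$ and read off the coefficient of the generic monomial $[x_3]^n_j[x_1]^i_h[x_2]^{n-t_3}_{n-h-j}$, and separately of the top monomial $[x_3]^n_j[x_1]^{i+1}_{i+1}[x_2]^{n-t_3}_{n-i-j-1}$. Each such coefficient splits as a sum of three pieces (two from the differential part of the operator, one from its scalar-and-reflection part). Using the Dunkl eigenvalue relations $T_3(x_3^m)=m_3^{(m)}x_3^{m-1}$, $T_2(x_2^m)=m_2^{(m)}x_2^{m-1}$, $T_1(x_1^m)=m_1^{(m)}x_1^{m-1}$, the Clifford/Pauli identities $\sigma_1^2=\sigma_3^2=1$ and $\{\sigma_1,\sigma_3\}=0$, and the Pascal-type identity $\binom{\lfloor a\rfloor+\lfloor b\rfloor}{\lfloor b\rfloor}=\binom{\lfloor a\rfloor+\lfloor b\rfloor-1}{\lfloor b\rfloor-1}+\binom{\lfloor a\rfloor+\lfloor b\rfloor-1}{\lfloor b\rfloor}$, one checks case by case in the parities of $i$, $h$, $j$ that these three pieces recombine into the recurrences \eqref{e:t3<=n<t2+t3-1} and \eqref{e:t3<=n<t2+t3-2}, with the boundary term at $h=i$ vanishing identically; the scalars $\theta_i=(-1)^i(k_1+k_2+i+\frac{1}{2})$, $\theta_i^*=(-1)^{i+1}(k_2+k_3+n-i+\frac{1}{2})$ and $\varphi_i$ are precisely what the scalar-and-reflection part produces. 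This term-by-term bookkeeping---carrying the matrix factors $c_{h,i,j}$ through the Pauli relations while simultaneously tracking the floor-valued binomial coefficients across the several parity cases, and confirming the boundary behaviour---is the main obstacle; everything else is routine and structurally identical to Proposition~\ref{prop:t1<=n<t1+t2(odd)}.

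For part~(iii), I would argue as in Proposition~\ref{prop:n<mint13}(iii). A direct computation gives $\D(p_0)=0$: since $p_0$ does not involve $x_1$ we have $T_1 p_0=0$, hence $\D(p_0)=e_2\otimes T_2(p_0)+e_3\otimes T_3(p_0)$, and the $T_2$-image of the $x_3^{j}x_2^{n-j}$-term cancels the $T_3$-image of the $x_3^{j+1}x_2^{n-j-1}$-term by the choice of the products $\prod m_3^{(\cdot)}$ and $\prod m_2^{(\cdot)}$, the sign $(-1)^{j/2}$, and a Pauli-matrix identity, while the endpoint terms vanish since $m_3^{(t_3)}=0$ and $m_2^{(0)}=0$. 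Combining this with part~(ii)---the operators $X$ and $Z$ of Theorem~\ref{thm:BImodule_Mn}(i) preserve $\ker\D$, and $p_{i+1}=(Z-\theta_i)p_i$, so an induction on $i$ propagates $\D(p_i)=0$ from $i=0$ up to $i=n-t_3$---completes the proof.
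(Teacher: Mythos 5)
Your proposal is correct and follows essentially the same route as the paper's proof: for (i) the triangular structure in $x_1$-degree together with the invertible leading $x_3^{n-i}x_1^i$-coefficient, for (ii) the term-by-term coefficient comparison of the three contributions arising from the differential and scalar/reflection parts of the formulas in Theorem~\ref{thm:BImodule_Mn}(i), and for (iii) the direct verification that $\D(p_0)=0$ followed by the induction via $p_{i+1}=(Z-\theta_i)p_i$ and the fact that the $Z$-formula preserves $\M_n$. The observation that this proposition is the $(1\,3)$-transport of Proposition~\ref{prop:t1<=n<t1+t2(odd)} is a sound sanity check, and your decision to nevertheless carry out the verifications by hand rather than relying on that symmetry alone is the prudent choice given the asymmetric sign conventions.
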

\begin{proof}  
(i): Let $i$ be an integer with $0\leq i\leq n-t_3$.
By construction the coefficient of $x_1^h$ in $p_i$ is zero for all integers $h$ with $i<h\leq n-t_3$. Observe that the coefficient of $x_3^{n-i} x_1^i$ in $p_i$ is 
\begin{gather}\label{coeff:t3<=n<t2+t3}
(-1)^\frac{n}{2}
\prod_{h=n-i+1}^n m_3^{(h)}
\prod_{h=1}^{n-t_3} m_2^{(h)}
\times 
\left\{
\begin{array}{ll}
\sigma_3 \qquad 
&\hbox{if $i$ is odd},
\\
\sigma_1 \qquad 
&\hbox{if $i$ is even}.
\end{array}
\right.
\end{gather}
Since $i\leq n-t_3$ the scalar $\prod\limits_{h=n-i+1}^n m_3^{(h)}$ is nonzero. 
Since $n<t_2+t_3$ the scalar $\prod\limits_{h=1}^{n-t_3} m_2^{(h)}$ is nonzero. 
Hence the matrix (\ref{coeff:t3<=n<t2+t3}) is nonsingular. By the above comments the part (i) follows.

(ii): 
Let $i$ be a nonnegative integer and let $j$ be an integer with $t_3\leq j\leq n-i-1$. 
Using Theorem \ref{thm:BImodule_Mn}(i) yields that the coefficient of 
$[x_3]^n_j  [x_1]^{i+1}_{i+1} [x_2]^{n-t_3}_{n-i-j-1}$ in $(Z-\theta_i)p_i$ is equal to 
$$
(-1)^{\frac{i-j+1}{2}}\sigma_3 c_{i,i,j}=
(-1)^{\frac{i+j+1}{2}}c_{i+1,i+1,j}.
$$
Now let $h,i,j$ denote three integers with $0\leq h\leq i\leq n-t_3$ and $t_3\leq j\leq n-h$. Using Theorem \ref{thm:BImodule_Mn}(i) yields that the coefficient of 
$[x_3]^n_j [x_1]^i_h [x_2]^{n-t_3}_{n-h-j} $ in $(Z-\theta_i)p_i$ is equal to $(-1)^{\frac{h-j}{2}}$ times the sum of 
\begin{align*}
&
m_1^{(h)}
{\left\lfloor \frac{n-i-j}{2}\right\rfloor+\left\lfloor\frac{i-h+1}{2}\right\rfloor \choose \left\lfloor\frac{i-h+1}{2}\right\rfloor}
 \sigma_3 c_{h-1,i,j},
\\
&
m_2^{(n-h-j)}
{\left\lfloor \frac{n-i-j}{2}\right\rfloor+\left\lfloor\frac{i-h-1}{2}\right\rfloor \choose \left\lfloor\frac{i-h-1}{2}\right\rfloor}
 \sigma_3 c_{h+1,i,j},
\\
&
{\textstyle(
(-1)^{h+1} k_1
+
(-1)^{h+j} k_2
-
(-1)^j \theta_i
-
\frac{1}{2}
)}
{\left\lfloor \frac{n-i-j}{2}\right\rfloor+\left\lfloor\frac{i-h}{2}\right\rfloor \choose \left\lfloor\frac{i-h}{2}\right\rfloor}
c_{h,i,j}.
\end{align*}
It is straightforward to verify that the sum of the above three terms is equal to 
$$
(-1)^j
m_1^{(i+1)}
{\left\lfloor \frac{n-i-j-1}{2}\right\rfloor+\left\lfloor\frac{i-h+1}{2}\right\rfloor \choose \left\lfloor\frac{i-h+1}{2}\right\rfloor}
c_{h,i+1,j}. 
$$
By the above comments the equations given in (\ref{e:t3<=n<t2+t3-1}) follow.

Using Theorem \ref{thm:BImodule_Mn}(i) yields that the coefficient of 
$[x_3]^n_j [x_1]^i_h [x_2]^{n-t_3}_{n-h-j}$ in $(X-\theta_i^*)p_i$ is equal to $(-1)^{\frac{j-h}{2}+1}$ times the sum of 
\begin{align*}
&
m_3^{(j)}
{\left\lfloor \frac{n-i-j+1}{2}\right\rfloor+\left\lfloor\frac{i-h}{2}\right\rfloor \choose \left\lfloor\frac{i-h}{2}\right\rfloor}
 \sigma_1 c_{h,i,j-1},
\\
&
m_2^{(n-h-j)}
{\left\lfloor \frac{n-i-j-1}{2}\right\rfloor+\left\lfloor\frac{i-h}{2}\right\rfloor \choose \left\lfloor\frac{i-h}{2}\right\rfloor}
 \sigma_1 c_{h,i,j+1},
\\
&
{\textstyle(
(-1)^j k_3
-
(-1)^{h+j} k_2
+
(-1)^h
\theta_i^*
+
\frac{1}{2}
)}
{\left\lfloor \frac{n-i-j}{2}\right\rfloor+\left\lfloor\frac{i-h}{2}\right\rfloor \choose \left\lfloor\frac{i-h}{2}\right\rfloor}
c_{h,i,j}.
\end{align*}
It is straightforward to verify that the sum of the above three terms is equal to $(-1)^{h+1}$ times 
$$
\left\{
\begin{array}{ll}
\begin{pmatrix}
0 &0
\\
0 &0
\end{pmatrix}
\qquad &\hbox{if $h=i$},
\\
(n-i+1)
\displaystyle{\left\lfloor \frac{n-i-j+1}{2}\right\rfloor+\left\lfloor\frac{i-h-1}{2}\right\rfloor \choose \left\lfloor\frac{i-h-1}{2}\right\rfloor}
c_{h,i-1,j}
\qquad &\hbox{if $h<i$ and $i$ is even},
\\
(n-i+2k_3+1)
\displaystyle{\left\lfloor \frac{n-i-j+1}{2}\right\rfloor+\left\lfloor\frac{i-h-1}{2}\right\rfloor \choose \left\lfloor\frac{i-h-1}{2}\right\rfloor}
c_{h,i-1,j}
\qquad &\hbox{if $h<i$ and $i$ is odd}.
\end{array}
\right.
$$
By the above comments the equations given in (\ref{e:t3<=n<t2+t3-2}) follow.

(iii): It is routine to verify that $\D(p_0)=0$. Combined with (ii) the statement (iii) follows.
\end{proof}

\begin{lem}\label{lem:basis:n<t2+t3(odd)}
Suppose that $n$ is an odd integer with $t_1+t_3\leq n< t_2$. 
Let $\{p_i\}_{i=0}^{n-t_3}$ be as in Proposition \ref{prop:t3<=n<t2+t3(odd)}. Then the following hold:
\begin{enumerate}
\item $\M_n(x_1)\cap \M_n(x_3)$ has the basis 
\begin{align}
p_i\cdot 
\begin{pmatrix}
1
\\
0
\end{pmatrix}
\otimes
1 
\qquad 
(t_1\leq i\leq n-t_3), 
\label{M13basis:n<t2+t3(odd)-1}
\\ 
p_i\cdot 
\begin{pmatrix}
0
\\
1
\end{pmatrix}
\otimes
1 
\qquad 
(t_1\leq i\leq n-t_3).
\label{M13basis:n<t2+t3(odd)-2}
\end{align}

\item $\M_n(x_3)/\M_n(x_1)\cap \M_n(x_3)$ has the basis
\begin{align}
p_i\cdot 
\begin{pmatrix}
1
\\
0
\end{pmatrix}
\otimes
1 
+
\M_n(x_1)\cap \M_n(x_3)
\qquad 
(0\leq i\leq t_1-1), 
\label{M3/M13basis:n<t2+t3(odd)-1}
\\ 
p_i\cdot 
\begin{pmatrix}
0
\\
1
\end{pmatrix}
\otimes
1 
+
\M_n(x_1)\cap \M_n(x_3)
\qquad 
(0\leq i\leq t_1-1).
\label{M3/M13basis:n<t2+t3(odd)-2}
\end{align}

\item $\M_n/\M_n(x_1)$ has the basis
\begin{align}
p_i\cdot 
\begin{pmatrix}
1
\\
0
\end{pmatrix}
\otimes
1 
+
\M_n(x_1)
\qquad 
(0\leq i\leq t_1-1), 
\label{M/M1basis:n<t2+t3(odd)-1}
\\ 
p_i\cdot 
\begin{pmatrix}
0
\\
1
\end{pmatrix}
\otimes
1 
+
\M_n(x_1)
\qquad 
(0\leq i\leq t_1-1).
\label{M/M1basis:n<t2+t3(odd)-2}
\end{align}
\end{enumerate}
\end{lem}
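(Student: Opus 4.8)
The plan is to follow the scheme of the proof of Lemma~\ref{lem:basis:t1<=n<t1+t2(odd)}, but with the roles of $x_1$ and $x_3$ interchanged: one uses the explicit monogenics $\{p_i\}_{i=0}^{n-t_3}$ of Proposition~\ref{prop:t3<=n<t2+t3(odd)} together with the dimension formulas of Section~\ref{s:submodule_Mn}. The step that carries all the weight is a bookkeeping observation about the monomial support of $p_i$. By construction $p_i$ is a sum of terms involving $[x_3]^n_j[x_1]^i_h[x_2]^{n-t_3}_{n-h-j}$ with $t_3\le j$ and $0\le h\le i$, so every monomial of $p_i$ has $x_3$-degree $\ge t_3$; moreover $[x_1]^i_h=\bigl(\prod_{\ell=h+1}^i m_1^{(\ell)}\bigr)x_1^h$ and $m_1^{(t_1)}=t_1+2k_1=0$, so $[x_1]^i_h=0$ exactly when $h<t_1\le i$. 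Hence, for $i\ge t_1$ only the $x_1$-degrees $t_1,\dots,i$ occur in $p_i$, whereas for $i\le t_1-1$ every $x_1$-degree occurring in $p_i$ lies in $\{0,\dots,t_1-1\}$. Combined with Proposition~\ref{prop:t3<=n<t2+t3(odd)}(iii), which gives $p_i\in\M_n$, this shows $p_i\in\M_n(x_3)$ for all $i$ and $p_i\in\M_n(x_1)\cap\M_n(x_3)$ whenever $i\ge t_1$.

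For part~(i): the $2(n-t_1-t_3+1)$ exhibited vectors lie in $\M_n(x_1)\cap\M_n(x_3)$ by the paragraph above, are linearly independent by Proposition~\ref{prop:t3<=n<t2+t3(odd)}(i), and $\dim\bigl(\M_n(x_1)\cap\M_n(x_3)\bigr)=2(n-t_1-t_3+1)$ by Proposition~\ref{prop:dimM(x12)}(iii) since $n\ge t_1+t_3$; hence they form a basis.

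For parts~(ii) and~(iii): here $i$ runs over $0\le i\le t_1-1$, and each such $p_i$ lies in $\M_n(x_3)$, so the cosets indeed lie in $\M_n(x_3)/\bigl(\M_n(x_1)\cap\M_n(x_3)\bigr)$ in~(ii) and in $\M_n/\M_n(x_1)$ in~(iii). Linear independence of these cosets is forced by the $x_1$-degree observation: if $\sum_{i=0}^{t_1-1}p_iv_i$ (with $v_i\in\C^2$) lies in $\M_n(x_1)$, then it has all $x_1$-degrees $<t_1$ while lying in a subspace with $x_1$-degrees $\ge t_1$, so it is zero, and Proposition~\ref{prop:t3<=n<t2+t3(odd)}(i) gives $v_i=0$ for all $i$. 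A dimension count completes both parts: by Propositions~\ref{prop:dimM(x1)}(iii) and~\ref{prop:dimM(x12)}(iii) the quotient in~(ii) has dimension $2(n-t_3+1)-2(n-t_1-t_3+1)=2t_1$, and by Theorem~\ref{thm:dimM=2(n+1)} (applicable since $n<t_2\le\max\{t_1,t_2,t_3\}$, so $\dim\M_n=2(n+1)$) together with Proposition~\ref{prop:dimM(x1)}(i) the quotient in~(iii) has dimension $2(n+1)-2(n-t_1+1)=2t_1$; in each case this is the number of cosets listed.

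The only genuinely delicate point is the one isolated in the first paragraph: one must be sure that the inclusions $p_i\in\M_n(x_1)$ for $i\ge t_1$, and the ``low $x_1$-degree'' placement of the $p_i$ with $i\le t_1-1$, really do follow from the vanishing of the single factor $m_1^{(t_1)}$ in $[x_1]^i_h$. Once this support analysis is recorded, everything else is routine dimension counting and is parallel to the proof of Lemma~\ref{lem:basis:t1<=n<t1+t2(odd)}.
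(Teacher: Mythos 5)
Your proof is correct and follows essentially the same route as the paper's: place the displayed vectors in the relevant subspaces or quotients, invoke the linear independence from Proposition~\ref{prop:t3<=n<t2+t3(odd)}(i), and finish by the dimension counts from Propositions~\ref{prop:dimM(x1)}, \ref{prop:dimM(x12)} and Theorem~\ref{thm:dimM=2(n+1)}. Your explicit support analysis — that $m_1^{(t_1)}=0$ kills every $[x_1]^i_h$ with $h<t_1\le i$, hence $p_i\in\M_n(x_1)$ for $i\ge t_1$ while $p_i$ has all $x_1$-degrees $<t_1$ for $i<t_1$ — is exactly the fact the paper uses silently when it asserts the displayed vectors lie in $\C^2\otimes\bigl(\bigoplus_{i=t_1}^n x_1^i\cdot\R[x_2,x_3]_{n-i}\cap\bigoplus_{i=t_3}^n x_3^i\cdot\R[x_1,x_2]_{n-i}\bigr)$, and it is also what underwrites the linear independence of the cosets in parts (ii) and (iii), so it is good that you recorded it.
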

\begin{proof}
(i): The linear independence of (\ref{M13basis:n<t2+t3(odd)-1}) and (\ref{M13basis:n<t2+t3(odd)-2}) follows from Proposition \ref{prop:t3<=n<t2+t3(odd)}(i). Since (\ref{M13basis:n<t2+t3(odd)-1}) and (\ref{M13basis:n<t2+t3(odd)-2}) are in 
$
\C^2\otimes\left(\bigoplus\limits_{i=t_1}^n x_1^i\cdot \R[x_2,x_3]_{n-i}
\cap 
\bigoplus\limits_{i=t_3}^n x_3^i\cdot \R[x_1,x_2]_{n-i}
\right)$,  
it follows from Proposition \ref{prop:t3<=n<t2+t3(odd)}(iii) that  (\ref{M13basis:n<t2+t3(odd)-1}) and (\ref{M13basis:n<t2+t3(odd)-2}) are in $\M_n(x_1)\cap \M_n(x_3)$. Combined with Proposition \ref{prop:dimM(x12)}(iii) the statement (i) follows.

(ii): The linear independence of (\ref{M3/M13basis:n<t2+t3(odd)-1}) and (\ref{M3/M13basis:n<t2+t3(odd)-2}) follows from Proposition \ref{prop:t3<=n<t2+t3(odd)}(i). 
By Proposition \ref{prop:t3<=n<t2+t3(odd)}(iii) the cosets (\ref{M3/M13basis:n<t2+t3(odd)-1}) and (\ref{M3/M13basis:n<t2+t3(odd)-2}) are in 
$
\M_n(x_3)/\M_n(x_1)\cap \M_n(x_3).
$  
By Propositions \ref{prop:dimM(x1)}(iii) and \ref{prop:dimM(x12)}(iii) the dimension of $\M_n(x_3)/\M_n(x_1)\cap \M_n(x_3)$ is $2t_1$. The statement (ii) follows.

(iii): The linear independence of (\ref{M/M1basis:n<t2+t3(odd)-1}) and (\ref{M/M1basis:n<t2+t3(odd)-2}) follows from Proposition \ref{prop:t3<=n<t2+t3(odd)}(i). By Proposition \ref{prop:t1<=n<t1+t2(odd)}(iii) the cosets (\ref{M/M1basis:n<t2+t3(odd)-1}) and (\ref{M/M1basis:n<t2+t3(odd)-2}) are in  $\M_n/\M_n(x_1)$. By Theorem \ref{thm:dimM=2(n+1)} and Proposition \ref{prop:dimM(x1)}(i) 
the dimension of $\M_n/\M_n(x_1)$ is $2t_1$.
The statement (iii) follows.
\end{proof}

\begin{thm}\label{thm:t1+t3<=n<t2(odd)}
Suppose that $n$ is an odd integer with $t_1+t_3\leq n< t_2$. Then the following hold:
\begin{enumerate}
\item The $\BI$-module $\M_n(x_1)\cap \M_n(x_3)$ 
is isomorphic to a direct sum of two copies of 
\begin{gather}\label{BImodule:t1+t3<=n<t2(odd)-1}
E_{n-t_1-t_3}
\textstyle(
k_1+k_2+\frac{n+1}{2},
-k_2-k_3-\frac{n+1}{2},
\frac{n+1}{2}
)^{((-1,-1),(2\,3))}.
\end{gather}

\item The $\BI$-modules $\M_n(x_1)/\M_n(x_1)\cap \M_n(x_3)$ and $\M_n/\M_n(x_3)$ are isomorphic to a direct sum of two copies of 
\begin{gather}\label{BImodule:t1+t3<=n<t2(odd)-2}
O_{t_3-1}(k_2,-k_1-k_2-k_3-n-1,k_1)^{(2\,3)}.
\end{gather}

\item The $\BI$-modules $\M_n(x_3)/\M_n(x_1)\cap \M_n(x_3)$ and $\M_n/\M_n(x_1)$ are isomorphic to a direct sum of two copies of 
\begin{gather}\label{BImodule:t1+t3<=n<t2(odd)-3}
O_{t_1-1}(k_2,-k_1-k_2-k_3-n-1,k_3)^{(1\,2\,3)}.
\end{gather}
\end{enumerate}
Moreover, if $k_2$ is nonnegative then the $\BI$-modules (\ref{BImodule:t1+t3<=n<t2(odd)-1})--(\ref{BImodule:t1+t3<=n<t2(odd)-3}) are irreducible and they are isomorphic to 
\begin{align}
&E_{n-t_1-t_3}
\textstyle(
k_1+k_2+\frac{n+1}{2},
\frac{n+1}{2},
k_2+k_3+\frac{n+1}{2}
)^{(-1,1)},
\label{BImodule:t1+t3<=n<t2(odd)-1'}
\\
&O_{t_3-1}(k_2,k_1,-k_1-k_2-k_3-n-1),
\label{BImodule:t1+t3<=n<t2(odd)-2'}
\\
&
O_{t_1-1}(-k_1-k_2-k_3-n-1,k_3,k_2),
\label{BImodule:t1+t3<=n<t2(odd)-3'}
\end{align}
respectively.
\end{thm}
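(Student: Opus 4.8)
The plan is to follow the template used in the proofs of Theorems \ref{thm:n<mint13(odd)}--\ref{thm:n<mint23(odd)}: present each of the modules or subquotients appearing in (i)--(iii) as a direct sum of two isomorphic $\BI$-submodules, and identify each summand by comparison with Proposition \ref{prop:Ed} or Proposition \ref{prop:Od}.

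For part (i) I would take $\{p_i\}_{i=0}^{n-t_1}$ as in Proposition \ref{prop:t1<=n<t1+t2(odd)} and set $V$, $V'$ to be the subspaces of $\M_n(x_1)\cap\M_n(x_3)$ spanned by the vectors $p_i\cdot\binom{1}{0}\otimes 1$ and by the vectors $p_i\cdot\binom{0}{1}\otimes 1$, respectively, for $t_3\le i\le n-t_1$; Lemma \ref{lem:basis:t1<=n<t1+t2(odd)}(i) then gives $\M_n(x_1)\cap\M_n(x_3)=V\oplus V'$. Since $t_3=-2k_3$ the odd-index factor $\varphi_{t_3}$ in Proposition \ref{prop:t1<=n<t1+t2(odd)}(ii) vanishes, so on the shifted chain $p_{t_3},p_{t_3+1},\dots,p_{n-t_1}$ (a space of dimension $n-t_1-t_3+1$) the relations (\ref{e:t1<=n<t1+t2-1})--(\ref{e:t1<=n<t1+t2-2}) constitute a closed tridiagonal action of $X$ and $Z$. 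Comparing this action and the central scalars of Theorem \ref{thm:BImodule_Mn}(ii) with Proposition \ref{prop:Ed} after twisting by $((-1,-1),(2\,3))$ (which interchanges the roles of $Y$ and $Z$ and flips the relevant signs in the $\theta_i$) shows $V\cong V'\cong$ the module (\ref{BImodule:t1+t3<=n<t2(odd)-1}); here $V\cong V'$ because they carry the same $X$- and $Z$-actions and $\mu$ acts by the same scalar on both, so their $Y$-actions agree as well.

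Parts (ii) and (iii) run along the same lines, with proper subquotients in place of submodules. For (ii) I would use Lemma \ref{lem:basis:t1<=n<t1+t2(odd)}(ii),(iii): in each of $\M_n(x_1)/\M_n(x_1)\cap\M_n(x_3)$ and $\M_n/\M_n(x_3)$ the images of the vectors $p_i\cdot\binom{1}{0}\otimes 1$ and $p_i\cdot\binom{0}{1}\otimes 1$ for $0\le i\le t_3-1$ span $\BI$-submodules, because $p_{t_3}$ already lies in the submodule being factored out and hence relation (\ref{e:t1<=n<t1+t2-1}) closes the chain modulo it; matching (\ref{e:t1<=n<t1+t2-1})--(\ref{e:t1<=n<t1+t2-2}) and the central scalars with Proposition \ref{prop:Od} twisted by $(2\,3)$ gives (\ref{BImodule:t1+t3<=n<t2(odd)-2}). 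For (iii) I would instead work with $\{p_i\}$ as in Proposition \ref{prop:t3<=n<t2+t3(odd)} and Lemma \ref{lem:basis:n<t2+t3(odd)}(ii),(iii), using that $\varphi_{t_1}=0$ since $t_1=-2k_1$, and obtain (\ref{BImodule:t1+t3<=n<t2(odd)-3}).

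Finally, when $k_2\ge 0$ I would check that the three parameter triples satisfy the non-vanishing conditions of Theorems \ref{thm:irr_E} and \ref{thm:irr_O}, yielding irreducibility, and then invoke Theorems \ref{thm:onto2_E} and \ref{thm:onto2_O} to discard the automorphism twists and rewrite the modules as (\ref{BImodule:t1+t3<=n<t2(odd)-1'})--(\ref{BImodule:t1+t3<=n<t2(odd)-3'}). The main obstacle is not conceptual but bookkeeping: one has to track the alternating signs $(-1)^i$ in $\theta_i$ and $\theta_i^*$, the permutation-with-signs that each of $((-1,-1),(2\,3))$, $(2\,3)$, $(1\,2\,3)$ induces on $\kappa,\lambda,\mu$ via Table \ref{pm1-action}, and the shift aligning the chains $\{p_i\}$ with the index range $0,\dots,d$ of Propositions \ref{prop:Ed} and \ref{prop:Od}; all of this is routine once the relations in Propositions \ref{prop:t1<=n<t1+t2(odd)}(ii) and \ref{prop:t3<=n<t2+t3(odd)}(ii) are in hand.
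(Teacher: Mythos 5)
Your proposal follows essentially the same approach as the paper's proof: for part (i) you decompose $\M_n(x_1)\cap\M_n(x_3)$ via Lemma \ref{lem:basis:t1<=n<t1+t2(odd)}(i) and the chain $\{p_i\}$ of Proposition \ref{prop:t1<=n<t1+t2(odd)}, matching against Proposition \ref{prop:Ed}; for parts (ii) and (iii) you use the subquotient bases of Lemmas \ref{lem:basis:t1<=n<t1+t2(odd)}(ii),(iii) and \ref{lem:basis:n<t2+t3(odd)}(ii),(iii) respectively, matching against Proposition \ref{prop:Od}; and for the last assertion you invoke Theorems \ref{thm:irr_E}, \ref{thm:irr_O}, \ref{thm:onto2_E}, \ref{thm:onto2_O}. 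Your supplementary remarks (that $\varphi_{t_3}=0$ resp.\ $\varphi_{t_1}=0$ make the truncated chains close, and that identical $X,Z$-matrices plus equality of the central scalar for $\mu$ force $V\cong V'$) are correct justifications of steps the paper leaves implicit, and the overall argument is the one the paper gives.
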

\begin{proof}
(i): Let $V$ and $V'$ denote the subspaces of $\M_n$ spanned by (\ref{M13basis:n<t1+t2(odd)-1}) and (\ref{M13basis:n<t1+t2(odd)-2}), respectively. It follows from Lemma \ref{lem:basis:t1<=n<t1+t2(odd)}(i) that 
$
\M_n(x_1)\cap \M_n(x_3)=V\oplus V'$. 
It follows from Theorem \ref{thm:BImodule_Mn}(ii) and Proposition \ref{prop:t1<=n<t1+t2(odd)}(ii) that $V$ and $V'$ are two isomorphic $\BI$-submodules of $\M_n$. Using Proposition \ref{prop:Ed} yields that both are isomorphic to (\ref{BImodule:t1+t3<=n<t2(odd)-1}). 

(ii): 
Let $V$ and $V'$ denote the subspaces of $\M_n(x_1)/\M_n(x_1)\cap \M_n(x_3)$ spanned by (\ref{M1/M13basis:n<t1+t2(odd)-1}) and (\ref{M1/M13basis:n<t1+t2(odd)-2}), respectively. It follows from Lemma \ref{lem:basis:t1<=n<t1+t2(odd)}(ii) that 
$
\M_n(x_1)/\M_n(x_1)\cap \M_n(x_3)=V\oplus V'$. 
It follows from Theorem \ref{thm:BImodule_Mn}(ii) and Proposition \ref{prop:t1<=n<t1+t2(odd)}(ii) that $V$ and $V'$ are two isomorphic $\BI$-submodules of $\M_n(x_1)/\M_n(x_1)\cap \M_n(x_3)$. Using Proposition \ref{prop:Od} yields that both are isomorphic to (\ref{BImodule:t1+t3<=n<t2(odd)-2}). 

Let $V$ and $V'$ denote the subspaces of $\M_n/\M_n(x_3)$ spanned by (\ref{M/M3basis:n<t1+t2(odd)-1}) and (\ref{M/M3basis:n<t1+t2(odd)-2}), respectively. It follows from Lemma \ref{lem:basis:t1<=n<t1+t2(odd)}(iii) that 
$
\M_n/\M_n(x_3)=V\oplus V'$. 
It follows from Theorem \ref{thm:BImodule_Mn}(ii) and Proposition \ref{prop:t1<=n<t1+t2(odd)}(ii) that $V$ and $V'$ are two isomorphic $\BI$-submodules of $\M_n/\M_n(x_3)$. Using Proposition \ref{prop:Od} yields that both are isomorphic to (\ref{BImodule:t1+t3<=n<t2(odd)-2}). 

(iii): 
Let $V$ and $V'$ denote the subspaces of $\M_n(x_3)/\M_n(x_1)\cap \M_n(x_3)$ spanned by (\ref{M3/M13basis:n<t2+t3(odd)-1}) and (\ref{M3/M13basis:n<t2+t3(odd)-2}), respectively. It follows from Lemma \ref{lem:basis:n<t2+t3(odd)}(ii) that 
$
\M_n(x_3)/\M_n(x_1)\cap \M_n(x_3)=V\oplus V'$. 
It follows from Theorem \ref{thm:BImodule_Mn}(ii) and Proposition \ref{prop:t3<=n<t2+t3(odd)}(ii) that $V$ and $V'$ are two isomorphic $\BI$-submodules of $\M_n(x_3)/\M_n(x_1)\cap \M_n(x_3)$. Using Proposition \ref{prop:Od} yields that both are isomorphic to (\ref{BImodule:t1+t3<=n<t2(odd)-3}). 

Let $V$ and $V'$ denote the subspaces of $\M_n/\M_n(x_1)$ spanned by (\ref{M/M1basis:n<t2+t3(odd)-1}) and (\ref{M/M1basis:n<t2+t3(odd)-2}), respectively. It follows from Lemma \ref{lem:basis:n<t2+t3(odd)}(iii) that 
$
\M_n/\M_n(x_1)=V\oplus V'$. 
It follows from Theorem \ref{thm:BImodule_Mn}(ii) and Proposition \ref{prop:t3<=n<t2+t3(odd)}(ii) that $V$ and $V'$ are two isomorphic $\BI$-submodules of $\M_n/\M_n(x_1)$. Using Proposition \ref{prop:Od} yields that both are isomorphic to (\ref{BImodule:t1+t3<=n<t2(odd)-3}).

Suppose that $k_2$ is nonnegative. Using Theorem \ref{thm:irr_E} yields that the $\BI$-module (\ref{BImodule:t1+t3<=n<t2(odd)-1}) is irreducible. Using Theorem \ref{thm:irr_O} yields that the $\BI$-modules (\ref{BImodule:t1+t3<=n<t2(odd)-2}) and (\ref{BImodule:t1+t3<=n<t2(odd)-3}) are irreducible. 
Using Theorem \ref{thm:onto2_E} yields that the $\BI$-module (\ref{BImodule:t1+t3<=n<t2(odd)-1}) is isomorphic to  (\ref{BImodule:t1+t3<=n<t2(odd)-1'}). 
Using Theorem \ref{thm:onto2_O} yields that the $\BI$-modules (\ref{BImodule:t1+t3<=n<t2(odd)-2}) and (\ref{BImodule:t1+t3<=n<t2(odd)-3}) are isomorphic to  (\ref{BImodule:t1+t3<=n<t2(odd)-2'}) and (\ref{BImodule:t1+t3<=n<t2(odd)-3'}), respectively. 
\end{proof}

By similar arguments as in the proof of Theorem \ref{thm:t1+t3<=n<t2(odd)} we have the following results:

\begin{thm}\label{thm:t1+t2<=n<t3(odd)}
Suppose that $n$ is an odd integer with $t_1+t_2\leq n<t_3$. Then the following hold:
\begin{enumerate}
\item The $\BI$-module $\M_n(x_1)\cap \M_n(x_2)$ 
is isomorphic to a direct sum of two copies of 
\begin{gather}\label{t1+t2<=n<t3(odd)-1}
E_{n-t_1-t_2}
\textstyle(
k_2+k_3+\frac{n+1}{2},
-k_1-k_3-\frac{n+1}{2},
\frac{n+1}{2}
)^{((-1,-1),(1\,2))}.
\end{gather}

\item The $\BI$-modules $\M_n(x_2)/\M_n(x_1)\cap \M_n(x_2)$ and $\M_n/\M_n(x_1)$ are isomorphic to a direct sum of two copies of 
\begin{gather}\label{t1+t2<=n<t3(odd)-2}
O_{t_1-1}(k_3,-k_1-k_2-k_3-n-1,k_2)^{(1\,2)}.
\end{gather}

\item The $\BI$-modules $\M_n(x_1)/\M_n(x_1)\cap \M_n(x_2)$ and $\M_n/\M_n(x_2)$
are isomorphic to a direct sum of two copies of 
\begin{gather}\label{t1+t2<=n<t3(odd)-3}
O_{t_2-1}(k_3,-k_1-k_2-k_3-n-1,k_1).
\end{gather}
\end{enumerate}
Moreover, if $k_3$ is nonnegative then the $\BI$-modules (\ref{t1+t2<=n<t3(odd)-1})--(\ref{t1+t2<=n<t3(odd)-3}) are irreducible and (\ref{t1+t2<=n<t3(odd)-1}), (\ref{t1+t2<=n<t3(odd)-2}) are isomorphic to 
\begin{align*}
&E_{n-t_1-t_2}
\textstyle(
k_1+k_3+\frac{n+1}{2},
k_2+k_3+\frac{n+1}{2},
\frac{n+1}{2}
)^{(-1,-1)},
\\
&O_{t_1-1}(-k_1-k_2-k_3-n-1,k_3,k_2),
\end{align*}
respectively.
\end{thm}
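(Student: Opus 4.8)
The plan is to follow the proof of Theorem~\ref{thm:t1+t3<=n<t2(odd)} step by step, with the variables $x_2,x_3$ (together with $k_2,k_3$, the thresholds $t_2,t_3$, the operators $T_2,T_3$, the Clifford generators $e_2,e_3$, and the generators $Y,Z$ of $\BI$) interchanged throughout. The starting point is to establish the $(x_1,x_2)$-analogues of Propositions~\ref{prop:t1<=n<t1+t2(odd)} and~\ref{prop:t3<=n<t2+t3(odd)}: for $t_1\le n<t_1+t_2$ one writes down an explicit family $\{p_i\}_{i=0}^{n-t_1}$ in $\C^2\otimes\R[x_1,x_2,x_3]_n$ built from the products $[x_1]^n_j[x_3]^{n-t_1}_{n-h-j}[x_2]^i_h$ with $\mathrm{Mat}_2(\C)$-coefficients $c_{h,i,j}$ given by an analogous case distinction, and for $t_2\le n<t_2+t_3$ a second such family based on $[x_2]^n_j$ with $x_1$ as the ladder variable. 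For each family one verifies: (a) linear independence over $\mathrm{Mat}_2(\C)$, by exhibiting in $p_i$ a nonsingular leading coefficient on a monomial absent from $p_0,\dots,p_{i-1}$, the relevant products of the $m^{(\cdot)}_\bullet$ being nonzero because $i\le n-t_1<t_2$ (resp.\ $i\le n-t_2<t_3$) and $n<t_1+t_2$ (resp.\ $n<t_2+t_3$); (b) two-diagonal ladder relations $(G-\theta_i)p_i=p_{i+1}$ and $(G'-\theta_i^*)p_i=\varphi_i p_{i-1}$ for a suitable pair $G,G'\in\{X,Y,Z\}$, evaluated via Theorem~\ref{thm:BImodule_Mn}(i); (c) $\D(p_i)=0$, which reduces to $\D(p_0)=0$ followed by propagation along the ladder. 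Since $m^{(t_a)}_a=0$ whenever $t_a$ is finite, the appropriate $p_i$ automatically lie in $\M_n(x_1)$, resp.\ $\M_n(x_2)$, and no third-variable obstruction arises because $n<t_3$.

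From these one reads off bases as in Lemmas~\ref{lem:basis:t1<=n<t1+t2(odd)} and~\ref{lem:basis:n<t2+t3(odd)}. The evaluations $p_i\cdot\binom{1}{0}\otimes1$ and $p_i\cdot\binom{0}{1}\otimes1$ of the $x_1$-based family over the range $t_2\le i\le n-t_1$ yield a basis of $\M_n(x_1)\cap\M_n(x_2)$, and over the complementary range $0\le i\le t_2-1$ they yield bases of $\M_n(x_1)/\M_n(x_1)\cap\M_n(x_2)$ and of $\M_n/\M_n(x_2)$; symmetrically the $x_2$-based family, over $t_1\le i\le n-t_2$ resp.\ $0\le i\le t_1-1$, handles $\M_n(x_1)\cap\M_n(x_2)$ again and the pair $\M_n(x_2)/\M_n(x_1)\cap\M_n(x_2)$, $\M_n/\M_n(x_1)$. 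The dimension counts come from Propositions~\ref{prop:dimM(x1)},~\ref{prop:dimM(x12)}, from Theorem~\ref{thm:dimM=2(n+1)} (applicable since $n<t_3\le\max\{t_1,t_2,t_3\}$), and from the identity $\M_n=\M_n(x_1)+\M_n(x_2)$ — valid because $2(n-t_1+1)+2(n-t_2+1)-2(n-t_1-t_2+1)=2(n+1)$ — which also gives the asserted pairwise isomorphisms $\M_n/\M_n(x_1)\cong\M_n(x_2)/\M_n(x_1)\cap\M_n(x_2)$ and $\M_n/\M_n(x_2)\cong\M_n(x_1)/\M_n(x_1)\cap\M_n(x_2)$. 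Each of the three spaces then splits as $V\oplus V'$, the spans of the two evaluation families, and the ladder relations together with the scalars of Theorem~\ref{thm:BImodule_Mn}(ii) identify both summands with a single module via Proposition~\ref{prop:Ed} or~\ref{prop:Od}: as $n,t_1,t_2$ are odd, $n-t_1-t_2$ is odd and yields the $E$-module of part~(i), while $t_1-1,t_2-1$ are even and yield the $O$-modules of parts~(ii)--(iii). Matching the scalars by which $\kappa,\lambda,\mu$ act against Propositions~\ref{prop:Ed}(ii) and~\ref{prop:Od}(ii) fixes the $\{\pm1\}^2\rtimes\Sym_3$-twist through Table~\ref{pm1-action}, giving (\ref{t1+t2<=n<t3(odd)-1})--(\ref{t1+t2<=n<t3(odd)-3}); and when $k_3\ge0$, Theorems~\ref{thm:irr_E},~\ref{thm:irr_O} give irreducibility and Theorems~\ref{thm:onto2_E},~\ref{thm:onto2_O} recast the twisted modules into the stated twist-free forms.

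The main obstacle is the combinatorial core of the first step: choosing the $c_{h,i,j}$ and the accompanying signs so that, after applying $X$, $Y$ or $Z$ from Theorem~\ref{thm:BImodule_Mn}(i), the three-term combination of binomial coefficients times products of Pauli matrices collapses to the single neighbouring term — this is exactly where Propositions~\ref{prop:t1<=n<t1+t2(odd)} and~\ref{prop:t3<=n<t2+t3(odd)} say ``it is straightforward to verify'', and it becomes straightforward only once the right normalization of $c_{h,i,j}$ is in hand — and then keeping track of the resulting $\theta_i,\theta_i^*,\varphi_i$ precisely enough to pin down the $E$- and $O$-parameters and the automorphism twist. Alternatively, the whole argument can be regarded as the image of the proof of Theorem~\ref{thm:t1+t3<=n<t2(odd)} under the relabelling $x_2\leftrightarrow x_3$, which sends the hypothesis $t_1+t_3\le n<t_2$ to $t_1+t_2\le n<t_3$, the active coordinate pair $(x_1,x_3)$ to $(x_1,x_2)$, and the conclusions to the ones stated here after the induced relabelling of the $E$- and $O$-parameters and twists.
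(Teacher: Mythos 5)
Your proposal takes essentially the same route as the paper: the paper disposes of Theorem \ref{thm:t1+t2<=n<t3(odd)} with the one-line remark ``By similar arguments as in the proof of Theorem \ref{thm:t1+t3<=n<t2(odd)} we have the following results,'' and what you have written out is precisely what those similar arguments amount to — the $x_2\leftrightarrow x_3$ relabelling of Propositions~\ref{prop:t1<=n<t1+t2(odd)},~\ref{prop:t3<=n<t2+t3(odd)} and Lemmas~\ref{lem:basis:t1<=n<t1+t2(odd)},~\ref{lem:basis:n<t2+t3(odd)}, followed by the same identification of the irreducible constituents via Propositions~\ref{prop:Ed},~\ref{prop:Od} and Theorems~\ref{thm:irr_E},~\ref{thm:irr_O},~\ref{thm:onto2_E},~\ref{thm:onto2_O}. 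One transcription slip to fix: the range you give for the first proposition-analogue (the family built from $[x_1]^n_j[x_3]^{n-t_1}_{n-h-j}[x_2]^i_h$) should be $t_1\leq n<t_1+t_3$ rather than $t_1\leq n<t_1+t_2$, and the corresponding nonvanishing condition is $n-t_1<t_3$, not $n-t_1<t_2$ — under the $x_2\leftrightarrow x_3$ swap the bound $t_2$ in Proposition~\ref{prop:t1<=n<t1+t2(odd)} goes to $t_3$; this is clearly what you intend, as your closing paragraph correctly states that the relabelling carries $t_1+t_3\le n<t_2$ to $t_1+t_2\le n<t_3$, and since $n<t_3\leq t_1+t_3$ the corrected range covers the case at hand.
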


\begin{thm}\label{thm:t2+t3<=n<t1(odd)}
Suppose that $n$ is an odd integer with $t_2+t_3\leq n<t_1$. Then the following hold:
\begin{enumerate}
\item The $\BI$-module $\M_n(x_2)\cap \M_n(x_3)$ is isomorphic to a direct sum of two copies of 
\begin{gather}\label{t2+t3<=n<t1(odd)-1}
E_{n-t_2-t_3}
\textstyle(
k_1+k_3+\frac{n+1}{2},
-k_1-k_2-\frac{n+1}{2},
\frac{n+1}{2}
)^{((-1,-1),(1\,3))}.
\end{gather}

\item The $\BI$-modules $\M_n(x_3)/\M_n(x_2)\cap \M_n(x_3)$ and $\M_n/\M_n(x_2)$ are isomorphic to a direct sum of two copies of 
\begin{gather}\label{t2+t3<=n<t1(odd)-2}
O_{t_2-1}(k_1,-k_1-k_2-k_3-n-1,k_3)^{(1\,3)}.
\end{gather}

\item The $\BI$-modules $\M_n(x_2)/\M_n(x_2)\cap \M_n(x_3)$ and $\M_n/\M_n(x_3)$ are isomorphic to a direct sum of two copies of 
\begin{gather}\label{t2+t3<=n<t1(odd)-3}
O_{t_3-1}(k_1,-k_1-k_2-k_3-n-1,k_2)^{(1\,3\,2)}.
\end{gather}
\end{enumerate}
Moreover, if $k_1$ is nonnegative then the $\BI$-modules (\ref{t2+t3<=n<t1(odd)-1})--(\ref{t2+t3<=n<t1(odd)-3}) are irreducible and they are isomorphic to 
\begin{align*}
&E_{n-t_2-t_3}
\textstyle(
\frac{n+1}{2},
k_1+k_2+\frac{n+1}{2},
k_1+k_3+\frac{n+1}{2}
)^{(1,-1)},
\\
&O_{t_2-1}(k_3,-k_1-k_2-k_3-n-1,k_1),
\\
&O_{t_3-1}(k_2,k_1,-k_1-k_2-k_3-n-1),
\end{align*}
respectively.
\end{thm}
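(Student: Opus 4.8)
The plan is to follow the proof of Theorem \ref{thm:t1+t3<=n<t2(odd)}, now with the monomials $x_2^{t_2}$ and $x_3^{t_3}$ playing the roles that $x_1^{t_1}$ and $x_3^{t_3}$ played there, and with the free coordinate being $x_1$ (so that the two relevant generators are $Y$ and $Z$, the two that involve $T_1$ in Theorem \ref{thm:BImodule_Mn}(i)).

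First I would set up the two families of explicit solutions replacing Propositions \ref{prop:t1<=n<t1+t2(odd)} and \ref{prop:t3<=n<t2+t3(odd)}. For $t_2\le n<t_1+t_2$ one writes down polynomials $p_i$ ($0\le i\le n-t_2$) of the same shape as in Proposition \ref{prop:t1<=n<t1+t2(odd)}, supported on the monomials $[x_2]^n_j[x_1]^{n-t_2}_{n-h-j}[x_3]^i_h$ with $j\ge t_2$, with matrices $c_{h,i,j}$ built from $\sigma_1,\sigma_2,\sigma_3$ by the same five-case recipe with the coordinates permuted accordingly; for $t_3\le n<t_1+t_3$ one writes down the analogous family supported on $[x_3]^n_j[x_1]^{n-t_3}_{n-h-j}[x_2]^i_h$ with $j\ge t_3$. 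Both ranges contain the range $t_2+t_3\le n<t_1$ of the theorem. For each family I would verify, exactly as in the proofs of Propositions \ref{prop:t1<=n<t1+t2(odd)} and \ref{prop:t3<=n<t2+t3(odd)}: (i) linear independence over ${\rm Mat}_2(\C)$, by inspecting the coefficient of the leading monomial of $p_i$ and using that the products $\prod m_\ell^{(h)}$ occurring there do not vanish in this parameter range; (ii) the two three-term relations, namely $(Y-\theta_i)p_i=p_{i+1}$ together with $(Z-\theta_i^*)p_i=\varphi_ip_{i-1}$ for the first family and the same with $Y$ and $Z$ interchanged for the second, by extracting the coefficient of a generic monomial in $(Y-\theta_i)p_i$ and $(Z-\theta_i^*)p_i$ from Theorem \ref{thm:BImodule_Mn}(i) and simplifying with $e_\ell^2=1$ and $(-1)^{\lceil(3i-3)/2\rceil}=(-1)^{\lceil i/2\rceil+1}$; (iii) $\D(p_i)=0$, which by (ii) reduces to the direct check $\D(p_0)=0$.

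Next, mirroring Lemmas \ref{lem:basis:t1<=n<t1+t2(odd)} and \ref{lem:basis:n<t2+t3(odd)}, the vectors $p_i\cdot\binom{1}{0}\otimes1$ and $p_i\cdot\binom{0}{1}\otimes1$ coming from these two families furnish: a basis of $\M_n(x_2)\cap\M_n(x_3)$ (with $t_3\le i\le n-t_2$, resp.\ $t_2\le i\le n-t_3$); a basis of each of $\M_n(x_2)/\M_n(x_2)\cap\M_n(x_3)$ and $\M_n/\M_n(x_3)$ (with $0\le i\le t_3-1$, from the first family); and a basis of each of $\M_n(x_3)/\M_n(x_2)\cap\M_n(x_3)$ and $\M_n/\M_n(x_2)$ (with $0\le i\le t_2-1$, from the second family). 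Linear independence is part (i) of the relevant proposition; membership follows from $\D(p_i)=0$ together with the monomial supports; and the dimensions agree by Propositions \ref{prop:dimM(x1)} and \ref{prop:dimM(x12)} together with Theorem \ref{thm:dimM=2(n+1)} --- note that $n<t_1$, so $\M_n(x_1)=\{0\}$ and $\dim\M_n=2(n+1)$. With these bases in hand, each module $W$ in the list splits as $W=V\oplus V'$, the spans of its $\binom{1}{0}$- and $\binom{0}{1}$-parts; by Theorem \ref{thm:BImodule_Mn}(ii) and part (ii) of the relevant proposition, $V$ and $V'$ are isomorphic $\BI$-submodules, and the ladder relations identify each with a standard module --- with Proposition \ref{prop:Ed} (as $n-t_2-t_3$ is a positive odd integer) for $\M_n(x_2)\cap\M_n(x_3)$, and with Proposition \ref{prop:Od} (as $t_2-1$ and $t_3-1$ are even) for the others. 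Matching the ladder pair $\{Y,Z\}$ of part (ii) against the $\{X,Y\}$-ladder of Propositions \ref{prop:Ed} and \ref{prop:Od} by means of Table \ref{pm1-action} produces the twists $((-1,-1),(1\,3))$, $(1\,3)$ and $(1\,3\,2)$ appearing in (\ref{t2+t3<=n<t1(odd)-1})--(\ref{t2+t3<=n<t1(odd)-3}), while reading off $\theta_i,\theta_i^*,\varphi_i$ pins down the parameters; this proves (i)--(iii).

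Finally, assume $k_1\ge0$, so that $t_1=\infty$. Then Theorems \ref{thm:irr_E} and \ref{thm:irr_O} show the three modules (\ref{t2+t3<=n<t1(odd)-1})--(\ref{t2+t3<=n<t1(odd)-3}) are irreducible, and Theorems \ref{thm:onto2_E} and \ref{thm:onto2_O} --- comparing the traces of $X,Y,Z$ and the scalars by which $\kappa+\mu,\lambda+\kappa,\mu+\lambda$ (resp.\ $\kappa,\lambda,\mu$) act, all computed from Theorem \ref{thm:BImodule_Mn}(ii) --- rewrite them in the untwisted forms at the end of the statement. I expect the main obstacle to be the first step: choosing the coefficient matrices $c_{h,i,j}$ and the sign prefactors so that both ladder relations and $\D(p_i)=0$ hold simultaneously in this coordinate ordering, and then correctly identifying the resulting element of $\{\pm1\}^2\rtimes\Sym_3$. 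Once that bookkeeping is done, the remaining steps are a routine transcription of the proof of Theorem \ref{thm:t1+t3<=n<t2(odd)}.
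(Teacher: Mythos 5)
Your proposal is correct and spells out precisely the ``similar arguments'' that the paper invokes without writing out; the paper's own proof of Theorem~\ref{thm:t2+t3<=n<t1(odd)} is just the sentence preceding Theorem~\ref{thm:t1+t2<=n<t3(odd)}, which refers back to the proof of Theorem~\ref{thm:t1+t3<=n<t2(odd)}.  One small caution when you do the bookkeeping: to land directly on the permutation $(1\,3)$ in part~(i), identify $\M_n(x_2)\cap\M_n(x_3)$ via the family with primary variable $x_3$ (the $(Z,Y)$-ladder, range $t_2\le i\le n-t_3$); the $x_2$-primary family's $(Y,Z)$-ladder gives instead the permutation $(1\,3\,2)$ and hence an isomorphic but cyclically relabelled $E_d$-description, so you would need a further application of Theorem~\ref{thm:onto2_E} to match the stated form.
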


\begin{prop}\label{prop:t1<=n<t1+t2(even)}
Suppose that $n$ is an even integer with $t_1\leq n<t_1+t_2$. 
Let 
$$
p_i
=
\sum_{h=0}^i
(-1)^{\frac{h}{2}}
\sum_{j=t_1}^{n-h}
(-1)^{-\frac{j}{2}}
{\left\lfloor \frac{n-i-j}{2}\right\rfloor+\left\lfloor\frac{i-h}{2}\right\rfloor \choose \left\lfloor\frac{i-h}{2}\right\rfloor}
c_{h,i,j}
\otimes 
[x_1]^n_j 
[x_2]^{n-t_1}_{n-h-j}
[x_3]^i_h
$$
for all $i=0,1,\ldots,n-t_1$ where
\begin{align*}
c_{h,i,j}&=
\left\{
\begin{array}{ll}
\sigma_3^j \sigma_1 
\qquad &\hbox{if $h$ is odd and $i$ is odd},
\\
\sigma_3^j
\qquad &\hbox{if $h$ is even and $i$ is even},
\\
\sigma_1
\qquad &\hbox{if $h$ is odd, $i$ is even and $j$ is even},
\\
-\sigma_3
\qquad &\hbox{if $h$ is even, $i$ is odd and $j$ is odd},
\\
\begin{pmatrix}
0 &0
\\
0 &0
\end{pmatrix}
\qquad &\hbox{else}
\end{array}
\right.
\end{align*}
for any integers $h,i,j$. 
Then the following hold:
\begin{enumerate}
\item $\{p_i\}_{i=0}^{n-t_1}$ are linearly independent over ${\rm Mat}_2(\C)$.

\item The following equations hold:
\begin{align}
(X-\theta_i)p_i &=p_{i+1} \qquad (0\leq i\leq n-t_1-1),
\qquad 
(X-\theta_{n-t_1}) p_{n-t_1}=0,
\label{e:t1<=n<t1+t2(even)-1}
\\
(Z-\theta_i^*) p_i &=\varphi_i p_{i-1} \qquad (1\leq i\leq n-t_1),
\qquad 
(Z-\theta_0^*) p_0=0,
\label{e:t1<=n<t1+t2(even)-2}
\end{align}
where
\begin{align*}
\theta_i &=(-1)^i
\textstyle(
k_2+k_3+i+\frac{1}{2}
)
\qquad 
(0\leq i\leq n-t_1),
\\
\theta_i^* &=(-1)^i
\textstyle(
n+k_1+k_2-i+\frac{1}{2}
)
\qquad 
(0\leq i\leq n-t_1),
\\
\varphi_i
&=
\left\{
\begin{array}{ll}
i(i-n-2k_1-1)
\qquad
\hbox{if $i$ is even},
\\
(i+2k_3)(i-n-1)
\qquad
\hbox{if $i$ is odd}
\end{array}
\right.
\qquad 
(1\leq i\leq n-t_1).
\end{align*}

\item $\D(p_i)=0$ for all $i=0,1,\ldots,n-t_1$. 
\end{enumerate}
\end{prop}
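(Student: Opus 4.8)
\end{prop}

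\begin{proof}
The plan is to follow, \emph{mutatis mutandis}, the argument used for Proposition~\ref{prop:t1<=n<t1+t2(odd)} and for Proposition~\ref{prop:n<mint13(even)}, adjusting the parity conventions to the even integer $n$ and the odd shift $t_1$. For (i), fix $i$ with $0\le i\le n-t_1$ and track the $x_3$-degree: every monomial occurring in $p_i$ has the form $x_1^jx_2^{n-h-j}x_3^h$ with $0\le h\le i$, so the $x_3$-degree in $p_i$ is at most $i$, and the monomial $x_1^{n-i}x_3^i$ arises only from the summand with $h=i$ and $j=n-i$. Reading off that summand, the coefficient of $x_1^{n-i}x_3^i$ in $p_i$ equals a nonzero scalar times
$$
\Bigl(\prod_{h=n-i+1}^{n}m^{(h)}_1\Bigr)\Bigl(\prod_{h=1}^{n-t_1}m^{(h)}_2\Bigr)\,c_{i,i,n-i},
$$
where, since $\sigma_3^2=1$, the matrix $c_{i,i,n-i}$ is the identity when $i$ is even and equals $\sigma_3\sigma_1$ when $i$ is odd, and in both cases it is invertible. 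As $i\le n-t_1$, the integer $t_1$ does not lie in $\{n-i+1,\ldots,n\}$, so the first product is nonzero; and as $n<t_1+t_2$, the integer $t_2$ does not lie in $\{1,\ldots,n-t_1\}$, so the second product is nonzero. Hence each $p_i$ carries an invertible coefficient on a monomial absent from every $p_{i'}$ with $i'<i$, which forces $\{p_i\}_{i=0}^{n-t_1}$ to be linearly independent over ${\rm Mat}_2(\C)$.

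For (ii), I would substitute the explicit formulas for the actions of $X$ and $Z$ in Theorem~\ref{thm:BImodule_Mn}(i) into $p_i$ and, for fixed integers $h,i,j$, compute the coefficient of $[x_1]^n_j[x_2]^{n-t_1}_{n-h-j}[x_3]^i_h$: this expresses the coefficient in $(X-\theta_i)p_i$ as a three-term combination of $c_{h-1,i,j}$, $c_{h+1,i,j}$, $c_{h,i,j}$ (the shifts in $h$ coming from the terms $x_3T_2$ and $x_2T_3$ in $X$), and the coefficient in $(Z-\theta_i^*)p_i$ as a three-term combination of $c_{h,i,j-1}$, $c_{h,i,j+1}$, $c_{h,i,j}$ (the shifts in $j$ coming from $x_2T_1$ and $x_1T_2$ in $Z$), with weights built from the numbers $m^{(\cdot)}_1,m^{(\cdot)}_2,m^{(\cdot)}_3$, the scalars $\theta_i,\theta_i^*$, and the binomial factors in the definition of $p_i$. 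The crux is then a finite parity-case verification --- splitting on the parities of $h$, $i$, $j$, and using $\sigma_1^2=\sigma_2^2=\sigma_3^2=1$ and the anticommutation of $\sigma_1,\sigma_2,\sigma_3$ --- that, with the stated $\theta_i,\theta_i^*,\varphi_i$, each three-term sum collapses to the single term equal to the corresponding coefficient of $p_{i+1}$, respectively of $\varphi_ip_{i-1}$; the endpoint identities $(X-\theta_{n-t_1})p_{n-t_1}=0$ and $(Z-\theta_0^*)p_0=0$ then follow because the relevant binomial coefficient or $c$-symbol vanishes. This bookkeeping is where essentially all of the work lies, and I expect it to be the main obstacle.

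For (iii), I would verify $\D(p_0)=0$ directly, just as in Propositions~\ref{prop:t1<=n<t1+t2(odd)} and~\ref{prop:n<mint13(even)}: the polynomial $p_0=\sum_{j=t_1}^{n}(-1)^{-j/2}\sigma_3^{j}\otimes[x_1]^n_j[x_2]^{n-t_1}_{n-j}$ does not involve $x_3$, so $T_3(p_0)=0$, and the $e_1\otimes T_1$ and $e_2\otimes T_2$ contributions telescope and cancel. Then, because Theorem~\ref{thm:BImodule_Mn}(i) guarantees that $X$ maps $\M_n$ into $\M_n$ and every column of $p_0$ lies in $\M_n$, part (ii) shows by induction that every column of $p_{i+1}=(X-\theta_i)p_i$ again lies in $\M_n$; hence $\D(p_i)=0$ for all $i=0,1,\ldots,n-t_1$.
\end{proof}
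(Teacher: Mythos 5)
Your proposal follows exactly the paper's argument: for (i) the same leading-term isolation of the coefficient of $x_1^{n-i}x_3^{i}$ (and your $\sigma_3\sigma_1$ is indeed $(-1)^{\frac12}\sigma_2$, matching the paper's matrix), for (ii) the same three-term recursions in $h$ for $X$ and in $j$ for $Z$ obtained from the explicit formulas in Theorem~\ref{thm:BImodule_Mn}(i), and for (iii) the same verification of $\D(p_0)=0$ propagated forward through the recursion. You leave the parity-case verification in (ii) as ``straightforward to check,'' but so does the paper, so there is no gap relative to the level of detail in the source.
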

\begin{proof}  
(i): Let $i$ be an integer with $0\leq i\leq n-t_1$.
By construction the coefficient of $x_3^h$ in $p_i$ is zero for all integers $h$ with $i<h\leq n-t_1$. Observe that the coefficient of $x_1^{n-i}x_3^i$ in $p_i$ is 
\begin{gather}\label{coeff:t1<=n<t1+t2(even)}
(-1)^{i-\frac{n}{2}}
\prod_{h=n-i+1}^n m_1^{(h)}
\prod_{h=1}^{n-t_1} m_2^{(h)}
\times 
\left\{
\begin{array}{ll}
(-1)^\frac{1}{2}
\sigma_2
\qquad 
&\hbox{if $i$ is odd},
\\
\begin{pmatrix}
1 &0
\\
0 &1
\end{pmatrix} 
\qquad 
&\hbox{if $i$ is even}.
\end{array}
\right.
\end{gather}
Since $i\leq n-t_1$ the scalar $\prod\limits_{h=n-i+1}^n m_1^{(h)}$ is nonzero. 
Since $n<t_1+t_2$ the scalar $\prod\limits_{h=1}^{n-t_1} m_2^{(h)}$ is nonzero. 
Hence the matrix (\ref{coeff:t1<=n<t1+t2(even)}) is nonsingular. By the above comments the part (i) follows.

(ii): 
Let $i$ be a nonnegative integer and let $j$ be an integer with $t_1\leq j\leq n-i-1$. 
Using Theorem \ref{thm:BImodule_Mn}(i) yields that the coefficient of 
$[x_1]^n_j [x_2]^{n-t_1}_{n-i-j-1} [x_3]^{i+1}_{i+1}$ in $(X-\theta_i)p_i$ is equal to 
$$
(-1)^{\frac{i+j+1}{2}}\sigma_1 c_{i,i,j}=(-1)^{\frac{i-j+1}{2}}c_{i+1,i+1,j}.
$$
Now let $h,i,j$ denote three integers with $0\leq h\leq i\leq n-t_1$ and $t_1\leq j\leq n-h$. Using Theorem \ref{thm:BImodule_Mn}(i) yields that the coefficient of 
$[x_1]^n_j [x_2]^{n-t_1}_{n-h-j} [x_3]^i_h$ in $(X-\theta_i)p_i$ is equal to $(-1)^{\frac{h+j}{2}}$ times the sum of 
\begin{align*}
&
m_3^{(h)}
{\left\lfloor \frac{n-i-j}{2}\right\rfloor+\left\lfloor\frac{i-h+1}{2}\right\rfloor \choose \left\lfloor\frac{i-h+1}{2}\right\rfloor}
 \sigma_1 c_{h-1,i,j},
\\
&
m_2^{(n-h-j)}
{\left\lfloor \frac{n-i-j}{2}\right\rfloor+\left\lfloor\frac{i-h-1}{2}\right\rfloor \choose \left\lfloor\frac{i-h-1}{2}\right\rfloor}
 \sigma_1 c_{h+1,i,j},
\\
&
{\textstyle(
(-1)^h k_3
+
(-1)^{h+j} k_2
-
(-1)^j\theta_i
+
\frac{1}{2}
)}
{\left\lfloor \frac{n-i-j}{2}\right\rfloor+\left\lfloor\frac{i-h}{2}\right\rfloor \choose \left\lfloor\frac{i-h}{2}\right\rfloor}
c_{h,i,j}.
\end{align*}
It is straightforward to verify that the sum of the above three terms is equal to 
$$
(-1)^j
m_3^{(i+1)}
{\left\lfloor \frac{n-i-j-1}{2}\right\rfloor+\left\lfloor\frac{i-h+1}{2}\right\rfloor \choose \left\lfloor\frac{i-h+1}{2}\right\rfloor}
c_{h,i+1,j}. 
$$
By the above comments the equations given in (\ref{e:t1<=n<t1+t2(even)-1}) follow.

Using Theorem \ref{thm:BImodule_Mn}(i) yields that the coefficient of 
$[x_1]^n_j [x_2]^{n-t_1}_{n-h-j} [x_3]^i_h$ in $(Z-\theta_i^*)p_i$ is equal to $(-1)^{-\frac{h+j}{2}}$ times the sum of 
\begin{align*}
&
m_1^{(j)}
{\left\lfloor \frac{n-i-j+1}{2}\right\rfloor+\left\lfloor\frac{i-h}{2}\right\rfloor \choose \left\lfloor\frac{i-h}{2}\right\rfloor}
 \sigma_3 c_{h,i,j-1},
\\
&
m_2^{(n-h-j)}
{\left\lfloor \frac{n-i-j-1}{2}\right\rfloor+\left\lfloor\frac{i-h}{2}\right\rfloor \choose \left\lfloor\frac{i-h}{2}\right\rfloor}
 \sigma_3 c_{h,i,j+1},
\\
&
{\textstyle(
(-1)^j k_1
+
(-1)^{h+j} k_2
-
(-1)^h
\theta_i^*
+
\frac{1}{2}
)}
{\left\lfloor \frac{n-i-j}{2}\right\rfloor+\left\lfloor\frac{i-h}{2}\right\rfloor \choose \left\lfloor\frac{i-h}{2}\right\rfloor}
c_{h,i,j}.
\end{align*}
It is straightforward to verify that the sum of the above three terms is equal to $(-1)^h$ times 
$$
\left\{
\begin{array}{ll}
\begin{pmatrix}
0 &0
\\
0 &0
\end{pmatrix}
\qquad &\hbox{if $h=i$},
\\
(i-n-2k_1-1)
\displaystyle{\left\lfloor \frac{n-i-j+1}{2}\right\rfloor+\left\lfloor\frac{i-h-1}{2}\right\rfloor \choose \left\lfloor\frac{i-h-1}{2}\right\rfloor}
c_{h,i-1,j}
\qquad &\hbox{if $h<i$ and $i$ is even},
\\
(i-n-1)
\displaystyle{\left\lfloor \frac{n-i-j+1}{2}\right\rfloor+\left\lfloor\frac{i-h-1}{2}\right\rfloor \choose \left\lfloor\frac{i-h-1}{2}\right\rfloor}
c_{h,i-1,j}
\qquad &\hbox{if $h<i$ and $i$ is odd}.
\end{array}
\right.
$$
By the above comments the equations given in (\ref{e:t1<=n<t1+t2(even)-2}) follow.

(iii): It is routine to verify that $\D(p_0)=0$. Combined with (ii) the statement (iii) follows.
\end{proof}

\begin{lem}\label{lem:basis:t1<=n<t1+t2(even)}
Suppose that $n$ is an even integer with $t_1+t_3\leq n<t_2$. 
Let $\{p_i\}_{i=0}^{n-t_1}$ be as in Proposition \ref{prop:t1<=n<t1+t2(even)}. Then the following hold:
\begin{enumerate}
\item $\M_n(x_1)\cap \M_n(x_3)$ has the basis 
\begin{align}
p_i\cdot 
\begin{pmatrix}
1
\\
0
\end{pmatrix}
\otimes
1 
\qquad 
(t_3\leq i\leq n-t_1), 
\label{M13basis:n<t1+t2(even)-1}
\\ 
p_i\cdot 
\begin{pmatrix}
0
\\
1
\end{pmatrix}
\otimes
1 
\qquad 
(t_3\leq i\leq n-t_1).
\label{M13basis:n<t1+t2(even)-2}
\end{align}

\item $\M_n(x_1)/\M_n(x_1)\cap \M_n(x_3)$ has the basis
\begin{align}
p_i\cdot 
\begin{pmatrix}
1
\\
0
\end{pmatrix}
\otimes
1 
+
\M_n(x_1)\cap \M_n(x_3)
\qquad 
(0\leq i\leq t_3-1), 
\label{M1/M13basis:n<t1+t2(even)-1}
\\ 
p_i\cdot 
\begin{pmatrix}
0
\\
1
\end{pmatrix}
\otimes
1 
+
\M_n(x_1)\cap \M_n(x_3)
\qquad 
(0\leq i\leq t_3-1).
\label{M1/M13basis:n<t1+t2(even)-2}
\end{align}

\item $\M_n/\M_n(x_3)$ has the basis
\begin{align}
p_i\cdot 
\begin{pmatrix}
1
\\
0
\end{pmatrix}
\otimes
1 
+
\M_n(x_3)
\qquad 
(0\leq i\leq t_3-1), 
\label{M/M3basis:n<t1+t2(even)-1}
\\ 
p_i\cdot 
\begin{pmatrix}
0
\\
1
\end{pmatrix}
\otimes
1 
+
\M_n(x_3)
\qquad 
(0\leq i\leq t_3-1).
\label{M/M3basis:n<t1+t2(even)-2}
\end{align}
\end{enumerate}
\end{lem}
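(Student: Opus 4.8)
The plan is to argue along the same lines as the proof of Lemma~\ref{lem:basis:t1<=n<t1+t2(odd)}, with Proposition~\ref{prop:t1<=n<t1+t2(even)} playing the role that Proposition~\ref{prop:t1<=n<t1+t2(odd)} plays there, and with the dimension counts supplied by Propositions~\ref{prop:dimM(x1)}, \ref{prop:dimM(x12)} and Theorem~\ref{thm:dimM=2(n+1)}. All three parts have the same shape: exhibit the right number of linearly independent elements (or cosets), check that they lie in the claimed space, and compare with a known dimension.

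For part~(i): the linear independence of (\ref{M13basis:n<t1+t2(even)-1}), (\ref{M13basis:n<t1+t2(even)-2}) is immediate from Proposition~\ref{prop:t1<=n<t1+t2(even)}(i). Next I would read off, from the explicit formula for $p_i$ in Proposition~\ref{prop:t1<=n<t1+t2(even)}, that every monomial occurring in $p_i$ carries $x_1$ to a power $j\geq t_1$ (the sum over $j$ starts at $t_1$, and $[x_1]^n_j=0$ for $j<t_1$ anyway, as $m_1^{(t_1)}=0$ by the definition of $t_1$), and that, when $i\geq t_3$, the factor $[x_3]^i_h$ vanishes for every $h<t_3$ because $m_3^{(t_3)}=0$ divides $\prod_{h'=h+1}^{i}m_3^{(h')}$; hence for $t_3\le i\le n-t_1$ the polynomial $p_i$ lies in $\C^2\otimes\bigl(\bigoplus_{j\ge t_1}x_1^j\cdot\R[x_2,x_3]_{n-j}\ \cap\ \bigoplus_{h\ge t_3}x_3^h\cdot\R[x_1,x_2]_{n-h}\bigr)$. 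Together with $\D(p_i)=0$ from Proposition~\ref{prop:t1<=n<t1+t2(even)}(iii), this puts (\ref{M13basis:n<t1+t2(even)-1}), (\ref{M13basis:n<t1+t2(even)-2}) inside $\M_n(x_1)\cap\M_n(x_3)$; since there are $2(n-t_1-t_3+1)$ of them and $\dim(\M_n(x_1)\cap\M_n(x_3))=2(n-t_1-t_3+1)$ by Proposition~\ref{prop:dimM(x12)}(iii), they form a basis.

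For parts~(ii) and~(iii): Proposition~\ref{prop:t1<=n<t1+t2(even)}(iii) together with the $x_1$-degree observation shows $p_i\in\M_n(x_1)$ for all $0\le i\le n-t_1$, so the listed cosets make sense. The relevant dimensions are $\dim\bigl(\M_n(x_1)/(\M_n(x_1)\cap\M_n(x_3))\bigr)=2(n-t_1+1)-2(n-t_1-t_3+1)=2t_3$ by Propositions~\ref{prop:dimM(x1)}(i) and \ref{prop:dimM(x12)}(iii), and $\dim\bigl(\M_n/\M_n(x_3)\bigr)=2(n+1)-2(n-t_3+1)=2t_3$ by Theorem~\ref{thm:dimM=2(n+1)} (applicable since $n<t_2\le\max\{t_1,t_2,t_3\}$) and Proposition~\ref{prop:dimM(x1)}(iii); in each case the list contains exactly $2t_3$ cosets. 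For their linear independence I would use that for $i\le t_3-1$ every monomial of $p_i$ has $x_3$-degree $h\le i<t_3$: if a $\C$-linear combination of (\ref{M/M3basis:n<t1+t2(even)-1}), (\ref{M/M3basis:n<t1+t2(even)-2}) (resp. of (\ref{M1/M13basis:n<t1+t2(even)-1}), (\ref{M1/M13basis:n<t1+t2(even)-2})) lies in $\M_n(x_3)$, then each of its monomials would have $x_3$-degree both $<t_3$ and $\ge t_3$, so the combination is the zero polynomial, whence Proposition~\ref{prop:t1<=n<t1+t2(even)}(i) gives that all coefficients vanish.

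The only point needing care is the bookkeeping around the vanishing of $[x_1]^n_j$ and $[x_3]^i_h$: one must check that the cutoffs $m_1^{(t_1)}=0$ and $m_3^{(t_3)}=0$ confine $p_i$ to exactly the homogeneous component of $\C^2\otimes\R[x_1,x_2,x_3]_n$ required by each item. Once that is in place, everything reduces to a dimension count and no computation beyond what Proposition~\ref{prop:t1<=n<t1+t2(even)} already provides is needed.
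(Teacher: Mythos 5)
Your proof is correct and follows essentially the same strategy as the paper's: membership plus a count of the relevant dimensions via Propositions~\ref{prop:dimM(x1)}, \ref{prop:dimM(x12)} and Theorem~\ref{thm:dimM=2(n+1)}, with linear independence traced back to Proposition~\ref{prop:t1<=n<t1+t2(even)}(i). You spell out more explicitly than the paper does why the $p_i$ with $i\geq t_3$ land in the intersection space and why the cosets for $i<t_3$ are independent (the $x_3$-degree cutoff forced by $m_3^{(t_3)}=0$); these details are glossed over in the paper's proof but are exactly the right ones.
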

\begin{proof}
(i): The linear independence of (\ref{M13basis:n<t1+t2(even)-1}) and (\ref{M13basis:n<t1+t2(even)-2}) follows from Proposition \ref{prop:t1<=n<t1+t2(even)}(i). Since (\ref{M13basis:n<t1+t2(even)-1}) and (\ref{M13basis:n<t1+t2(even)-2}) are in 
$
\C^2\otimes\left(\bigoplus\limits_{i=t_1}^n x_1^i\cdot \R[x_2,x_3]_{n-i}
\cap 
\bigoplus\limits_{i=t_3}^n x_3^i\cdot \R[x_1,x_2]_{n-i}
\right)$,  
it follows from Proposition \ref{prop:t1<=n<t1+t2(even)}(iii) that (\ref{M13basis:n<t1+t2(even)-1}) and (\ref{M13basis:n<t1+t2(even)-2}) are in $\M_n(x_1)\cap \M_n(x_3)$. Combined with Proposition \ref{prop:dimM(x12)}(iii) the statement (i) follows.

(ii): The linear independence of (\ref{M1/M13basis:n<t1+t2(even)-1}) and (\ref{M1/M13basis:n<t1+t2(even)-2}) follows from Proposition \ref{prop:t1<=n<t1+t2(even)}(i). 
By Proposition \ref{prop:t1<=n<t1+t2(even)}(iii) the cosets (\ref{M1/M13basis:n<t1+t2(even)-1}) and (\ref{M1/M13basis:n<t1+t2(even)-2}) are in 
$
\M_n(x_1)/\M_n(x_1)\cap \M_n(x_3).
$  
By Propositions \ref{prop:dimM(x1)}(i) and \ref{prop:dimM(x12)}(iii) the dimension of $\M_n(x_1)/\M_n(x_1)\cap \M_n(x_3)$ is $2t_3$. The statement (ii) follows.

(iii): The linear independence of (\ref{M/M3basis:n<t1+t2(even)-1}) and (\ref{M/M3basis:n<t1+t2(even)-2}) follows from Proposition \ref{prop:t1<=n<t1+t2(even)}(i). By Proposition \ref{prop:t1<=n<t1+t2(even)}(iii) the cosets (\ref{M/M3basis:n<t1+t2(even)-1}) and (\ref{M/M3basis:n<t1+t2(even)-2}) are in  $\M_n/\M_n(x_3)$. By Theorem \ref{thm:dimM=2(n+1)} and Proposition \ref{prop:dimM(x1)}(iii) 
the dimension of $\M_n/\M_n(x_3)$ is $2t_3$.
The statement (iii) follows.
\end{proof}

\begin{prop}\label{prop:t3<=n<t2+t3(even)}
Suppose that $n$ is an even integer with $t_3\leq n<t_2+t_3$. 
Let 
$$
p_i
=
\sum_{h=0}^i
(-1)^{-\frac{h}{2}}
\sum_{j=t_3}^{n-h}
(-1)^{\frac{j}{2}}
{\left\lfloor \frac{n-i-j}{2}\right\rfloor+\left\lfloor\frac{i-h}{2}\right\rfloor \choose \left\lfloor\frac{i-h}{2}\right\rfloor}
c_{h,i,j}
\otimes
[x_3]^n_j[x_1]^i_h[x_2]^{n-t_3}_{n-h-j}
$$
for all $i=0,1,\ldots,n-t_3$ where
\begin{align*}
c_{h,i,j}&=
\left\{
\begin{array}{ll}
\sigma_1^j \sigma_3
\qquad &\hbox{if $h$ is odd and $i$ is odd},
\\
\sigma_1^j
\qquad &\hbox{if $h$ is even and $i$ is even},
\\
\sigma_3
\qquad &\hbox{if $h$ is odd, $i$ is even and $j$ is even},
\\
-\sigma_1
\qquad &\hbox{if $h$ is even, $i$ is odd and $j$ is odd},
\\
\begin{pmatrix}
0 &0
\\
0 &0
\end{pmatrix}
\qquad &\hbox{else}
\end{array}
\right.
\end{align*}
for any integers $h,i,j$.
Then the following hold:
\begin{enumerate}
\item $\{p_i\}_{i=0}^{n-t_3}$ are linearly independent over ${\rm Mat}_2(\C)$.

\item The following equations hold:
\begin{align}
(Z-\theta_i)p_i &=p_{i+1} \qquad (0\leq i\leq n-t_3-1),
\qquad 
(Z-\theta_{n-t_3}) p_{n-t_3}=0,
\label{e:t3<=n<t2+t3(even)-1}
\\
(X-\theta_i^*) p_i &=\varphi_i p_{i-1} \qquad (1\leq i\leq n-t_3),
\qquad 
(X-\theta_0^*) p_0=0,
\label{e:t3<=n<t2+t3(even)-2}
\end{align}
where
\begin{align*}
\theta_i &=(-1)^i
\textstyle(
k_1+k_2+i+\frac{1}{2}
)
\qquad 
(0\leq i\leq n-t_3),
\\
\theta_i^* &=
(-1)^i
\textstyle(
n+k_2+k_3-i+\frac{1}{2}
)
\qquad 
(0\leq i\leq n-t_3),
\\
\varphi_i
&=
\left\{
\begin{array}{ll}
i
(i-n-2k_3-1)
\qquad
\hbox{if $i$ is even},
\\
(i+2 k_1)(i-n-1)
\qquad
\hbox{if $i$ is odd}
\end{array}
\right.
\qquad 
(1\leq i\leq n-t_3).
\end{align*}

\item $\D(p_i)=0$ for all $i=0,1,\ldots,n-t_3$. 
\end{enumerate}
\end{prop}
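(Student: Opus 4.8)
The plan is to follow the three-step template used in the proofs of Propositions \ref{prop:t1<=n<t1+t2(odd)}, \ref{prop:t3<=n<t2+t3(odd)} and \ref{prop:t1<=n<t1+t2(even)}, adjusting only the parity bookkeeping to the present even case.

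For part (i), I would first note that, by construction, the coefficient of $x_1^h$ in $p_i$ vanishes whenever $i<h\leq n-t_3$; thus the family $\{p_i\}_{i=0}^{n-t_3}$ is triangular with respect to the filtration of $\C^2\otimes\R[x_1,x_2,x_3]_n$ by powers of $x_1$, and it suffices to check that the leading coefficient of $p_i$ — the coefficient of $x_3^{\,n-i}x_1^{\,i}$, which arises solely from the summand with $h=i$ and $j=n-i$ — is invertible in ${\rm Mat}_2(\C)$. That coefficient is a nonzero scalar multiple of $\sigma_1\sigma_3$ when $i$ is odd and of the identity matrix when $i$ is even, the scalar being $\pm\bigl(\prod_{h=n-i+1}^{n}m_3^{(h)}\bigr)\bigl(\prod_{h=1}^{n-t_3}m_2^{(h)}\bigr)$. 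Neither product vanishes: since $i\leq n-t_3$ every index $h$ in the first product satisfies $h>t_3$, so $m_3^{(h)}\neq 0$; and since $n<t_2+t_3$, i.e.\ $n-t_3<t_2$, every index $h$ in the second product satisfies $h<t_2$, so $m_2^{(h)}\neq 0$. Hence each leading coefficient is invertible and the $p_i$ are linearly independent over ${\rm Mat}_2(\C)$.

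For part (ii) — which I expect to be the main obstacle — I would extend the operators $X$ and $Z$ of Theorem \ref{thm:BImodule_Mn}(i) formally to all of $\C^2\otimes\R[x_1,x_2,x_3]$ and then compare, coefficient by coefficient in the spanning family $\{[x_3]^n_j[x_1]^i_h[x_2]^{n-t_3}_{n-h-j}\}$, the two sides of (\ref{e:t3<=n<t2+t3(even)-1}) and of (\ref{e:t3<=n<t2+t3(even)-2}). Evaluating the explicit formula for $Z$ (resp.\ for $X$) on $p_i$ writes the coefficient of a fixed such monomial as the sum of three contributions: a term that raises the $x_1$-degree (resp.\ the $x_3$-degree) and carries the factor $m_1^{(h)}$ (resp.\ $m_3^{(j)}$), a term that lowers the $x_2$-degree and carries the factor $m_2^{(n-h-j)}$, and a diagonal term carrying a linear combination of $k_1,k_2,k_3$ and $\theta_i$ (resp.\ $\theta_i^*$). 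Using $e_a^2=1$, the products among the matrices $c_{h,i,j}$ forced by their defining case list, and Vandermonde-type identities for the binomial coefficients $\binom{\lfloor(n-i-j)/2\rfloor+\lfloor(i-h)/2\rfloor}{\lfloor(i-h)/2\rfloor}$, one checks that this three-term sum collapses to exactly the coefficient occurring in $p_{i+1}$ (resp.\ in $\varphi_i\,p_{i-1}$), with the asserted $\theta_i,\theta_i^*,\varphi_i$; the boundary relations $(Z-\theta_{n-t_3})p_{n-t_3}=0$ and $(X-\theta_0^*)p_0=0$ are the degenerate cases in which the raising (resp.\ lowering) contribution is absent. This is the most laborious step, because the parity case analysis on the triple $(h,i,j)$ interacts with the floor functions in the binomial coefficients and one must simultaneously track the sign prefactors $(-1)^{-h/2}$ and $(-1)^{j/2}$; but it runs in complete parallel to the corresponding verifications in the three earlier propositions.

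For part (iii), I would observe that $p_0=\sum_{j=t_3}^{n}(-1)^{\,j/2}\,c_{0,0,j}\otimes[x_3]^n_j[x_2]^{n-t_3}_{\,n-j}$ involves only $x_2$ and $x_3$, so that $\D(p_0)=\D(x_1)(p_0)$; a short direct computation, exactly as in the proof of Proposition \ref{prop:t3<=n<t2+t3(odd)}, then gives $\D(p_0)=0$, whence $p_0\in\M_n$. Since $Z$ maps $\M_n$ into $\M_n$ by Theorem \ref{thm:BImodule_Mn}(i), an induction on $i$ using $p_{i+1}=(Z-\theta_i)p_i$ from (\ref{e:t3<=n<t2+t3(even)-1}) yields $p_i\in\M_n$, i.e.\ $\D(p_i)=0$, for all $i=0,1,\ldots,n-t_3$.
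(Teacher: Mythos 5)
Your proposal is correct and follows essentially the same three-step template as the paper's own proof: triangularity plus an explicit invertible leading coefficient for (i), a formal coefficient-by-coefficient comparison of the ladder relations via Theorem \ref{thm:BImodule_Mn}(i) for (ii), and a direct check of $\D(p_0)=0$ combined with the recursion from (ii) for (iii). The only cosmetic discrepancy is that you record the odd-$i$ leading coefficient as a scalar multiple of $\sigma_1\sigma_3$ while the paper writes $(-1)^{3/2}\sigma_2$; these are the same matrix.
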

\begin{proof}
(i): Let $i$ be an integer with $0\leq i\leq n-t_3$.
By construction the coefficient of $x_1^h$ in $p_i$ is zero for all integers $h$ with $i<h\leq n-t_3$. Observe that the coefficient of $x_3^{n-i} x_1^i$ in $p_i$ is 
\begin{gather}\label{coeff:t3<=n<t2+t3(even)}
(-1)^{\frac{n}{2}-i}
\prod_{h=n-i+1}^n m_3^{(h)}
\prod_{h=1}^{n-t_3} m_2^{(h)}
\times 
\left\{
\begin{array}{ll}
(-1)^\frac{3}{2}\sigma_2
\qquad 
&\hbox{if $i$ is odd},
\\
\begin{pmatrix}
1 &0
\\
0 &1
\end{pmatrix} 
\qquad 
&\hbox{if $i$ is even}.
\end{array}
\right.
\end{gather}
Since $i\leq n-t_3$ the scalar $\prod\limits_{h=n-i+1}^n m_3^{(h)}$ is nonzero. 
Since $n<t_2+t_3$ the scalar $\prod\limits_{h=1}^{n-t_3} m_2^{(h)}$ is nonzero. 
Hence the matrix (\ref{coeff:t3<=n<t2+t3(even)}) is nonsingular. By the above comments the part (i) follows.

(ii): 
Let $i$ be a nonnegative integer and let $j$ be an integer with $t_3\leq j\leq n-i-1$. 
Using Theorem \ref{thm:BImodule_Mn}(i) yields that the coefficient of 
$[x_3]^n_j  [x_1]^{i+1}_{i+1} [x_2]^{n-t_3}_{n-i-j-1}$ in $(Z-\theta_i)p_i$ is equal to 
$$
(-1)^{-\frac{i+j+1}{2}}\sigma_3 c_{i,i,j}=
(-1)^{-\frac{i-j+1}{2}}c_{i+1,i+1,j}.
$$
Now let $h,i,j$ denote three integers with $0\leq h\leq i\leq n-t_3$ and $t_3\leq j\leq n-h$. Using Theorem \ref{thm:BImodule_Mn}(i) yields that the coefficient of 
$[x_3]^n_j [x_1]^i_h [x_2]^{n-t_3}_{n-h-j} $ in $(Z-\theta_i)p_i$ is equal to $(-1)^{-\frac{h+j}{2}}$ times the sum of 
\begin{align*}
&
m_1^{(h)}
{\left\lfloor \frac{n-i-j}{2}\right\rfloor+\left\lfloor\frac{i-h+1}{2}\right\rfloor \choose \left\lfloor\frac{i-h+1}{2}\right\rfloor}
 \sigma_3 c_{h-1,i,j},
\\
&
m_2^{(n-h-j)}
{\left\lfloor \frac{n-i-j}{2}\right\rfloor+\left\lfloor\frac{i-h-1}{2}\right\rfloor \choose \left\lfloor\frac{i-h-1}{2}\right\rfloor}
 \sigma_3 c_{h+1,i,j},
\\
&
{\textstyle(
(-1)^h k_1
+
(-1)^{h+j} k_2
-
(-1)^j \theta_i
+
\frac{1}{2}
)}
{\left\lfloor \frac{n-i-j}{2}\right\rfloor+\left\lfloor\frac{i-h}{2}\right\rfloor \choose \left\lfloor\frac{i-h}{2}\right\rfloor}
c_{h,i,j}.
\end{align*}
It is straightforward to verify that the sum of the above three terms is equal to 
$$
(-1)^j
m_1^{(i+1)}
{\left\lfloor \frac{n-i-j-1}{2}\right\rfloor+\left\lfloor\frac{i-h+1}{2}\right\rfloor \choose \left\lfloor\frac{i-h+1}{2}\right\rfloor}
c_{h,i+1,j}.
$$
By the above comments the equations given in (\ref{e:t3<=n<t2+t3(even)-1}) follow.

Using Theorem \ref{thm:BImodule_Mn}(i) yields that the coefficient of 
$[x_3]^n_j [x_1]^i_h [x_2]^{n-t_3}_{n-h-j}$ in $(X-\theta_i^*)p_i$ is equal to $(-1)^{\frac{h+j}{2}}$ times the sum of 
\begin{align*}
&
m_3^{(j)}
{\left\lfloor \frac{n-i-j+1}{2}\right\rfloor+\left\lfloor\frac{i-h}{2}\right\rfloor \choose \left\lfloor\frac{i-h}{2}\right\rfloor}
 \sigma_1 c_{h,i,j-1},
\\
&
m_2^{(n-h-j)}
{\left\lfloor \frac{n-i-j-1}{2}\right\rfloor+\left\lfloor\frac{i-h}{2}\right\rfloor \choose \left\lfloor\frac{i-h}{2}\right\rfloor}
 \sigma_1 c_{h,i,j+1},
\\
&
{\textstyle(
(-1)^j k_3
+
(-1)^{h+j} k_2
-
(-1)^h
\theta_i^*
+
\frac{1}{2}
)}
{\left\lfloor \frac{n-i-j}{2}\right\rfloor+\left\lfloor\frac{i-h}{2}\right\rfloor \choose \left\lfloor\frac{i-h}{2}\right\rfloor}
c_{h,i,j}.
\end{align*}
It is straightforward to verify that the sum of the above three terms is equal to $(-1)^h$ times 
$$
\left\{
\begin{array}{ll}
\begin{pmatrix}
0 &0
\\
0 &0
\end{pmatrix}
\qquad &\hbox{if $h=i$},
\\
(i-n-2k_3-1)
\displaystyle{\left\lfloor \frac{n-i-j+1}{2}\right\rfloor+\left\lfloor\frac{i-h-1}{2}\right\rfloor \choose \left\lfloor\frac{i-h-1}{2}\right\rfloor}
c_{h,i-1,j}
\qquad &\hbox{if $h<i$ and $i$ is even},
\\
(i-n-1)
\displaystyle{\left\lfloor \frac{n-i-j+1}{2}\right\rfloor+\left\lfloor\frac{i-h-1}{2}\right\rfloor \choose \left\lfloor\frac{i-h-1}{2}\right\rfloor}
c_{h,i-1,j}
\qquad &\hbox{if $h<i$ and $i$ is odd}.
\end{array}
\right.
$$
By the above comments the equations given in (\ref{e:t3<=n<t2+t3(even)-2}) follow.

(iii): It is routine to verify that $\D(p_0)=0$. Combined with (ii) the statement (iii) follows.
\end{proof}

\begin{lem}\label{lem:basis:n<t2+t3(even)}
Suppose that $n$ is an even integer with $t_1+t_3\leq n<t_2$. 
Let $\{p_i\}_{i=0}^{n-t_3}$ be as in Proposition \ref{prop:t3<=n<t2+t3(even)}. Then the following hold:
\begin{enumerate}
\item $\M_n(x_1)\cap \M_n(x_3)$ has the basis 
\begin{align}
p_i\cdot 
\begin{pmatrix}
1
\\
0
\end{pmatrix}
\otimes
1 
\qquad 
(t_1\leq i\leq n-t_3), 
\label{M13basis:n<t2+t3(even)-1}
\\ 
p_i\cdot 
\begin{pmatrix}
0
\\
1
\end{pmatrix}
\otimes
1 
\qquad 
(t_1\leq i\leq n-t_3).
\label{M13basis:n<t2+t3(even)-2}
\end{align}

\item $\M_n(x_3)/\M_n(x_1)\cap \M_n(x_3)$ has the basis
\begin{align}
p_i\cdot 
\begin{pmatrix}
1
\\
0
\end{pmatrix}
\otimes
1 
+
\M_n(x_1)\cap \M_n(x_3)
\qquad 
(0\leq i\leq t_1-1), 
\label{M3/M13basis:n<t2+t3(even)-1}
\\ 
p_i\cdot 
\begin{pmatrix}
0
\\
1
\end{pmatrix}
\otimes
1 
+
\M_n(x_1)\cap \M_n(x_3)
\qquad 
(0\leq i\leq t_1-1).
\label{M3/M13basis:n<t2+t3(even)-2}
\end{align}

\item $\M_n/\M_n(x_1)$ has the basis
\begin{align}
p_i\cdot 
\begin{pmatrix}
1
\\
0
\end{pmatrix}
\otimes
1 
+
\M_n(x_1)
\qquad 
(0\leq i\leq t_1-1), 
\label{M/M1basis:n<t2+t3(even)-1}
\\ 
p_i\cdot 
\begin{pmatrix}
0
\\
1
\end{pmatrix}
\otimes
1 
+
\M_n(x_1)
\qquad 
(0\leq i\leq t_1-1).
\label{M/M1basis:n<t2+t3(even)-2}
\end{align}
\end{enumerate}
\end{lem}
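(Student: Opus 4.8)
The plan is to run, for each of the three parts, the same argument as in the proof of Lemma~\ref{lem:basis:n<t2+t3(odd)}, now fed by Proposition~\ref{prop:t3<=n<t2+t3(even)} in place of Proposition~\ref{prop:t3<=n<t2+t3(odd)}. Two facts will be used throughout. First, by Proposition~\ref{prop:t3<=n<t2+t3(even)}(i) the family $\{p_i\}_{i=0}^{n-t_3}$ is linearly independent over ${\rm Mat}_2(\C)$; consequently, for any subset $S$ of $\{0,1,\dots,n-t_3\}$ the vectors $p_i\cdot(\binom{1}{0}\otimes 1)$ and $p_i\cdot(\binom{0}{1}\otimes 1)$ with $i\in S$ are linearly independent in $\C^2\otimes\R[x_1,x_2,x_3]_n$, and moreover no nontrivial combination of these with $i$ in one block can equal a combination with $i$ in a disjoint block. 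Second, by Proposition~\ref{prop:t3<=n<t2+t3(even)}(iii) we have $\D(p_i)=0$, so every $p_i$ lies in $\M_n$.

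For part (i) I would first read off from the defining formula that, for $t_1\le i\le n-t_3$, the polynomial $p_i$ lies in $\C^2\otimes\big(\bigoplus_{l=t_1}^{n}x_1^{l}\cdot\R[x_2,x_3]_{n-l}\cap\bigoplus_{l=t_3}^{n}x_3^{l}\cdot\R[x_1,x_2]_{n-l}\big)$: every monomial of $p_i$ carries a factor $[x_3]^n_j$ with $j\ge t_3$, while the factor $[x_1]^i_h$ contains $m_1^{(t_1)}=t_1+2k_1=0$ (here $t_1$ is odd and $t_1=-2k_1$) as one of its factors whenever $h<t_1\le i$, so only the terms of $x_1$-degree $h\ge t_1$ survive. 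Together with $\D(p_i)=0$ this shows the vectors (\ref{M13basis:n<t2+t3(even)-1}) and (\ref{M13basis:n<t2+t3(even)-2}) lie in $\M_n(x_1)\cap\M_n(x_3)$; they are $2(n-t_1-t_3+1)$ in number and linearly independent, and $\dim(\M_n(x_1)\cap\M_n(x_3))=2(n-t_1-t_3+1)$ by Proposition~\ref{prop:dimM(x12)}(iii), so they form a basis.

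For parts (ii) and (iii) the same formula shows that, for $0\le i\le t_1-1$, every monomial of $p_i$ has $x_1$-degree $h\le i<t_1$ and $x_3$-degree $\ge t_3$; combined with $p_i\in\M_n$ this gives $p_i\in\M_n(x_3)$, so the cosets (\ref{M3/M13basis:n<t2+t3(even)-1}), (\ref{M3/M13basis:n<t2+t3(even)-2}) lie in $\M_n(x_3)/\M_n(x_1)\cap\M_n(x_3)$ and the cosets (\ref{M/M1basis:n<t2+t3(even)-1}), (\ref{M/M1basis:n<t2+t3(even)-2}) lie in $\M_n/\M_n(x_1)$. By Propositions~\ref{prop:dimM(x1)}(iii) and~\ref{prop:dimM(x12)}(iii) the first quotient has dimension $2(n-t_3+1)-2(n-t_1-t_3+1)=2t_1$, and by Theorem~\ref{thm:dimM=2(n+1)} (applicable since $t_1,t_3\le n<t_2=\max\{t_1,t_2,t_3\}$) together with Proposition~\ref{prop:dimM(x1)}(i) the second has dimension $2(n+1)-2(n-t_1+1)=2t_1$. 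Each list has exactly $2t_1$ members, and their linear independence in the respective quotient follows from Proposition~\ref{prop:t3<=n<t2+t3(even)}(i) via part~(i): a nontrivial relation modulo $\M_n(x_1)\cap\M_n(x_3)$ (resp. modulo $\M_n(x_1)$) would, once the spanning family of part~(i) is added in, contradict the linear independence of $\{p_i\}_{i=0}^{n-t_3}$ over ${\rm Mat}_2(\C)$.

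The step I expect to require the most care is the ``by construction'' claim in part~(i): one must verify that $m_1^{(t_1)}=0$ (which is precisely where the hypothesis that $2k_1$ is an odd negative integer enters) really does kill $[x_1]^i_h$ for every $h<t_1\le i$, so that $p_i$ genuinely sits in $\bigoplus_{l=t_1}^{n}x_1^l\cdot\R[x_2,x_3]_{n-l}$ for $t_1\le i\le n-t_3$ with no further cancellation needed, and similarly that $p_i$ has $x_1$-degree $<t_1$ for $i<t_1$. Once this is in hand, parts~(i)--(iii) are pure dimension counting, exactly parallel to Lemma~\ref{lem:basis:n<t2+t3(odd)}.
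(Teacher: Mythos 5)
Your proposal follows the same route as the paper: for each part one shows that the listed vectors (or cosets) are linearly independent and of the right number, lie in the right space by the explicit form of $p_i$ together with $\D(p_i)=0$ from Proposition~\ref{prop:t3<=n<t2+t3(even)}(iii), and then one invokes the dimension counts from Propositions~\ref{prop:dimM(x1)}, \ref{prop:dimM(x12)} and Theorem~\ref{thm:dimM=2(n+1)}. Your observation that $m_1^{(t_1)}=0$ is precisely why the terms of $x_1$-degree $h<t_1$ vanish in $p_i$ when $i\geq t_1$ is the correct way to unpack the paper's ``by construction'' claim, and your treatment of parts (i) and (ii) is sound.

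The one place your justification slips is linear independence in part (iii). You argue that a nontrivial relation $\sum_{i=0}^{t_1-1}p_i\cdot v_i\in\M_n(x_1)$ would, ``once the spanning family of part~(i) is added in,'' contradict Proposition~\ref{prop:t3<=n<t2+t3(even)}(i); but the family of part~(i) spans only $\M_n(x_1)\cap\M_n(x_3)$, which is a proper subspace of $\M_n(x_1)$ (their dimensions are $2(n-t_1-t_3+1)$ and $2(n-t_1+1)$, and $t_3>0$), so a relation landing in $\M_n(x_1)$ but outside $\M_n(x_1)\cap\M_n(x_3)$ is not excluded by that argument. The correct, and in fact simpler, argument is by $x_1$-degree: every monomial of $p_i$ with $i<t_1$ has $x_1$-degree $h\leq i<t_1$, whereas every element of $\M_n(x_1)$ lies in $\bigoplus_{l=t_1}^n\C^2\otimes\bigl(x_1^l\cdot\R[x_2,x_3]_{n-l}\bigr)$; hence $\sum_{i<t_1}p_i\cdot v_i\in\M_n(x_1)$ forces $\sum_{i<t_1}p_i\cdot v_i=0$, and then Proposition~\ref{prop:t3<=n<t2+t3(even)}(i) gives all $v_i=0$. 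With that repair the proof matches the paper's.
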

\begin{proof}
(i): The linear independence of (\ref{M13basis:n<t2+t3(even)-1}) and (\ref{M13basis:n<t2+t3(even)-2}) follows from Proposition \ref{prop:t3<=n<t2+t3(even)}(i). Since (\ref{M13basis:n<t2+t3(even)-1}) and (\ref{M13basis:n<t2+t3(even)-2}) are in 
$
\C^2\otimes\left(\bigoplus\limits_{i=t_1}^n x_1^i\cdot \R[x_2,x_3]_{n-i}
\cap 
\bigoplus\limits_{i=t_3}^n x_3^i\cdot \R[x_1,x_2]_{n-i}
\right)$,  
it follows from  Proposition \ref{prop:t3<=n<t2+t3(even)}(iii) that (\ref{M13basis:n<t2+t3(even)-1}) and (\ref{M13basis:n<t2+t3(even)-2}) are in $\M_n(x_1)\cap \M_n(x_3)$. Combined with Proposition \ref{prop:dimM(x12)}(iii) the statement (i) follows.

(ii): The linear independence of (\ref{M3/M13basis:n<t2+t3(even)-1}) and (\ref{M3/M13basis:n<t2+t3(even)-2}) follows from Proposition \ref{prop:t3<=n<t2+t3(even)}(i). 
By Proposition \ref{prop:t3<=n<t2+t3(even)}(iii) the cosets (\ref{M3/M13basis:n<t2+t3(even)-1}) and (\ref{M3/M13basis:n<t2+t3(even)-2}) are in 
$
\M_n(x_3)/\M_n(x_1)\cap \M_n(x_3).
$  
By Propositions \ref{prop:dimM(x1)}(iii) and \ref{prop:dimM(x12)}(iii) the dimension of $\M_n(x_3)/\M_n(x_1)\cap \M_n(x_3)$ is $2t_1$. The statement (ii) follows.

(iii): The linear independence of (\ref{M/M1basis:n<t2+t3(even)-1}) and (\ref{M/M1basis:n<t2+t3(even)-2}) follows from Proposition \ref{prop:t3<=n<t2+t3(even)}(i). By Proposition \ref{prop:t3<=n<t2+t3(even)}(iii) the cosets (\ref{M/M1basis:n<t2+t3(even)-1}) and (\ref{M/M1basis:n<t2+t3(even)-2}) are in  $\M_n/\M_n(x_1)$. By Theorem \ref{thm:dimM=2(n+1)} and Proposition \ref{prop:dimM(x1)}(i) 
the dimension of $\M_n/\M_n(x_1)$ is $2t_1$.
The statement (iii) follows.
\end{proof}

\begin{thm}\label{thm:t1+t3<=n<t2(even)}
Suppose that $n$ is an even integer with $t_1+t_3\leq n<t_2$. Then the following hold:
\begin{enumerate}
\item The $\BI$-module $\M_n(x_1)\cap \M_n(x_3)$ is isomorphic to a direct sum of two copies of 
\begin{gather}\label{BImodule:t1+t3<=n<t2(even)-1}
O_{n-t_1-t_3}
(
\textstyle 
k_1+k_2+\frac{n+1}{2},
-k_2-k_3-\frac{n+1}{2},
-\frac{n+1}{2}
)^{((-1,1),(2\,3))}.
\end{gather}

\item The $\BI$-modules $\M_n(x_1)/\M_n(x_1)\cap \M_n(x_3)$ and $\M_n/\M_n(x_3)$ are isomorphic to a direct sum of two copies of 
\begin{gather}\label{BImodule:t1+t3<=n<t2(even)-2}
O_{t_3-1}(k_2,-k_1-k_2-k_3-n-1,-k_1)^{((1,-1),(2\,3))}.
\end{gather}

\item The $\BI$-modules $\M_n(x_3)/\M_n(x_1)\cap \M_n(x_3)$ and $\M_n/\M_n(x_1)$ are  isomorphic to a direct sum of two copies of 
\begin{gather}\label{BImodule:t1+t3<=n<t2(even)-3}
O_{t_1-1}(k_2,-k_1-k_2-k_3-n-1,-k_3)^{((1,-1),(1\,2\,3))}.
\end{gather}
\end{enumerate}
Moreover, if $k_2$ is nonnegative then the $\BI$-modules (\ref{BImodule:t1+t3<=n<t2(even)-1})--(\ref{BImodule:t1+t3<=n<t2(even)-3}) are irreducible and they are isomorphic to 
\begin{align}
&O_{n-t_1-t_3}
(
\textstyle 
-k_1-k_2-\frac{n+1}{2},
\frac{n+1}{2},
-k_2-k_3-\frac{n+1}{2}
),
\label{BImodule:t1+t3<=n<t2(even)-1'}
\\
&O_{t_3-1}(k_2,k_1,k_1+k_2+k_3+n+1),
\label{BImodule:t1+t3<=n<t2(even)-2'}
\\
&O_{t_1-1}(k_1+k_2+k_3+n+1,k_3,k_2),
\label{BImodule:t1+t3<=n<t2(even)-3'}
\end{align}
respectively.
\end{thm}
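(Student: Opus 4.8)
The plan is to follow the argument of the odd case, Theorem~\ref{thm:t1+t3<=n<t2(odd)}, replacing every odd-case ingredient by its even-case counterpart. Concretely, parts (i) and (ii) will be read off from Proposition~\ref{prop:t1<=n<t1+t2(even)} via the bases provided by Lemma~\ref{lem:basis:t1<=n<t1+t2(even)}, while part (iii) will be read off from Proposition~\ref{prop:t3<=n<t2+t3(even)} via the bases of Lemma~\ref{lem:basis:n<t2+t3(even)}. For each of the five subquotients in the statement I let $V$ and $V'$ be the spans of the two families of basis vectors exhibited in the relevant part of the Lemma (the two families being distinguished only by the constant $\C^2$-factor), so that the subquotient is $V\oplus V'$. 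Because the ladder identities~(\ref{e:t1<=n<t1+t2(even)-1})--(\ref{e:t1<=n<t1+t2(even)-2}) and~(\ref{e:t3<=n<t2+t3(even)-1})--(\ref{e:t3<=n<t2+t3(even)-2}) hold already at the level of ${\rm Mat}_2(\C)$-valued polynomials, appending a fixed $\C^2$-vector on the right yields, for each of the two choices, an isomorphic $\BI$-submodule, so it remains to name that common module.

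To name it, I will use two inputs: the scalars of the central elements $\kappa,\lambda,\mu$, which are those of Theorem~\ref{thm:BImodule_Mn}(ii) (a central element acts by the same scalar on every subquotient of $\M_n$), and the tridiagonal action of $X$ and $Z$ on $\{p_i\}$ given by the cited ladder equations. Since $n$ is even, the $\varphi_i$ appearing there have the shape of those in Proposition~\ref{prop:Od}, so each $V$ is an $O_d$-type module with $d\in\{n-t_1-t_3,\,t_3-1,\,t_1-1\}$ (all even). The only work is to match the triple $(\theta_i,\theta_i^*,\varphi_i)$ with the normal form of Proposition~\ref{prop:Od} after conjugating by the appropriate automorphism of $\BI$ taken from Table~\ref{pm1-action}: the permutation part of the automorphism (a transposition or a $3$-cycle) records that the raising operator is $X$ while the tridiagonal operator is $Z$ rather than $Y$, and the $\{\pm1\}^2$-part absorbs the signs built into $c_{h,i,j}$. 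Equivalently, $Y=\{Z,X\}-\mu$ with $\mu$ scalar, so all three traces of $X,Y,Z$ on the twisted module are computable and Theorem~\ref{thm:onto2_O} pins down the parameters. Finally, the two modules in part (ii) (resp. part (iii)) are described by the same basis formula, hence coincide as $\BI$-modules; this gives the statements for $\M_n(x_1)/\M_n(x_1)\cap\M_n(x_3)$ and $\M_n/\M_n(x_3)$ (resp. for $\M_n(x_3)/\M_n(x_1)\cap\M_n(x_3)$ and $\M_n/\M_n(x_1)$).

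For the ``moreover'' clause, assume $k_2\geq 0$. Irreducibility of each of (\ref{BImodule:t1+t3<=n<t2(even)-1})--(\ref{BImodule:t1+t3<=n<t2(even)-3}) follows from Theorem~\ref{thm:irr_O} (a twist does not affect irreducibility) once one checks that none of $a+b+c,\,a-b-c,\,-a+b-c,\,-a-b+c$ lies in $\{\tfrac{d+1}{2}-i\mid i=2,4,\dots,d\}$; a short computation with the explicit parameters, using $t_1,t_3\geq 1$, $t_1+t_3\leq n$ and $k_2\geq 0$, shows the offending index $i$ always falls outside $\{2,4,\dots,d\}$. Then Theorem~\ref{thm:onto2_O}, applied after accounting for the permutation and sign changes of the traces encoded in Table~\ref{pm1-action}, converts (\ref{BImodule:t1+t3<=n<t2(even)-1})--(\ref{BImodule:t1+t3<=n<t2(even)-3}) into the twist-free forms (\ref{BImodule:t1+t3<=n<t2(even)-1'})--(\ref{BImodule:t1+t3<=n<t2(even)-3'}).

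I expect the main obstacle to be organizational rather than conceptual: for each of the five subquotients one must identify the correct element of $\{\pm1\}^2\rtimes\Sym_3$ conjugating the explicit matrices of Propositions~\ref{prop:t1<=n<t1+t2(even)} and~\ref{prop:t3<=n<t2+t3(even)} into the normal form of Proposition~\ref{prop:Od}, and then check that the resulting parameters are mutually consistent---in particular that the description of $\M_n(x_1)\cap\M_n(x_3)$ obtained from Proposition~\ref{prop:t1<=n<t1+t2(even)} agrees with the one obtained from Proposition~\ref{prop:t3<=n<t2+t3(even)}, as it must since both compute the $\BI$-action on the same space. The parity of $n$ alters the signs in $c_{h,i,j}$ and the sign pattern of the $\theta_i^*$, so the twists here differ from those of the odd case; tracking these signs is the delicate point.
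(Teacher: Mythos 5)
Your plan mirrors the paper's proof of this theorem: parts (i)--(ii) are read from Proposition~\ref{prop:t1<=n<t1+t2(even)}(ii) together with the bases of Lemma~\ref{lem:basis:t1<=n<t1+t2(even)}(i)--(iii), part (iii) from Proposition~\ref{prop:t3<=n<t2+t3(even)}(ii) with Lemma~\ref{lem:basis:n<t2+t3(even)}(ii)--(iii), each subquotient splits as $V\oplus V'$ according to the $\C^2$-factor, the modules are labeled by matching $(\theta_i,\theta_i^*,\varphi_i)$ against the bidiagonal normal form of Proposition~\ref{prop:Od} after an appropriate twist from Table~\ref{pm1-action}, and the ``moreover'' clause uses Theorem~\ref{thm:irr_O} then Theorem~\ref{thm:onto2_O}. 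This is the route the paper takes.

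One caveat about your ``equivalently'' remark (computing the traces of $X,Y,Z$ via $Y=\{Z,X\}-\mu$ and invoking Theorem~\ref{thm:onto2_O} to pin down the parameters): Theorem~\ref{thm:onto2_O} presupposes that the module is irreducible, which is not part of the hypotheses in parts (i)--(iii), so that shortcut cannot be used to establish the twisted $O_d$-labels there. It is only sound in the ``moreover'' clause, where $k_2\geq 0$ is assumed and irreducibility is first obtained from Theorem~\ref{thm:irr_O}. Your main argument --- direct comparison of the matrices from Propositions~\ref{prop:t1<=n<t1+t2(even)} and~\ref{prop:t3<=n<t2+t3(even)} with Proposition~\ref{prop:Od}, which needs no irreducibility --- is the correct one for parts (i)--(iii), so the proof as a whole goes through.
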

\begin{proof}
(i): Let $V$ and $V'$ denote the subspaces of $\M_n$ spanned by (\ref{M13basis:n<t1+t2(even)-1}) and (\ref{M13basis:n<t1+t2(even)-2}), respectively. It follows from Lemma \ref{lem:basis:t1<=n<t1+t2(even)}(i) that 
$
\M_n(x_1)\cap \M_n(x_3)=V\oplus V'$. 
It follows from Theorem \ref{thm:BImodule_Mn}(ii) and Proposition \ref{prop:t1<=n<t1+t2(even)}(ii) that $V$ and $V'$ are two isomorphic $\BI$-submodules of $\M_n$. Using Proposition \ref{prop:Od} yields that both are isomorphic to (\ref{BImodule:t1+t3<=n<t2(even)-1}). 

(ii): 
Let $V$ and $V'$ denote the subspaces of $\M_n(x_1)/\M_n(x_1)\cap \M_n(x_3)$ spanned by (\ref{M1/M13basis:n<t1+t2(even)-1}) and (\ref{M1/M13basis:n<t1+t2(even)-2}), respectively. It follows from Lemma \ref{lem:basis:t1<=n<t1+t2(even)}(ii) that 
$
\M_n(x_1)/\M_n(x_1)\cap \M_n(x_3)=V\oplus V'$. 
It follows from Theorem \ref{thm:BImodule_Mn}(ii) and Proposition \ref{prop:t1<=n<t1+t2(even)}(ii) that $V$ and $V'$ are two isomorphic $\BI$-submodules of $\M_n(x_1)/\M_n(x_1)\cap \M_n(x_3)$. Using Proposition \ref{prop:Od} yields that both are isomorphic to (\ref{BImodule:t1+t3<=n<t2(even)-2}). 

Let $V$ and $V'$ denote the subspaces of $\M_n/\M_n(x_3)$ spanned by (\ref{M/M3basis:n<t1+t2(even)-1}) and (\ref{M/M3basis:n<t1+t2(even)-2}), respectively. It follows from Lemma \ref{lem:basis:t1<=n<t1+t2(even)}(iii) that 
$
\M_n/\M_n(x_3)=V\oplus V'$. 
It follows from Theorem \ref{thm:BImodule_Mn}(ii) and Proposition \ref{prop:t1<=n<t1+t2(even)}(ii) that $V$ and $V'$ are two isomorphic $\BI$-submodules of $\M_n/\M_n(x_3)$. Using Proposition \ref{prop:Od} yields that both are isomorphic to (\ref{BImodule:t1+t3<=n<t2(even)-2}). 

(iii): 
Let $V$ and $V'$ denote the subspaces of $\M_n(x_3)/\M_n(x_1)\cap \M_n(x_3)$ spanned by (\ref{M3/M13basis:n<t2+t3(even)-1}) and (\ref{M3/M13basis:n<t2+t3(even)-2}), respectively. It follows from Lemma \ref{lem:basis:n<t2+t3(even)}(ii) that 
$
\M_n(x_3)/\M_n(x_1)\cap \M_n(x_3)=V\oplus V'$. 
It follows from Theorem \ref{thm:BImodule_Mn}(ii) and Proposition \ref{prop:t3<=n<t2+t3(even)}(ii) that $V$ and $V'$ are two isomorphic $\BI$-submodules of $\M_n(x_3)/\M_n(x_1)\cap \M_n(x_3)$. Using Proposition \ref{prop:Od} yields that both are isomorphic to (\ref{BImodule:t1+t3<=n<t2(even)-3}). 

Let $V$ and $V'$ denote the subspaces of $\M_n/\M_n(x_1)$ spanned by (\ref{M/M1basis:n<t2+t3(even)-1}) and (\ref{M/M1basis:n<t2+t3(even)-2}), respectively. It follows from Lemma \ref{lem:basis:n<t2+t3(even)}(iii) that 
$
\M_n/\M_n(x_1)=V\oplus V'$. 
It follows from Theorem \ref{thm:BImodule_Mn}(ii) and Proposition \ref{prop:t3<=n<t2+t3(even)}(ii) that $V$ and $V'$ are two isomorphic $\BI$-submodules of $\M_n/\M_n(x_1)$. Using Proposition \ref{prop:Od} yields that both are isomorphic to (\ref{BImodule:t1+t3<=n<t2(even)-3}).

Suppose that $k_2$ is nonnegative. Using Theorem \ref{thm:irr_O} yields that the $\BI$-modules (\ref{BImodule:t1+t3<=n<t2(even)-1})--(\ref{BImodule:t1+t3<=n<t2(even)-3}) are irreducible. 
Using Theorem \ref{thm:onto2_O} yields that the $\BI$-modules (\ref{BImodule:t1+t3<=n<t2(even)-1})--(\ref{BImodule:t1+t3<=n<t2(even)-3}) are isomorphic to  (\ref{BImodule:t1+t3<=n<t2(even)-1'})--(\ref{BImodule:t1+t3<=n<t2(even)-3'}), respectively. 
\end{proof}

By similar arguments as in the proof of Theorem \ref{thm:t1+t3<=n<t2(even)} we have the following results:

\begin{thm}\label{thm:t1+t2<=n<t3(even)}
Suppose that $n$ is an even integer with $t_1+t_2\leq n<t_3$. Then the following hold:
\begin{enumerate}
\item The $\BI$-module $\M_n(x_1)\cap \M_n(x_2)$ is isomorphic to a direct sum of two copies of 
\begin{gather}\label{t1+t2<=n<t3(even)-1}
O_{n-t_1-t_2}
(
\textstyle
k_2+k_3+\frac{n+1}{2},
-k_1-k_3-\frac{n+1}{2},
-\frac{n+1}{2}
)^{((-1,1),(1\,2))}.
\end{gather}

\item The $\BI$-modules $\M_n(x_2)/\M_n(x_1)\cap \M_n(x_2)$ and $\M_n/\M_n(x_1)$ are isomorphic to a direct sum of two copies of 
\begin{gather}\label{t1+t2<=n<t3(even)-2}
O_{t_1-1}(k_3,-k_1-k_2-k_3-n-1,-k_2)^{((1,-1),(1\,2))}.
\end{gather}

\item The $\BI$-modules $\M_n(x_1)/\M_n(x_1)\cap \M_n(x_2)$ and $\M_n/\M_n(x_2)$ are isomorphic to a direct sum of two copies of 
\begin{gather}\label{t1+t2<=n<t3(even)-3}
O_{t_2-1}(k_3,-k_1-k_2-k_3-n-1,-k_1)^{(1,-1)}.
\end{gather}
\end{enumerate}
Moreover, if $k_3$ is nonnegative then the $\BI$-modules (\ref{t1+t2<=n<t3(even)-1})--(\ref{t1+t2<=n<t3(even)-3}) are irreducible and they are isomorphic to 
\begin{align*}
&O_{n-t_1-t_2}
\textstyle
(-k_1-k_3-\frac{n+1}{2},
-k_2-k_3-\frac{n+1}{2},
\frac{n+1}{2}),
\\
&O_{t_1-1}(k_1+k_2+k_3+n+1,k_3,k_2),
\\
&O_{t_2-1}(k_3,k_1+k_2+k_3+n+1,k_1),
\end{align*}
respectively.
\end{thm}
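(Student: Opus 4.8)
The plan is to repeat the proof of Theorem \ref{thm:t1+t3<=n<t2(even)} with the indices $2$ and $3$ interchanged throughout: in the multiplicities ($k_2\leftrightarrow k_3$), the thresholds ($t_2\leftrightarrow t_3$), the variables ($x_2\leftrightarrow x_3$), the Clifford generators and Pauli matrices ($e_2\leftrightarrow e_3$, $\sigma_2\leftrightarrow\sigma_3$), and the $\BI$-generators ($Y\leftrightarrow Z$). First I would establish the two analogues of Propositions \ref{prop:t1<=n<t1+t2(even)} and \ref{prop:t3<=n<t2+t3(even)}: for even $n$ with $t_1\leq n<t_1+t_3$ a family $\{p_i\}_{i=0}^{n-t_1}$ of ${\rm Mat}_2(\C)$-valued polynomials supported on the monomials $[x_1]^n_j\,[x_3]^{n-t_1}_{n-h-j}\,[x_2]^i_h$ and satisfying a two-term $(X,Y)$-ladder $(X-\theta_i)p_i=p_{i+1}$, $(Y-\theta_i^*)p_i=\varphi_i p_{i-1}$ together with $\D(p_i)=0$; and for even $n$ with $t_2\leq n<t_2+t_3$ a family $\{q_i\}_{i=0}^{n-t_2}$ supported on $[x_2]^n_j\,[x_1]^i_h\,[x_3]^{n-t_2}_{n-h-j}$ satisfying a $(Y,X)$-ladder together with $\D(q_i)=0$. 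The verification of each is the same coefficient-by-coefficient computation carried out in those proofs, using only $\D=\D(x_1)+e_1\otimes T_1$ and Theorem \ref{thm:BImodule_Mn}(i); nonvanishing of the leading coefficients follows from $n<t_1+t_3$ (resp.\ $n<t_2+t_3$), which keeps the pertinent products of the $m_3^{(h)}$ nonzero, while $m_2^{(t_2)}=0$ forces $[x_2]^i_h=0$ whenever $h<t_2\leq i$ (this is what puts the $p_i$ and $q_i$ with large $i$ inside $\M_n(x_2)$).

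Second I would prove the analogues of Lemmas \ref{lem:basis:t1<=n<t1+t2(even)} and \ref{lem:basis:n<t2+t3(even)}: for even $n$ with $t_1+t_2\leq n<t_3$, the vectors $p_i\cdot\binom{1}{0}\otimes 1$ and $p_i\cdot\binom{0}{1}\otimes 1$ with $t_2\leq i\leq n-t_1$ form a basis of $\M_n(x_1)\cap\M_n(x_2)$, the same vectors with $0\leq i\leq t_2-1$ descend to bases of $\M_n(x_1)/(\M_n(x_1)\cap\M_n(x_2))$ and of $\M_n/\M_n(x_2)$, and symmetrically the $q_i$-vectors with $t_1\leq i\leq n-t_2$ span $\M_n(x_1)\cap\M_n(x_2)$ while those with $0\leq i\leq t_1-1$ descend to bases of $\M_n(x_2)/(\M_n(x_1)\cap\M_n(x_2))$ and of $\M_n/\M_n(x_1)$. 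Linear independence comes from part (i) of the respective proposition, membership from part (iii) together with the support condition just described, and the dimension counts from Propositions \ref{prop:dimM(x1)}, \ref{prop:dimM(x12)} and Theorem \ref{thm:dimM=2(n+1)}.

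Finally, each of the five $\BI$-modules in the statement decomposes as $V\oplus V'$, where $V$ and $V'$ are the $\BI$-submodules spanned by the $\binom{1}{0}$- and $\binom{0}{1}$-vectors (they are submodules by Theorem \ref{thm:BImodule_Mn}(ii) and the ladder relations, and clearly isomorphic to each other). Matching the ladder relations with the normal form of Proposition \ref{prop:Od} and reading off the action of $\kappa,\lambda,\mu$ from Theorem \ref{thm:BImodule_Mn}(ii) identifies each summand with the stated $O_d(a,b,c)$ twisted by the indicated element of $\{\pm1\}^2\rtimes\Sym_3$. When $k_3\geq 0$ the parameters avoid the exceptional set of Theorem \ref{thm:irr_O}, so each module is irreducible, and Theorem \ref{thm:onto2_O} (comparison of the traces of $X,Y,Z$, computed via Table \ref{pm1-action}) removes the twists and produces the displayed untwisted representatives. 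I expect the only real difficulty to be bookkeeping: tracking the signs such as $(-1)^{\lceil h/2\rceil}$ through the ladder computations and, especially, pinning down the precise twists $((-1,1),(1\,2))$, $((1,-1),(1\,2))$ and $(1,-1)$ against Proposition \ref{prop:Od} and Table \ref{pm1-action}; once the twists are correct, everything else is forced. One could instead try to deduce the theorem from Theorem \ref{thm:t1+t3<=n<t2(even)} via the symmetry of the whole setup swapping $x_2\leftrightarrow x_3$ and $k_2\leftrightarrow k_3$, which intertwines the two $\BI$-module structures up to an explicit element of $\{\pm1\}^2\rtimes\Sym_3$, but this still requires making that intertwining explicit, so the direct route above is no more laborious.
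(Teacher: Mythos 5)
Your proposal is correct and matches the paper's own approach, which obtains Theorem \ref{thm:t1+t2<=n<t3(even)} by exactly the $x_2\leftrightarrow x_3$ (hence $k_2\leftrightarrow k_3$, $t_2\leftrightarrow t_3$, $\sigma_2\leftrightarrow\sigma_3$, $Y\leftrightarrow Z$) transcription of Propositions \ref{prop:t1<=n<t1+t2(even)}, \ref{prop:t3<=n<t2+t3(even)} and Lemmas \ref{lem:basis:t1<=n<t1+t2(even)}, \ref{lem:basis:n<t2+t3(even)} that you describe. The one small slip is in your parenthetical about the $q_i$: for that family it is $m_1^{(t_1)}=0$ (killing $[x_1]^i_h$ when $h<t_1\le i$) that places $q_i$ in $\M_n(x_1)$ for $i\ge t_1$, while the factor $[x_2]^n_j$ with $j\ge t_2$ already places every $q_i$ in $\M_n(x_2)$ unconditionally; the vanishing $m_2^{(t_2)}=0$ is the relevant one only for the $p_i$.
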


\begin{thm}\label{thm:t2+t3<=n<t1(even)}
Suppose that $n$ is an even integer with $t_2+t_3\leq n<t_1$. Then the following hold:
\begin{enumerate}
\item The $\BI$-module $\M_n(x_2)\cap \M_n(x_3)$ is isomorphic to a direct sum of two copies of 
\begin{gather}\label{t2+t3<=n<t1(even)-1}
O_{n-t_2-t_3}
\textstyle
(
k_1+k_3+\frac{n+1}{2},
-k_1-k_2-\frac{n+1}{2},
-\frac{n+1}{2}
)^{((-1,1),(1\,3))}.
\end{gather}

\item The $\BI$-modules $\M_n(x_3)/\M_n(x_2)\cap \M_n(x_3)$ and $\M_n/\M_n(x_2)$ are isomorphic to a direct sum of two copies of 
\begin{gather}\label{t2+t3<=n<t1(even)-2}
O_{t_2-1}(k_1,-k_1-k_2-k_3-n-1,-k_3)^{((1,-1),(1\,3))}.
\end{gather}

\item The $\BI$-modules $\M_n(x_2)/\M_n(x_2)\cap \M_n(x_3)$ and $\M_n/\M_n(x_3)$ are isomorphic to a direct sum of two copies of 
\begin{gather}\label{t2+t3<=n<t1(even)-3}
O_{t_3-1}(k_1,-k_1-k_2-k_3-n-1,-k_2)^{((1,-1),(1\,3\,2))}.
\end{gather}
\end{enumerate}
Moreover, if $k_1$ is nonnegative then the $\BI$-modules (\ref{t2+t3<=n<t1(even)-1})--(\ref{t2+t3<=n<t1(even)-3}) are irreducible and they are isomorphic to 
\begin{align*}
&O_{n-t_2-t_3}
\textstyle
(
\frac{n+1}{2},
-k_1-k_2-\frac{n+1}{2},
-k_1-k_3-\frac{n+1}{2}
),
\\
&O_{t_2-1}(k_3,k_1+k_2+k_3+n+1,k_1),
\\
&O_{t_3-1}(k_2,k_1,k_1+k_2+k_3+n+1),
\end{align*}
respectively.
\end{thm}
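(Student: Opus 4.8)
The plan is to run the same three-step argument used for Theorem~\ref{thm:t1+t3<=n<t2(even)}, with the three coordinates cyclically permuted; indeed, Theorem~\ref{thm:t2+t3<=n<t1(even)} is essentially the image of Theorem~\ref{thm:t1+t3<=n<t2(even)} under the relabelling of $(x_1,x_2,x_3)$ that carries the pair $(x_1,x_3)$ onto $(x_2,x_3)$, together with the matching relabelling of $e_1,e_2,e_3$ and of $k_1,k_2,k_3$ (hence of $t_1,t_2,t_3$), which by the cyclic symmetry of the formulas in Theorem~\ref{thm:BImodule_Mn} corresponds on $\BI$ to one of the automorphisms in Table~\ref{pm1-action}. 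Carrying this out directly, I would first prove two propositions analogous to Propositions~\ref{prop:t1<=n<t1+t2(even)} and~\ref{prop:t3<=n<t2+t3(even)}: a family $\{p_i\}$ obtained by peeling off successive powers of $x_2$, carrying a $Y$-ladder $(Y-\theta_i)p_i=p_{i+1}$ together with a complementary ladder, and a second family obtained by peeling off powers of $x_3$ carrying a $Z$-ladder and a complementary ladder, all of whose members are annihilated by $\D$. The eigenvalue lists $\theta_i,\theta_i^*,\varphi_i$ are those of the two model propositions with $(k_1,k_2,k_3)$ cyclically relabelled, and the verification that these polynomials lie in $\ker\D$ and satisfy the recurrences is the same term-by-term computation with Theorem~\ref{thm:BImodule_Mn}(i) and the floor-function binomial identities.

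Second, I would extract the bases. On the range $t_2+t_3\leq n<t_1$ the brackets $[x_3]^i_h$ vanish for $h<t_3$ once $i\geq t_3$ (because $m_3^{(t_3)}=0$), so the members $p_i$ of the $x_2$-family with $t_3\leq i$ lie in $\M_n(x_2)\cap\M_n(x_3)$ and, by Proposition~\ref{prop:dimM(x12)}(ii), have exactly the cardinality required to be a basis of it; the remaining cosets with $0\leq i\leq t_3-1$ then give bases of $\M_n(x_2)/(\M_n(x_2)\cap\M_n(x_3))$ and of $\M_n/\M_n(x_3)$, the dimension bookkeeping coming from Propositions~\ref{prop:dimM(x1)}(ii), \ref{prop:dimM(x12)}(ii) and Theorem~\ref{thm:dimM=2(n+1)}. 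Symmetrically the $x_3$-family yields a basis of $\M_n(x_2)\cap\M_n(x_3)$ indexed by $t_2\leq i$ and cosets with $0\leq i\leq t_2-1$ giving bases of $\M_n(x_3)/(\M_n(x_2)\cap\M_n(x_3))$ and of $\M_n/\M_n(x_2)$. These are the analogues of Lemmas~\ref{lem:basis:t1<=n<t1+t2(even)} and~\ref{lem:basis:n<t2+t3(even)}.

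Third, each of these bases splits the corresponding space into two isomorphic copies of a single tridiagonal $\BI$-module, by Theorem~\ref{thm:BImodule_Mn}(ii) together with the two ladder relations; matching the tridiagonal data and the central eigenvalues of Theorem~\ref{thm:BImodule_Mn}(ii) against Proposition~\ref{prop:Od} identifies each summand with the $O$-module displayed in the theorem, twisted by the indicated element of $\{\pm1\}^2\rtimes\Sym_3$. For the final assertion, when $k_1\geq 0$ the relevant combinations $\pm a\pm b\pm c$ attached to each $O$-module avoid the forbidden set of Theorem~\ref{thm:irr_O}, so the modules are irreducible, and Theorem~\ref{thm:onto2_O}, matching the traces of $X,Y,Z$, rewrites them in the untwisted normal forms given at the end.

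I expect the only genuinely delicate point to be the twist bookkeeping: pinning down exactly which $\e\in\{\pm1\}^2\rtimes\Sym_3$ occurs amounts to composing the sign pattern of the matrices $c_{h,i,j}$ with the coordinate permutation and reconciling the result with Table~\ref{pm1-action}, after which one must confirm that the parameters of the resulting $O$-modules land on precisely the triples stated. The safest way to perform this confirmation is not to propagate the twist through the bases but to read off the traces of $X,Y,Z$ on each of the spaces directly from the ladder relations and Theorem~\ref{thm:BImodule_Mn}(ii), and then invoke the trace criterion of Theorem~\ref{thm:onto2_O}.
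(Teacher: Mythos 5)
Your proposal is correct and takes essentially the same approach as the paper, which proves this theorem only by the remark ``by similar arguments as in the proof of Theorem~\ref{thm:t1+t3<=n<t2(even)}'': one transports Propositions~\ref{prop:t1<=n<t1+t2(even)} and~\ref{prop:t3<=n<t2+t3(even)} and Lemmas~\ref{lem:basis:t1<=n<t1+t2(even)} and~\ref{lem:basis:n<t2+t3(even)} through a coordinate relabelling and then reruns the identification of the tridiagonal summands. One small bookkeeping slip in your narrative: under the relevant relabelling (which is the $3$-cycle $(1\,3\,2)$, not the transposition $(1\,2)$ suggested by ``carries $(x_1,x_3)$ onto $(x_2,x_3)$''), the family indexed by the $x_2$-degree comes with the $Z$-raising ladder and the one indexed by the $x_3$-degree with the $Y$-raising ladder, not the reverse; but your stated plan to bypass the twist bookkeeping and pin the $O$-module parameters down via the trace criterion of Theorem~\ref{thm:onto2_O} would correctly resolve this.
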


\section{The $\BI$-modules $\M_n$ of type (IV)}\label{s:case4}

\begin{prop}\label{prop:n>=t1+t3(odd)}
Suppose that $n$ is an odd integer with $n\geq t_1+t_3$. 
Let 
$$
p_i
=
\sum_{h=0}^i
(-1)^{\frac{h}{2}}
\sum_{j=t_1}^{n-t_3-h}
(-1)^{\frac{j}{2}}
{\left\lfloor \frac{n-t_3-i-j}{2}\right\rfloor+\left\lfloor\frac{i-h}{2}\right\rfloor \choose \left\lfloor\frac{i-h}{2}\right\rfloor}
c_{h,i,j}
\otimes
[x_1]^{n-t_3}_j[x_2]^i_h[x_3]^{n-t_1}_{n-h-j}
$$
for all $i=0,1,\ldots,n-t_1-t_3$ where
\begin{align*}
c_{h,i,j}&=
\left\{
\begin{array}{ll}
\sigma_2^j \sigma_1 
\qquad &\hbox{if $h$ is odd and $i$ is odd},
\\
\sigma_2^j
\qquad &\hbox{if $h$ is even and $i$ is even},
\\
\sigma_2
\qquad &\hbox{if $h$ is even, $i$ is odd and $j$ is odd},
\\
-\sigma_1
\qquad &\hbox{if $h$ is odd, $i$ is even and $j$ is even},
\\
0
\qquad &\hbox{else}
\end{array}
\right.
\end{align*}
for any integers $h,i,j$. 
Then the following hold:
\begin{enumerate}
\item $\{p_i\}_{i=0}^{n-t_1-t_3}$ are linearly independent over ${\rm Mat}_2(\C)$.

\item The following equations hold:
\begin{align}
(X-\theta_i)p_i &=p_{i+1} \qquad (0\leq i\leq n-t_1-t_3-1),
\qquad 
(X-\theta_{n-t_1-t_3}) p_{n-t_1-t_3}=0,
\label{e:n>=t1+t3(odd)-1}
\\
(Y-\theta_i^*) p_i &=\varphi_i p_{i-1} \qquad (1\leq i\leq n-t_1-t_3),
\qquad 
(Y-\theta_0^*) p_0=0,
\label{e:n>=t1+t3(odd)-2}
\end{align}
where
\begin{align*}
\theta_i &=(-1)^i
\textstyle(
k_3-k_2-i-\frac{1}{2}
)
\qquad 
(0\leq i\leq n-t_1-t_3),
\\
\theta_i^* &=(-1)^{i+1}
\textstyle(
k_1+k_3+n-i+\frac{1}{2}
)
\qquad 
(0\leq i\leq n-t_1-t_3),
\\
\varphi_i
&=
\left\{
\begin{array}{ll}
i(i-2k_1-2k_3-n-1)
\qquad
\hbox{if $i$ is even},
\\
(i+2k_2)(i-2k_3-n-1)
\qquad
\hbox{if $i$ is odd}
\end{array}
\right.
\qquad 
(1\leq i\leq n-t_1-t_3).
\end{align*}

\item $\D(p_i)=0$ for all $i=0,1,\ldots,n-t_1-t_3$. 

\end{enumerate}
\end{prop}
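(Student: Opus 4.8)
The plan is to argue exactly as in the proofs of Propositions~\ref{prop:n<mint13}, \ref{prop:t1<=n<t1+t2(odd)} and \ref{prop:t3<=n<t2+t3(odd)}, the new feature being that \emph{both} $t_1$ and $t_3$ are finite, hence odd positive integers (the hypothesis $n\ge t_1+t_3$ forces this). For part (i), note that $[x_2]^i_h$ has $x_2$-degree $h\le i$, so the coefficient of $x_2^h$ in $p_i$ vanishes for $h>i$; it therefore suffices to exhibit, for each $i$, a monomial of $x_2$-degree $i$ whose matrix coefficient in $p_i$ is invertible. I would take $x_1^{\,n-t_3-i}x_2^{\,i}x_3^{\,t_3}$: the only contributing term of the defining sum is $h=i$, $j=n-t_3-i$ (admissible because $0\le i\le n-t_1-t_3$ forces $t_1\le j\le n-t_3-h$), and its coefficient is a nonzero scalar times $c_{i,i,\,n-t_3-i}$ times $\prod_{h'=n-t_3-i+1}^{n-t_3}m_1^{(h')}$ times $\prod_{h'=t_3+1}^{n-t_1}m_3^{(h')}$. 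Since $m_1^{(h')}=0$ only at $h'=t_1$ while the first product runs over $h'\ge t_1+1$, and $m_3^{(h')}=0$ only at $h'=t_3$ while the second runs over $h'\ge t_3+1$, both products are nonzero; and $c_{i,i,\,n-t_3-i}$ equals $I$ or $\sigma_2\sigma_1$ according to the parity of $i$ (using $\sigma_2^2=1$), each of which is invertible. Part (i) then follows, a family that is triangular in the $x_2$-degree with invertible leading coefficients being linearly independent over $\mathrm{Mat}_2(\C)$.

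For part (ii), I would apply the explicit formulas of Theorem~\ref{thm:BImodule_Mn}(i) for $X$ and $Y$ to $p_i$ and compare, monomial by monomial, the coefficient of a generic $[x_1]^{n-t_3}_j[x_2]^i_h[x_3]^{n-t_1}_{n-h-j}$ on the two sides of each asserted recurrence. As in the earlier propositions, this coefficient is the sum of three contributions: the Dunkl part ($x_3T_2-x_2T_3$ for $X$, $x_1T_3-x_3T_1$ for $Y$), which shifts the relevant exponents by $\pm1$, produces the factors $m^{(\cdot)}_\bullet$ and multiplies $c_{h,i,j}$ by $\sigma_1$ (resp.~$\sigma_2$); the two reflection terms together with the $\tfrac12$, which contribute signs $(-1)^{\text{exponent}}$; and the term $-\theta_i$ (resp.~$-\theta_i^*$). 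Showing that these three collapse to the coefficient of $p_{i+1}$ (resp.~$\varphi_ip_{i-1}$) reduces to Pascal-type identities for $\binom{\lfloor\frac{n-t_3-i-j}{2}\rfloor+\lfloor\frac{i-h}{2}\rfloor}{\lfloor\frac{i-h}{2}\rfloor}$ together with a finite parity case-check on $h,i,j$ matching the branches of $c_{h,i,j}$ and the even/odd split of $\varphi_i$; the boundary equations $(X-\theta_{n-t_1-t_3})p_{n-t_1-t_3}=0$ and $(Y-\theta_0^*)p_0=0$ emerge because the shifted index then leaves the admissible range $[t_1,\,n-t_3-h]$ or $[0,i]$. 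This is the step I expect to be the main obstacle: structurally routine, but the sign and parity bookkeeping with two truncated variables is delicate, so, as in Propositions~\ref{prop:n<mint13} and \ref{prop:t3<=n<t2+t3(odd)}, I would display the three-term decomposition explicitly and leave the final simplification as a direct verification.

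For part (iii), I would check $\D(p_0)=0$ by hand: $p_0$ carries no $x_2$-dependence, so only the $e_1\otimes T_1$ and $e_3\otimes T_3$ parts of $\D$ act, and the two resulting sums telescope against each other via the Clifford relations $\{e_1,e_3\}=0$, $e_1^2=e_3^2=1$ and the way the factors $m^{(\cdot)}_\bullet$ are built into the $[x_\bullet]^{\cdot}_{\cdot}$, the endpoints being annihilated by $m_1^{(t_1)}=0$ and $m_3^{(t_3)}=0$. By Theorem~\ref{thm:BImodule_Mn}(i) the operator $X$ maps $\M_n$ into $\M_n$; hence, starting from $p_0\in\M_n$ and using the recursion $p_{i+1}=(X-\theta_i)p_i$ established in part (ii), induction on $i$ yields $p_i\in\M_n$, i.e.~$\D(p_i)=0$, for all $i=0,1,\dots,n-t_1-t_3$, completing the proof.
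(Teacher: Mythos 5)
Your plan reproduces the paper's own proof: for (i) you identify the same leading monomial $x_1^{n-t_3-i}x_2^ix_3^{t_3}$ and obtain the same nonzero products of $m_1^{(\cdot)}$, $m_3^{(\cdot)}$ times an invertible $2\times 2$ matrix (your $\sigma_2\sigma_1$ is the paper's $(-1)^{3/2}\sigma_3$); for (ii) you propose exactly the three-term coefficient comparison the paper carries out and leaves as ``straightforward to verify''; and for (iii) you give the same argument, checking $\D(p_0)=0$ by telescoping (vanishing at the endpoints since $m_1^{(t_1)}=m_3^{(t_3)}=0$) and then bootstrapping via the recursion from (ii) and $X(\M_n)\subseteq\M_n$. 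This is the same approach as the paper, with somewhat more detail on (iii) and somewhat less on (ii), and I see no gap.
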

\begin{proof}
(i): Let $i$ be an integer with $0\leq i\leq n-t_1-t_3$.
By construction the coefficient of $x_2^h$ in $p_i$ is zero for all integers $h$ with $i<h\leq n-t_1-t_3$. Observe that the coefficient of $x_1^{n-t_3-i}x_2^i x_3^{t_3}$ in $p_i$ is 
\begin{gather}\label{coeff:n>=t1+t3(odd)}
(-1)^{\frac{n-t_3}{2}}
\prod_{h=n-t_3-i+1}^{n-t_3} m_1^{(h)}
\prod_{h=t_3+1}^{n-t_1} m_3^{(h)}
\times 
\left\{
\begin{array}{ll}
(-1)^\frac{3}{2}\sigma_3
\qquad 
&\hbox{if $i$ is odd},
\\
\begin{pmatrix}
1 &0
\\
0 &1
\end{pmatrix} \qquad 
&\hbox{if $i$ is even}.
\end{array}
\right.
\end{gather}
Clearly $\prod\limits_{h=t_3+1}^{n-t_1} m_3^{(h)}$ is nonzero.
Since $i\leq n-t_1-t_3$ the scalar $\prod\limits_{h=n-t_3-i+1}^{n-t_3} m_1^{(h)}$ is nonzero. 
Hence the matrix (\ref{coeff:n>=t1+t3(odd)}) is nonsingular. By the above comments the part (i) follows.

(ii): 
Let $i$ be a nonnegative integer and let $j$ be an integer with $t_1\leq j\leq n-t_3-i-1$. 
Using Theorem \ref{thm:BImodule_Mn}(i) yields that the coefficient of 
$[x_1]^{n-t_3}_j [x_2]^{i+1}_{i+1} [x_3]^{n-t_1}_{n-i-j-1} $ in $(X-\theta_i)p_i$ is equal to 
$$
(-1)^{\frac{i-j+1}{2}}\sigma_1 c_{i,i,j}=(-1)^{\frac{i+j+1}{2}}c_{i+1,i+1,j}.
$$
Now let $h,i,j$ denote three integers with $0\leq h\leq i\leq n-t_1-t_3$ and $t_1\leq j\leq n-t_3-h$. Using Theorem \ref{thm:BImodule_Mn}(i) yields that the coefficient of 
$[x_1]^{n-t_3}_j [x_2]^i_h [x_3]^{n-t_1}_{n-h-j}$ in $(X-\theta_i)p_i$ is equal to $(-1)^{\frac{h-j}{2}}$ times the sum of 
\begin{align*}
&
m_2^{(h)}
{\left\lfloor \frac{n-t_3-i-j}{2}\right\rfloor+\left\lfloor\frac{i-h+1}{2}\right\rfloor \choose \left\lfloor\frac{i-h+1}{2}\right\rfloor}
 \sigma_1 c_{h-1,i,j},
\\
&
m_3^{(n-h-j)}
{\left\lfloor \frac{n-t_3-i-j}{2}\right\rfloor+\left\lfloor\frac{i-h-1}{2}\right\rfloor \choose \left\lfloor\frac{i-h-1}{2}\right\rfloor}
 \sigma_1 c_{h+1,i,j},
\\
&
{\textstyle(
(-1)^{h+1} k_2
+
(-1)^{h+j} k_3
-
(-1)^j\theta_i
-
\frac{1}{2}
)}
{\left\lfloor \frac{n-t_3-i-j}{2}\right\rfloor+\left\lfloor\frac{i-h}{2}\right\rfloor \choose \left\lfloor\frac{i-h}{2}\right\rfloor}
c_{h,i,j}.
\end{align*}
It is straightforward to verify that the sum of the above three terms is equal to 
$$
(-1)^j
m_2^{(i+1)}
{\left\lfloor \frac{n-t_3-i-j-1}{2}\right\rfloor+\left\lfloor\frac{i-h+1}{2}\right\rfloor \choose \left\lfloor\frac{i-h+1}{2}\right\rfloor}
c_{h,i+1,j}. 
$$
By the above comments the equations given in (\ref{e:n>=t1+t3(odd)-1}) follow.

Using Theorem \ref{thm:BImodule_Mn}(i) yields that the coefficient of 
$[x_1]^{n-t_3}_j [x_2]^i_h [x_3]^{n-t_1}_{n-h-j}$ in $(Y-\theta_i^*)p_i$ is equal to $(-1)^{\frac{j-h}{2}+1}$ times the sum of 
\begin{align*}
&
m_1^{(j)}
{\left\lfloor \frac{n-t_3-i-j+1}{2}\right\rfloor+\left\lfloor\frac{i-h}{2}\right\rfloor \choose \left\lfloor\frac{i-h}{2}\right\rfloor}
 \sigma_2 c_{h,i,j-1},
\\
&
m_3^{(n-h-j)}
{\left\lfloor \frac{n-t_3-i-j-1}{2}\right\rfloor+\left\lfloor\frac{i-h}{2}\right\rfloor \choose \left\lfloor\frac{i-h}{2}\right\rfloor}
 \sigma_2 c_{h,i,j+1},
\\
&
{\textstyle(
(-1)^{j} k_1
-
(-1)^{h+j} k_3
+
(-1)^h
\theta_i^*
+
\frac{1}{2}
)}
{\left\lfloor \frac{n-t_3-i-j}{2}\right\rfloor+\left\lfloor\frac{i-h}{2}\right\rfloor \choose \left\lfloor\frac{i-h}{2}\right\rfloor}
c_{h,i,j}.
\end{align*}
It is straightforward to verify that the sum of the above three terms is equal to $(-1)^{h+1}$ times 
$$
\left\{
\begin{array}{ll}
\begin{pmatrix}
0 &0
\\
0 &0
\end{pmatrix}
\qquad &\hbox{if $h=i$},
\\
(i-2k_1-2k_3-n-1)
\displaystyle{\left\lfloor \frac{n-t_3-i-j+1}{2}\right\rfloor+\left\lfloor\frac{i-h-1}{2}\right\rfloor \choose \left\lfloor\frac{i-h-1}{2}\right\rfloor}
c_{h,i-1,j}
\qquad &\hbox{if $h<i$ and $i$ is even},
\\
(i-2k_3-n-1)
\displaystyle{\left\lfloor \frac{n-t_3-i-j+1}{2}\right\rfloor+\left\lfloor\frac{i-h-1}{2}\right\rfloor \choose \left\lfloor\frac{i-h-1}{2}\right\rfloor}
c_{h,i-1,j}
\qquad &\hbox{if $h<i$ and $i$ is odd}.
\end{array}
\right.
$$
By the above comments the equations given in (\ref{e:n>=t1+t3(odd)-2}) follow.

(iii): It is routine to verify that $\D(p_0)=0$. Combined with (ii) the statement (iii) follows.
\end{proof}

\begin{lem}\label{lem:basis:n>=t1+t3(odd)}
Suppose that $n$ is an odd integer with $n\geq t_1+t_2+t_3$. 
Let $\{p_i\}_{i=0}^{n-t_1-t_3}$ be as in Proposition \ref{prop:n>=t1+t3(odd)}. Then the following hold:
\begin{enumerate}
\item $\M_n(x_1)\cap \M_n(x_2)\cap \M_n(x_3)$ has the basis 
\begin{align}
p_i\cdot 
\begin{pmatrix}
1
\\
0
\end{pmatrix}
\otimes
1 
\qquad 
(t_2\leq i\leq n-t_1-t_3), 
\label{M123basis:n>=t1+t3(odd)-1}
\\ 
p_i\cdot 
\begin{pmatrix}
0
\\
1
\end{pmatrix}
\otimes
1 
\qquad 
(t_2\leq i\leq n-t_1-t_3).
\label{M123basis:n>=t1+t3(odd)-2}
\end{align}

\item $\M_n/\M_n(x_2)$ has the basis
\begin{align}
p_i\cdot 
\begin{pmatrix}
1
\\
0
\end{pmatrix}
\otimes
1 
+
\M_n(x_2)
\qquad 
(0\leq i\leq t_2-1), 
\label{M/M2basis:n>=t1+t3(odd)-1}
\\ 
p_i\cdot 
\begin{pmatrix}
0
\\
1
\end{pmatrix}
\otimes
1 
+
\M_n(x_2)
\qquad 
(0\leq i\leq t_2-1).
\label{M/M2basis:n>=t1+t3(odd)-2}
\end{align}
\end{enumerate}
\end{lem}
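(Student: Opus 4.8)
The plan is to mimic the proofs of Lemmas~\ref{lem:basis:t2<=n<mint13(odd)} and \ref{lem:basis:n<t2+t3(odd)}: for each displayed family I would check that its members lie in the prescribed space, that they are linearly independent, and that their number equals the dimension of that space, the dimensions being supplied by \S\ref{s:submodule_Mn} and Theorem~\ref{thm:dimM=2(n+1)}. Note first that $n\ge t_1+t_2+t_3$ forces $t_1,t_2,t_3$ to be finite, hence odd positive integers, and in particular $m_2^{(t_2)}=t_2+2k_2=0$.

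First I would locate the $p_i$ of Proposition~\ref{prop:n>=t1+t3(odd)}. Each of their terms has the form $c_{h,i,j}\otimes[x_1]^{n-t_3}_j[x_2]^i_h[x_3]^{n-t_1}_{n-h-j}$ with $t_1\le j\le n-t_3-h$, so it has $x_1$-degree $j\ge t_1$ and $x_3$-degree $n-h-j\ge t_3$; combined with $\D(p_i)=0$ from Proposition~\ref{prop:n>=t1+t3(odd)}(iii), this shows $p_i\in\M_n(x_1)\cap\M_n(x_3)$ for every $i$. The key observation for (i) is that when $i\ge t_2$ the factor $[x_2]^i_h=\bigl(\prod_{l=h+1}^{i}m_2^{(l)}\bigr)x_2^h$ vanishes for every $h<t_2$, because then $l=t_2$ occurs in the product and $m_2^{(t_2)}=0$; hence for $t_2\le i\le n-t_1-t_3$ every surviving monomial of $p_i$ has $x_2$-degree $\ge t_2$, so $p_i\in\M_n(x_1)\cap\M_n(x_2)\cap\M_n(x_3)$. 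Then (\ref{M123basis:n>=t1+t3(odd)-1})--(\ref{M123basis:n>=t1+t3(odd)-2}) are $2(n-t_1-t_2-t_3+1)$ vectors in the triple intersection, linearly independent by Proposition~\ref{prop:n>=t1+t3(odd)}(i); since Proposition~\ref{prop:dimM(x123)} (applicable as $n\ge t_1+t_2+t_3$) gives $\dim\bigl(\M_n(x_1)\cap\M_n(x_2)\cap\M_n(x_3)\bigr)=2(n-t_1-t_2-t_3+1)$, they form a basis, which is (i).

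For (ii), the cosets (\ref{M/M2basis:n>=t1+t3(odd)-1})--(\ref{M/M2basis:n>=t1+t3(odd)-2}) are well defined because $\D(p_i)=0$ puts the underlying vectors in $\M_n$. For their independence in $\M_n/\M_n(x_2)$ I would separate by $x_2$-degree: for $0\le i\le t_2-1$ every monomial of $p_i$ has $x_2$-degree at most $i<t_2$, whereas by Definition~\ref{defn:M(x1)} every element of $\M_n(x_2)$ has all $x_2$-degrees $\ge t_2$; so a linear combination of these $p_i$ that lies in $\M_n(x_2)$ must be $0$, and then Proposition~\ref{prop:n>=t1+t3(odd)}(i) forces all coefficients to vanish. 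Finally $\dim\M_n/\M_n(x_2)=\dim\M_n-\dim\M_n(x_2)=2(n+1)-2(n-t_2+1)=2t_2$ by Theorem~\ref{thm:dimM=2(n+1)} (valid since $n+1\ge t_1+t_2+t_3$) and Proposition~\ref{prop:dimM(x1)}(ii); this equals the number of cosets, so they are a basis.

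The only real wrinkle, compared with the earlier analogous lemmas, is part (ii): here $\{p_i\}_{i=0}^{n-t_1-t_3}$ is not long enough to be a basis of $\M_n$, so (ii) cannot be read off from a global basis as in Lemma~\ref{lem:basis:t2<=n<mint13(odd)}(ii) and instead needs the $x_2$-degree separation above. Everything else --- the degree bookkeeping for the $p_i$ and the identity $m_2^{(t_2)}=0$ --- is routine.
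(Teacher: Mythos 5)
Your proof is correct and follows the same route as the paper: place the indicated vectors and cosets in the target spaces using Proposition~\ref{prop:n>=t1+t3(odd)}(iii), establish linear independence from Proposition~\ref{prop:n>=t1+t3(odd)}(i), and match the count against the dimensions from Proposition~\ref{prop:dimM(x123)}, Theorem~\ref{thm:dimM=2(n+1)}, and Proposition~\ref{prop:dimM(x1)}(ii). You correctly identified and made explicit the one point the paper leaves tacit in part (ii) — namely that independence of the $p_i$ in $\M_n$ does not immediately give independence of the cosets in $\M_n/\M_n(x_2)$, and that this needs the observation that $p_i$ with $i<t_2$ has all $x_2$-degrees $<t_2$ while $\M_n(x_2)$ consists of elements with $x_2$-degrees $\geq t_2$; that is exactly the right way to close the gap, and you also correctly note why the global-basis shortcut from Lemma~\ref{lem:basis:t2<=n<mint13(odd)}(ii) is unavailable here.
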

\begin{proof}
(i): The linear independence of (\ref{M123basis:n>=t1+t3(odd)-1}) and (\ref{M123basis:n>=t1+t3(odd)-2}) follows from Proposition \ref{prop:n>=t1+t3(odd)}(i). Since (\ref{M123basis:n>=t1+t3(odd)-1}) and (\ref{M123basis:n>=t1+t3(odd)-2}) are in 
$
\C^2\otimes\left(\bigoplus\limits_{i=t_1}^n x_1^i\cdot \R[x_2,x_3]_{n-i}
\cap
\bigoplus\limits_{i=t_2}^n x_2^i\cdot \R[x_1,x_3]_{n-i}
\cap 
\bigoplus\limits_{i=t_3}^n x_3^i\cdot \R[x_1,x_2]_{n-i}
\right)$,  
it follows from Proposition \ref{prop:n>=t1+t3(odd)}(iii) that (\ref{M123basis:n>=t1+t3(odd)-1}) and (\ref{M123basis:n>=t1+t3(odd)-2}) are in $\M_n(x_1)\cap \M_n(x_2)\cap \M_n(x_3)$. Combined with Proposition \ref{prop:dimM(x123)} the statement (i) follows.

(ii): The linear independence of (\ref{M/M2basis:n>=t1+t3(odd)-1}) and (\ref{M/M2basis:n>=t1+t3(odd)-2}) follows from Proposition \ref{prop:n>=t1+t3(odd)}(i). 
By Proposition \ref{prop:n>=t1+t3(odd)}(iii) the cosets (\ref{M/M2basis:n>=t1+t3(odd)-1}) and (\ref{M/M2basis:n>=t1+t3(odd)-2}) are in 
$
\M_n/\M_n(x_2).
$  
By  Theorem \ref{thm:dimM=2(n+1)} and Proposition \ref{prop:dimM(x1)}(ii) the dimension of $\M_n/\M_n(x_2)$ is $2t_2$. The statement (ii) follows.
\end{proof}

\begin{prop}\label{prop:n>=t1+t2(odd)}
Suppose that $n$ is an odd integer with $n\geq t_1+t_2$. 
Let 
$$
q_i
=
\sum_{h=0}^i
(-1)^{\frac{h}{2}}
\sum_{j=t_2}^{n-t_1-h}
(-1)^{\frac{j}{2}}
{\left\lfloor \frac{n-t_1-i-j}{2}\right\rfloor+\left\lfloor\frac{i-h}{2}\right\rfloor \choose \left\lfloor\frac{i-h}{2}\right\rfloor}
c_{h,i,j}
[x_2]^{n-t_1}_j[x_3]^i_h[x_1]^{n-t_2}_{n-h-j}
$$
for all $i=0,1,\ldots,n-t_1-t_2$ where
\begin{align*}
c_{h,i,j}&=
\left\{
\begin{array}{ll}
\sigma_3^j \sigma_2
\qquad &\hbox{if $h$ is odd and $i$ is odd},
\\
\sigma_3^j
\qquad &\hbox{if $h$ is even and $i$ is even},
\\
\sigma_3
\qquad &\hbox{if $h$ is even, $i$ is odd and $j$ is odd},
\\
-\sigma_2
\qquad &\hbox{if $h$ is odd, $i$ is even and $j$ is even},
\\
\begin{pmatrix}
0 &0 \\
0 &0
\end{pmatrix}
\qquad &\hbox{else}
\end{array}
\right.
\end{align*}
for any integers $h,i,j$. 
Then the following hold:
\begin{enumerate}
\item $\{q_i\}_{i=0}^{n-t_1-t_2}$ are linearly independent over ${\rm Mat}_2(\C)$.

\item The following equations hold:
\begin{align}
(Y-\theta_i)q_i &=q_{i+1} \qquad (0\leq i\leq n-t_1-t_2-1),
\qquad 
(Y-\theta_{n-t_1-t_2}) q_{n-t_1-t_2}=0,
\label{e:n>=t1+t2(odd)-1}
\\
(Z-\theta_i^*) q_i &=\varphi_i q_{i-1} \qquad (1\leq i\leq n-t_1-t_2),
\qquad 
(Z-\theta_0^*) q_0=0,
\label{e:n>=t1+t2(odd)-2}
\end{align}
where
\begin{align*}
\theta_i &=(-1)^i
\textstyle(
k_1-k_3-i-\frac{1}{2}
)
\qquad 
(0\leq i\leq n-t_1-t_2),
\\
\theta_i^* &=(-1)^{i+1}
\textstyle(
k_2+k_1+n-i+\frac{1}{2}
)
\qquad 
(0\leq i\leq n-t_1-t_2),
\\
\varphi_i
&=
\left\{
\begin{array}{ll}
i(i-2k_2-2k_1-n-1)
\qquad
\hbox{if $i$ is even},
\\
(i+2k_3)(i-2k_1-n-1)
\qquad
\hbox{if $i$ is odd}
\end{array}
\right.
\qquad 
(1\leq i\leq n-t_1-t_2).
\end{align*}

\item $\D(q_i)=0$ for all $i=0,1,\ldots,n-t_1-t_2$.
\end{enumerate}
\end{prop}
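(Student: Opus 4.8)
My plan is to deduce Proposition \ref{prop:n>=t1+t2(odd)} from Proposition \ref{prop:n>=t1+t3(odd)} via the cyclic symmetry $1\to 2\to 3\to 1$ of the whole construction, rather than redo the computation. First I would record that the formal substitution sending $x_1,x_2,x_3\mapsto x_2,x_3,x_1$, the Dunkl operators $T_1,T_2,T_3\mapsto T_2,T_3,T_1$, the Clifford generators $e_1,e_2,e_3\mapsto e_2,e_3,e_1$, the multiplicities $k_1,k_2,k_3\mapsto k_2,k_3,k_1$ (hence $t_1,t_2,t_3\mapsto t_2,t_3,t_1$ and $m_i^{(n)}\mapsto m_{i+1}^{(n)}$), and $X,Y,Z\mapsto Y,Z,X$ — this last being the automorphism $(1\,2\,3)=(1\,2)(2\,3)$ of $\BI$ read off from Table \ref{pm1-action} — fixes $\D=e_1\otimes T_1+e_2\otimes T_2+e_3\otimes T_3$ and carries each generator formula of Theorem \ref{thm:BImodule_Mn}(i) onto the next one. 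On the module $\C^2$ the relabeling $e_a\mapsto e_{a+1}$ (indices mod $3$) is realized by conjugation by a fixed $U\in GL_2(\C)$ with $U\sigma_aU^{-1}=\sigma_{a+1}$; such $U$ exists because $\Cl(\R^3)$ admits the automorphism $e_a\mapsto e_{a+1}$, this automorphism fixes the central element $e_1e_2e_3$, and hence preserves the central character $e_1e_2e_3\mapsto(-1)^{1/2}I$ of the given $2$-dimensional representation. Applying Proposition \ref{prop:n>=t1+t3(odd)} with the multiplicities $(k_2,k_3,k_1)$ and reading its conclusion back through this substitution reproduces the present statement: the monomials $[x_1]^{n-t_3}_j[x_2]^i_h[x_3]^{n-t_1}_{n-h-j}$ become $[x_2]^{n-t_1}_j[x_3]^i_h[x_1]^{n-t_2}_{n-h-j}$; the case table for $c_{h,i,j}$ turns into the one above because $U\sigma_2^j\sigma_1U^{-1}=\sigma_3^j\sigma_2$, $U\sigma_2^jU^{-1}=\sigma_3^j$, $U\sigma_2U^{-1}=\sigma_3$ and $-U\sigma_1U^{-1}=-\sigma_2$; the hypothesis $n\ge t_1+t_3$ becomes $n\ge t_1+t_2$; and the data $\theta_i,\theta_i^*,\varphi_i$ of Proposition \ref{prop:n>=t1+t3(odd)} evaluated at $(k_2,k_3,k_1)$ are exactly those listed here. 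Then (i)--(iii) follow immediately from (i)--(iii) of Proposition \ref{prop:n>=t1+t3(odd)}.

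If a self-contained argument is preferred, I would instead imitate the proof of Proposition \ref{prop:n>=t1+t3(odd)} verbatim. For (i): the coefficient of $x_1^{t_1}x_2^{n-t_1-i}x_3^i$ in $q_i$ comes solely from the term $h=i$, $j=n-t_1-i$ and equals $(-1)^{(n-t_1)/2}\bigl(\prod_{h=n-t_1-i+1}^{n-t_1}m_2^{(h)}\bigr)\bigl(\prod_{h=t_1+1}^{n-t_2}m_1^{(h)}\bigr)c_{i,i,n-t_1-i}$; here $t_1$ and $t_2$ are odd positive integers (the hypothesis $n\ge t_1+t_2$ forces both finite), every index $h$ occurring exceeds $t_1$, respectively $t_2$, so the scalar is nonzero, $c_{i,i,n-t_1-i}$ is nonsingular, and $q_i$ has zero $x_3^h$-coefficient for $h>i$; triangularity in the $x_3$-degree with nonsingular leading matrices gives independence over ${\rm Mat}_2(\C)$. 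For (ii): substitute the generic monomial $[x_2]^{n-t_1}_j[x_3]^i_h[x_1]^{n-t_2}_{n-h-j}$ into the explicit $Y$- and $Z$-formulas of Theorem \ref{thm:BImodule_Mn}(i); each of $(Y-\theta_i)q_i$ and $(Z-\theta_i^*)q_i$ splits into a ``raise'' and a ``lower'' in the $x_3$-index $h$ together with a diagonal term, all carrying the $(-1)^{h/2},(-1)^{j/2}$ prefactors and the floor-binomial coefficients, and one checks these three contributions collapse to $q_{i+1}$, respectively $\varphi_iq_{i-1}$, the endpoints $i=n-t_1-t_2$ and $i=0$ being handled by the same identities. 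For (iii): verify $\D(q_0)=0$ directly, then use (ii) and the fact that $Y$ preserves $\M_n$ (Theorem \ref{thm:BImodule_Mn}) to conclude by induction that $q_i\cdot(v\otimes 1)=(Y-\theta_{i-1})\cdots(Y-\theta_0)\,q_0\cdot(v\otimes 1)\in\M_n$, i.e.\ $\D(q_i)=0$, for every $i$ and every $v\in\C^2$.

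The main obstacle, on the direct route, is the ``straightforward to verify'' collapse in part (ii): one must juggle the sign prefactors, the floor-binomial coefficients, and the five-way parity case split of $c_{h,i,j}$ so that the three contributions telescope to a single term, for both defining recurrences and at the boundary. The symmetry route avoids this entirely, since that computation has already been carried out for Proposition \ref{prop:n>=t1+t3(odd)}; there the only delicate point is that a single unitary $U$ implements the cyclic permutation of $\sigma_1,\sigma_2,\sigma_3$ simultaneously and is compatible with the reflection operators $f\mapsto f(\pm x_1,\pm x_2,\pm x_3)$ appearing in the $\BI$-action formulas, which holds because those reflections are permuted cyclically by the same substitution.
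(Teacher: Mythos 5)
Your primary route via the cyclic symmetry is correct and genuinely different from the paper's: the paper proves Proposition \ref{prop:n>=t1+t2(odd)} by the same bare-hands computation as Proposition \ref{prop:n>=t1+t3(odd)} (extract the coefficient of each basis monomial $[x_2]^{n-t_1}_j[x_3]^i_h[x_1]^{n-t_2}_{n-h-j}$ in $(Y-\theta_i)q_i$ and $(Z-\theta_i^*)q_i$, verify the three-term collapse, use triangularity in the $x_3$-degree for (i), and check $\D(q_0)=0$ directly for (iii)), whereas you transport Proposition \ref{prop:n>=t1+t3(odd)} across the cyclic relabeling. Your argument is sound: the substitution $x_a,T_a,k_a\mapsto x_{a+1},T_{a+1},k_{a+1}$ (indices mod $3$), applied to Proposition \ref{prop:n>=t1+t3(odd)} with parameters $(k_2,k_3,k_1)$, together with conjugation by an invertible $U\in{\rm Mat}_2(\C)$ realizing $\sigma_a\mapsto\sigma_{a+1}$ — which exists because the Clifford automorphism $e_a\mapsto e_{a+1}$ fixes the central $e_1e_2e_3$ and is therefore inner on the central simple $\C$-algebra $\Cl(\R^3)$ — conjugates $\D$ with permuted multiplicities to $\D$ with the original ones, carries the reflection $(x_1,x_2,x_3)\mapsto(x_1,-x_2,-x_3)$ to $(-x_1,x_2,-x_3)$ and so on cyclically, intertwines the operators $X,Y,Z$ of Theorem \ref{thm:BImodule_Mn}(i) via the $\BI$-automorphism $(1\,2\,3)$, and sends $p_i$ to $q_i$; the data $\theta_i,\theta_i^*,\varphi_i$ and the $c_{h,i,j}$ table transform exactly as you list. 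The gain is that the paper's route repeats essentially the same long verification three times (for Propositions \ref{prop:n>=t1+t3(odd)}, \ref{prop:n>=t1+t2(odd)}, \ref{prop:n>=t2+t3(odd)}), while yours pays the one-time cost of setting up the intertwiner $U$ and its compatibility with the Dunkl reflections, after which the other two (and their even analogues) come for free. Two details worth making explicit in a final write-up: $A\mapsto UAU^{-1}$ is an algebra automorphism of ${\rm Mat}_2(\C)$, which is precisely what transfers linear independence over ${\rm Mat}_2(\C)$ from $\{p_i\}$ to $\{q_i\}$; and the bracket $[x_1]^j_i$ built from $(k_2,k_3,k_1)$ is carried to $[x_2]^j_i$ built from $(k_1,k_2,k_3)$ because $m_1^{(h)}$ at the former equals $m_2^{(h)}$ at the latter. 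Your alternate self-contained sketch reproduces the paper's own proof.
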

\begin{proof}
(i): Let $i$ be an integer with $0\leq i\leq n-t_1-t_2$.
By construction the coefficient of $x_3^h$ in $q_i$ is zero for all integers $h$ with $i<h\leq n-t_1-t_2$. 
Observe that the coefficient of $x_2^{n-t_1-i}x_3^i x_1^{t_1}$ in $q_i$ is 
\begin{gather}\label{coeff:n>=t1+t2(odd)}
(-1)^{\frac{n-t_1}{2}}
\prod_{h=n-t_1-i+1}^{n-t_1} m_2^{(h)}
\prod_{h=t_1+1}^{n-t_2} m_1^{(h)}
\times 
\left\{
\begin{array}{ll}
(-1)^\frac{3}{2}\sigma_1
\qquad 
&\hbox{if $i$ is odd},
\\
\begin{pmatrix}
1 &0
\\
0 &1
\end{pmatrix} \qquad 
&\hbox{if $i$ is even}.
\end{array}
\right.
\end{gather}
Clearly $\prod\limits_{h=t_1+1}^{n-t_2} m_1^{(h)}$ is nonzero.
Since $i\leq n-t_1-t_2$ the scalar $\prod\limits_{h=n-t_1-i+1}^{n-t_1} m_2^{(h)}$ is nonzero. 
Hence the matrix (\ref{coeff:n>=t1+t2(odd)}) is nonsingular. By the above comments the part (i) follows.

(ii): 
Let $i$ be a nonnegative integer and let $j$ be an integer with $t_2\leq j\leq n-t_1-i-1$. 
Using Theorem \ref{thm:BImodule_Mn}(i) yields that the coefficient of 
$[x_2]^{n-t_1}_j [x_3]^{i+1}_{i+1} [x_1]^{n-t_2}_{n-i-j-1} $ in $(Y-\theta_i)q_i$ is equal to 
$$
(-1)^{\frac{i-j+1}{2}}\sigma_2 c_{i,i,j}=(-1)^{\frac{i+j+1}{2}}c_{i+1,i+1,j}.
$$
Now let $h,i,j$ denote three integers with $0\leq h\leq i\leq n-t_1-t_2$ and $t_2\leq j\leq n-t_1-h$. Using Theorem \ref{thm:BImodule_Mn}(i) yields that the coefficient of 
$[x_2]^{n-t_1}_j [x_3]^i_h [x_1]^{n-t_2}_{n-h-j}$ in $(Y-\theta_i)q_i$ is equal to $(-1)^{\frac{h-j}{2}}$ times the sum of 
\begin{align*}
&
m_3^{(h)}
{\left\lfloor \frac{n-t_1-i-j}{2}\right\rfloor+\left\lfloor\frac{i-h+1}{2}\right\rfloor \choose \left\lfloor\frac{i-h+1}{2}\right\rfloor}
 \sigma_2 c_{h-1,i,j},
\\
&
m_1^{(n-h-j)}
{\left\lfloor \frac{n-t_1-i-j}{2}\right\rfloor+\left\lfloor\frac{i-h-1}{2}\right\rfloor \choose \left\lfloor\frac{i-h-1}{2}\right\rfloor}
 \sigma_2 c_{h+1,i,j},
\\
&
{\textstyle(
(-1)^{h+1} k_3
+
(-1)^{h+j} k_1
-
(-1)^j\theta_i
-
\frac{1}{2}
)}
{\left\lfloor \frac{n-t_1-i-j}{2}\right\rfloor+\left\lfloor\frac{i-h}{2}\right\rfloor \choose \left\lfloor\frac{i-h}{2}\right\rfloor}
c_{h,i,j}.
\end{align*}
It is straightforward to verify that the sum of the above three terms is equal to 
$$
(-1)^j
m_3^{(i+1)}
{\left\lfloor \frac{n-t_1-i-j-1}{2}\right\rfloor+\left\lfloor\frac{i-h+1}{2}\right\rfloor \choose \left\lfloor\frac{i-h+1}{2}\right\rfloor}
c_{h,i+1,j}. 
$$
By the above comments the equations given in (\ref{e:n>=t1+t2(odd)-1}) follow.

Using Theorem \ref{thm:BImodule_Mn}(i) yields that the coefficient of 
$[x_2]^{n-t_1}_j [x_3]^i_h [x_1]^{n-t_2}_{n-h-j}$ in $(Z-\theta_i^*)q_i$ is equal to $(-1)^{\frac{j-h}{2}+1}$ times the sum of 
\begin{align*}
&
m_2^{(j)}
{\left\lfloor \frac{n-t_1-i-j+1}{2}\right\rfloor+\left\lfloor\frac{i-h}{2}\right\rfloor \choose \left\lfloor\frac{i-h}{2}\right\rfloor}
 \sigma_3 c_{h,i,j-1},
\\
&
m_1^{(n-h-j)}
{\left\lfloor \frac{n-t_1-i-j-1}{2}\right\rfloor+\left\lfloor\frac{i-h}{2}\right\rfloor \choose \left\lfloor\frac{i-h}{2}\right\rfloor}
 \sigma_3 c_{h,i,j+1},
\\
&
{\textstyle(
(-1)^{j} k_2
-
(-1)^{h+j} k_1
+
(-1)^h
\theta_i^*
+
\frac{1}{2}
)}
{\left\lfloor \frac{n-t_1-i-j}{2}\right\rfloor+\left\lfloor\frac{i-h}{2}\right\rfloor \choose \left\lfloor\frac{i-h}{2}\right\rfloor}
c_{h,i,j}.
\end{align*}
It is straightforward to verify that the sum of the above three terms is equal to $(-1)^{h+1}$ times 
$$
\left\{
\begin{array}{ll}
\begin{pmatrix}
0 &0
\\
0 &0
\end{pmatrix}
\qquad &\hbox{if $h=i$},
\\
(i-2k_2-2k_1-n-1)
\displaystyle{\left\lfloor \frac{n-t_1-i-j+1}{2}\right\rfloor+\left\lfloor\frac{i-h-1}{2}\right\rfloor \choose \left\lfloor\frac{i-h-1}{2}\right\rfloor}
c_{h,i-1,j}
\qquad &\hbox{if $h<i$ and $i$ is even},
\\
(i-2k_1-n-1)
\displaystyle{\left\lfloor \frac{n-t_1-i-j+1}{2}\right\rfloor+\left\lfloor\frac{i-h-1}{2}\right\rfloor \choose \left\lfloor\frac{i-h-1}{2}\right\rfloor}
c_{h,i-1,j}
\qquad &\hbox{if $h<i$ and $i$ is odd}.
\end{array}
\right.
$$
By the above comments the equations given in (\ref{e:n>=t1+t2(odd)-2}) follow.

(iii): It is routine to verify that $\D(q_0)=0$. Combined with (ii) the statement (iii) follows.
\end{proof}

\begin{lem}\label{lem:basis:n>=t1+t2(odd)}
Suppose that $n$ is an odd integer with $n\geq t_1+t_2+t_3$. 
Let $\{q_i\}_{i=0}^{n-t_1-t_2}$ be as in Proposition \ref{prop:n>=t1+t2(odd)}. Then the following hold:
\begin{enumerate}
\item $\M_n(x_1)\cap \M_n(x_2)\cap \M_n(x_3)$ has the basis 
\begin{align}
q_i\cdot 
\begin{pmatrix}
1
\\
0
\end{pmatrix}
\otimes
1 
\qquad 
(t_3\leq i\leq n-t_1-t_2), 
\label{M123basis:n>=t1+t2(odd)-1}
\\ 
q_i\cdot 
\begin{pmatrix}
0
\\
1
\end{pmatrix}
\otimes
1 
\qquad 
(t_3\leq i\leq n-t_1-t_2).
\label{M123basis:n>=t1+t2(odd)-2}
\end{align}

\item $\M_n/\M_n(x_3)$ has the basis
\begin{align}
q_i\cdot 
\begin{pmatrix}
1
\\
0
\end{pmatrix}
\otimes
1 
+
\M_n(x_3)
\qquad 
(0\leq i\leq t_3-1), 
\label{M/M3basis:n>=t1+t2(odd)-1}
\\ 
q_i\cdot 
\begin{pmatrix}
0
\\
1
\end{pmatrix}
\otimes
1 
+
\M_n(x_3)
\qquad 
(0\leq i\leq t_3-1).
\label{M/M3basis:n>=t1+t2(odd)-2}
\end{align}
\end{enumerate}
\end{lem}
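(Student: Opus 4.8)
The plan is to follow the template of the proof of Lemma~\ref{lem:basis:n>=t1+t3(odd)}, with the roles of the variables permuted. For part~(i) I would first invoke Proposition~\ref{prop:n>=t1+t2(odd)}(i) to conclude that the vectors in (\ref{M123basis:n>=t1+t2(odd)-1}) and (\ref{M123basis:n>=t1+t2(odd)-2}) are linearly independent over $\C$. Next I would verify that for each $i$ with $t_3\leq i\leq n-t_1-t_2$ the polynomial $q_i$ lies in
\[
\C^2\otimes\Bigl(
\bigoplus_{l=t_1}^{n} x_1^{l}\cdot \R[x_2,x_3]_{n-l}
\;\cap\;
\bigoplus_{l=t_2}^{n} x_2^{l}\cdot \R[x_1,x_3]_{n-l}
\;\cap\;
\bigoplus_{l=t_3}^{n} x_3^{l}\cdot \R[x_1,x_2]_{n-l}
\Bigr).
\]
Containment in the first two summands is immediate from the exponent ranges in the defining sum of $q_i$: the factor $[x_1]^{n-t_2}_{n-h-j}$ has $x_1$-degree $n-h-j\geq t_1$ (because $j\leq n-t_1-h$), and $[x_2]^{n-t_1}_{j}$ has $x_2$-degree $j\geq t_2$. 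Containment in the third summand is the only step needing an observation: $[x_3]^i_h=\prod_{l=h+1}^{i} m_3^{(l)}\cdot x_3^{h}$, and since $m_3^{(t_3)}=t_3+2k_3=0$, this product vanishes whenever $h\leq t_3-1$ and $i\geq t_3$; hence all surviving terms of $q_i$ have $x_3$-degree at least $t_3$. Together with $\D(q_i)=0$ from Proposition~\ref{prop:n>=t1+t2(odd)}(iii), this shows (\ref{M123basis:n>=t1+t2(odd)-1}) and (\ref{M123basis:n>=t1+t2(odd)-2}) lie in $\M_n(x_1)\cap\M_n(x_2)\cap\M_n(x_3)$. Since they number $2(n-t_1-t_2-t_3+1)$, which by Proposition~\ref{prop:dimM(x123)} equals $\dim\bigl(\M_n(x_1)\cap\M_n(x_2)\cap\M_n(x_3)\bigr)$ under the hypothesis $n\geq t_1+t_2+t_3$, they form a basis.

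For part~(ii) I would use Proposition~\ref{prop:n>=t1+t2(odd)}(iii) to see that the $q_i$ with $0\leq i\leq t_3-1$ lie in $\M_n$, so the cosets in (\ref{M/M3basis:n>=t1+t2(odd)-1}) and (\ref{M/M3basis:n>=t1+t2(odd)-2}) are well defined. For linear independence modulo $\M_n(x_3)$, the point is that each such $q_i$ has $x_3$-degree at most $i\leq t_3-1$ (the only $x_3$-bearing factor is $[x_3]^i_h$ with $h\leq i$), while every element of $\M_n(x_3)$ has all its monomials divisible by $x_3^{t_3}$; hence a linear combination of the $q_i$ ($0\leq i\leq t_3-1$) lying in $\M_n(x_3)$ must be zero, and Proposition~\ref{prop:n>=t1+t2(odd)}(i) then forces the coefficients to vanish. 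Finally, these cosets number $2t_3$, and by Theorem~\ref{thm:dimM=2(n+1)} and Proposition~\ref{prop:dimM(x1)}(iii) one has $\dim\bigl(\M_n/\M_n(x_3)\bigr)=2(n+1)-2(n-t_3+1)=2t_3$; hence they form a basis.

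I do not anticipate a genuine obstacle: the substantive content --- the explicit polynomials $q_i$, their linear independence over ${\rm Mat}_2(\C)$, and the vanishing $\D(q_i)=0$ --- is already supplied by Proposition~\ref{prop:n>=t1+t2(odd)}, and what remains is a bookkeeping argument combining that data with the dimension formulas of Propositions~\ref{prop:dimM(x1)} and \ref{prop:dimM(x123)} and Theorem~\ref{thm:dimM=2(n+1)}. The single place that repays attention, and which one could easily slur over, is the vanishing $[x_3]^i_h=0$ for $h<t_3\leq i$ coming from $m_3^{(t_3)}=0$, which is exactly what makes the inclusion into $\bigoplus_{l\geq t_3}x_3^{l}\cdot\R[x_1,x_2]_{n-l}$ hold; the analogous vanishing underlies the parallel step in the proof of Lemma~\ref{lem:basis:n>=t1+t3(odd)}.
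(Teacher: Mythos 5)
Your proof is correct and follows exactly the same route as the paper's: cite Proposition~\ref{prop:n>=t1+t2(odd)}(i) for linear independence and (iii) for membership in $\M_n$, observe the divisibility by $x_1^{t_1},x_2^{t_2},x_3^{t_3}$ to get into $\M_n(x_1)\cap\M_n(x_2)\cap\M_n(x_3)$, and close by dimension count via Propositions~\ref{prop:dimM(x1)}, \ref{prop:dimM(x123)} and Theorem~\ref{thm:dimM=2(n+1)}. You are in fact slightly more explicit than the paper at two points the paper leaves implicit --- the vanishing of $[x_3]^i_h$ for $h<t_3\leq i$ via $m_3^{(t_3)}=0$, and the degree argument showing the cosets in part~(ii) are independent modulo $\M_n(x_3)$ --- and both of those fills are correct.
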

\begin{proof}
(i): The linear independence of  (\ref{M123basis:n>=t1+t2(odd)-1}) and (\ref{M123basis:n>=t1+t2(odd)-2}) follows from Proposition \ref{prop:n>=t1+t2(odd)}(i). Since (\ref{M123basis:n>=t1+t2(odd)-1}) and (\ref{M123basis:n>=t1+t2(odd)-2}) are in 
$
\C^2\otimes\left(\bigoplus\limits_{i=t_1}^n x_1^i\cdot \R[x_2,x_3]_{n-i}
\cap
\bigoplus\limits_{i=t_2}^n x_2^i\cdot \R[x_1,x_3]_{n-i}
\cap 
\bigoplus\limits_{i=t_3}^n x_3^i\cdot \R[x_1,x_2]_{n-i}
\right)$,  
it follows from Proposition \ref{prop:n>=t1+t2(odd)}(iii) that  (\ref{M123basis:n>=t1+t2(odd)-1}) and (\ref{M123basis:n>=t1+t2(odd)-2}) are in $\M_n(x_1)\cap \M_n(x_2)\cap \M_n(x_3)$. Combined with Proposition \ref{prop:dimM(x123)} the statement (i) follows.

(ii): The linear independence of  (\ref{M/M3basis:n>=t1+t2(odd)-1}) and (\ref{M/M3basis:n>=t1+t2(odd)-2}) follows from Proposition \ref{prop:n>=t1+t2(odd)}(i). 
By Proposition \ref{prop:n>=t1+t2(odd)}(iii) the cosets (\ref{M/M3basis:n>=t1+t2(odd)-1}) and (\ref{M/M3basis:n>=t1+t2(odd)-2}) are in 
$
\M_n/\M_n(x_3).
$  
By  Theorem \ref{thm:dimM=2(n+1)} and Proposition \ref{prop:dimM(x1)}(iii) the dimension of $\M_n/\M_n(x_3)$ is $2t_3$. The statement (ii) follows.
\end{proof}

\begin{prop}\label{prop:n>=t2+t3(odd)}
Suppose that $n$ is an odd integer with $n\geq t_2+t_3$. 
Let 
$$
r_i
=
\sum_{h=0}^i
(-1)^{\frac{h}{2}}
\sum_{j=t_3}^{n-t_2-h}
(-1)^{\frac{j}{2}}
{\left\lfloor \frac{n-t_2-i-j}{2}\right\rfloor+\left\lfloor\frac{i-h}{2}\right\rfloor \choose \left\lfloor\frac{i-h}{2}\right\rfloor}
c_{h,i,j}
[x_3]^{n-t_2}_j[x_1]^i_h[x_2]^{n-t_3}_{n-h-j}
$$
for all $i=0,1,\ldots,n-t_2-t_3$ where
\begin{align*}
c_{h,i,j}&=
\left\{
\begin{array}{ll}
\sigma_1^j \sigma_3
\qquad &\hbox{if $h$ is odd and $i$ is odd},
\\
\sigma_1^j
\qquad &\hbox{if $h$ is even and $i$ is even},
\\
\sigma_1
\qquad &\hbox{if $h$ is even, $i$ is odd and $j$ is odd},
\\
-\sigma_3
\qquad &\hbox{if $h$ is odd, $i$ is even and $j$ is even},
\\
\begin{pmatrix}
0 &0 \\
0 &0
\end{pmatrix}
\qquad &\hbox{else}
\end{array}
\right.
\end{align*}
for any integers $h,i,j$. 
Then the following hold:
\begin{enumerate}
\item $\{r_i\}_{i=0}^{n-t_2-t_3}$ are linearly independent over ${\rm Mat}_2(\C)$.

\item The following equations hold:
\begin{align}
(Z-\theta_i)r_i &=r_{i+1} \qquad (0\leq i\leq n-t_2-t_3-1),
\qquad 
(Z-\theta_{n-t_2-t_3}) r_{n-t_2-t_3}=0,
\label{e:n>=t2+t3(odd)-1}
\\
(X-\theta_i^*) r_i &=\varphi_i r_{i-1} \qquad (1\leq i\leq n-t_2-t_3),
\qquad 
(X-\theta_0^*) r_0=0,
\label{e:n>=t2+t3(odd)-2}
\end{align}
where
\begin{align*}
\theta_i &=(-1)^i
\textstyle(
k_2-k_1-i-\frac{1}{2}
)
\qquad 
(0\leq i\leq n-t_2-t_3),
\\
\theta_i^* &=(-1)^{i+1}
\textstyle(
k_3+k_2+n-i+\frac{1}{2}
)
\qquad 
(0\leq i\leq n-t_2-t_3),
\\
\varphi_i
&=
\left\{
\begin{array}{ll}
i(i-2k_3-2k_2-n-1)
\qquad
\hbox{if $i$ is even},
\\
(i+2k_1)(i-2k_2-n-1)
\qquad
\hbox{if $i$ is odd}
\end{array}
\right.
\qquad 
(1\leq i\leq n-t_2-t_3).
\end{align*}

\item $\D(r_i)=0$ for all $i=0,1,\ldots,n-t_2-t_3$. 
\end{enumerate}
\end{prop}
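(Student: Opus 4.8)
The plan is to establish the three assertions exactly as in the proofs of Propositions \ref{prop:n>=t1+t3(odd)} and \ref{prop:n>=t1+t2(odd)}, of which the present statement is the third, cyclically shifted, member: the roles played there by $(x_1,x_2,x_3)$ and by the pair of generators $(X,Y)$ are now taken over by $(x_3,x_1,x_2)$ and $(Z,X)$. In principle one could instead transport the construction of Proposition \ref{prop:n>=t1+t3(odd)} along the coordinate permutation that cyclically interchanges the three axes together with the three multiplicity parameters, but it is cleaner to carry out the verification directly so that the constants $\theta_i,\theta_i^*,\varphi_i$ are matched to the normalizations chosen for the $r_i$.

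For (i) I would first record the triangularity built into the definition: the coefficient of $x_1^h$ in $r_i$ vanishes whenever $i<h\le n-t_2-t_3$. Extracting the coefficient of the monomial $x_3^{\,n-t_2-i}x_1^{i}x_2^{t_2}$ in $r_i$ (obtained by taking $h=i$ and $j=n-t_2-i$ in the defining sum, where the binomial factor is $1$), one finds it to be a nonzero scalar — namely $\pm\prod_{h=n-t_2-i+1}^{n-t_2}m_3^{(h)}\cdot\prod_{h=t_2+1}^{n-t_3}m_2^{(h)}$ — times the identity matrix if $i$ is even and times $\sigma_1\sigma_3$ if $i$ is odd. The second product is nonzero since every factor has index exceeding $t_2$, and the first is nonzero because $i\le n-t_2-t_3$ forces every index $h$ occurring in it to satisfy $h\ge t_3+1$; as the identity matrix and $\sigma_1\sigma_3$ are invertible, each of these leading coefficients is nonsingular. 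Triangularity together with the nonsingularity of the leading coefficients then yields that $\{r_i\}_{i=0}^{n-t_2-t_3}$ are linearly independent over ${\rm Mat}_2(\C)$.

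Part (ii) is the main computational step, and I expect it to be the chief obstacle. I would feed $r_i$ into the explicit formulas of Theorem \ref{thm:BImodule_Mn}(i) for the actions of $Z$ and of $X$, and collect, for each triple $(h,i,j)$, the coefficient of $[x_3]^{n-t_2}_j[x_1]^i_h[x_2]^{n-t_3}_{n-h-j}$ in $(Z-\theta_i)r_i$ and in $(X-\theta_i^*)r_i$. Each such coefficient is a sum of three contributions; using the Pascal recursion for the binomial factors $\binom{\lfloor\,\cdot\,\rfloor+\lfloor\,\cdot\,\rfloor}{\lfloor\,\cdot\,\rfloor}$ and the Pauli identities $\sigma_a^2=1$, $\sigma_a\sigma_b=-\sigma_b\sigma_a$, $\sigma_1\sigma_3=-(-1)^{1/2}\sigma_2$ encoded in the definition of the $c_{h,i,j}$, one checks that the three contributions collapse to the single term $c_{h,i+1,j}$ (respectively $c_{h,i-1,j}$) carrying precisely the coefficient recorded in $\theta_i$, $\varphi_i$; the boundary cases $h=i$ (the top of the $Z$-ladder, which must produce $0$) and the extreme values of $j$ are treated separately, just as for the $p_i$. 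The sign accounting governed by $(-1)^{h/2}$, $(-1)^{j/2}$ and the parities of $h,i,j$ runs parallel to that in Proposition \ref{prop:n>=t1+t3(odd)}, and the upshot is the three-term ladder (\ref{e:n>=t2+t3(odd)-1})--(\ref{e:n>=t2+t3(odd)-2}).

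For (iii) I would check by a direct computation that $\D(r_0)=0$: since $r_0$ involves no $x_1$, one has $T_1r_0=0$, hence $\D(r_0)=\D(x_1)(r_0)$, and the vanishing of $\D(x_1)(r_0)$ follows from the identity $T_3\big([x_3]^m_j\big)=[x_3]^{m}_{j-1}$, its analogue for $T_2$, and the anticommutation $\{\sigma_2,\sigma_3\}=0$. Once $\D(r_0)=0$, the relation $(Z-\theta_i)r_i=r_{i+1}$ from (ii) — applied columnwise, i.e.\ to $r_i\cdot v$ for each $v\in\C^2$ — together with the fact from Theorem \ref{thm:BImodule_Mn}(i) that $Z$ preserves $\ker(\D|_{\C^2\otimes\R[x_1,x_2,x_3]_n})$, gives $\D(r_i)=0$ for all $i$ by induction. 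The one genuinely delicate point is (ii): the required equality is a finite but intricate identity in ${\rm Mat}_2(\C)[x_1,x_2,x_3]$ whose verification is a case analysis on parities and boundary indices, and the principal hazard is a sign slip in reconciling the Clifford conventions (the $(-1)^{1/2}$ and $(-1)^{3/2}$ entries of $\sigma_2$) with the chosen normalization of the $r_i$; modulo this, I expect the argument to go through exactly as in Propositions \ref{prop:n>=t1+t3(odd)} and \ref{prop:n>=t1+t2(odd)}.
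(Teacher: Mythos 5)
Your proposal follows the paper's proof essentially verbatim: (i) triangularity in the $x_1$-degree plus nonsingularity of the leading coefficient (your $\sigma_1\sigma_3$ and the paper's $(-1)^{3/2}\sigma_2$ are the same matrix); (ii) extraction of the coefficient of each $[x_3]^{n-t_2}_j[x_1]^i_h[x_2]^{n-t_3}_{n-h-j}$ in $(Z-\theta_i)r_i$ and $(X-\theta_i^*)r_i$ and collapse of the three resulting contributions to a single $c_{h,i\pm1,j}$ term via the binomial recursion and Clifford identities; (iii) direct verification of $\D(r_0)=0$ propagated up the $Z$-ladder using (ii). The only difference is granularity: the paper writes the three summands explicitly before asserting their sum simplifies, whereas you describe the plan without recording them, but the method is identical.
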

\begin{proof}
(i): Let $i$ be an integer with $0\leq i\leq n-t_2-t_3$.
By construction the coefficient of $x_1^h$ in $r_i$ is zero for all integers $h$ with $i<h\leq n-t_2-t_3$. 
Observe that the coefficient of $x_3^{n-t_2-i}x_1^i x_2^{t_2}$ in $r_i$ is 
\begin{gather}\label{coeff:n>=t2+t3(odd)}
(-1)^{\frac{n-t_2}{2}}
\prod_{h=n-t_2-i+1}^{n-t_2} m_3^{(h)}
\prod_{h=t_2+1}^{n-t_3} m_2^{(h)}
\times 
\left\{
\begin{array}{ll}
(-1)^\frac{3}{2}\sigma_2
\qquad 
&\hbox{if $i$ is odd},
\\
\begin{pmatrix}
1 &0
\\
0 &1
\end{pmatrix} \qquad 
&\hbox{if $i$ is even}.
\end{array}
\right.
\end{gather}
Clearly $\prod\limits_{h=t_2+1}^{n-t_3} m_2^{(h)}$ is nonzero.
Since $i\leq n-t_2-t_3$ the scalar $\prod\limits_{h=n-t_2-i+1}^{n-t_2} m_3^{(h)}$ is nonzero. 
Hence the matrix (\ref{coeff:n>=t2+t3(odd)}) is nonsingular. By the above comments the part (i) follows.

(ii): 
Let $i$ be a nonnegative integer and let $j$ be an integer with $t_3\leq j\leq n-t_2-i-1$. 
Using Theorem \ref{thm:BImodule_Mn}(i) yields that the coefficient of 
$[x_3]^{n-t_2}_j [x_1]^{i+1}_{i+1} [x_2]^{n-t_3}_{n-i-j-1} $ in $(Z-\theta_i)r_i$ is equal to 
$$
(-1)^{\frac{i-j+1}{2}}\sigma_3 c_{i,i,j}=(-1)^{\frac{i+j+1}{2}}c_{i+1,i+1,j}.
$$
Now let $h,i,j$ denote three integers with $0\leq h\leq i\leq n-t_2-t_3$ and $t_3\leq j\leq n-t_2-h$. Using Theorem \ref{thm:BImodule_Mn}(i) yields that the coefficient of 
$[x_3]^{n-t_2}_j [x_1]^i_h [x_2]^{n-t_3}_{n-h-j}$ in $(Z-\theta_i)r_i$ is equal to $(-1)^{\frac{h-j}{2}}$ times the sum of 
\begin{align*}
&
m_1^{(h)}
{\left\lfloor \frac{n-t_2-i-j}{2}\right\rfloor+\left\lfloor\frac{i-h+1}{2}\right\rfloor \choose \left\lfloor\frac{i-h+1}{2}\right\rfloor}
 \sigma_3 c_{h-1,i,j},
\\
&
m_2^{(n-h-j)}
{\left\lfloor \frac{n-t_2-i-j}{2}\right\rfloor+\left\lfloor\frac{i-h-1}{2}\right\rfloor \choose \left\lfloor\frac{i-h-1}{2}\right\rfloor}
 \sigma_3 c_{h+1,i,j},
\\
&
{\textstyle(
(-1)^{h+1} k_1
+
(-1)^{h+j} k_2
-
(-1)^j\theta_i
-
\frac{1}{2}
)}
{\left\lfloor \frac{n-t_2-i-j}{2}\right\rfloor+\left\lfloor\frac{i-h}{2}\right\rfloor \choose \left\lfloor\frac{i-h}{2}\right\rfloor}
c_{h,i,j}.
\end{align*}
It is straightforward to verify that the sum of the above three terms is equal to 
$$
(-1)^j
m_1^{(i+1)}
{\left\lfloor \frac{n-t_2-i-j-1}{2}\right\rfloor+\left\lfloor\frac{i-h+1}{2}\right\rfloor \choose \left\lfloor\frac{i-h+1}{2}\right\rfloor}
c_{h,i+1,j}. 
$$
By the above comments the equations given in (\ref{e:n>=t2+t3(odd)-1}) follow.

Using Theorem \ref{thm:BImodule_Mn}(i) yields that the coefficient of 
$[x_3]^{n-t_2}_j [x_1]^i_h [x_2]^{n-t_3}_{n-h-j}$ in $(X-\theta_i^*)r_i$ is equal to $(-1)^{\frac{j-h}{2}+1}$ times the sum of 
\begin{align*}
&
m_3^{(j)}
{\left\lfloor \frac{n-t_2-i-j+1}{2}\right\rfloor+\left\lfloor\frac{i-h}{2}\right\rfloor \choose \left\lfloor\frac{i-h}{2}\right\rfloor}
 \sigma_1 c_{h,i,j-1},
\\
&
m_2^{(n-h-j)}
{\left\lfloor \frac{n-t_2-i-j-1}{2}\right\rfloor+\left\lfloor\frac{i-h}{2}\right\rfloor \choose \left\lfloor\frac{i-h}{2}\right\rfloor}
 \sigma_1 c_{h,i,j+1},
\\
&
{\textstyle(
(-1)^{j} k_3
-
(-1)^{h+j} k_2
+
(-1)^h
\theta_i^*
+
\frac{1}{2}
)}
{\left\lfloor \frac{n-t_2-i-j}{2}\right\rfloor+\left\lfloor\frac{i-h}{2}\right\rfloor \choose \left\lfloor\frac{i-h}{2}\right\rfloor}
c_{h,i,j}.
\end{align*}
It is straightforward to verify that the sum of the above three terms is equal to $(-1)^{h+1}$ times 
$$
\left\{
\begin{array}{ll}
\begin{pmatrix}
0 &0
\\
0 &0
\end{pmatrix}
\qquad &\hbox{if $h=i$},
\\
(i-2k_3-2k_2-n-1)
\displaystyle{\left\lfloor \frac{n-t_2-i-j+1}{2}\right\rfloor+\left\lfloor\frac{i-h-1}{2}\right\rfloor \choose \left\lfloor\frac{i-h-1}{2}\right\rfloor}
c_{h,i-1,j}
\qquad &\hbox{if $h<i$ and $i$ is even},
\\
(i-2k_2-n-1)
\displaystyle{\left\lfloor \frac{n-t_2-i-j+1}{2}\right\rfloor+\left\lfloor\frac{i-h-1}{2}\right\rfloor \choose \left\lfloor\frac{i-h-1}{2}\right\rfloor}
c_{h,i-1,j}
\qquad &\hbox{if $h<i$ and $i$ is odd}.
\end{array}
\right.
$$
By the above comments the equations given in (\ref{e:n>=t2+t3(odd)-2}) follow.

(iii): It is routine to verify that $\D(r_0)=0$. Combined with (ii) the statement (iii) follows.
\end{proof}

\begin{lem}\label{lem:basis:n>=t2+t3(odd)}
Suppose that $n$ is an odd integer with $n\geq t_1+t_2+t_3$. 
Let $\{r_i\}_{i=0}^{n-t_2-t_3}$ be as in Proposition \ref{prop:n>=t2+t3(odd)}. Then the following hold:
\begin{enumerate}
\item $\M_n(x_1)\cap \M_n(x_2)\cap \M_n(x_3)$ has the basis 
\begin{align}
r_i\cdot 
\begin{pmatrix}
1
\\
0
\end{pmatrix}
\otimes
1 
\qquad 
(t_1\leq i\leq n-t_2-t_3), 
\label{M123basis:n>=t2+t3(odd)-1}
\\ 
r_i\cdot 
\begin{pmatrix}
0
\\
1
\end{pmatrix}
\otimes
1 
\qquad 
(t_1\leq i\leq n-t_2-t_3).
\label{M123basis:n>=t2+t3(odd)-2}
\end{align}

\item $\M_n/\M_n(x_1)$ has the basis
\begin{align}
r_i\cdot 
\begin{pmatrix}
1
\\
0
\end{pmatrix}
\otimes
1 
+
\M_n(x_1)
\qquad 
(0\leq i\leq t_1-1), 
\label{M/M1basis:n>=t2+t3(odd)-1}
\\ 
r_i\cdot 
\begin{pmatrix}
0
\\
1
\end{pmatrix}
\otimes
1 
+
\M_n(x_1)
\qquad 
(0\leq i\leq t_1-1).
\label{M/M1basis:n>=t2+t3(odd)-2}
\end{align}
\end{enumerate}
\end{lem}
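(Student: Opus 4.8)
The plan is to follow the pattern of Lemmas~\ref{lem:basis:n>=t1+t3(odd)} and~\ref{lem:basis:n>=t1+t2(odd)}, now using the family $\{r_i\}_{i=0}^{n-t_2-t_3}$ furnished by Proposition~\ref{prop:n>=t2+t3(odd)} in the roles played there by $\{p_i\}$ and $\{q_i\}$. Everything rests on the three facts about the $r_i$ already available: their linear independence over ${\rm Mat}_2(\C)$ and the nonsingularity of the leading coefficient matrix, both from Proposition~\ref{prop:n>=t2+t3(odd)}(i), and the monogenicity $\D(r_i)=0$ from Proposition~\ref{prop:n>=t2+t3(odd)}(iii).

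For part (i) I would first note that, for $t_1\le i\le n-t_2-t_3$, each $r_i$ is by construction a linear combination of terms whose monomial part is $x_3^{\,j}x_1^{\,h}x_2^{\,n-h-j}$ with $t_3\le j\le n-t_2-h$ and $0\le h\le i$. The constraint $j\le n-t_2-h$ forces the $x_2$-exponent $n-h-j\ge t_2$, the constraint $j\ge t_3$ gives the $x_3$-exponent $\ge t_3$, and for every $h<t_1$ the factor $[x_1]^i_h$ contains $m_1^{(t_1)}=t_1+2k_1=0$ (legitimate because $t_1\le i$), so those terms vanish and $r_i$ has all $x_1$-exponents $\ge t_1$. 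Hence
$$
r_i\cdot\begin{pmatrix}1\\0\end{pmatrix}\otimes 1,\quad r_i\cdot\begin{pmatrix}0\\1\end{pmatrix}\otimes 1\ \in\ \C^2\otimes\Bigl(\bigoplus_{\ell=t_1}^{n}x_1^\ell\cdot\R[x_2,x_3]_{n-\ell}\ \cap\ \bigoplus_{\ell=t_2}^{n}x_2^\ell\cdot\R[x_1,x_3]_{n-\ell}\ \cap\ \bigoplus_{\ell=t_3}^{n}x_3^\ell\cdot\R[x_1,x_2]_{n-\ell}\Bigr),
$$
and together with $\D(r_i)=0$ this puts all these vectors in $\M_n(x_1)\cap\M_n(x_2)\cap\M_n(x_3)$. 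Their linear independence is immediate from Proposition~\ref{prop:n>=t2+t3(odd)}(i), and they number $2(n-t_1-t_2-t_3+1)$, which by Proposition~\ref{prop:dimM(x123)} (applicable since $n\ge t_1+t_2+t_3$) equals $\dim\bigl(\M_n(x_1)\cap\M_n(x_2)\cap\M_n(x_3)\bigr)$; so they form a basis.

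For part (ii), the cosets $r_i\cdot v\otimes 1+\M_n(x_1)$ with $v\in\{\binom{1}{0},\binom{0}{1}\}$ and $0\le i\le t_1-1$ are well defined because $\D(r_i)=0$ gives $r_i\cdot v\otimes 1\in\M_n$. To see they are linearly independent modulo $\M_n(x_1)$, suppose $\sum_{i=0}^{t_1-1}r_i\,w_i\otimes 1\in\M_n(x_1)$ with $w_i\in\C^2$ not all zero, i.e.\ the sum has all $x_1$-exponents $\ge t_1$; letting $i_0$ be maximal with $w_{i_0}\ne0$ and using that $r_i$ has $x_1$-degree $\le i$, the coefficient of $x_3^{\,n-t_2-i_0}x_1^{\,i_0}x_2^{\,t_2}$ in the sum equals the corresponding coefficient of $r_{i_0}$ applied to $w_{i_0}$, which is a nonsingular matrix times $w_{i_0}$ by Proposition~\ref{prop:n>=t2+t3(odd)}(i); since $i_0<t_1$ this coefficient must vanish, forcing $w_{i_0}=0$, a contradiction. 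Finally $\dim\M_n/\M_n(x_1)=2(n+1)-2(n-t_1+1)=2t_1$ by Theorem~\ref{thm:dimM=2(n+1)} and Proposition~\ref{prop:dimM(x1)}(i), which matches the count of $2t_1$ cosets, so they form a basis of $\M_n/\M_n(x_1)$.

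The only delicate point, and the one I expect to be the main obstacle, is the indexing bookkeeping in part (i): one has to check that $t_1\le i$ really makes $m_1^{(t_1)}$ occur as a factor of $[x_1]^i_h$ for every $h<t_1$, and that $n\ge t_1+t_2+t_3$ indeed guarantees $n-t_3\ge t_2$ so the $x_2$-exponents fall in the correct range. Once this is verified, both statements reduce to dimension counts identical in spirit to the proofs of Lemmas~\ref{lem:basis:n>=t1+t3(odd)} and~\ref{lem:basis:n>=t1+t2(odd)}.
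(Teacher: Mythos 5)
Your proposal is correct and takes essentially the same approach as the paper: membership in the relevant subspaces is deduced from the shape of $r_i$ together with Proposition~\ref{prop:n>=t2+t3(odd)}(iii), linear independence comes from Proposition~\ref{prop:n>=t2+t3(odd)}(i), and the conclusion follows by the dimension counts in Propositions~\ref{prop:dimM(x1)}(i), \ref{prop:dimM(x123)} and Theorem~\ref{thm:dimM=2(n+1)}. The only difference is that you spell out two steps the paper leaves implicit---that $[x_1]^i_h$ vanishes for $h<t_1\le i$ because $m_1^{(t_1)}=0$, and the leading-coefficient argument showing the cosets are independent modulo $\M_n(x_1)$---both of which are correct and worth making explicit.
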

\begin{proof}
(i): The linear independence of  (\ref{M123basis:n>=t2+t3(odd)-1}) and (\ref{M123basis:n>=t2+t3(odd)-2}) follows from Proposition \ref{prop:n>=t2+t3(odd)}(i). Since (\ref{M123basis:n>=t2+t3(odd)-1}) and (\ref{M123basis:n>=t2+t3(odd)-2}) are in 
$
\C^2\otimes\left(\bigoplus\limits_{i=t_1}^n x_1^i\cdot \R[x_2,x_3]_{n-i}
\cap
\bigoplus\limits_{i=t_2}^n x_2^i\cdot \R[x_1,x_3]_{n-i}
\cap 
\bigoplus\limits_{i=t_3}^n x_3^i\cdot \R[x_1,x_2]_{n-i}
\right)$,  
it follows from Proposition \ref{prop:n>=t2+t3(odd)}(iii) that (\ref{M123basis:n>=t2+t3(odd)-1}) and (\ref{M123basis:n>=t2+t3(odd)-2}) are in $\M_n(x_1)\cap \M_n(x_2)\cap \M_n(x_3)$. Combined with Proposition \ref{prop:dimM(x123)} the statement (i) follows.

(ii): The linear independence of  (\ref{M/M1basis:n>=t2+t3(odd)-1}) and (\ref{M/M1basis:n>=t2+t3(odd)-2}) follows from Proposition \ref{prop:n>=t2+t3(odd)}(i). 
By Proposition \ref{prop:n>=t2+t3(odd)}(iii) the cosets (\ref{M/M1basis:n>=t2+t3(odd)-1}) and (\ref{M/M1basis:n>=t2+t3(odd)-2}) are in 
$
\M_n/\M_n(x_1).
$  
By  Theorem \ref{thm:dimM=2(n+1)} and Proposition \ref{prop:dimM(x1)}(i) the dimension of $\M_n/\M_n(x_1)$ is $2t_1$. The statement (ii) follows.
\end{proof}

\begin{lem}\label{lem:basis:n+1>=t1+t2+t3(odd)}
Suppose that $n$ is an odd integer with $n\geq t_1+t_2+t_3$. 
Let 
$$
\{p_i\}_{i=0}^{n-t_1-t_3},
\qquad 
\{q_i\}_{i=0}^{n-t_1-t_2},
\qquad 
\{r_i\}_{i=0}^{n-t_2-t_3}
$$ 
be as in Propositions \ref{prop:n>=t1+t3(odd)}, \ref{prop:n>=t1+t2(odd)}, \ref{prop:n>=t2+t3(odd)} respectively. Then the following hold:
\begin{enumerate}
\item $\M_n(x_1)/\M_n(x_1)\cap \M_n(x_2)\cap \M_n(x_3)$ has the basis 
\begin{align}
p_i\cdot 
\begin{pmatrix}
1
\\
0
\end{pmatrix}
\otimes
1
+
\M_n(x_1)\cap \M_n(x_2)\cap \M_n(x_3)
\qquad 
(0\leq i\leq t_2-1), 
\label{M1/M123basis:n+1>=t1+t2+t3(odd)-1}
\\ 
p_i\cdot 
\begin{pmatrix}
0
\\
1
\end{pmatrix}
\otimes
1 
+
\M_n(x_1)\cap \M_n(x_2)\cap \M_n(x_3)
\qquad 
(0\leq i\leq t_2-1),
\label{M1/M123basis:n+1>=t1+t2+t3(odd)-2}
\\
q_i\cdot 
\begin{pmatrix}
1
\\
0
\end{pmatrix}
\otimes
1
+
\M_n(x_1)\cap \M_n(x_2)\cap \M_n(x_3)
\qquad 
(0\leq i\leq t_3-1), 
\label{M1/M123basis:n+1>=t1+t2+t3(odd)-3}
\\
q_i\cdot 
\begin{pmatrix}
0
\\
1
\end{pmatrix}
\otimes
1 
+
\M_n(x_1)\cap \M_n(x_2)\cap \M_n(x_3)
\qquad 
(0\leq i\leq t_3-1).
\label{M1/M123basis:n+1>=t1+t2+t3(odd)-4}
\end{align}

\item $\M_n(x_2)/\M_n(x_1)\cap \M_n(x_2)\cap \M_n(x_3)$ has the basis 
\begin{align}
q_i\cdot 
\begin{pmatrix}
1
\\
0
\end{pmatrix}
\otimes
1
+
\M_n(x_1)\cap \M_n(x_2)\cap \M_n(x_3)
\qquad 
(0\leq i\leq t_3-1), 
\label{M2/M123basis:n+1>=t1+t2+t3(odd)-1}
\\ 
q_i\cdot 
\begin{pmatrix}
0
\\
1
\end{pmatrix}
\otimes
1 
+
\M_n(x_1)\cap \M_n(x_2)\cap \M_n(x_3)
\qquad 
(0\leq i\leq t_3-1),
\label{M2/M123basis:n+1>=t1+t2+t3(odd)-2}
\\
r_i\cdot 
\begin{pmatrix}
1
\\
0
\end{pmatrix}
\otimes
1
+
\M_n(x_1)\cap \M_n(x_2)\cap \M_n(x_3)
\qquad 
(0\leq i\leq t_1-1), 
\label{M2/M123basis:n+1>=t1+t2+t3(odd)-3}
\\
r_i\cdot 
\begin{pmatrix}
0
\\
1
\end{pmatrix}
\otimes
1 
+
\M_n(x_1)\cap \M_n(x_2)\cap \M_n(x_3)
\qquad 
(0\leq i\leq t_1-1).
\label{M2/M123basis:n+1>=t1+t2+t3(odd)-4}
\end{align}

\item $\M_n(x_3)/\M_n(x_1)\cap \M_n(x_2)\cap \M_n(x_3)$ has the basis 
\begin{align}
r_i\cdot 
\begin{pmatrix}
1
\\
0
\end{pmatrix}
\otimes
1
+
\M_n(x_1)\cap \M_n(x_2)\cap \M_n(x_3)
\qquad 
(0\leq i\leq t_1-1), 
\label{M3/M123basis:n+1>=t1+t2+t3(odd)-1}
\\ 
r_i\cdot 
\begin{pmatrix}
0
\\
1
\end{pmatrix}
\otimes
1 
+
\M_n(x_1)\cap \M_n(x_2)\cap \M_n(x_3)
\qquad 
(0\leq i\leq t_1-1),
\label{M3/M123basis:n+1>=t1+t2+t3(odd)-2}
\\
p_i\cdot 
\begin{pmatrix}
1
\\
0
\end{pmatrix}
\otimes
1
+
\M_n(x_1)\cap \M_n(x_2)\cap \M_n(x_3)
\qquad 
(0\leq i\leq t_2-1), 
\label{M3/M123basis:n+1>=t1+t2+t3(odd)-3}
\\
p_i\cdot 
\begin{pmatrix}
0
\\
1
\end{pmatrix}
\otimes
1 
+
\M_n(x_1)\cap \M_n(x_2)\cap \M_n(x_3)
\qquad 
(0\leq i\leq t_2-1).
\label{M3/M123basis:n+1>=t1+t2+t3(odd)-4}
\end{align}
\end{enumerate}
\end{lem}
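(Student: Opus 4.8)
The plan is to deduce all three bases from explicit direct sum decompositions of the $\BI$-modules $\M_n(x_1),\M_n(x_2),\M_n(x_3)$ that refine the two-term splittings of Theorem \ref{thm:decM} and isolate $W:=\M_n(x_1)\cap\M_n(x_2)\cap\M_n(x_3)$ as one summand. I will treat part (i) in detail; parts (ii) and (iii) are then obtained by the same argument after the cyclic relabelling $x_1\mapsto x_2\mapsto x_3\mapsto x_1$, $t_1\mapsto t_2\mapsto t_3\mapsto t_1$, $\{p_i\}\mapsto\{q_i\}\mapsto\{r_i\}\mapsto\{p_i\}$, invoking the correspondingly permuted Propositions \ref{prop:n>=t1+t3(odd)}, \ref{prop:n>=t1+t2(odd)}, \ref{prop:n>=t2+t3(odd)} and dimension formulas.

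First I would prove the decomposition
\[
\M_n(x_1)=W\ \oplus\ \bigl(\M_n(x_1)\cap\M_n(x_2)\cap\N_n(x_3)\bigr)\ \oplus\ \bigl(\M_n(x_1)\cap\N_n(x_2)\bigr).
\]
Each of $\M_n(x_2),\N_n(x_2),\M_n(x_3),\N_n(x_3)$ is $\M_n$ intersected with a span of monomials, hence stable under the $x_1$-grading. Given $f\in\M_n(x_1)$, write $f=f_1+f_2$, where $f_1$ (resp.\ $f_2$) is the sum of the monomials of $f$ whose $x_2$-exponent is $\ge t_2$ (resp.\ $<t_2$). The mechanism in the proof of Theorem \ref{thm:decM}(ii)---which relies on $T_2(x_2^{t_2})=0$, so that $\D$ carries the high part into the high part and the low part into the low part---gives $\D f_1=\D f_2=0$; since the split leaves $x_1$-exponents untouched, $f_1\in\M_n(x_1)\cap\M_n(x_2)$ and $f_2\in\M_n(x_1)\cap\N_n(x_2)$, so $\M_n(x_1)=(\M_n(x_1)\cap\M_n(x_2))\oplus(\M_n(x_1)\cap\N_n(x_2))$. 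Running the same reasoning inside $\M_n(x_1)\cap\M_n(x_2)$, now splitting by $x_3$-exponent at $t_3$, yields $\M_n(x_1)\cap\M_n(x_2)=W\oplus(\M_n(x_1)\cap\M_n(x_2)\cap\N_n(x_3))$, and the two steps combine to the displayed identity.

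Next I would identify the two complementary summands using the families of Propositions \ref{prop:n>=t1+t3(odd)} and \ref{prop:n>=t1+t2(odd)}. Reading off the monomials shows that every term of $p_i$ carries $x_1$-exponent $\ge t_1$ and $x_2$-exponent $\le i$; hence for $0\le i\le t_2-1$ the $2t_2$ vectors $p_i\cdot\binom{1}{0}\otimes1$ and $p_i\cdot\binom{0}{1}\otimes1$ lie in $\M_n(x_1)\cap\N_n(x_2)$ (they belong to $\M_n$ since $\D(p_i)=0$ by Proposition \ref{prop:n>=t1+t3(odd)}(iii)) and are linearly independent by Proposition \ref{prop:n>=t1+t3(odd)}(i); since the decomposition above together with Propositions \ref{prop:dimM(x1)}(i) and \ref{prop:dimM(x12)}(i) gives $\dim(\M_n(x_1)\cap\N_n(x_2))=2(n-t_1+1)-2(n-t_1-t_2+1)=2t_2$, they form a basis. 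Likewise every term of $q_i$ carries $x_1$-exponent $\ge t_1$, $x_2$-exponent $\ge t_2$ and $x_3$-exponent $\le i$; hence for $0\le i\le t_3-1$ the $2t_3$ vectors $q_i\cdot\binom{1}{0}\otimes1$ and $q_i\cdot\binom{0}{1}\otimes1$ lie in $\M_n(x_1)\cap\M_n(x_2)\cap\N_n(x_3)$, are linearly independent by Proposition \ref{prop:n>=t1+t2(odd)}(i), and since Propositions \ref{prop:dimM(x12)}(i) and \ref{prop:dimM(x123)} give $\dim(\M_n(x_1)\cap\M_n(x_2)\cap\N_n(x_3))=2(n-t_1-t_2+1)-2(n-t_1-t_2-t_3+1)=2t_3$, they too form a basis. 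Passing to the quotient $\M_n(x_1)/W$, the union of these two bases is a basis, which is precisely the list in part (i).

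The only step that is not routine bookkeeping is the refined decomposition: that chopping an element of $\M_n(x_1)$ by $x_2$-degree, and then its $\M_n(x_2)$-component by $x_3$-degree, keeps each piece in $\M_n$. This is exactly the phenomenon already exploited in Theorem \ref{thm:decM}---the vanishing $T_a(x_a^{t_a})=0$ at the threshold exponent $t_a$ stops $\D$ from connecting the two parts, so the equation $\D f=0$ splits accordingly. Everything after that is forced by counting dimensions, and parts (ii) and (iii) follow by the same argument upon cycling the indices $1\to2\to3\to1$.
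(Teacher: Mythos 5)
Your proof is correct, and it uses the same basic ingredients as the paper's (linear independence of the $p_i$'s and $q_i$'s, plus the dimension formulas of Propositions \ref{prop:dimM(x1)}, \ref{prop:dimM(x12)}, \ref{prop:dimM(x123)}), but it packages them differently. The paper's proof is terse: it observes the cosets lie in the quotient, asserts that their linear independence ``follows from Propositions \ref{prop:n>=t1+t3(odd)}(i) and \ref{prop:n>=t1+t2(odd)}(i),'' and then counts $\dim\bigl(\M_n(x_1)/\M_n(x_1)\cap\M_n(x_2)\cap\M_n(x_3)\bigr)=2(t_2+t_3)$. The cited propositions only give the separate independence of $\{p_i\}$ and of $\{q_i\}$ over ${\rm Mat}_2(\C)$; the fact that a mixed combination of the $p_i$'s ($0\le i\le t_2-1$) and $q_i$'s ($0\le i\le t_3-1$) cannot lie in $\M_n(x_1)\cap\M_n(x_2)\cap\M_n(x_3)$ without vanishing really rests on the $x_2$- and $x_3$-exponent supports of the two families, which the paper leaves implicit. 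Your explicit three-term direct sum
\[
\M_n(x_1)=\bigl(\M_n(x_1)\cap\M_n(x_2)\cap\M_n(x_3)\bigr)\oplus\bigl(\M_n(x_1)\cap\M_n(x_2)\cap\N_n(x_3)\bigr)\oplus\bigl(\M_n(x_1)\cap\N_n(x_2)\bigr),
\]
obtained by iterating the splitting mechanism of Theorem \ref{thm:decM} (the key point being that $T_a(x_a^{t_a})=0$, so $\D$ respects the grading cut at the threshold exponent) supplies exactly the missing separation: the low-index $p_i$'s sit in the last summand, the low-index $q_i$'s in the middle summand, each spanning it by the correct count $2t_2$ and $2t_3$, and the quotient by the first summand is immediate. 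This buys a more transparent and self-contained independence argument at the cost of establishing the refined decomposition; the paper's shorter version reaches the same conclusion once one trusts the implicit degree bookkeeping. Your cyclic relabelling for parts (ii) and (iii) also matches the paper's ``by similar arguments.''
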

\begin{proof}
(i): 
Observe that (\ref{M1/M123basis:n+1>=t1+t2+t3(odd)-1})--(\ref{M1/M123basis:n+1>=t1+t2+t3(odd)-4}) are in 
$
\M_n(x_1)/\M_n(x_1)\cap \M_n(x_2)\cap \M_n(x_3).
$ 
The linear independence of  (\ref{M1/M123basis:n+1>=t1+t2+t3(odd)-1})--(\ref{M1/M123basis:n+1>=t1+t2+t3(odd)-4}) follows from Propositions \ref{prop:n>=t1+t3(odd)}(i) and \ref{prop:n>=t1+t2(odd)}(i). 
By Propositions \ref{prop:dimM(x1)}(i) and \ref{prop:dimM(x123)} the dimension of $\M_n(x_1)/\M_n(x_1)\cap \M_n(x_2)\cap \M_n(x_3)$ is $2(t_2+t_3)$.
The statement (i) follows.

(ii): 
Observe that (\ref{M2/M123basis:n+1>=t1+t2+t3(odd)-1})--(\ref{M2/M123basis:n+1>=t1+t2+t3(odd)-4}) are in 
$
\M_n(x_2)/\M_n(x_1)\cap \M_n(x_2)\cap \M_n(x_3).
$ 
The linear independence of  (\ref{M2/M123basis:n+1>=t1+t2+t3(odd)-1})--(\ref{M2/M123basis:n+1>=t1+t2+t3(odd)-4}) follows from Propositions \ref{prop:n>=t1+t2(odd)}(i) and \ref{prop:n>=t2+t3(odd)}(i). 
By Propositions \ref{prop:dimM(x1)}(ii) and \ref{prop:dimM(x123)} the dimension of $\M_n(x_2)/\M_n(x_1)\cap \M_n(x_2)\cap \M_n(x_3)$ is $2(t_1+t_3)$.
The statement (ii) follows.

(iii): 
Observe that (\ref{M3/M123basis:n+1>=t1+t2+t3(odd)-1})--(\ref{M3/M123basis:n+1>=t1+t2+t3(odd)-4}) are in 
$
\M_n(x_3)/\M_n(x_1)\cap \M_n(x_2)\cap \M_n(x_3).
$ 
The linear independence of  (\ref{M3/M123basis:n+1>=t1+t2+t3(odd)-1})--(\ref{M3/M123basis:n+1>=t1+t2+t3(odd)-4}) follows from Propositions \ref{prop:n>=t2+t3(odd)}(i) and \ref{prop:n>=t1+t3(odd)}(i). 
By Propositions \ref{prop:dimM(x1)}(iii) and \ref{prop:dimM(x123)} the dimension of $\M_n(x_3)/\M_n(x_1)\cap \M_n(x_2)\cap \M_n(x_3)$ is $2(t_1+t_2)$.
The statement (iii) follows. 
\end{proof}

\begin{thm}\label{thm:n+1>=t1+t2+t3(odd)}
Suppose that $n$ is an odd integer with $n\geq t_1+t_2+t_3$. Then the following hold: 
\begin{enumerate}
\item The $\BI$-module $\M_n(x_1)\cap \M_n(x_2)\cap \M_n(x_3)$ is isomorphic to a direct sum of two copies of 
\begin{gather}\label{M123(odd)}
O_{n-t_1-t_2-t_3}
\textstyle
(
k_1+\frac{n+1}{2},
k_2+\frac{n+1}{2},
k_3+\frac{n+1}{2}
).
\end{gather}

\item The $\BI$-module $\M_n/\M_n(x_1)$ is isomorphic to 
a direct sum of two copies of 
\begin{gather}\label{M/M1(odd)}
O_{t_1-1}(-k_1-k_2-k_3-n-1,k_3,k_2).
\end{gather}

\item The $\BI$-module $\M_n/\M_n(x_2)$ is isomorphic to 
a direct sum of two copies of 
\begin{gather}\label{M/M2(odd)}
O_{t_2-1}(k_3,-k_1-k_2-k_3-n-1,k_1).
\end{gather}

\item The $\BI$-module $\M_n/\M_n(x_3)$ is isomorphic to 
a direct sum of two copies of 
\begin{gather}\label{M/M3(odd)}
O_{t_3-1}(k_2,k_1,-k_1-k_2-k_3-n-1).
\end{gather}

\item The $\BI$-module $\M_n(x_1)/\M_n(x_1)\cap \M_n(x_2)\cap \M_n(x_3)$ is isomorphic to a direct sum of two copies of (\ref{M/M2(odd)}) and (\ref{M/M3(odd)}).

\item The $\BI$-module $\M_n(x_2)/\M_n(x_1)\cap \M_n(x_2)\cap \M_n(x_3)$ is isomorphic to a direct sum of two copies of (\ref{M/M1(odd)}) and (\ref{M/M3(odd)}).

\item The $\BI$-module $\M_n(x_3)/\M_n(x_1)\cap \M_n(x_2)\cap \M_n(x_3)$ is isomorphic to a direct sum of two copies of (\ref{M/M1(odd)}) and (\ref{M/M2(odd)}).
\end{enumerate}
Moreover the $\BI$-modules (\ref{M123(odd)})--(\ref{M/M3(odd)}) are irreducible.
\end{thm}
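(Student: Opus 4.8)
The plan is to follow the pattern of the proofs in \S\ref{s:case1}--\S\ref{s:case3}: each of the modules occurring in (i)--(vii) already carries an explicit basis together with explicit formulas for the $X,Y,Z$-action (through Lemmas \ref{lem:basis:n>=t1+t3(odd)}--\ref{lem:basis:n+1>=t1+t2+t3(odd)} and Propositions \ref{prop:n>=t1+t3(odd)}--\ref{prop:n>=t2+t3(odd)}), so it remains only to recognize these actions in the normal forms of Proposition \ref{prop:Od}. For part (i) I would take the basis of $\M_n(x_1)\cap\M_n(x_2)\cap\M_n(x_3)$ given by Lemma \ref{lem:basis:n>=t1+t3(odd)}(i) and split the module as $V\oplus V'$, the two summands being built from the two standard basis vectors of $\C^2$ with $t_2\leq i\leq n-t_1-t_3$; Theorem \ref{thm:BImodule_Mn}(ii) shows $V\cong V'$. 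After reindexing $i\mapsto i-t_2$ (which is legitimate because $\varphi_{t_2}=0$ in Proposition \ref{prop:n>=t1+t3(odd)}(ii), so the recursions close at the bottom, while $(X-\theta_{n-t_1-t_3})p_{n-t_1-t_3}=0$ closes them at the top) the action of $X$ and $Y$ on $V$ is brought, after a change of basis and a twist by an element of $\{\pm1\}^2\rtimes\Sym_3$, into the form of Proposition \ref{prop:Od} with $d=n-t_1-t_2-t_3$; reading off $\theta_i,\theta_i^*,\varphi_i$, cross-checking with the central scalars of Theorem \ref{thm:BImodule_Mn}(ii), and using the irreducibility of the last paragraph together with Theorem \ref{thm:onto2_O} to remove the twist, identifies $V$ with (\ref{M123(odd)}). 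Parts (ii)--(iv) are handled in exactly the same way, using Lemmas \ref{lem:basis:n>=t2+t3(odd)}(ii), \ref{lem:basis:n>=t1+t3(odd)}(ii), \ref{lem:basis:n>=t1+t2(odd)}(ii) and the recursions of Propositions \ref{prop:n>=t2+t3(odd)}(ii), \ref{prop:n>=t1+t3(odd)}(ii), \ref{prop:n>=t1+t2(odd)}(ii) restricted to $0\leq i\leq t_k-1$, to identify $\M_n/\M_n(x_k)$ with two copies of (\ref{M/M1(odd)})--(\ref{M/M3(odd)}).

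For parts (v)--(vii) I would use Lemma \ref{lem:basis:n+1>=t1+t2+t3(odd)}. Each of the quotients $\M_n(x_k)/\M_n(x_1)\cap\M_n(x_2)\cap\M_n(x_3)$ is spanned by cosets coming from exactly two of the three families $\{p_i\},\{q_i\},\{r_i\}$, over index ranges $0\leq i\leq t_\ell-1$. Since $p_{t_2},q_{t_3},r_{t_1}$ lie in $\M_n(x_1)\cap\M_n(x_2)\cap\M_n(x_3)$ by Lemmas \ref{lem:basis:n>=t1+t3(odd)}(i), \ref{lem:basis:n>=t1+t2(odd)}(i), \ref{lem:basis:n>=t2+t3(odd)}(i), the recursions of the relevant propositions show that each of these two families spans a $\BI$-submodule of the quotient; the dimension count already recorded in Lemma \ref{lem:basis:n+1>=t1+t2+t3(odd)} then forces the quotient to be their direct sum. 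Finally, each such submodule is spanned by literally the same polynomials, obeying literally the same recursions with the same vanishing at the top index, as one of the quotients $\M_n/\M_n(x_j)$ treated in parts (ii)--(iv) (compare Lemmas \ref{lem:basis:n>=t1+t3(odd)}(ii), \ref{lem:basis:n>=t1+t2(odd)}(ii), \ref{lem:basis:n>=t2+t3(odd)}(ii)), hence is isomorphic to two copies of the appropriate module among (\ref{M/M1(odd)})--(\ref{M/M3(odd)}). This yields (v)--(vii).

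For the last assertion, note first that $n\geq t_1+t_2+t_3$ forces $t_1,t_2,t_3<\infty$, so $k_i=-t_i/2$ with each $t_i$ a positive odd integer. Substituting these values into the parameters of (\ref{M123(odd)})--(\ref{M/M3(odd)}), a direct computation using the hypothesis $n\geq t_1+t_2+t_3$ shows that for each module $O_d(a,b,c)$ on the list the four quantities $a+b+c$, $a-b-c$, $-a+b-c$, $-a-b+c$ avoid the set $\{\tfrac{d+1}{2}-i\mid i=2,4,\dots,d\}$ (in every case the index $i$ that the membership would force either exceeds $d$ or is negative). Hence Theorem \ref{thm:irr_O} gives the irreducibility of all of (\ref{M123(odd)})--(\ref{M/M3(odd)}), which in particular validates the appeals to Theorem \ref{thm:onto2_O} made above.

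\textbf{The main obstacle} is organizational rather than conceptual: correctly tracking the changes of basis and $\{\pm1\}^2\rtimes\Sym_3$-twists that bring Propositions \ref{prop:n>=t1+t3(odd)}--\ref{prop:n>=t2+t3(odd)} into the normal form of Proposition \ref{prop:Od} and hence pinning down the exact parameters $a,b,c$ (via the trace formula recorded after Proposition \ref{prop:Od} and Theorem \ref{thm:onto2_O}); verifying that the summands appearing in (v)--(vii) are genuinely $\BI$-submodules; and carrying out the elementary but delicate arithmetic with $t_1,t_2,t_3$ in the irreducibility check. Once the explicit modules $E_d$, $O_d$ and their properties from \S\ref{s:BImodule} are in hand, no genuinely new idea is needed beyond those of \S\ref{s:case1}--\S\ref{s:case3}.
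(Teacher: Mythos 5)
Your proposal is correct and follows essentially the same route as the paper: split each module (or quotient) into two (or four) summands via the explicit bases of Lemmas \ref{lem:basis:n>=t1+t3(odd)}--\ref{lem:basis:n+1>=t1+t2+t3(odd)}, read off the tridiagonal/bidiagonal action from Propositions \ref{prop:n>=t1+t3(odd)}--\ref{prop:n>=t2+t3(odd)}, match with Proposition \ref{prop:Od} up to a $\{\pm1\}^2\rtimes\Sym_3$-twist, then remove the twist via Theorems \ref{thm:irr_O} and \ref{thm:onto2_O}. The only (cosmetic) difference is that you spell out explicitly that $\varphi_{t_2}=0$, etc., and why the four quantities in Theorem \ref{thm:irr_O} avoid the forbidden set when $n\geq t_1+t_2+t_3$, whereas the paper leaves these routine verifications to the reader; both points check out.
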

\begin{proof}
(i):  Let $V$ and $V'$ denote the subspaces of $\M_n$ spanned by (\ref{M123basis:n>=t1+t3(odd)-1}) and (\ref{M123basis:n>=t1+t3(odd)-2}), respectively. It follows from Lemma \ref{lem:basis:n>=t1+t3(odd)}(i) that $\M_n(x_1)\cap \M_n(x_2)\cap \M_n(x_3)=V\oplus V'$. It follows from Theorem \ref{thm:BImodule_Mn}(ii) and Proposition \ref{prop:n>=t1+t3(odd)}(i) that $V$ and $V'$ are two isomorphic $\BI$-modules. 
Compared with Proposition \ref{prop:Od} both are isomorphic to 
\begin{gather}\label{M123'(odd)}
O_{n-t_1-t_2-t_3}
(
\textstyle
k_1+\frac{n+1}{2},
-k_2-\frac{n+1}{2},
-k_3-\frac{n+1}{2}
)^{(1,-1)}.
\end{gather}
Using Theorem \ref{thm:irr_O} yields that the $\BI$-module (\ref{M123'(odd)}) is irreducible. By Theorem \ref{thm:onto2_O} the $\BI$-module (\ref{M123'(odd)}) is isomorphic to (\ref{M123(odd)}). The statement (i) follows.

(ii): Let $V$ and $V'$ denote the subspaces of $\M_n/\M_n(x_1)$ spanned by (\ref{M/M1basis:n>=t2+t3(odd)-1}) and (\ref{M/M1basis:n>=t2+t3(odd)-2}), respectively. 
It follows from Lemma \ref{lem:basis:n>=t2+t3(odd)}(ii) that $\M_n/\M_n(x_1)=V\oplus V'$. It follows from Theorem \ref{thm:BImodule_Mn}(ii) and Proposition \ref{prop:n>=t2+t3(odd)}(ii) that $V$ and $V'$ are two isomorphic $\BI$-modules. 
Compared with Proposition \ref{prop:Od} both are isomorphic to 
\begin{gather}\label{M/M1'(odd)}
O_{t_1-1}(-k_2,-k_1-k_2-k_3-n-1,-k_3)^{((-1,1),(1\,2\,3))}.
\end{gather}
Using Theorem \ref{thm:irr_O} yields that the $\BI$-module (\ref{M/M1'(odd)}) is irreducible. By Theorem \ref{thm:onto2_O} the $\BI$-module (\ref{M/M1'(odd)}) is isomorphic to (\ref{M/M1(odd)}). The statement (ii) follows.

(iii): Let $V$ and $V'$ denote the subspaces of $\M_n/\M_n(x_2)$ spanned by (\ref{M/M2basis:n>=t1+t3(odd)-1}) and (\ref{M/M2basis:n>=t1+t3(odd)-2}), respectively. 
It follows from Lemma \ref{lem:basis:n>=t1+t3(odd)}(ii) that $\M_n/\M_n(x_2)=V\oplus V'$. It follows from Theorem \ref{thm:BImodule_Mn}(ii) and Proposition \ref{prop:n>=t1+t3(odd)}(ii) that $V$ and $V'$ are two isomorphic $\BI$-modules. 
Compared with Proposition \ref{prop:Od} both are isomorphic to 
\begin{gather}\label{M/M2'(odd)}
O_{t_2-1}(-k_3,-k_1-k_2-k_3-n-1,-k_1)^{(-1,1)}.
\end{gather}
Using Theorem \ref{thm:irr_O} yields that the $\BI$-module (\ref{M/M2'(odd)}) is irreducible. By Theorem \ref{thm:onto2_O} the $\BI$-module (\ref{M/M2'(odd)}) is isomorphic to (\ref{M/M2(odd)}). The statement (iii) follows.

(iv): Let $V$ and $V'$ denote the subspaces of $\M_n/\M_n(x_3)$ spanned by (\ref{M/M3basis:n>=t1+t2(odd)-1}) and (\ref{M/M3basis:n>=t1+t2(odd)-2}), respectively. 
It follows from Lemma \ref{lem:basis:n>=t1+t2(odd)}(ii) that $\M_n/\M_n(x_3)=V\oplus V'$. It follows from Theorem \ref{thm:BImodule_Mn}(ii) and Proposition \ref{prop:n>=t1+t2(odd)}(ii) that $V$ and $V'$ are two isomorphic $\BI$-modules. 
Compared with Proposition \ref{prop:Od} both are isomorphic to 
\begin{gather}\label{M/M3'(odd)}
O_{t_3-1}(-k_1,-k_1-k_2-k_3-n-1,-k_2)^{((-1,1),(1\,3\,2))}.
\end{gather}
Using Theorem \ref{thm:irr_O} yields that the $\BI$-module (\ref{M/M3'(odd)}) is irreducible. By Theorem \ref{thm:onto2_O} the $\BI$-module (\ref{M/M3'(odd)}) is isomorphic to (\ref{M/M3(odd)}). The statement (iv) follows.

(v): 
Let $V,V',W,W'$ denote the subspaces of $\M_n(x_1)/\M_n(x_1)\cap \M_n(x_2)\cap \M_n(x_3)$ spanned by (\ref{M1/M123basis:n+1>=t1+t2+t3(odd)-1})--(\ref{M1/M123basis:n+1>=t1+t2+t3(odd)-4}), respectively. 
It follows from Lemma \ref{lem:basis:n+1>=t1+t2+t3(odd)}(i) that 
$$
\M_n(x_1)/\M_n(x_1)\cap \M_n(x_2)\cap \M_n(x_3)=V\oplus V'\oplus W\oplus W'.
$$ 
Similar to the proof of Theorem \ref{thm:n+1>=t1+t2+t3(odd)}(iii) it follows that $V$ and $V'$ are isomorphic to the $\BI$-module (\ref{M/M2(odd)}). Similar to the proof of Theorem \ref{thm:n+1>=t1+t2+t3(odd)}(iv) it follows that that $W$ and $W'$ are isomorphic to the $\BI$-module (\ref{M/M3(odd)}). The statement (v) follows.

(vi): 
Let $V,V',W,W'$ denote the subspaces of $\M_n(x_1)/\M_n(x_1)\cap \M_n(x_2)\cap \M_n(x_3)$ spanned by (\ref{M2/M123basis:n+1>=t1+t2+t3(odd)-1})--(\ref{M2/M123basis:n+1>=t1+t2+t3(odd)-4}), respectively. 
It follows from Lemma \ref{lem:basis:n+1>=t1+t2+t3(odd)}(ii) that 
$$
\M_n(x_2)/\M_n(x_1)\cap \M_n(x_2)\cap \M_n(x_3)=V\oplus V'\oplus W\oplus W'.
$$ 
Similar to the proof of Theorem \ref{thm:n+1>=t1+t2+t3(odd)}(iv) it follows that $V$ and $V'$ are isomorphic to the $\BI$-module (\ref{M/M3(odd)}). Similar to the proof of Theorem \ref{thm:n+1>=t1+t2+t3(odd)}(ii) it follows that $W$ and $W'$ are isomorphic to the $\BI$-module (\ref{M/M1(odd)}). The statement (vi) follows.

(vii): 
Let $V,V',W,W'$ denote the subspaces of $\M_n(x_3)/\M_n(x_1)\cap \M_n(x_2)\cap \M_n(x_3)$ spanned by (\ref{M3/M123basis:n+1>=t1+t2+t3(odd)-1})--(\ref{M3/M123basis:n+1>=t1+t2+t3(odd)-4}), respectively. 
It follows from Lemma \ref{lem:basis:n+1>=t1+t2+t3(odd)}(iii) that 
$$
\M_n(x_3)/\M_n(x_1)\cap \M_n(x_2)\cap \M_n(x_3)=V\oplus V'\oplus W\oplus W'.
$$ 
Similar to the proof of Theorem \ref{thm:n+1>=t1+t2+t3(odd)}(ii) it follows that $V$ and $V'$ are isomorphic to the $\BI$-module (\ref{M/M1(odd)}). Similar to the proof of Theorem \ref{thm:n+1>=t1+t2+t3(odd)}(iii) it follows that $W$ and $W'$ are isomorphic to the $\BI$-module (\ref{M/M2(odd)}). The statement (vii) follows.
\end{proof}

\begin{prop}\label{prop:n>=t1+t3(even)}
Suppose that $n$ is an even integer with $n\geq t_1+t_3$. 
Let 
$$
p_i
=
\sum_{h=0}^i
(-1)^{-\frac{h}{2}}
\sum_{j=t_1}^{n-t_3-h}
(-1)^{\frac{j}{2}}
{\left\lfloor \frac{n-t_3-i-j}{2}\right\rfloor+\left\lfloor\frac{i-h}{2}\right\rfloor \choose \left\lfloor\frac{i-h}{2}\right\rfloor}
c_{h,i,j}
\otimes
[x_1]^{n-t_3}_j[x_2]^i_h[x_3]^{n-t_1}_{n-h-j}
$$
for all $i=0,1,\ldots,n-t_1-t_3$ where
\begin{align*}
c_{h,i,j}&=
\left\{
\begin{array}{ll}
\sigma_2^j \sigma_1 
\qquad &\hbox{if $h$ is odd and $i$ is odd},
\\
\sigma_2^j
\qquad &\hbox{if $h$ is even and $i$ is even},
\\
(-1)^\frac{1}{2} \sigma_3
\qquad &\hbox{if $h$ is odd, $i$ is even and $j$ is odd},
\\
\begin{pmatrix}
1 &0
\\
0 &1
\end{pmatrix}
\qquad &\hbox{if $h$ is even, $i$ is odd and $j$ is even},
\\
\begin{pmatrix}
0&0
\\
0 &0
\end{pmatrix}
\qquad &\hbox{else}
\end{array}
\right.
\end{align*}
for any integers $h,i,j$. 
Then the following hold:
\begin{enumerate}
\item $\{p_i\}_{i=0}^{n-t_1-t_3}$ are linearly independent over ${\rm Mat}_2(\C)$.

\item The following equations hold:
\begin{align}
(X-\theta_i)p_i &=p_{i+1} \qquad (0\leq i\leq n-t_1-t_3-1),
\qquad 
(X-\theta_{n-t_1-t_3}) p_{n-t_1-t_3}=0,
\label{e:n>=t1+t3(even)-1}
\\
(Y-\theta_i^*) p_i &=\varphi_i p_{i-1} \qquad (1\leq i\leq n-t_1-t_3),
\qquad 
(Y-\theta_0^*) p_0=0,
\label{e:n>=t1+t3(even)-2}
\end{align}
where
\begin{align*}
\theta_i &=(-1)^i
\textstyle(
k_3-k_2-i-\frac{1}{2}
)
\qquad 
(0\leq i\leq n-t_1-t_3),
\\
\theta_i^* &=(-1)^i
\textstyle(
k_1+k_3+n-i+\frac{1}{2}
)
\qquad 
(0\leq i\leq n-t_1-t_3),
\\
\varphi_i
&=
\left\{
\begin{array}{ll}
i(2 k_3+n-i+1)
\qquad
\hbox{if $i$ is even},
\\
(i+2 k_2)(2 k_1+2 k_3+n-i+1)
\qquad
\hbox{if $i$ is odd}
\end{array}
\right.
\qquad 
(1\leq i\leq n-t_1-t_3).
\end{align*}

\item $\D(p_i)=0$ for all $i=0,1,\ldots,n-t_1-t_3$.
\end{enumerate}
\end{prop}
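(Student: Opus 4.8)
The plan is to establish Proposition~\ref{prop:n>=t1+t3(even)} by the same three-step argument that proves its odd-degree analogue Proposition~\ref{prop:n>=t1+t3(odd)}; the parity of $n$ only changes the sign conventions (here $(-1)^{-h/2}$ and $(-1)^{j/2}$, together with the modified five-case list defining $c_{h,i,j}$), so the structure of the proof is unchanged but none of the intermediate signs can be copied verbatim.

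For (i) I would single out the monomial $x_1^{n-t_3-i}x_2^i x_3^{t_3}$ inside $p_i$. By construction $p_j$ carries no term of $x_2$-degree larger than $j$, so this monomial is absent from $p_0,\ldots,p_{i-1}$; inside $p_i$ it comes only from the summand with $h=i$ and $j=n-t_3-i$, and its coefficient is a fourth root of unity times $\prod_{h=n-t_3-i+1}^{n-t_3}m_1^{(h)}$ times $\prod_{h=t_3+1}^{n-t_1}m_3^{(h)}$ times an invertible $2\times 2$ matrix (a unit scalar times one of the Pauli matrices $\sigma_1,\sigma_2$, according to the parity of $i$, read off from $c_{i,i,n-t_3-i}$). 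Because $0\le i\le n-t_1-t_3$, every index occurring in the first product exceeds $t_1$ and every index in the (possibly empty) second product exceeds $t_3$, so neither product contains the vanishing factor $m_1^{(t_1)}$ or $m_3^{(t_3)}$; hence the coefficient is nonsingular, and a triangularity argument in the $x_2$-degree yields the asserted linear independence over ${\rm Mat}_2(\C)$.

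Part (ii) is the computational heart. I would apply the explicit operators $X,Y$ of Theorem~\ref{thm:BImodule_Mn}(i) to $p_i$ term by term, using $T_r(x_r^m)=m_r^{(m)}x_r^{m-1}$, the relations $\sigma_r^2=1$ and the product rules among $\sigma_1,\sigma_2,\sigma_3$, and the parity rules built into $c_{h,i,j}$. In $(X-\theta_i)p_i$ the coefficient of the top monomial (of $x_2$-degree $i+1$) reproduces the leading term of $p_{i+1}$; for a generic monomial $[x_1]^{n-t_3}_j[x_2]^i_h[x_3]^{n-t_1}_{n-h-j}$ one collects three contributions, namely one from the angular-momentum part lowering the $x_2$-degree of an adjacent summand, one from the $T_2$ and $T_3$ actions, and one from the reflection and $\theta_i$ terms, and one checks, via Pascal's rule for the floor-binomial coefficients, that their sum is the matching coefficient of $p_{i+1}$ (and is $0$ when $h=i$, giving the terminal relation $(X-\theta_{n-t_1-t_3})p_{n-t_1-t_3}=0$). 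The values of $\theta_i,\theta_i^*,\varphi_i$ in the statement are precisely those that make these collapses occur; the computation of $(Y-\theta_i^*)p_i=\varphi_i p_{i-1}$ is entirely analogous. The main obstacle will be this bookkeeping: with the $(-1)^{-h/2}$ convention and the five-case description of $c_{h,i,j}$ one has to track the parities of $h,i,j$ at every step, and in several places the role of $\sigma_1$ versus $\sigma_3$ and the overall sign differ from the odd-degree case, so the odd-degree computation cannot simply be transcribed.

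For (iii) I would first verify $\D(p_0)=0$ by hand: since $p_0$ contains no $x_2$, the summand $e_2\otimes T_2$ of $\D$ annihilates it, while $e_1\otimes T_1$ and $e_3\otimes T_3$ each feed a contribution into every monomial $x_1^{j-1}x_3^{n-j}$ from two consecutive terms of $p_0$; these cancel because the polynomial prefactors $\gamma_j$ of $x_1^j x_3^{n-j}$ in $p_0$ satisfy $m_1^{(j)}\gamma_j=m_3^{(n-j+1)}\gamma_{j-1}$ while $\sigma_1\sigma_2$ is the appropriate scalar multiple of $\sigma_3$, and the two boundary monomials get no partner because $m_1^{(t_1)}=m_3^{(t_3)}=0$. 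Once $\D(p_0)=0$ is known, $\D(p_i)=0$ for all $i$ follows by induction on $i$: each $p_i$ is homogeneous of degree $n$, hence lies (column by column) in $\M_n$, and since $X$ maps $\M_n$ into $\M_n$ by Theorem~\ref{thm:BImodule_Mn}(i), the recursion $p_{i+1}=(X-\theta_i)p_i$ from (ii) keeps $p_{i+1}$ in $\M_n$ as well. Carrying out these three steps yields the proposition.
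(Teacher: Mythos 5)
Your plan reproduces the paper's own proof almost step for step: part (i) by the same triangularity-in-$x_2$-degree argument isolating the coefficient of $x_1^{n-t_3-i}x_2^ix_3^{t_3}$ (which the paper computes as $(-1)^{\frac{n-t_3}{2}-i}\prod_{h=n-t_3-i+1}^{n-t_3}m_1^{(h)}\prod_{h=t_3+1}^{n-t_1}m_3^{(h)}$ times $\sigma_1$ or $\sigma_2$ according to the parity of $i$), part (ii) by the same coefficient-by-coefficient application of the formulas for $X,Y$ from Theorem \ref{thm:BImodule_Mn}(i) combined with Pascal's identity for the floor-binomials, and part (iii) by verifying $\D(p_0)=0$ directly and then propagating via the recursion $p_{i+1}=(X-\theta_i)p_i$. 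The only blemish is a slip of the pen in (iii) — ``each $p_i$ is homogeneous of degree $n$, hence lies in $\M_n$'' should read that the inductive hypothesis $\D(p_i)=0$ together with homogeneity places $p_i$ in $\M_n$; homogeneity alone of course does not.
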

\begin{proof}
(i): Let $i$ be an integer with $0\leq i\leq n-t_1-t_3$.
By construction the coefficient of $x_2^h$ in $p_i$ is zero for all integers $h$ with $i<h\leq n-t_1-t_3$. Observe that the coefficient of $x_1^{n-t_3-i}x_2^i x_3^{t_3}$ in $p_i$ is 
\begin{gather}\label{coeff:n>=t1+t3(even)}
(-1)^{\frac{n-t_3}{2}-i}
\prod_{h=n-t_3-i+1}^{n-t_3} m_1^{(h)}
\prod_{h=t_3+1}^{n-t_1} m_3^{(h)}
\times 
\left\{
\begin{array}{ll}
\sigma_1
\qquad 
&\hbox{if $i$ is odd},
\\
\sigma_2 
\qquad 
&\hbox{if $i$ is even}.
\end{array}
\right.
\end{gather}
Clearly $\prod\limits_{h=t_3+1}^{n-t_1} m_3^{(h)}$ is nonzero.
Since $i\leq n-t_1-t_3$ the scalar $\prod\limits_{h=n-t_3-i+1}^{n-t_3} m_1^{(h)}$ is nonzero. 
Hence the matrix (\ref{coeff:n>=t1+t3(even)}) is nonsingular. By the above comments the part (i) follows.

(ii): 
Let $i$ be a nonnegative integer and let $j$ be an integer with $t_1\leq j\leq n-t_3-i-1$. 
Using Theorem \ref{thm:BImodule_Mn}(i) yields that the coefficient of 
$[x_1]^{n-t_3}_j [x_2]^{i+1}_{i+1} [x_3]^{n-t_1}_{n-i-j-1} $ in $(X-\theta_i)p_i$ is equal to 
$$
(-1)^{-\frac{i+j+1}{2}}\sigma_1 c_{i,i,j}=(-1)^{\frac{-i+j-1}{2}}c_{i+1,i+1,j}.
$$
Now let $h,i,j$ denote three integers with $0\leq h\leq i\leq n-t_1-t_3$ and $t_1\leq j\leq n-t_3-h$. Using Theorem \ref{thm:BImodule_Mn}(i) yields that the coefficient of 
$[x_1]^{n-t_3}_j [x_2]^i_h [x_3]^{n-t_1}_{n-h-j}$ in $(X-\theta_i)p_i$ is equal to $(-1)^{-\frac{h+j}{2}}$ times the sum of 
\begin{align*}
&
m_2^{(h)}
{\left\lfloor \frac{n-t_3-i-j}{2}\right\rfloor+\left\lfloor\frac{i-h+1}{2}\right\rfloor \choose \left\lfloor\frac{i-h+1}{2}\right\rfloor}
 \sigma_1 c_{h-1,i,j},
\\
&
m_3^{(n-h-j)}
{\left\lfloor \frac{n-t_3-i-j}{2}\right\rfloor+\left\lfloor\frac{i-h-1}{2}\right\rfloor \choose \left\lfloor\frac{i-h-1}{2}\right\rfloor}
 \sigma_1 c_{h+1,i,j},
\\
&
{\textstyle(
(-1)^h k_2
+
(-1)^{h+j} k_3
-
(-1)^j\theta_i
+
\frac{1}{2}
)}
{\left\lfloor \frac{n-t_3-i-j}{2}\right\rfloor+\left\lfloor\frac{i-h}{2}\right\rfloor \choose \left\lfloor\frac{i-h}{2}\right\rfloor}
c_{h,i,j}.
\end{align*}
It is straightforward to verify that the sum of the above three terms is equal to 
$$
(-1)^j
m_2^{(i+1)}
{\left\lfloor \frac{n-t_3-i-j-1}{2}\right\rfloor+\left\lfloor\frac{i-h+1}{2}\right\rfloor \choose \left\lfloor\frac{i-h+1}{2}\right\rfloor}
c_{h,i+1,j}.  
$$
By the above comments the equations given in (\ref{e:n>=t1+t3(even)-1}) follow.

Using Theorem \ref{thm:BImodule_Mn}(i) yields that the coefficient of 
$[x_1]^{n-t_3}_j [x_2]^i_h [x_3]^{n-t_1}_{n-h-j}$ in $(Y-\theta_i^*)p_i$ is equal to $(-1)^\frac{h+j}{2}$ times the sum of 
\begin{align*}
&
m_1^{(j)}
{\left\lfloor \frac{n-t_3-i-j+1}{2}\right\rfloor+\left\lfloor\frac{i-h}{2}\right\rfloor \choose \left\lfloor\frac{i-h}{2}\right\rfloor}
 \sigma_2 c_{h,i,j-1},
\\
&
m_3^{(n-h-j)}
{\left\lfloor \frac{n-t_3-i-j-1}{2}\right\rfloor+\left\lfloor\frac{i-h}{2}\right\rfloor \choose \left\lfloor\frac{i-h}{2}\right\rfloor}
 \sigma_2 c_{h,i,j+1},
\\
&
{\textstyle(
(-1)^j k_1
+
(-1)^{h+j} k_3
-
(-1)^h
\theta_i^*
+
\frac{1}{2}
)}
{\left\lfloor \frac{n-t_3-i-j}{2}\right\rfloor+\left\lfloor\frac{i-h}{2}\right\rfloor \choose \left\lfloor\frac{i-h}{2}\right\rfloor}
c_{h,i,j}.
\end{align*}
It is straightforward to verify that the sum of the above three terms is equal to $(-1)^h$ times 
$$
\left\{
\begin{array}{ll}
\begin{pmatrix}
0 &0
\\
0 &0
\end{pmatrix}
\qquad &\hbox{if $h=i$},
\\
(2k_3+n-i+1)
\displaystyle{\left\lfloor \frac{n-t_3-i-j+1}{2}\right\rfloor+\left\lfloor\frac{i-h-1}{2}\right\rfloor \choose \left\lfloor\frac{i-h-1}{2}\right\rfloor}
c_{h,i-1,j}
\qquad &\hbox{if $h<i$ and $i$ is even},
\\
(2k_1+2k_3+n-i+1)
\displaystyle{\left\lfloor \frac{n-t_3-i-j+1}{2}\right\rfloor+\left\lfloor\frac{i-h-1}{2}\right\rfloor \choose \left\lfloor\frac{i-h-1}{2}\right\rfloor}
c_{h,i-1,j}
\qquad &\hbox{if $h<i$ and $i$ is odd}.
\end{array}
\right.
$$
By the above comments the equations given in (\ref{e:n>=t1+t3(even)-2}) follow.

(iii): It is routine to verify that $\D(p_0)=0$. Combined with (ii) the statement (iii) follows.
\end{proof}

\begin{lem}\label{lem:basis:n>=t1+t3(even)}
Suppose that $n$ is an even integer with $n\geq t_1+t_2+t_3$. 
Let $\{p_i\}_{i=0}^{n-t_1-t_3}$ be as in Proposition \ref{prop:n>=t1+t3(even)}. Then the following hold:
\begin{enumerate}
\item $\M_n(x_1)\cap \M_n(x_2)\cap \M_n(x_3)$ has the basis 
\begin{align}
p_i\cdot 
\begin{pmatrix}
1
\\
0
\end{pmatrix}
\otimes
1 
\qquad 
(t_2\leq i\leq n-t_1-t_3), 
\label{M123basis:n>=t1+t3(even)-1}
\\ 
p_i\cdot 
\begin{pmatrix}
0
\\
1
\end{pmatrix}
\otimes
1 
\qquad 
(t_2\leq i\leq n-t_1-t_3).
\label{M123basis:n>=t1+t3(even)-2}
\end{align}

\item $\M_n/\M_n(x_2)$ has the basis
\begin{align}
p_i\cdot 
\begin{pmatrix}
1
\\
0
\end{pmatrix}
\otimes
1 
+
\M_n(x_2)
\qquad 
(0\leq i\leq t_2-1), 
\label{M/M2basis:n>=t1+t3(even)-1}
\\ 
p_i\cdot 
\begin{pmatrix}
0
\\
1
\end{pmatrix}
\otimes
1 
+
\M_n(x_2)
\qquad 
(0\leq i\leq t_2-1).
\label{M/M2basis:n>=t1+t3(even)-2}
\end{align}
\end{enumerate}
\end{lem}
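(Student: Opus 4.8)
This is the even-degree counterpart of Lemma~\ref{lem:basis:n>=t1+t3(odd)}, and the plan is to run exactly the same argument with Proposition~\ref{prop:n>=t1+t3(even)} in place of Proposition~\ref{prop:n>=t1+t3(odd)}; I only record the points that genuinely need to be checked.

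For part~(i), I would first verify that the $2(n-t_1-t_2-t_3+1)$ elements (\ref{M123basis:n>=t1+t3(even)-1}) and (\ref{M123basis:n>=t1+t3(even)-2}) lie in $\M_n(x_1)\cap\M_n(x_2)\cap\M_n(x_3)$. Membership in $\M_n$ is immediate from $\D(p_i)=0$, which is Proposition~\ref{prop:n>=t1+t3(even)}(iii). For the monomial supports, observe that every term of $p_i$ is a scalar multiple of $x_1^{j}x_2^{h}x_3^{\,n-h-j}$ with $t_1\le j$ and $h+j\le n-t_3$, by the summation ranges in the definition of $p_i$; hence the $x_1$-exponent is at least $t_1$ and the $x_3$-exponent is at least $t_3$. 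Moreover, when $i\ge t_2$ and $h<t_2$ the scalar $[x_2]^{i}_{h}$ vanishes, since $\prod_{l=h+1}^{i}m_2^{(l)}$ then contains the factor $m_2^{(t_2)}=0$, so every surviving term of $p_i$ has $x_2$-exponent at least $t_2$. Linear independence over $\C$ is Proposition~\ref{prop:n>=t1+t3(even)}(i), and since $n\ge t_1+t_2+t_3$ Proposition~\ref{prop:dimM(x123)} gives $\dim\bigl(\M_n(x_1)\cap\M_n(x_2)\cap\M_n(x_3)\bigr)=2(n-t_1-t_2-t_3+1)$, which matches the count; so (\ref{M123basis:n>=t1+t3(even)-1}) and (\ref{M123basis:n>=t1+t3(even)-2}) form a basis.

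For part~(ii), the $2t_2$ cosets (\ref{M/M2basis:n>=t1+t3(even)-1}) and (\ref{M/M2basis:n>=t1+t3(even)-2}) lie in $\M_n/\M_n(x_2)$ because the underlying polynomials $p_i\cdot\begin{pmatrix}1\\0\end{pmatrix}\otimes 1$ and $p_i\cdot\begin{pmatrix}0\\1\end{pmatrix}\otimes 1$ lie in $\M_n$ by Proposition~\ref{prop:n>=t1+t3(even)}(iii). To see the cosets are linearly independent over $\C$, suppose a $\C$-combination of these polynomials with $0\le i\le t_2-1$ lies in $\M_n(x_2)\subseteq\bigoplus_{i'=t_2}^{n}\C^2\otimes(x_2^{i'}\cdot\R[x_1,x_3]_{n-i'})$, and read off the coefficient of the monomial $x_1^{\,n-t_3-i}x_2^{i}x_3^{t_3}$: by Proposition~\ref{prop:n>=t1+t3(even)}(i) this coefficient is a nonsingular matrix for $p_i$, it does not occur in any element of $\M_n(x_2)$ when $i<t_2$, and it does not occur in $p_{i'}$ for $i'<i$ since the $x_2$-exponents appearing in $p_{i'}$ are at most $i'$; running $i$ downward from $t_2-1$ forces all coefficients to vanish. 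Finally, Theorem~\ref{thm:dimM=2(n+1)} and Proposition~\ref{prop:dimM(x1)}(ii) (both applicable because $n\ge t_1+t_2+t_3$) give $\dim\bigl(\M_n/\M_n(x_2)\bigr)=2(n+1)-2(n-t_2+1)=2t_2$, so the listed cosets form a basis.

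I do not expect any genuine obstacle: all the real computation is contained in Proposition~\ref{prop:n>=t1+t3(even)}, which I take as given, and the argument above is a transcription of the proof of Lemma~\ref{lem:basis:n>=t1+t3(odd)}. The only mildly delicate point is the vanishing of $[x_2]^{i}_{h}$ when $h<t_2\le i$, which is precisely what places the relevant $p_i$ inside $\M_n(x_2)$.
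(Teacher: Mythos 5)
Your proof is correct and follows essentially the same route as the paper's: membership via $\D(p_i)=0$ together with the monomial-support constraints coming from the summation ranges and the vanishing $m_2^{(t_2)}=0$, linear independence from Proposition~\ref{prop:n>=t1+t3(even)}(i), and the dimension counts from Proposition~\ref{prop:dimM(x123)}, Theorem~\ref{thm:dimM=2(n+1)} and Proposition~\ref{prop:dimM(x1)}(ii). You have merely spelled out two things the paper leaves implicit — why $[x_2]^i_h=0$ forces the terms of $p_i$ with $i\ge t_2$ to have $x_2$-exponent at least $t_2$, and why in part (ii) the independence of the cosets modulo $\M_n(x_2)$ follows from the nonsingular leading coefficient of $x_1^{n-t_3-i}x_2^i x_3^{t_3}$ in $p_i$ — but these are exactly the facts the paper's citation of Proposition~\ref{prop:n>=t1+t3(even)}(i),(iii) is invoking.
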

\begin{proof}
(i): The linear independence of (\ref{M123basis:n>=t1+t3(even)-1}) and (\ref{M123basis:n>=t1+t3(even)-2}) follows from Proposition \ref{prop:n>=t1+t3(even)}(i). Since (\ref{M123basis:n>=t1+t3(even)-1}) and (\ref{M123basis:n>=t1+t3(even)-2}) are in 
$
\C^2\otimes\left(\bigoplus\limits_{i=t_1}^n x_1^i\cdot \R[x_2,x_3]_{n-i}
\cap
\bigoplus\limits_{i=t_2}^n x_2^i\cdot \R[x_1,x_3]_{n-i}
\cap 
\bigoplus\limits_{i=t_3}^n x_3^i\cdot \R[x_1,x_2]_{n-i}
\right)$,  
it follows from Proposition \ref{prop:n>=t1+t3(even)}(iii) that (\ref{M123basis:n>=t1+t3(even)-1}) and (\ref{M123basis:n>=t1+t3(even)-2}) are in $\M_n(x_1)\cap \M_n(x_2)\cap \M_n(x_3)$. Combined with Proposition \ref{prop:dimM(x123)} the statement (i) follows.

(ii): The linear independence of (\ref{M/M2basis:n>=t1+t3(even)-1}) and (\ref{M/M2basis:n>=t1+t3(even)-2}) follows from Proposition \ref{prop:n>=t1+t3(even)}(i). 
By Proposition \ref{prop:n>=t1+t3(even)}(iii) the cosets (\ref{M/M2basis:n>=t1+t3(even)-1}) and (\ref{M/M2basis:n>=t1+t3(even)-2}) are in 
$
\M_n/\M_n(x_2).
$  
By  Theorem \ref{thm:dimM=2(n+1)} and Proposition \ref{prop:dimM(x1)}(ii) the dimension of $\M_n/\M_n(x_2)$ is $2t_2$. The statement (ii) follows.
\end{proof}

\begin{prop}\label{prop:n>=t1+t2(even)}
Suppose that $n$ is an even integer with $n\geq t_1+t_2$. 
Let 
$$
q_i
=
\sum_{h=0}^i
(-1)^{-\frac{h}{2}}
\sum_{j=t_2}^{n-t_1-h}
(-1)^{\frac{j}{2}}
{\left\lfloor \frac{n-t_1-i-j}{2}\right\rfloor+\left\lfloor\frac{i-h}{2}\right\rfloor \choose \left\lfloor\frac{i-h}{2}\right\rfloor}
c_{h,i,j}
\otimes
[x_2]^{n-t_1}_j[x_3]^i_h[x_1]^{n-t_2}_{n-h-j}
$$
for all $i=0,1,\ldots,n-t_1-t_2$ where
\begin{align*}
c_{h,i,j}&=
\left\{
\begin{array}{ll}
\sigma_3^j \sigma_2 
\qquad &\hbox{if $h$ is odd and $i$ is odd},
\\
\sigma_3^j
\qquad &\hbox{if $h$ is even and $i$ is even},
\\
(-1)^\frac{1}{2} \sigma_1
\qquad &\hbox{if $h$ is odd, $i$ is even and $j$ is odd},
\\
\begin{pmatrix}
1 &0
\\
0 &1
\end{pmatrix}
\qquad &\hbox{if $h$ is even, $i$ is odd and $j$ is even},
\\
\begin{pmatrix}
0&0
\\
0 &0
\end{pmatrix}
\qquad &\hbox{else}
\end{array}
\right.
\end{align*}
for any integers $h,i,j$. 
Then the following hold:
\begin{enumerate}
\item $\{q_i\}_{i=0}^{n-t_1-t_2}$ are linearly independent over ${\rm Mat}_2(\C)$.

\item The following equations hold:
\begin{align}
(Y-\theta_i)q_i &=q_{i+1} \qquad (0\leq i\leq n-t_1-t_2-1),
\qquad 
(Y-\theta_{n-t_1-t_2}) q_{n-t_1-t_2}=0,
\label{e:n>=t1+t2(even)-1}
\\
(Z-\theta_i^*) q_i &=\varphi_i q_{i-1} \qquad (1\leq i\leq n-t_1-t_2),
\qquad 
(Z-\theta_0^*) q_0=0,
\label{e:n>=t1+t2(even)-2}
\end{align}
where
\begin{align*}
\theta_i &=(-1)^i
\textstyle(
k_1-k_3-i-\frac{1}{2}
)
\qquad 
(0\leq i\leq n-t_1-t_2),
\\
\theta_i^* &=(-1)^i
\textstyle(
k_1+k_2+n-i+\frac{1}{2}
)
\qquad 
(0\leq i\leq n-t_1-t_2),
\\
\varphi_i
&=
\left\{
\begin{array}{ll}
i(2 k_1+n-i+1)
\qquad
\hbox{if $i$ is even},
\\
(i+2 k_3)(2 k_2+2 k_1+n-i+1)
\qquad
\hbox{if $i$ is odd}
\end{array}
\right.
\qquad (1\leq i\leq n-t_1-t_2).
\end{align*}

\item $\D(q_i)=0$ for all $i=0,1,\ldots,n-t_1-t_2$.
\end{enumerate}
\end{prop}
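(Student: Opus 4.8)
The plan is to follow, almost line for line, the proof of Proposition~\ref{prop:n>=t1+t2(odd)}; only the parity bookkeeping built into the matrices $c_{h,i,j}$ and into the half-integer powers of $-1$ changes. I would treat the three parts in the order (i), (ii), (iii).

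For part (i), I would first note that, by the definition of $c_{h,i,j}$, the coefficient of $x_3^h$ in $q_i$ vanishes whenever $i<h\le n-t_1-t_2$, so the family $\{q_i\}_{i=0}^{n-t_1-t_2}$ is triangular in the $x_3$-degree. It then suffices to exhibit one monomial whose coefficient in $q_i$ is invertible, and the natural choice is $x_2^{n-t_1-i}x_3^{i}x_1^{t_1}$: only the term with $h=i$, $j=n-t_1-i$ of the defining double sum contributes, and it equals
$$
(-1)^{\frac{n-t_1}{2}-i}\Bigl(\prod_{h=n-t_1-i+1}^{n-t_1}m_2^{(h)}\Bigr)\Bigl(\prod_{h=t_1+1}^{n-t_2}m_1^{(h)}\Bigr)\times
\begin{cases}
\sigma_2 &\text{if $i$ is odd},\\
\sigma_3 &\text{if $i$ is even}.
\end{cases}
$$
Since $t_1\notin\{t_1+1,\dots,n-t_2\}$ the first product is nonzero, and since $i\le n-t_1-t_2$ forces $t_2\le n-t_1-i$, the index $t_2$ does not lie in $\{n-t_1-i+1,\dots,n-t_1\}$, so the second product is nonzero; because $\sigma_2$ and $\sigma_3$ are invertible, the displayed matrix is nonsingular, and (i) follows from the triangularity.

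For part (ii), I would apply the operators of Theorem~\ref{thm:BImodule_Mn}(i) to $q_i$ and compare coefficients of the monomials $[x_2]^{n-t_1}_j[x_3]^i_h[x_1]^{n-t_2}_{n-h-j}$ together with the top monomial $[x_2]^{n-t_1}_j[x_3]^{i+1}_{i+1}[x_1]^{n-t_2}_{n-i-j-1}$. Each such monomial receives, in $(Y-\theta_i)q_i$, exactly three contributions — one that raises the $x_3$-index $h$, one that lowers it, and one diagonal term whose scalar is an explicit combination of $k_1$, $k_3$, $\theta_i$ and $\frac12$ — and I would verify, using the Pascal-type identity for the floor-binomial $\binom{\lfloor\cdot\rfloor+\lfloor\cdot\rfloor}{\lfloor\cdot\rfloor}$ and the Pauli relations absorbed into $c_{h,i,j}$, that they combine (after accounting for the sign prefactor $(-1)^{-(h+j)/2}$) into $(-1)^{j}m_3^{(i+1)}$ times the coefficient of the same monomial in $q_{i+1}$, with the correct boundary behaviour at $h=i$ and at the extreme indices; the computation for $(Z-\theta_i^*)q_i$ is entirely analogous and produces the factor $\varphi_i$. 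This establishes (\ref{e:n>=t1+t2(even)-1}) and (\ref{e:n>=t1+t2(even)-2}).

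For part (iii), I would check directly that $\D(q_0)=0$ — this reduces to a short first-order matching, since $q_0$ is independent of $x_3$ so that only $T_1,T_2$ act nontrivially — and then, since part (ii) gives $q_{i+1}=(Y-\theta_i)q_i$ and the operator $Y$ of Theorem~\ref{thm:BImodule_Mn}(i) carries $\ker(\D|_{\C^2\otimes\R[x_1,x_2,x_3]_n})$ into itself (applied columnwise to a matrix polynomial), an induction on $i$ would give $\D(q_i)=0$ for all $i=0,1,\dots,n-t_1-t_2$. The main obstacle is part (ii): it is conceptually routine but demands careful tracking of the signs $(-1)^{-h/2}$ and $(-1)^{j/2}$, of the products $\sigma_a\sigma_b$ hidden in $c_{h,i,j}$, and of several floor-function binomial identities — precisely the bookkeeping already performed in the odd-$n$ case, so no new idea is required.
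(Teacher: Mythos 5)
Your proposal reproduces the paper's proof step for step: triangularity in the $x_3$-degree with the pivot coefficient of $x_2^{n-t_1-i}x_3^{i}x_1^{t_1}$ for part~(i), the three-term coefficient comparison (raising $h$, lowering $h$, diagonal) with the floor-binomial identity for part~(ii), and a direct check of $\D(q_0)=0$ propagated by the recursion from part~(ii) together with the $\BI$-module structure of Theorem~\ref{thm:BImodule_Mn} for part~(iii). One cosmetic slip in part~(i): you swapped the labels ``first product'' and ``second product'' relative to the justifications you gave, but each justification is correct for the corresponding product, so the conclusion is unaffected.
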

\begin{proof}
(i): Let $i$ be an integer with $0\leq i\leq n-t_1-t_2$.
By construction the coefficient of $x_3^h$ in $q_i$ is zero for all integers $h$ with $i<h\leq n-t_1-t_2$. Observe that the coefficient of $x_2^{n-t_1-i}x_3^i x_1^{t_1}$ in $q_i$ is 
\begin{gather}\label{coeff:n>=t1+t2(even)}
(-1)^{\frac{n-t_1}{2}-i}
\prod_{h=n-t_1-i+1}^{n-t_1} m_2^{(h)}
\prod_{h=t_1+1}^{n-t_2} m_1^{(h)}
\times 
\left\{
\begin{array}{ll}
\sigma_2
\qquad 
&\hbox{if $i$ is odd},
\\
\sigma_3 
\qquad 
&\hbox{if $i$ is even}.
\end{array}
\right.
\end{gather}
Clearly $\prod\limits_{h=t_1+1}^{n-t_2} m_1^{(h)}$ is nonzero.
Since $i\leq n-t_1-t_2$ the scalar $\prod\limits_{h=n-t_1-i+1}^{n-t_1} m_2^{(h)}$ is nonzero. 
Hence the matrix (\ref{coeff:n>=t1+t2(even)}) is nonsingular. By the above comments the part (i) follows.

(ii): 
Let $i$ be a nonnegative integer and let $j$ be an integer with $t_2\leq j\leq n-t_1-i-1$. 
Using Theorem \ref{thm:BImodule_Mn}(i) yields that the coefficient of 
$[x_2]^{n-t_1}_j [x_3]^{i+1}_{i+1} [x_1]^{n-t_2}_{n-i-j-1} $ in $(Y-\theta_i)q_i$ is equal to 
$$
(-1)^{-\frac{i+j+1}{2}}\sigma_2 c_{i,i,j}=(-1)^{\frac{-i+j-1}{2}}c_{i+1,i+1,j}.
$$
Now let $h,i,j$ denote three integers with $0\leq h\leq i\leq n-t_1-t_2$ and $t_2\leq j\leq n-t_1-h$. Using Theorem \ref{thm:BImodule_Mn}(i) yields that the coefficient of 
$[x_2]^{n-t_1}_j [x_3]^i_h [x_1]^{n-t_2}_{n-h-j}$ in $(Y-\theta_i)q_i$ is equal to $(-1)^{-\frac{h+j}{2}}$ times the sum of 
\begin{align*}
&
m_3^{(h)}
{\left\lfloor \frac{n-t_1-i-j}{2}\right\rfloor+\left\lfloor\frac{i-h+1}{2}\right\rfloor \choose \left\lfloor\frac{i-h+1}{2}\right\rfloor}
 \sigma_2 c_{h-1,i,j},
\\
&
m_1^{(n-h-j)}
{\left\lfloor \frac{n-t_1-i-j}{2}\right\rfloor+\left\lfloor\frac{i-h-1}{2}\right\rfloor \choose \left\lfloor\frac{i-h-1}{2}\right\rfloor}
 \sigma_2 c_{h+1,i,j},
\\
&
{\textstyle(
(-1)^h k_3
+
(-1)^{h+j} k_1
-
(-1)^j\theta_i
+
\frac{1}{2}
)}
{\left\lfloor \frac{n-t_1-i-j}{2}\right\rfloor+\left\lfloor\frac{i-h}{2}\right\rfloor \choose \left\lfloor\frac{i-h}{2}\right\rfloor}
c_{h,i,j}.
\end{align*}
It is straightforward to verify that the sum of the above three terms is equal to 
$$
(-1)^j
m_3^{(i+1)}
{\left\lfloor \frac{n-t_1-i-j-1}{2}\right\rfloor+\left\lfloor\frac{i-h+1}{2}\right\rfloor \choose \left\lfloor\frac{i-h+1}{2}\right\rfloor}
c_{h,i+1,j}.  
$$
By the above comments the equations given in (\ref{e:n>=t1+t2(even)-1}) follow.

Using Theorem \ref{thm:BImodule_Mn}(i) yields that the coefficient of 
$[x_2]^{n-t_1}_j [x_3]^i_h [x_1]^{n-t_2}_{n-h-j}$ in $(Z-\theta_i^*)q_i$ is equal to $(-1)^\frac{h+j}{2}$ times the sum of 
\begin{align*}
&
m_2^{(j)}
{\left\lfloor \frac{n-t_1-i-j+1}{2}\right\rfloor+\left\lfloor\frac{i-h}{2}\right\rfloor \choose \left\lfloor\frac{i-h}{2}\right\rfloor}
 \sigma_3 c_{h,i,j-1},
\\
&
m_1^{(n-h-j)}
{\left\lfloor \frac{n-t_1-i-j-1}{2}\right\rfloor+\left\lfloor\frac{i-h}{2}\right\rfloor \choose \left\lfloor\frac{i-h}{2}\right\rfloor}
 \sigma_3 c_{h,i,j+1},
\\
&
{\textstyle(
(-1)^j k_2
+
(-1)^{h+j} k_1
-
(-1)^h
\theta_i^*
+
\frac{1}{2}
)}
{\left\lfloor \frac{n-t_1-i-j}{2}\right\rfloor+\left\lfloor\frac{i-h}{2}\right\rfloor \choose \left\lfloor\frac{i-h}{2}\right\rfloor}
c_{h,i,j}.
\end{align*}
It is straightforward to verify that the sum of the above three terms is equal to $(-1)^h$ times 
$$
\left\{
\begin{array}{ll}
\begin{pmatrix}
0 &0
\\
0 &0
\end{pmatrix}
\qquad &\hbox{if $h=i$},
\\
(2k_1+n-i+1)
\displaystyle{\left\lfloor \frac{n-t_1-i-j+1}{2}\right\rfloor+\left\lfloor\frac{i-h-1}{2}\right\rfloor \choose \left\lfloor\frac{i-h-1}{2}\right\rfloor}
c_{h,i-1,j}
\qquad &\hbox{if $h<i$ and $i$ is even},
\\
(2k_1+2k_2+n-i+1)
\displaystyle{\left\lfloor \frac{n-t_1-i-j+1}{2}\right\rfloor+\left\lfloor\frac{i-h-1}{2}\right\rfloor \choose \left\lfloor\frac{i-h-1}{2}\right\rfloor}
c_{h,i-1,j}
\qquad &\hbox{if $h<i$ and $i$ is odd}.
\end{array}
\right.
$$
By the above comments the equations given in (\ref{e:n>=t1+t2(even)-2}) follow.

(iii): It is routine to verify that $\D(q_0)=0$. Combined with (ii) the statement (iii) follows.
\end{proof}

\begin{lem}\label{lem:basis:n>=t1+t2(even)}
Suppose that $n$ is an even integer with $n\geq t_1+t_2+t_3$. 
Let $\{q_i\}_{i=0}^{n-t_1-t_2}$ be as in Proposition \ref{prop:n>=t1+t2(even)}. Then the following hold:
\begin{enumerate}
\item $\M_n(x_1)\cap \M_n(x_2)\cap \M_n(x_3)$ has the basis 
\begin{align}
q_i\cdot 
\begin{pmatrix}
1
\\
0
\end{pmatrix}
\otimes
1 
\qquad 
(t_3\leq i\leq n-t_1-t_2), 
\label{M123basis:n>=t1+t2(even)-1}
\\ 
q_i\cdot 
\begin{pmatrix}
0
\\
1
\end{pmatrix}
\otimes
1 
\qquad 
(t_3\leq i\leq n-t_1-t_2).
\label{M123basis:n>=t1+t2(even)-2}
\end{align}

\item $\M_n/\M_n(x_3)$ has the basis
\begin{align}
q_i\cdot 
\begin{pmatrix}
1
\\
0
\end{pmatrix}
\otimes
1 
+
\M_n(x_3)
\qquad 
(0\leq i\leq t_3-1), 
\label{M/M3basis:n>=t1+t2(even)-1}
\\ 
q_i\cdot 
\begin{pmatrix}
0
\\
1
\end{pmatrix}
\otimes
1 
+
\M_n(x_3)
\qquad 
(0\leq i\leq t_3-1).
\label{M/M3basis:n>=t1+t2(even)-2}
\end{align}
\end{enumerate}
\end{lem}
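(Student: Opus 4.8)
The plan is to imitate the proof of Lemma~\ref{lem:basis:n>=t1+t2(odd)} essentially verbatim, substituting the even-degree data of Proposition~\ref{prop:n>=t1+t2(even)} for the odd-degree data. Throughout I would fix a standard basis vector $v\in\C^2$ and argue for $q_i\cdot v\otimes 1$ (the two choices of $v$ being handled identically), and I would use that $t_3$ is here a finite positive odd integer, so that $m_3^{(t_3)}=t_3+2k_3=0$; consequently the factor $[x_3]^i_h=\prod_{l=h+1}^i m_3^{(l)}\,x_3^h$ is zero whenever $h<t_3\le i$.

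For part (i) the first step is to note that $\D(q_i)=0$ by Proposition~\ref{prop:n>=t1+t2(even)}(iii), so $q_i\cdot v\otimes 1\in\M_n$. The second step is to read off the monomial support of $q_i$: every term has $x_2$-degree $j\ge t_2$, $x_1$-degree $n-h-j\ge t_1$ since $j\le n-t_1-h$, and $x_3$-degree $h$; and for $t_3\le i\le n-t_1-t_2$ every term with $h<t_3$ carries the vanishing factor $[x_3]^i_h$, so in fact every nonzero term of $q_i$ has $x_3$-degree $\ge t_3$. Hence $q_i\cdot v\otimes 1$ belongs to $\C^2\otimes\bigl(\bigoplus_{i'=t_1}^n x_1^{i'}\R[x_2,x_3]_{n-i'}\cap\bigoplus_{i'=t_2}^n x_2^{i'}\R[x_1,x_3]_{n-i'}\cap\bigoplus_{i'=t_3}^n x_3^{i'}\R[x_1,x_2]_{n-i'}\bigr)$, and combined with $q_i\cdot v\otimes 1\in\M_n$ this puts it in $\M_n(x_1)\cap\M_n(x_2)\cap\M_n(x_3)$. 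The last step is counting: the listed family has $2(n-t_1-t_2-t_3+1)$ elements, it is linearly independent by Proposition~\ref{prop:n>=t1+t2(even)}(i), and by Proposition~\ref{prop:dimM(x123)} this is exactly $\dim\bigl(\M_n(x_1)\cap\M_n(x_2)\cap\M_n(x_3)\bigr)$, so it is a basis.

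For part (ii) the cosets $q_i\cdot v\otimes 1+\M_n(x_3)$ with $0\le i\le t_3-1$ lie in $\M_n/\M_n(x_3)$ again by Proposition~\ref{prop:n>=t1+t2(even)}(iii). To see the $2t_3$ of them are independent I would exploit the triangular structure recorded in the proof of Proposition~\ref{prop:n>=t1+t2(even)}(i), namely that the coefficient of $x_3^h$ in $q_i$ vanishes for $h>i$ while the coefficient of $x_2^{n-t_1-i}x_1^{t_1}x_3^i$ in $q_i$ is a nonsingular matrix: given a relation $\sum_{i=0}^{t_3-1}q_i W_i\in\M_n(x_3)$ with $W_i\in{\rm Mat}_2(\C)$, comparing coefficients of $x_3^{t_3-1},x_3^{t_3-2},\ldots,x_3^0$ forces $W_{t_3-1}=\cdots=W_0=0$ since every element of $\M_n(x_3)$ has all monomials of $x_3$-degree $\ge t_3$. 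Finally $\dim\M_n/\M_n(x_3)=2(n+1)-2(n-t_3+1)=2t_3$ by Theorem~\ref{thm:dimM=2(n+1)} and Proposition~\ref{prop:dimM(x1)}(iii), so this independent family is a basis. The only new labor compared with Lemma~\ref{lem:basis:n>=t1+t2(odd)} is keeping the even-case matrices $c_{h,i,j}$ and signs $(-1)^{-h/2}$ straight, and I do not expect any genuine obstacle: the argument is structurally identical to the odd case and invokes only Propositions~\ref{prop:n>=t1+t2(even)},~\ref{prop:dimM(x1)},~\ref{prop:dimM(x123)} and Theorem~\ref{thm:dimM=2(n+1)}.
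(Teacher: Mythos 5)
Your proposal is correct and follows essentially the same route as the paper: membership in the triple intersection (resp.\ the quotient) comes from Proposition~\ref{prop:n>=t1+t2(even)}(iii) together with the monomial support of $q_i$, linear independence from Proposition~\ref{prop:n>=t1+t2(even)}(i), and the dimension count from Propositions~\ref{prop:dimM(x123)},~\ref{prop:dimM(x1)}(iii) and Theorem~\ref{thm:dimM=2(n+1)}. You spell out two points the paper leaves implicit --- that the factor $[x_3]^i_h$ vanishes once $h<t_3\le i$ (since $m_3^{(t_3)}=0$), and that the triangularity in $x_3$-degree is what makes the cosets $q_i\cdot v\otimes 1+\M_n(x_3)$ independent for $0\le i\le t_3-1$ --- which is slightly more careful but not a different method.
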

\begin{proof}
(i): The linear independence of  (\ref{M123basis:n>=t1+t2(even)-1}) and (\ref{M123basis:n>=t1+t2(even)-2}) follows from Proposition \ref{prop:n>=t1+t2(even)}(i). Since (\ref{M123basis:n>=t1+t2(even)-1}) and (\ref{M123basis:n>=t1+t2(even)-2}) are in 
$
\C^2\otimes\left(\bigoplus\limits_{i=t_1}^n x_1^i\cdot \R[x_2,x_3]_{n-i}
\cap
\bigoplus\limits_{i=t_2}^n x_2^i\cdot \R[x_1,x_3]_{n-i}
\cap 
\bigoplus\limits_{i=t_3}^n x_3^i\cdot \R[x_1,x_2]_{n-i}
\right)$,  
it follows from Proposition \ref{prop:n>=t1+t2(even)}(iii) that (\ref{M123basis:n>=t1+t2(even)-1}) and (\ref{M123basis:n>=t1+t2(even)-2}) are in $\M_n(x_1)\cap \M_n(x_2)\cap \M_n(x_3)$. Combined with Proposition \ref{prop:dimM(x123)} the statement (i) follows.

(ii): The linear independence of  (\ref{M/M3basis:n>=t1+t2(even)-1}) and (\ref{M/M3basis:n>=t1+t2(even)-2}) follows from Proposition \ref{prop:n>=t1+t2(even)}(i). 
By Proposition \ref{prop:n>=t1+t2(even)}(iii) the cosets (\ref{M/M3basis:n>=t1+t2(even)-1}) and (\ref{M/M3basis:n>=t1+t2(even)-2}) are in 
$
\M_n/\M_n(x_3).
$  
By  Theorem \ref{thm:dimM=2(n+1)} and Proposition \ref{prop:dimM(x1)}(iii) the dimension of $\M_n/\M_n(x_3)$ is $2t_3$. The statement (ii) follows.
\end{proof}

\begin{prop}\label{prop:n>=t2+t3(even)}
Suppose that $n$ is an even integer with $n\geq t_2+t_3$. 
Let 
$$
r_i
=
\sum_{h=0}^i
(-1)^{-\frac{h}{2}}
\sum_{j=t_3}^{n-t_2-h}
(-1)^{\frac{j}{2}}
{\left\lfloor \frac{n-t_2-i-j}{2}\right\rfloor+\left\lfloor\frac{i-h}{2}\right\rfloor \choose \left\lfloor\frac{i-h}{2}\right\rfloor}
c_{h,i,j}
\otimes
[x_3]^{n-t_2}_j[x_1]^i_h[x_2]^{n-t_3}_{n-h-j}
$$
for all $i=0,1,\ldots,n-t_2-t_3$ where
\begin{align*}
c_{h,i,j}&=
\left\{
\begin{array}{ll}
\sigma_1^j \sigma_3
\qquad &\hbox{if $h$ is odd and $i$ is odd},
\\
\sigma_1^j
\qquad &\hbox{if $h$ is even and $i$ is even},
\\
(-1)^\frac{1}{2} \sigma_2
\qquad &\hbox{if $h$ is odd, $i$ is even and $j$ is odd},
\\
\begin{pmatrix}
1 &0
\\
0 &1
\end{pmatrix}
\qquad &\hbox{if $h$ is even, $i$ is odd and $j$ is even},
\\
\begin{pmatrix}
0&0
\\
0 &0
\end{pmatrix}
\qquad &\hbox{else}
\end{array}
\right.
\end{align*}
for any integers $h,i,j$. 
Then the following hold:
\begin{enumerate}
\item $\{r_i\}_{i=0}^{n-t_2-t_3}$ are linearly independent over ${\rm Mat}_2(\C)$.

\item The following equations hold:
\begin{align}
(Z-\theta_i)r_i &=r_{i+1} \qquad (0\leq i\leq n-t_2-t_3-1),
\qquad 
(Z-\theta_{n-t_2-t_3}) r_{n-t_2-t_3}=0,
\label{e:n>=t2+t3(even)-1}
\\
(X-\theta_i^*) r_i &=\varphi_i r_{i-1} \qquad (1\leq i\leq n-t_2-t_3),
\qquad 
(X-\theta_0^*) r_0=0,
\label{e:n>=t2+t3(even)-2}
\end{align}
where
\begin{align*}
\theta_i &=(-1)^i
\textstyle(
k_2-k_1-i-\frac{1}{2}
)
\qquad 
(0\leq i\leq n-t_2-t_3),
\\
\theta_i^* &=(-1)^i
\textstyle(
k_2+k_3+n-i+\frac{1}{2}
)
\qquad 
(0\leq i\leq n-t_2-t_3),
\\
\varphi_i
&=
\left\{
\begin{array}{ll}
i(2 k_2+n-i+1)
\qquad
\hbox{if $i$ is even},
\\
(i+2 k_1)(2 k_2+2 k_3+n-i+1)
\qquad
\hbox{if $i$ is odd}
\end{array}
\right.
\qquad 
(1\leq i\leq n-t_2-t_3).
\end{align*}

\item $\D(r_i)=0$ for all $i=0,1,\ldots,n-t_2-t_3$.
\end{enumerate}
\end{prop}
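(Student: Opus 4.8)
The plan is to prove the above proposition along exactly the lines of its odd-$n$ counterpart Proposition \ref{prop:n>=t2+t3(odd)} and of the even-$n$ propositions \ref{prop:n>=t1+t3(even)} and \ref{prop:n>=t1+t2(even)}. The three parts are handled independently: (i) follows from a triangularity property of $\{r_i\}$ in the variable $x_1$; (ii) from a coefficientwise expansion of the actions of $Z$ and $X$ via Theorem \ref{thm:BImodule_Mn}(i); and (iii) from a direct check that $\D(r_0)=0$ together with the ladder relations established in (ii).

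For part (i), by construction the coefficient of $x_1^h$ in $r_i$ is zero for all integers $h$ with $i<h\leq n-t_2-t_3$, so it is enough to exhibit, for each $i$, one invertible matrix coefficient of $r_i$. The natural candidate is the coefficient of $x_3^{n-t_2-i}x_1^{i}x_2^{t_2}$, which comes from the $h=i$, $j=n-t_2-i$ term of the defining sum and equals a product of the scalars $m_3^{(h)}$ for $n-t_2-i+1\leq h\leq n-t_2$ and $m_2^{(h)}$ for $t_2+1\leq h\leq n-t_3$, times a sign, times one of the invertible matrices among $\pm I$, $\sigma_1$, $\sigma_2$, $\sigma_3$ (possibly carrying a factor $(-1)^{1/2}$) according to the parity of $i$. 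Since $i\leq n-t_2-t_3$ forces every index $h$ in the first product to exceed $t_3$ and every index in the second product to exceed $t_2$, each such $m_2^{(h)}$ and $m_3^{(h)}$ is nonzero by the definitions of $t_2$ and $t_3$ (recall $m_a^{(h)}=h+(1-(-1)^h)k_a$), so the leading coefficient is nonsingular and the linear independence over ${\rm Mat}_2(\C)$ follows. (When $t_2=\infty$ or $t_3=\infty$ the hypothesis $n\geq t_2+t_3$ is vacuous, so one may assume $t_2,t_3$ are finite positive odd integers throughout.)

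For part (ii), apply Theorem \ref{thm:BImodule_Mn}(i) to expand $(Z-\theta_i)r_i$ and $(X-\theta_i^*)r_i$ in the monomials $[x_3]^{n-t_2}_j[x_1]^i_h[x_2]^{n-t_3}_{n-h-j}$. As in the displayed calculations of Proposition \ref{prop:n>=t2+t3(odd)}, the coefficient of a given monomial in $(Z-\theta_i)r_i$ is a sum of a ``$c_{h-1,i,j}$'' term, a ``$c_{h+1,i,j}$'' term and a diagonal ``$c_{h,i,j}$'' term, with explicit weights built from the floor-type binomial coefficients and from $m_1^{(h)}$, $m_2^{(n-h-j)}$, $k_1$, $k_2$, $k_3$ and $\theta_i$. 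A case split on the parities of $h$, $i$, $j$, using $\sigma_a^2=1$, the Pauli products $\sigma_1\sigma_3$ and the like, the value $(-1)^{1/2}\sigma_2$, and the Pascal-type identity for the floor binomial coefficients, collapses this to the coefficient of the same monomial in $r_{i+1}$; the corresponding expansion of $(X-\theta_i^*)r_i$ collapses to $\varphi_i$ times the coefficient in $r_{i-1}$, with the ``$h=i$'' branch producing the zero that yields $(X-\theta_0^*)r_0=0$, and the absence of a target monomial at $i=n-t_2-t_3$ giving $(Z-\theta_{n-t_2-t_3})r_{n-t_2-t_3}=0$. Matching the resulting expressions against the stated $\theta_i$, $\theta_i^*$, $\varphi_i$ completes (ii). For part (iii), verify $\D(r_0)=0$ by a short direct computation (using the splitting $\D=\D(x_3)+e_3\otimes T_3$, the rules $T_a([x_a]^n_j)=m_a^{(j)}[x_a]^n_{j-1}$ and the Clifford relations among $e_1,e_2,e_3$); then, since by (ii) each $r_i$ with $i\geq1$ is obtained from $r_0$ by successively applying operators $Z-\theta_{i'}$ and $Z$ maps $\M_n=\ker(\D|_{\C^2\otimes\R[x_1,x_2,x_3]_n})$ into itself by Theorem \ref{thm:BImodule_Mn}(i), an immediate induction gives $\D(r_i)=0$ for all $i=0,1,\ldots,n-t_2-t_3$.

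The real obstacle is the parity and sign bookkeeping in (ii): managing the half-integer signs $(-1)^{\pm h/2}$ and $(-1)^{\pm j/2}$, the Pauli-matrix products, and the floor-function binomial identities all at once is where the genuine effort goes and where a sign error is easiest to make. It is, however, entirely mechanical and rigidly parallel to Propositions \ref{prop:n>=t2+t3(odd)}, \ref{prop:n>=t1+t3(even)} and \ref{prop:n>=t1+t2(even)}, so no new idea is needed; one should also cross-check the normalization in (i) and the precise form of $\varphi_i$ against the parameters of the $O_d$-modules in Proposition \ref{prop:Od}, since these will be used immediately afterwards to identify $\M_n(x_1)\cap\M_n(x_2)\cap\M_n(x_3)$ as a direct sum of modules of type $O$.
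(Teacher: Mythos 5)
Your proposal is correct and follows essentially the same approach as the paper's proof: part (i) by triangularity of $r_i$ in the variable $x_1$ together with a nonsingular leading coefficient at $x_3^{n-t_2-i}x_1^i x_2^{t_2}$ (which the paper records explicitly as $\sigma_3$ for odd $i$ and $\sigma_1$ for even $i$, times nonzero products of $m_2^{(h)}$ and $m_3^{(h)}$); part (ii) by expanding $(Z-\theta_i)r_i$ and $(X-\theta_i^*)r_i$ coefficientwise via Theorem~\ref{thm:BImodule_Mn}(i) and a parity case-split; part (iii) by checking $\D(r_0)=0$ directly and then using the $Z$-ladder from (ii) together with the fact that $Z$ preserves $\M_n$.

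Two small points worth fixing. First, with the paper's normalization $[x_a]^n_j=\prod_{h=j+1}^n m_a^{(h)}\,x_a^j$, the differentiation rule is $T_a\bigl([x_a]^n_j\bigr)=[x_a]^n_{j-1}$, \emph{not} $m_a^{(j)}[x_a]^n_{j-1}$: the factor $m_a^{(j)}$ produced by $T_a(x_a^j)=m_a^{(j)}x_a^{j-1}$ is exactly absorbed into the product defining $[x_a]^n_{j-1}$. This is a bookkeeping slip, not a strategic one, but it would propagate an error into the direct verification of $\D(r_0)=0$ if left uncorrected. Second, your closing remark about cross-checking $\varphi_i$ against the $O_d$-parameters for the purpose of identifying $\M_n(x_1)\cap\M_n(x_2)\cap\M_n(x_3)$ is slightly off in the even-$n$ case treated here: since $n-t_1-t_2-t_3$ is odd when $n$ is even and each $t_\ell$ is odd, that intersection is identified with $E_d$-modules (Theorem~\ref{thm:n+1>=t1+t2+t3(even)}(i)); the $O_d$-modules appear in the identifications of the quotients $\M_n/\M_n(x_\ell)$. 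Again, this is downstream usage and does not affect the proof of the proposition itself.
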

\begin{proof}
(i): Let $i$ be an integer with $0\leq i\leq n-t_2-t_3$.
By construction the coefficient of $x_1^h$ in $r_i$ is zero for all integers $h$ with $i<h\leq n-t_2-t_3$. Observe that the coefficient of $x_3^{n-t_2-i}x_1^i x_2^{t_2}$ in $r_i$ is 
\begin{gather}\label{coeff:n>=t2+t3(even)}
(-1)^{\frac{n-t_2}{2}-i}
\prod_{h=n-t_2-i+1}^{n-t_2} m_3^{(h)}
\prod_{h=t_2+1}^{n-t_3} m_2^{(h)}
\times 
\left\{
\begin{array}{ll}
\sigma_3
\qquad 
&\hbox{if $i$ is odd},
\\
\sigma_1 
\qquad 
&\hbox{if $i$ is even}.
\end{array}
\right.
\end{gather}
Clearly $\prod\limits_{h=t_2+1}^{n-t_3} m_2^{(h)}$ is nonzero.
Since $i\leq n-t_2-t_3$ the scalar $\prod\limits_{h=n-t_2-i+1}^{n-t_2} m_3^{(h)}$ is nonzero. 
Hence the matrix (\ref{coeff:n>=t2+t3(even)}) is nonsingular. By the above comments the part (i) follows.

(ii): 
Let $i$ be a nonnegative integer and let $j$ be an integer with $t_3\leq j\leq n-t_2-i-1$. 
Using Theorem \ref{thm:BImodule_Mn}(i) yields that the coefficient of 
$[x_3]^{n-t_2}_j [x_1]^{i+1}_{i+1} [x_2]^{n-t_3}_{n-i-j-1} $ in $(Z-\theta_i)r_i$ is equal to 
$$
(-1)^{-\frac{i+j+1}{2}}\sigma_3 c_{i,i,j}=(-1)^{\frac{-i+j-1}{2}}c_{i+1,i+1,j}.
$$
Now let $h,i,j$ denote three integers with $0\leq h\leq i\leq n-t_2-t_3$ and $t_3\leq j\leq n-t_2-h$. Using Theorem \ref{thm:BImodule_Mn}(i) yields that the coefficient of 
$[x_3]^{n-t_2}_j [x_1]^i_h [x_2]^{n-t_3}_{n-h-j}$ in $(Z-\theta_i)r_i$ is equal to $(-1)^{-\frac{h+j}{2}}$ times the sum of 
\begin{align*}
&
m_1^{(h)}
{\left\lfloor \frac{n-t_2-i-j}{2}\right\rfloor+\left\lfloor\frac{i-h+1}{2}\right\rfloor \choose \left\lfloor\frac{i-h+1}{2}\right\rfloor}
 \sigma_3 c_{h-1,i,j},
\\
&
m_2^{(n-h-j)}
{\left\lfloor \frac{n-t_2-i-j}{2}\right\rfloor+\left\lfloor\frac{i-h-1}{2}\right\rfloor \choose \left\lfloor\frac{i-h-1}{2}\right\rfloor}
 \sigma_3 c_{h+1,i,j},
\\
&
{\textstyle(
(-1)^h k_1
+
(-1)^{h+j} k_2
-
(-1)^j\theta_i
+
\frac{1}{2}
)}
{\left\lfloor \frac{n-t_2-i-j}{2}\right\rfloor+\left\lfloor\frac{i-h}{2}\right\rfloor \choose \left\lfloor\frac{i-h}{2}\right\rfloor}
c_{h,i,j}.
\end{align*}
It is straightforward to verify that the sum of the above three terms is equal to 
$$
(-1)^j
m_1^{(i+1)}
{\left\lfloor \frac{n-t_2-i-j-1}{2}\right\rfloor+\left\lfloor\frac{i-h+1}{2}\right\rfloor \choose \left\lfloor\frac{i-h+1}{2}\right\rfloor}
c_{h,i+1,j}.  
$$
By the above comments the equations given in (\ref{e:n>=t2+t3(even)-1}) follow.

Using Theorem \ref{thm:BImodule_Mn}(i) yields that the coefficient of 
$[x_3]^{n-t_2}_j [x_1]^i_h [x_2]^{n-t_3}_{n-h-j}$ in $(X-\theta_i^*)r_i$ is equal to $(-1)^\frac{h+j}{2}$ times the sum of 
\begin{align*}
&
m_3^{(j)}
{\left\lfloor \frac{n-t_2-i-j+1}{2}\right\rfloor+\left\lfloor\frac{i-h}{2}\right\rfloor \choose \left\lfloor\frac{i-h}{2}\right\rfloor}
 \sigma_1 c_{h,i,j-1},
\\
&
m_2^{(n-h-j)}
{\left\lfloor \frac{n-t_2-i-j-1}{2}\right\rfloor+\left\lfloor\frac{i-h}{2}\right\rfloor \choose \left\lfloor\frac{i-h}{2}\right\rfloor}
 \sigma_1 c_{h,i,j+1},
\\
&
{\textstyle(
(-1)^j k_3
+
(-1)^{h+j} k_2
-
(-1)^h
\theta_i^*
+
\frac{1}{2}
)}
{\left\lfloor \frac{n-t_2-i-j}{2}\right\rfloor+\left\lfloor\frac{i-h}{2}\right\rfloor \choose \left\lfloor\frac{i-h}{2}\right\rfloor}
c_{h,i,j}.
\end{align*}
It is straightforward to verify that the sum of the above three terms is equal to $(-1)^h$ times 
$$
\left\{
\begin{array}{ll}
\begin{pmatrix}
0 &0
\\
0 &0
\end{pmatrix}
\qquad &\hbox{if $h=i$},
\\
(2k_2+n-i+1)
\displaystyle{\left\lfloor \frac{n-t_2-i-j+1}{2}\right\rfloor+\left\lfloor\frac{i-h-1}{2}\right\rfloor \choose \left\lfloor\frac{i-h-1}{2}\right\rfloor}
c_{h,i-1,j}
\qquad &\hbox{if $h<i$ and $i$ is even},
\\
(2k_2+2k_3+n-i+1)
\displaystyle{\left\lfloor \frac{n-t_2-i-j+1}{2}\right\rfloor+\left\lfloor\frac{i-h-1}{2}\right\rfloor \choose \left\lfloor\frac{i-h-1}{2}\right\rfloor}
c_{h,i-1,j}
\qquad &\hbox{if $h<i$ and $i$ is odd}.
\end{array}
\right.
$$
By the above comments the equations given in (\ref{e:n>=t2+t3(even)-2}) follow.

(iii): It is routine to verify that $\D(r_0)=0$. Combined with (ii) the statement (iii) follows.
\end{proof}

\begin{lem}\label{lem:basis:n>=t2+t3(even)}
Suppose that $n$ is an even integer with $n\geq t_1+t_2+t_3$. 
Let $\{r_i\}_{i=0}^{n-t_2-t_3}$ be as in Proposition \ref{prop:n>=t2+t3(even)}. Then the following hold:
\begin{enumerate}
\item $\M_n(x_1)\cap \M_n(x_2)\cap \M_n(x_3)$ has the basis 
\begin{align}
r_i\cdot 
\begin{pmatrix}
1
\\
0
\end{pmatrix}
\otimes
1 
\qquad 
(t_1\leq i\leq n-t_2-t_3), 
\label{M123basis:n>=t2+t3(even)-1}
\\ 
r_i\cdot 
\begin{pmatrix}
0
\\
1
\end{pmatrix}
\otimes
1 
\qquad 
(t_1\leq i\leq n-t_2-t_3).
\label{M123basis:n>=t2+t3(even)-2}
\end{align}

\item $\M_n/\M_n(x_1)$ has the basis
\begin{align}
r_i\cdot 
\begin{pmatrix}
1
\\
0
\end{pmatrix}
\otimes
1 
+
\M_n(x_1)
\qquad 
(0\leq i\leq t_1-1), 
\label{M/M1basis:n>=t2+t3(even)-1}
\\ 
r_i\cdot 
\begin{pmatrix}
0
\\
1
\end{pmatrix}
\otimes
1 
+
\M_n(x_1)
\qquad 
(0\leq i\leq t_1-1).
\label{M/M1basis:n>=t2+t3(even)-2}
\end{align}
\end{enumerate}
\end{lem}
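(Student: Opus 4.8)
The plan is to follow the proof of Lemma \ref{lem:basis:n>=t2+t3(odd)} line for line, reading Proposition \ref{prop:n>=t2+t3(even)} in place of Proposition \ref{prop:n>=t2+t3(odd)}. For part (i) I would check three things about the vectors (\ref{M123basis:n>=t2+t3(even)-1}) and (\ref{M123basis:n>=t2+t3(even)-2}): that they are linearly independent, that they lie in $\M_n(x_1)\cap\M_n(x_2)\cap\M_n(x_3)$, and that their number equals the dimension of that space. Linear independence is immediate from Proposition \ref{prop:n>=t2+t3(even)}(i), since $\{r_i\}$ is linearly independent over ${\rm Mat}_2(\C)$ and one is merely evaluating each $r_i$ on the two standard basis vectors of $\C^2$. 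For membership, Proposition \ref{prop:n>=t2+t3(even)}(iii) gives $\D(r_i)=0$, so (\ref{M123basis:n>=t2+t3(even)-1}) and (\ref{M123basis:n>=t2+t3(even)-2}) lie in $\M_n$; and the definition of $r_i$ shows that every monomial occurring in it has $x_3$-degree $j\ge t_3$ and $x_2$-degree $n-h-j\ge t_2$, both forced by the ranges of the two sums, while for $i\ge t_1$ the coefficient $[x_1]^i_h$ vanishes as soon as $h<t_1$ because $m_1^{(t_1)}=0$, so only monomials of $x_1$-degree $h\ge t_1$ survive. Hence these vectors lie in all three subspaces of Definition \ref{defn:M(x1)} simultaneously. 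Finally, since $n\ge t_1+t_2+t_3$, Proposition \ref{prop:dimM(x123)} gives $\dim\bigl(\M_n(x_1)\cap\M_n(x_2)\cap\M_n(x_3)\bigr)=2(n-t_1-t_2-t_3+1)$, which is exactly the number of vectors listed, so they form a basis.

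For part (ii) the plan is to exhibit $2t_1$ linearly independent cosets in $\M_n/\M_n(x_1)$ and then match dimensions. By Proposition \ref{prop:n>=t2+t3(even)}(iii) each of (\ref{M/M1basis:n>=t2+t3(even)-1}) and (\ref{M/M1basis:n>=t2+t3(even)-2}) is the image of an element of $\M_n$, so the cosets make sense. For their linear independence modulo $\M_n(x_1)$, I would observe that for $0\le i\le t_1-1$ every monomial of $r_i$ has $x_1$-degree $h\le i<t_1$ and that $[x_1]^i_h\ne 0$ for such $h$; therefore any nontrivial combination $\sum_{i<t_1}r_i v_i$ still involves a monomial of $x_1$-degree $<t_1$, whereas every element of $\M_n(x_1)$ has all of its monomials of $x_1$-degree $\ge t_1$. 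So such a combination can lie in $\M_n(x_1)$ only if it vanishes, and then Proposition \ref{prop:n>=t2+t3(even)}(i) forces all $v_i=0$. By Theorem \ref{thm:dimM=2(n+1)} together with Proposition \ref{prop:dimM(x1)}(i), $\dim\M_n/\M_n(x_1)=2(n+1)-2(n-t_1+1)=2t_1$, which matches the number of cosets, so they form a basis.

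Everything beyond invoking Proposition \ref{prop:n>=t2+t3(even)} and the dimension formulas of \S\ref{s:submodule_Mn} is degree bookkeeping, so I do not expect a genuine obstacle; the one point deserving a moment's care is the behaviour of $[x_1]^i_h$, namely that it is nonzero precisely when $h\le i<t_1$ and zero precisely when $h<t_1\le i$. This dichotomy is exactly what simultaneously places $r_i$ inside $\M_n(x_1)$ for $i\ge t_1$ and keeps $r_i$ transverse to $\M_n(x_1)$ for $i<t_1$, and it is the only input not read off verbatim from the cited results.
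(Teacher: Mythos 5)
Your proposal is correct and takes essentially the same approach as the paper's proof: cite Proposition \ref{prop:n>=t2+t3(even)}(i) for linear independence, Proposition \ref{prop:n>=t2+t3(even)}(iii) for membership, and Propositions \ref{prop:dimM(x123)}, \ref{prop:dimM(x1)}(i) with Theorem \ref{thm:dimM=2(n+1)} for the dimension counts. The only difference is that you spell out explicitly the dichotomy in $[x_1]^i_h$ (nonzero precisely when $h\le i<t_1$, zero precisely when $h<t_1\le i$) which underlies both the containment in part (i) and the transversality to $\M_n(x_1)$ in part (ii), whereas the paper leaves this degree bookkeeping implicit; this is a fair — and arguably welcome — elaboration rather than a different route.
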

\begin{proof}
(i): The linear independence of  (\ref{M123basis:n>=t2+t3(even)-1}) and (\ref{M123basis:n>=t2+t3(even)-2}) follows from Proposition \ref{prop:n>=t2+t3(even)}(i). Since (\ref{M123basis:n>=t2+t3(even)-1}) and (\ref{M123basis:n>=t2+t3(even)-2}) are in 
$
\C^2\otimes\left(\bigoplus\limits_{i=t_1}^n x_1^i\cdot \R[x_2,x_3]_{n-i}
\cap
\bigoplus\limits_{i=t_2}^n x_2^i\cdot \R[x_1,x_3]_{n-i}
\cap 
\bigoplus\limits_{i=t_3}^n x_3^i\cdot \R[x_1,x_2]_{n-i}
\right)$,  
it follows from Proposition \ref{prop:n>=t2+t3(even)}(iii) that (\ref{M123basis:n>=t2+t3(even)-1}) and (\ref{M123basis:n>=t2+t3(even)-2}) are in $\M_n(x_1)\cap \M_n(x_2)\cap \M_n(x_3)$. Combined with Proposition \ref{prop:dimM(x123)} the statement (i) follows.

(ii): The linear independence of  (\ref{M/M1basis:n>=t2+t3(even)-1}) and (\ref{M/M1basis:n>=t2+t3(even)-2}) follows from Proposition \ref{prop:n>=t2+t3(even)}(i). 
By Proposition \ref{prop:n>=t2+t3(even)}(iii) the cosets (\ref{M/M1basis:n>=t2+t3(even)-1}) and (\ref{M/M1basis:n>=t2+t3(even)-2}) are in 
$
\M_n/\M_n(x_1).
$  
By  Theorem \ref{thm:dimM=2(n+1)} and Proposition \ref{prop:dimM(x1)}(i) the dimension of $\M_n/\M_n(x_1)$ is $2t_1$. The statement (ii) follows.
\end{proof}

\begin{lem}\label{lem:basis:n+1>=t1+t2+t3(even)}
Suppose that $n$ is an even integer with $n\geq t_1+t_2+t_3$. 
Let 
$$
\{p_i\}_{i=0}^{n-t_1-t_3},
\qquad 
\{q_i\}_{i=0}^{n-t_1-t_2},
\qquad 
\{r_i\}_{i=0}^{n-t_2-t_3}
$$ 
be as in Propositions \ref{prop:n>=t1+t3(even)}, \ref{prop:n>=t1+t2(even)}, \ref{prop:n>=t2+t3(even)} respectively. Then the following hold:
\begin{enumerate}
\item $\M_n(x_1)/\M_n(x_1)\cap \M_n(x_2)\cap \M_n(x_3)$ has the basis 
\begin{align}
p_i\cdot 
\begin{pmatrix}
1
\\
0
\end{pmatrix}
\otimes
1
+
\M_n(x_1)\cap \M_n(x_2)\cap \M_n(x_3)
\qquad 
(0\leq i\leq t_2-1), 
\label{M1/M123basis:n+1>=t1+t2+t3(even)-1}
\\ 
p_i\cdot 
\begin{pmatrix}
0
\\
1
\end{pmatrix}
\otimes
1 
+
\M_n(x_1)\cap \M_n(x_2)\cap \M_n(x_3)
\qquad 
(0\leq i\leq t_2-1),
\label{M1/M123basis:n+1>=t1+t2+t3(even)-2}
\\
q_i\cdot 
\begin{pmatrix}
1
\\
0
\end{pmatrix}
\otimes
1
+
\M_n(x_1)\cap \M_n(x_2)\cap \M_n(x_3)
\qquad 
(0\leq i\leq t_3-1), 
\label{M1/M123basis:n+1>=t1+t2+t3(even)-3}
\\
q_i\cdot 
\begin{pmatrix}
0
\\
1
\end{pmatrix}
\otimes
1 
+
\M_n(x_1)\cap \M_n(x_2)\cap \M_n(x_3)
\qquad 
(0\leq i\leq t_3-1).
\label{M1/M123basis:n+1>=t1+t2+t3(even)-4}
\end{align}

\item $\M_n(x_2)/\M_n(x_1)\cap \M_n(x_2)\cap \M_n(x_3)$ has the basis 
\begin{align}
q_i\cdot 
\begin{pmatrix}
1
\\
0
\end{pmatrix}
\otimes
1
+
\M_n(x_1)\cap \M_n(x_2)\cap \M_n(x_3)
\qquad 
(0\leq i\leq t_3-1), 
\label{M2/M123basis:n+1>=t1+t2+t3(even)-1}
\\ 
q_i\cdot 
\begin{pmatrix}
0
\\
1
\end{pmatrix}
\otimes
1 
+
\M_n(x_1)\cap \M_n(x_2)\cap \M_n(x_3)
\qquad 
(0\leq i\leq t_3-1),
\label{M2/M123basis:n+1>=t1+t2+t3(even)-2}
\\
r_i\cdot 
\begin{pmatrix}
1
\\
0
\end{pmatrix}
\otimes
1
+
\M_n(x_1)\cap \M_n(x_2)\cap \M_n(x_3)
\qquad 
(0\leq i\leq t_1-1), 
\label{M2/M123basis:n+1>=t1+t2+t3(even)-3}
\\
r_i\cdot 
\begin{pmatrix}
0
\\
1
\end{pmatrix}
\otimes
1 
+
\M_n(x_1)\cap \M_n(x_2)\cap \M_n(x_3)
\qquad 
(0\leq i\leq t_1-1).
\label{M2/M123basis:n+1>=t1+t2+t3(even)-4}
\end{align}

\item $\M_n(x_3)/\M_n(x_1)\cap \M_n(x_2)\cap \M_n(x_3)$ has the basis 
\begin{align}
r_i\cdot 
\begin{pmatrix}
1
\\
0
\end{pmatrix}
\otimes
1
+
\M_n(x_1)\cap \M_n(x_2)\cap \M_n(x_3)
\qquad 
(0\leq i\leq t_1-1), 
\label{M3/M123basis:n+1>=t1+t2+t3(even)-1}
\\ 
r_i\cdot 
\begin{pmatrix}
0
\\
1
\end{pmatrix}
\otimes
1 
+
\M_n(x_1)\cap \M_n(x_2)\cap \M_n(x_3)
\qquad 
(0\leq i\leq t_1-1),
\label{M3/M123basis:n+1>=t1+t2+t3(even)-2}
\\
p_i\cdot 
\begin{pmatrix}
1
\\
0
\end{pmatrix}
\otimes
1
+
\M_n(x_1)\cap \M_n(x_2)\cap \M_n(x_3)
\qquad 
(0\leq i\leq t_2-1), 
\label{M3/M123basis:n+1>=t1+t2+t3(even)-3}
\\
p_i\cdot 
\begin{pmatrix}
0
\\
1
\end{pmatrix}
\otimes
1 
+
\M_n(x_1)\cap \M_n(x_2)\cap \M_n(x_3)
\qquad 
(0\leq i\leq t_2-1).
\label{M3/M123basis:n+1>=t1+t2+t3(even)-4}
\end{align}
\end{enumerate}
\end{lem}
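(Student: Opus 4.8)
The plan is to reproduce, for the even-degree families, the argument already used in the proof of Lemma~\ref{lem:basis:n+1>=t1+t2+t3(odd)}, with Propositions~\ref{prop:n>=t1+t3(odd)}, \ref{prop:n>=t1+t2(odd)}, \ref{prop:n>=t2+t3(odd)} replaced by their even-degree counterparts Propositions~\ref{prop:n>=t1+t3(even)}, \ref{prop:n>=t1+t2(even)}, \ref{prop:n>=t2+t3(even)}. Since the three parts (i)--(iii) go over to one another under cyclic permutation of the indices $1,2,3$ and of the families $p,q,r$, I would write out (i) in full and then record the bookkeeping for (ii) and (iii).

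For (i) there are three steps. First, from the explicit form of the $p_i$ in Proposition~\ref{prop:n>=t1+t3(even)} the exponent of $x_1$ in each monomial is $j\ge t_1$, and from Proposition~\ref{prop:n>=t1+t2(even)} the exponent of $x_1$ in each monomial of $q_i$ is $n-h-j\ge t_1$; together with $\D(p_i)=\D(q_i)=0$ from parts (iii) of those propositions, this shows that the four families of vectors built from $p_i$ ($0\le i\le t_2-1$) and $q_i$ ($0\le i\le t_3-1$) lie in $\M_n(x_1)$, so the cosets (\ref{M1/M123basis:n+1>=t1+t2+t3(even)-1})--(\ref{M1/M123basis:n+1>=t1+t2+t3(even)-4}) lie in $\M_n(x_1)/\M_n(x_1)\cap\M_n(x_2)\cap\M_n(x_3)$. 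Second, to prove linear independence I would exploit the monomial supports: for $0\le i\le t_2-1$ every monomial of $p_i$ has $x_2$-exponent $\le i<t_2$, whereas every member of $\M_n(x_2)$---hence of the triple intersection---and every $q_i$ occurring has $x_2$-exponent $\ge t_2$; projecting a relation (in $\M_n(x_1)$, modulo the triple intersection) onto the span of monomials with $x_2$-exponent $<t_2$ leaves only a combination of the $p_i$, which must then vanish by Proposition~\ref{prop:n>=t1+t3(even)}(i); the symmetric projection onto $x_3$-exponent $<t_3$ kills the $q_i$-coefficients by Proposition~\ref{prop:n>=t1+t2(even)}(i). Third, by Propositions~\ref{prop:dimM(x1)}(i) and \ref{prop:dimM(x123)} the dimension of $\M_n(x_1)/\M_n(x_1)\cap\M_n(x_2)\cap\M_n(x_3)$ is $2(n-t_1+1)-2(n-t_1-t_2-t_3+1)=2(t_2+t_3)$, which equals the number of listed cosets, so a linearly independent family of this size is a basis.

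Parts (ii) and (iii) follow by the same scheme with the variables cyclically permuted: part (ii) combines the families $q_i$ and $r_i$, invokes Propositions~\ref{prop:n>=t1+t2(even)} and \ref{prop:n>=t2+t3(even)} for membership and independence, and reads $\dim\M_n(x_2)/\M_n(x_1)\cap\M_n(x_2)\cap\M_n(x_3)=2(t_1+t_3)$ off Propositions~\ref{prop:dimM(x1)}(ii) and \ref{prop:dimM(x123)}; part (iii) combines $r_i$ and $p_i$, invokes Propositions~\ref{prop:n>=t2+t3(even)} and \ref{prop:n>=t1+t3(even)}, and gets $\dim\M_n(x_3)/\M_n(x_1)\cap\M_n(x_2)\cap\M_n(x_3)=2(t_1+t_2)$ from Propositions~\ref{prop:dimM(x1)}(iii) and \ref{prop:dimM(x123)}. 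The step I expect to require the most care is the linear-independence argument: one has to pin down, in each of the three parts, which variable's exponent is forced large on the triple intersection and small on the ``extra'' members of the two families involved, so that the two monomial-support projections truly decouple the contributions; once that is set up, the remainder is a direct appeal to the cited propositions together with the elementary dimension count.
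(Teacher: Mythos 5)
Your proposal is correct and follows the same three-step scheme as the paper (membership in the quotient, linear independence, dimension count), and in fact supplies the details that the paper leaves implicit: the paper merely cites Propositions~\ref{prop:n>=t1+t3(even)}(i) and \ref{prop:n>=t1+t2(even)}(i) for linear independence, and your monomial-support projections (onto $x_2$-exponent $<t_2$ to isolate the $p_i$-terms, onto $x_3$-exponent $<t_3$ to isolate the $q_i$-terms, using that every element of the triple intersection has $x_2$-exponent $\ge t_2$ and $x_3$-exponent $\ge t_3$) are exactly the right mechanism for combining those two separate independence statements into independence of the full family modulo $\M_n(x_1)\cap\M_n(x_2)\cap\M_n(x_3)$. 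The membership argument (exponent of $x_1$ is $\ge t_1$ in every monomial of $p_i$ and $q_i$, together with $\D(p_i)=\D(q_i)=0$) and the dimension count $2(n-t_1+1)-2(n-t_1-t_2-t_3+1)=2(t_2+t_3)$ match the paper's appeal to Propositions~\ref{prop:dimM(x1)} and \ref{prop:dimM(x123)}.
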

\begin{proof}
(i): 
Observe that (\ref{M1/M123basis:n+1>=t1+t2+t3(even)-1})--(\ref{M1/M123basis:n+1>=t1+t2+t3(even)-4}) are in 
$
\M_n(x_1)/\M_n(x_1)\cap \M_n(x_2)\cap \M_n(x_3).
$ 
The linear independence of  (\ref{M1/M123basis:n+1>=t1+t2+t3(even)-1})--(\ref{M1/M123basis:n+1>=t1+t2+t3(even)-4}) follows from Propositions \ref{prop:n>=t1+t3(even)}(i) and \ref{prop:n>=t1+t2(even)}(i). 
By Propositions \ref{prop:dimM(x1)}(i) and \ref{prop:dimM(x123)} the dimension of $\M_n(x_1)/\M_n(x_1)\cap \M_n(x_2)\cap \M_n(x_3)$ is $2(t_2+t_3)$.
The statement (i) follows.

(ii): 
Observe that (\ref{M2/M123basis:n+1>=t1+t2+t3(even)-1})--(\ref{M2/M123basis:n+1>=t1+t2+t3(even)-4}) are in 
$
\M_n(x_2)/\M_n(x_1)\cap \M_n(x_2)\cap \M_n(x_3).
$ 
The linear independence of  (\ref{M2/M123basis:n+1>=t1+t2+t3(even)-1})--(\ref{M2/M123basis:n+1>=t1+t2+t3(even)-4}) follows from Propositions \ref{prop:n>=t1+t2(even)}(i) and \ref{prop:n>=t2+t3(even)}(i). 
By Propositions \ref{prop:dimM(x1)}(ii) and \ref{prop:dimM(x123)} the dimension of $\M_n(x_2)/\M_n(x_1)\cap \M_n(x_2)\cap \M_n(x_3)$ is $2(t_1+t_3)$.
The statement (ii) follows.

(iii): 
Observe that (\ref{M3/M123basis:n+1>=t1+t2+t3(even)-1})--(\ref{M3/M123basis:n+1>=t1+t2+t3(even)-4}) are in 
$
\M_n(x_3)/\M_n(x_1)\cap \M_n(x_2)\cap \M_n(x_3).
$ 
The linear independence of  (\ref{M3/M123basis:n+1>=t1+t2+t3(even)-1})--(\ref{M3/M123basis:n+1>=t1+t2+t3(even)-4}) follows from Propositions \ref{prop:n>=t2+t3(even)}(i) and \ref{prop:n>=t1+t3(even)}(i). 
By Propositions \ref{prop:dimM(x1)}(iii) and \ref{prop:dimM(x123)} the dimension of $\M_n(x_3)/\M_n(x_1)\cap \M_n(x_2)\cap \M_n(x_3)$ is $2(t_1+t_2)$.
The statement (iii) follows. 
\end{proof}

\begin{thm}\label{thm:n+1>=t1+t2+t3(even)}
Suppose that $n$ is an even  integer with $n\geq t_1+t_2+t_3$. Then the following hold: 
\begin{enumerate}
\item The $\BI$-module $\M_n(x_1)\cap \M_n(x_2)\cap \M_n(x_3)$ is isomorphic to a direct sum of two copies of 
\begin{gather}\label{M123(even)}
E_{n-t_1-t_2-t_3}
\textstyle(
k_1+\frac{n+1}{2},
k_2+\frac{n+1}{2},
k_3+\frac{n+1}{2}
).
\end{gather}

\item The $\BI$-module $\M_n/\M_n(x_1)$ is isomorphic to a direct sum of two copies of 
\begin{gather}\label{M/M1(even)}
O_{t_1-1}(k_1+k_2+k_3+n+1,k_3,k_2).
\end{gather}

\item The $\BI$-module $\M_n/\M_n(x_2)$ is isomorphic to a direct sum of two copies of 
\begin{gather}\label{M/M2(even)}
O_{t_2-1}(k_3,k_1+k_2+k_3+n+1,k_1).
\end{gather}

\item The $\BI$-module $\M_n/\M_n(x_3)$ is isomorphic to a direct sum of two copies of 
\begin{gather}\label{M/M3(even)}
O_{t_3-1}(k_2,k_1,k_1+k_2+k_3+n+1).
\end{gather}

\item The $\BI$-module $\M_n(x_1)/\M_n(x_1)\cap \M_n(x_2)\cap \M_n(x_3)$ 
is isomorphic to a direct sum of two copies of (\ref{M/M2(even)}) and (\ref{M/M3(even)}).

\item The $\BI$-module $\M_n(x_2)/\M_n(x_1)\cap \M_n(x_2)\cap \M_n(x_3)$ 
is isomorphic to a direct sum of two copies of (\ref{M/M1(even)}) and (\ref{M/M3(even)}).

\item The $\BI$-module $\M_n(x_3)/\M_n(x_1)\cap \M_n(x_2)\cap \M_n(x_3)$ 
is isomorphic to a direct sum of two copies of (\ref{M/M1(even)}) and (\ref{M/M2(even)}).
\end{enumerate}
Moreover the $\BI$-modules (\ref{M123(even)})--(\ref{M/M3(even)}) are irreducible.
\end{thm}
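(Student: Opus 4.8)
The plan is to mirror, step for step, the proof of Theorem~\ref{thm:n+1>=t1+t2+t3(odd)}, substituting the even-$n$ analogues of every ingredient used there. Throughout, recall that each $t_j$ is an odd positive integer (the value $\infty$ being excluded here since $n\geq t_1+t_2+t_3$), so $t_1+t_2+t_3$ is odd and $n-t_1-t_2-t_3$ is even; consequently $\M_n(x_1)\cap\M_n(x_2)\cap\M_n(x_3)$ is even-dimensional and is to be described by the $E_d$ model of Proposition~\ref{prop:Ed}, while each quotient $\M_n/\M_n(x_j)$ has dimension $2t_j$ and is described by the $O_{t_j-1}$ model of Proposition~\ref{prop:Od}.

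For part~(i) I would set $V$ and $V'$ to be the subspaces of $\M_n$ spanned by (\ref{M123basis:n>=t1+t3(even)-1}) and (\ref{M123basis:n>=t1+t3(even)-2}); Lemma~\ref{lem:basis:n>=t1+t3(even)}(i) gives $\M_n(x_1)\cap\M_n(x_2)\cap\M_n(x_3)=V\oplus V'$, and Theorem~\ref{thm:BImodule_Mn}(ii) together with Proposition~\ref{prop:n>=t1+t3(even)}(ii) shows $V\cong V'$ as $\BI$-modules. Matching the bidiagonal action of $X,Y$ recorded in Proposition~\ref{prop:n>=t1+t3(even)}(ii), after reindexing the sub-basis $p_i$ ($t_2\leq i\leq n-t_1-t_3$), against the normal form of Proposition~\ref{prop:Ed} identifies each of $V,V'$ with $E_{n-t_1-t_2-t_3}(a,b,c)^{\e}$ for explicit $a,b,c$ and an explicit $\e\in\{\pm1\}^2$; here the substitution $k_j=-t_j/2$ is what makes the entries $\varphi_i$ take the required form. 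Then Theorem~\ref{thm:irr_E} yields irreducibility — the four noncoincidence conditions reduce, again via $k_j=-t_j/2$, to inequalities implied by $n\geq t_1+t_2+t_3$ — and Theorem~\ref{thm:onto2_E} rewrites the (possibly twisted, possibly sign-flipped) module as (\ref{M123(even)}).

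Parts~(ii)--(iv) are the same argument run three times: use Lemma~\ref{lem:basis:n>=t2+t3(even)}(ii), Lemma~\ref{lem:basis:n>=t1+t3(even)}(ii), Lemma~\ref{lem:basis:n>=t1+t2(even)}(ii) to split $\M_n/\M_n(x_1)$, $\M_n/\M_n(x_2)$, $\M_n/\M_n(x_3)$ into two isomorphic $\BI$-submodules; use Propositions~\ref{prop:n>=t2+t3(even)}(ii), \ref{prop:n>=t1+t3(even)}(ii), \ref{prop:n>=t1+t2(even)}(ii) with Proposition~\ref{prop:Od} to see that each summand is an $O_{t_j-1}$-module twisted by an element of $\{\pm1\}^2\rtimes\Sym_3$ as in Table~\ref{pm1-action}; then Theorem~\ref{thm:irr_O} gives irreducibility (using $n\geq t_1+t_2+t_3$) and Theorem~\ref{thm:onto2_O} produces the untwisted forms (\ref{M/M1(even)})--(\ref{M/M3(even)}). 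For parts~(v)--(vii), Lemma~\ref{lem:basis:n+1>=t1+t2+t3(even)} writes each of $\M_n(x_j)/\M_n(x_1)\cap\M_n(x_2)\cap\M_n(x_3)$ as $V\oplus V'\oplus W\oplus W'$, where $\{V,V'\}$ and $\{W,W'\}$ are spanned by two of the four listed coset families; each pair is identified exactly as in the relevant part~(ii)--(iv), giving the stated decomposition into copies of (\ref{M/M1(even)})--(\ref{M/M3(even)}).

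The only real work is computational bookkeeping: correctly reading off $(a,b,c,\e)$ from the triples $(\theta_i,\theta_i^*,\varphi_i)$ in Propositions~\ref{prop:n>=t1+t3(even)}, \ref{prop:n>=t1+t2(even)}, \ref{prop:n>=t2+t3(even)}, tracking the automorphism twists through Table~\ref{pm1-action}, and verifying the irreducibility criteria; this is precisely the bookkeeping already carried out in the odd case, and I anticipate no new conceptual obstacle. A mild point to be careful about is that, unlike the $O_d$ family, the $E_d$ family satisfies the sign-flip isomorphisms of Theorem~\ref{lem:onto2_E}, so in part~(i) the parameters need only be matched up to sign, which is exactly what Theorem~\ref{thm:onto2_E} encapsulates.
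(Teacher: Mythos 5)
Your proposal tracks the paper's actual proof step for step: part (i) is split via Lemma \ref{lem:basis:n>=t1+t3(even)}(i) into two summands, which are identified (via Theorem \ref{thm:BImodule_Mn}(ii) and Proposition \ref{prop:n>=t1+t3(even)}(ii), then Proposition \ref{prop:Ed}) with a twisted $E_{n-t_1-t_2-t_3}$-module, and then Theorems \ref{thm:irr_E} and \ref{thm:onto2_E} give irreducibility and the untwisted normal form (\ref{M123(even)}); parts (ii)--(iv) run the same scheme with Lemmas \ref{lem:basis:n>=t2+t3(even)}(ii), \ref{lem:basis:n>=t1+t3(even)}(ii), \ref{lem:basis:n>=t1+t2(even)}(ii), Proposition \ref{prop:Od}, and Theorems \ref{thm:irr_O}, \ref{thm:onto2_O}; parts (v)--(vii) use Lemma \ref{lem:basis:n+1>=t1+t2+t3(even)} to split into four summands and reuse the identifications from (ii)--(iv). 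This is exactly the argument in the paper, correct in approach, key ingredients, and routing.
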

\begin{proof}
(i):  Let $V$ and $V'$ denote the subspaces of $\M_n$ spanned by (\ref{M123basis:n>=t1+t3(even)-1}) and (\ref{M123basis:n>=t1+t3(even)-2}), respectively. It follows from Lemma \ref{lem:basis:n>=t1+t3(even)}(i) that $\M_n(x_1)\cap \M_n(x_2)\cap \M_n(x_3)=V\oplus V'$. It follows from Theorem \ref{thm:BImodule_Mn}(ii) and Proposition \ref{prop:n>=t1+t3(even)}(i) that $V$ and $V'$ are two isomorphic $\BI$-modules. 
Compared with Proposition \ref{prop:Ed} both are isomorphic to 
\begin{gather}\label{M123'(even)}
E_{n-t_1-t_2-t_3}
(
\textstyle
k_1+\frac{n+1}{2},
-k_2-\frac{n+1}{2},
k_3+\frac{n+1}{2}
).
\end{gather}
Using Theorem \ref{thm:irr_E} yields that the $\BI$-module (\ref{M123'(even)}) is irreducible. By Theorem \ref{thm:onto2_E} the $\BI$-module (\ref{M123'(even)}) is isomorphic to (\ref{M123(even)}). The statement (i) follows.

(ii): Let $V$ and $V'$ denote the subspaces of $\M_n/\M_n(x_1)$ spanned by (\ref{M/M1basis:n>=t2+t3(even)-1}) and (\ref{M/M1basis:n>=t2+t3(even)-2}), respectively. 
It follows from Lemma \ref{lem:basis:n>=t2+t3(even)}(ii) that $\M_n/\M_n(x_1)=V\oplus V'$. It follows from Theorem \ref{thm:BImodule_Mn}(ii) and Proposition \ref{prop:n>=t2+t3(even)}(ii) that $V$ and $V'$ are two isomorphic $\BI$-modules. 
Compared with Proposition \ref{prop:Od} both are isomorphic to 
\begin{gather}\label{M/M1'(even)}
O_{t_1-1}(-k_2,-k_1-k_2-k_3-n-1,k_3)^{((-1,-1),(1\,2\,3))}.
\end{gather}
Using Theorem \ref{thm:irr_O} yields that the $\BI$-module (\ref{M/M1'(even)}) is irreducible. By Theorem \ref{thm:onto2_O} the $\BI$-module (\ref{M/M1'(even)}) is isomorphic to (\ref{M/M1(even)}). The statement (ii) follows.

(iii): Let $V$ and $V'$ denote the subspaces of $\M_n/\M_n(x_2)$ spanned by (\ref{M/M2basis:n>=t1+t3(even)-1}) and (\ref{M/M2basis:n>=t1+t3(even)-2}), respectively. 
It follows from Lemma \ref{lem:basis:n>=t1+t3(even)}(ii) that $\M_n/\M_n(x_2)=V\oplus V'$. It follows from Theorem \ref{thm:BImodule_Mn}(ii) and Proposition \ref{prop:n>=t1+t3(even)}(ii) that $V$ and $V'$ are two isomorphic $\BI$-modules. 
Compared with Proposition \ref{prop:Od} both are isomorphic to 
\begin{gather}\label{M/M2'(even)}
O_{t_2-1}(-k_3,-k_1-k_2-k_3-n-1,k_1)^{(-1,-1)}.
\end{gather}
Using Theorem \ref{thm:irr_O} yields that the $\BI$-module (\ref{M/M2'(even)}) is irreducible. By Theorem \ref{thm:onto2_O} the $\BI$-module (\ref{M/M2'(even)}) is isomorphic to (\ref{M/M2(even)}). The statement (iii) follows.

(iv): Let $V$ and $V'$ denote the subspaces of $\M_n/\M_n(x_3)$ spanned by (\ref{M/M3basis:n>=t1+t2(even)-1}) and (\ref{M/M3basis:n>=t1+t2(even)-2}), respectively. 
It follows from Lemma \ref{lem:basis:n>=t1+t2(even)}(ii) that $\M_n/\M_n(x_3)=V\oplus V'$. It follows from Theorem \ref{thm:BImodule_Mn}(ii) and Proposition \ref{prop:n>=t1+t2(even)}(ii) that $V$ and $V'$ are two isomorphic $\BI$-modules. 
Compared with Proposition \ref{prop:Ed} both are isomorphic to 
\begin{gather}\label{M/M3'(even)}
O_{t_3-1}(-k_1,-k_1-k_2-k_3-n-1,k_2)^{((-1,-1),(1\,3\,2))}.
\end{gather}
Using Theorem \ref{thm:irr_O} yields that the $\BI$-module (\ref{M/M3'(even)}) is irreducible. By Theorem \ref{thm:onto2_O} the $\BI$-module (\ref{M/M3'(even)}) is isomorphic to (\ref{M/M3(even)}). The statement (iv) follows.

(v): 
Let $V,V',W,W'$ denote the subspaces of $\M_n(x_1)/\M_n(x_1)\cap \M_n(x_2)\cap \M_n(x_3)$ spanned by (\ref{M1/M123basis:n+1>=t1+t2+t3(even)-1})--(\ref{M1/M123basis:n+1>=t1+t2+t3(even)-4}), respectively. 
It follows from Lemma \ref{lem:basis:n+1>=t1+t2+t3(even)}(i) that 
$$
\M_n(x_1)/\M_n(x_1)\cap \M_n(x_2)\cap \M_n(x_3)=V\oplus V'\oplus W\oplus W'.
$$ 
Similar to the proof of Theorem \ref{thm:n+1>=t1+t2+t3(even)}(iii) it follows that $V$ and $V'$ are isomorphic to the $\BI$-module (\ref{M/M2(even)}). Similar to the proof of Theorem \ref{thm:n+1>=t1+t2+t3(even)}(iv) it follows that $W$ and $W'$ are isomorphic to the $\BI$-module (\ref{M/M3(even)}). The statement (v) follows.

(vi): 
Let $V,V',W,W'$ denote the subspaces of $\M_n(x_1)/\M_n(x_1)\cap \M_n(x_2)\cap \M_n(x_3)$ spanned by (\ref{M2/M123basis:n+1>=t1+t2+t3(even)-1})--(\ref{M2/M123basis:n+1>=t1+t2+t3(even)-4}), respectively. 
It follows from Lemma \ref{lem:basis:n+1>=t1+t2+t3(even)}(ii) that 
$$
\M_n(x_2)/\M_n(x_1)\cap \M_n(x_2)\cap \M_n(x_3)=V\oplus V'\oplus W\oplus W'.
$$ 
Similar to the proof of Theorem \ref{thm:n+1>=t1+t2+t3(even)}(iv) it follows that $V$ and $V'$ are isomorphic to the $\BI$-module (\ref{M/M3(even)}). Similar to the proof of Theorem \ref{thm:n+1>=t1+t2+t3(even)}(ii) it follows that $W$ and $W'$ are isomorphic to the $\BI$-module (\ref{M/M1(even)}). The statement (vi) follows.

(vii): 
Let $V,V',W,W'$ denote the subspaces of $\M_n(x_3)/\M_n(x_1)\cap \M_n(x_2)\cap \M_n(x_3)$ spanned by (\ref{M3/M123basis:n+1>=t1+t2+t3(even)-1})--(\ref{M3/M123basis:n+1>=t1+t2+t3(even)-4}), respectively. 
It follows from Lemma \ref{lem:basis:n+1>=t1+t2+t3(even)}(iii) that 
$$
\M_n(x_3)/\M_n(x_1)\cap \M_n(x_2)\cap \M_n(x_3)=V\oplus V'\oplus W\oplus W'.
$$ 
Similar to the proof of Theorem \ref{thm:n+1>=t1+t2+t3(even)}(ii) it follows that $V$ and $V'$ are isomorphic to the $\BI$-module (\ref{M/M1(even)}). Similar to the proof of Theorem \ref{thm:n+1>=t1+t2+t3(even)}(iii) it follows that $W$ and $W'$ are isomorphic to the $\BI$-module (\ref{M/M2(even)}). The statement (vii) follows.
\end{proof}

\subsection*{Acknowledgements}

The research is supported by the Ministry of Science and Technology of Taiwan under the project MOST 110-2115-M-008-008-MY2.

\bibliographystyle{amsplain}
\bibliography{MP}
\end{document}